\newtheorem*{rep@theorem}{\rep@title}
\newcommand{\newreptheorem}[2]{%
\newenvironment{rep#1}[1]{%
 \def\rep@title{#2 \ref{##1}}%
 \begin{rep@theorem}}%
 {\end{rep@theorem}}}
\newtheorem{theorem}{Theorem}
\newtheorem{proposition}[theorem]{Proposition}
\newtheorem{definition}[theorem]{Definition}
\newtheorem{corollary}[theorem]{Corollary}
\newtheorem{lemma}[theorem]{Lemma}
\newtheorem{example}[theorem]{Example}
\newtheorem{question}{Question}
\newtheorem{remark}{Remark}
\title{Quantified block gluing, aperiodicity and 
entropy of multidimensional subshifts of finite type}
\author{Silvère Gangloff, Mathieu Sablik}
\def\N{\mathbb N}
\def\Z{\mathbb Z}
\newcommand{\Lang}{\mathcal{L}}
\newcommand{\id}{\operatorname{Id} }
\newcommand{\A}{\mathcal{A}}
\renewcommand{\vec}[1]{\textbf{#1}}
\newcommand{\U}{\mathbb{U}}
\newcommand{\define}[1]{\textbf{#1}}
\newcommand{\blacksq}{
\begin{tikzpicture}
\fill[Black] (0,0) rectangle (0.3,0.3) ;
\draw (0,0)--(0.3,0) ;
\draw (0,0)--(0,0.3) ;
\draw (0,0.3)--(0.3,0.3) ; 
\draw (0.3,0)--(0.3,0.3) ;
\end{tikzpicture}}
\newcommand{\greensq}{
\begin{tikzpicture}
\fill[YellowGreen] (0,0) rectangle (0.3,0.3) ;
\draw (0,0)--(0.3,0) ;
\draw (0,0)--(0,0.3) ;
\draw (0,0.3)--(0.3,0.3) ; 
\draw (0.3,0)--(0.3,0.3) ;
\end{tikzpicture}}
\newcommand{\redsq}{
\begin{tikzpicture}
\fill[Red] (0,0) rectangle (0.3,0.3) ;
\draw (0,0)--(0.3,0) ;
\draw (0,0)--(0,0.3) ;
\draw (0,0.3)--(0.3,0.3) ; 
\draw (0.3,0)--(0.3,0.3) ;
\end{tikzpicture}}
\newcommand{\purplesq}{
\begin{tikzpicture}
\fill[Purple] (0,0) rectangle (0.3,0.3) ;
\draw (0,0)--(0.3,0) ;
\draw (0,0)--(0,0.3) ;
\draw (0,0.3)--(0.3,0.3) ; 
\draw (0.3,0)--(0.3,0.3) ;
\end{tikzpicture}}
\newcommand{\orangesq}{
\begin{tikzpicture}
\fill[Orange] (0,0) rectangle (0.3,0.3) ;
\draw (0,0)--(0.3,0) ;
\draw (0,0)--(0,0.3) ;
\draw (0,0.3)--(0.3,0.3) ; 
\draw (0.3,0)--(0.3,0.3) ;
\end{tikzpicture}}
\newcommand{\blanksq}{
\begin{tikzpicture}
\draw (0,0)--(0.3,0) ;
\draw (0,0)--(0,0.3) ;
\draw (0,0.3)--(0.3,0.3) ; 
\draw (0.3,0)--(0.3,0.3) ;
\end{tikzpicture}}
\newcommand{\robionegauche}[2]{
\draw (#1,#2) rectangle (#1+2,#2+2);
\draw [-latex] (#1+2,#2+1) -- (#1+0,#2+1);
\draw [-latex] (#1+1,#2+0) -- (#1+1,#2+1); 
\draw [-latex] (#1+1,#2+2) -- (#1+1,#2+1);}
\newcommand{\robionedroite}[2]{
\draw (#1,#2) rectangle (#1+2,#2+2);
\draw [-latex] (#1+0,#2+1) -- (#1+2,#2+1);
\draw [-latex] (#1+1,#2+0) -- (#1+1,#2+1); 
\draw [-latex] (#1+1,#2+2) -- (#1+1,#2+1);}
\newcommand{\robionebas}[2]{
\draw (#1,#2) rectangle (#1+2,#2+2);
\draw [-latex] (#1+1,#2+0) -- (#1+1,#2+2);
\draw [-latex] (#1+0,#2+1) -- (#1+1,#2+1); 
\draw [-latex] (#1+2,#2+1) -- (#1+1,#2+1);}
\newcommand{\robionehaut}[2]{
\draw (#1,#2) rectangle (#1+2,#2+2);
\draw [-latex] (#1+1,#2+2) -- (#1+1,#2+0);
\draw [-latex] (#1+0,#2+1) -- (#1+1,#2+1); 
\draw [-latex] (#1+2,#2+1) -- (#1+1,#2+1);}
\newcommand{\robitwobas}[2]{
\draw (#1,#2) rectangle (#1+2,#2+2);
\draw [-latex] (#1+1,#2+2) -- (#1+1,#2+0) ;
\draw [-latex] (#1+0,#2+1) -- (#1+1,#2+1) ; 
\draw [-latex] (#1+0,#2+0.5) -- (#1+1,#2+0.5) ; 
\draw [-latex] (#1+2,#2+1) -- (#1+1,#2+1) ;
\draw [-latex] (#1+2,#2+0.5) -- (#1+1,#2+0.5) ;}
\newcommand{\robitwohaut}[2]{
\draw (#1,#2) rectangle (#1+2,#2+2);
\draw [-latex] (#1+1,#2+0) -- (#1+1,#2+2) ;
\draw [-latex] (#1+0,#2+1) -- (#1+1,#2+1) ; 
\draw [-latex] (#1+0,#2+1.5) -- (#1+1,#2+1.5) ; 
\draw [-latex] (#1+2,#2+1) -- (#1+1,#2+1) ;
\draw [-latex] (#1+2,#2+1.5) -- (#1+1,#2+1.5) ;}
\newcommand{\robitwodroite}[2]{
\draw (#1,#2) rectangle (#1+2,#2+2);
\draw [-latex] (#1+0,#2+1) -- (#1+2,#2+1) ;
\draw [-latex] (#1+1,#2+0) -- (#1+1,#2+1) ; 
\draw [-latex] (#1+1.5,#2+0) -- (#1+1.5,#2+1) ; 
\draw [-latex] (#1+1,#2+2) -- (#1+1,#2+1) ;
\draw [-latex] (#1+1.5,#2+2) -- (#1+1.5,#2+1) ;}
\newcommand{\robitwogauche}[2]{
\draw (#1,#2) rectangle (#1+2,#2+2);
\draw [-latex] (#1+2,#2+1) -- (#1+0,#2+1) ;
\draw [-latex] (#1+1,#2+0) -- (#1+1,#2+1) ; 
\draw [-latex] (#1+0.5,#2+0) -- (#1+0.5,#2+1) ; 
\draw [-latex] (#1+1,#2+2) -- (#1+1,#2+1) ;
\draw [-latex] (#1+0.5,#2+2) -- (#1+0.5,#2+1) ;}
\newcommand{\robithreebas}[2]{
\draw (#1,#2) rectangle (#1+2,#2+2) ;
\draw [-latex] (#1+1,#2+2) -- (#1+1,#2+0) ;
\draw [-latex] (#1+0.5,#2+2) -- (#1+0.5,#2+0) ; 
\draw [-latex] (#1+0,#2+1) -- (#1+0.5,#2+1) ; 
\draw [-latex] (#1+2,#2+1) -- (#1+1,#2+1) ;}
\newcommand{\robithreehaut}[2]{
\draw (#1,#2) rectangle (#1+2,#2+2) ;
\draw [-latex] (#1+1,#2+0) -- (#1+1,#2+2) ;
\draw [-latex] (#1+0.5,#2+0) -- (#1+0.5,#2+2) ; 
\draw [-latex] (#1+0,#2+1) -- (#1+0.5,#2+1) ; 
\draw [-latex] (#1+2,#2+1) -- (#1+1,#2+1) ;}
\newcommand{\robithreegauche}[2]{
\draw (#1,#2) rectangle (#1+2,#2+2) ;
\draw [-latex] (#1+2,#2+1) -- (#1+0,#2+1) ;
\draw [-latex] (#1+2,#2+0.5) -- (#1+0,#2+0.5) ; 
\draw [-latex] (#1+1,#2+0) -- (#1+1,#2+0.5) ; 
\draw [-latex] (#1+1,#2+2) -- (#1+1,#2+1) ;}
\newcommand{\robithreedroite}[2]{
\draw (#1,#2) rectangle (#1+2,#2+2) ;
\draw [-latex] (#1+0,#2+1) -- (#1+2,#2+1) ;
\draw [-latex] (#1+0,#2+0.5) -- (#1+2,#2+0.5) ; 
\draw [-latex] (#1+1,#2+0) -- (#1+1,#2+0.5) ; 
\draw [-latex] (#1+1,#2+2) -- (#1+1,#2+1) ;}
\newcommand{\robifourbas}[2]{
\draw (#1,#2) rectangle (#1+2,#2+2) ;
\draw [-latex] (#1+1,#2+2) -- (#1+1,#2+0) ;
\draw [-latex] (#1+1.5,#2+2) -- (#1+1.5,#2+0) ; 
\draw [-latex] (#1+0,#2+1) -- (#1+1,#2+1) ; 
\draw [-latex] (#1+2,#2+1) -- (#1+1.5,#2+1) ;}
\newcommand{\robifourgauche}[2]{
\draw (#1,#2) rectangle (#1+2,#2+2) ;
\draw [-latex] (#1+2,#2+1) -- (#1+0,#2+1) ;
\draw [-latex] (#1+2,#2+1.5) -- (#1+0,#2+1.5) ; 
\draw [-latex] (#1+1,#2+0) -- (#1+1,#2+1) ; 
\draw [-latex] (#1+1,#2+2) -- (#1+1,#2+1.5) ;}
\newcommand{\robifourhaut}[2]{
\draw (#1,#2) rectangle (#1+2,#2+2) ;
\draw [-latex] (#1+1,#2+0) -- (#1+1,#2+2) ;
\draw [-latex] (#1+1.5,#2+0) -- (#1+1.5,#2+2) ; 
\draw [-latex] (#1+0,#2+1) -- (#1+1,#2+1) ; 
\draw [-latex] (#1+2,#2+1) -- (#1+1.5,#2+1) ;}
\newcommand{\robifourdroite}[2]{
\draw (#1,#2) rectangle (#1+2,#2+2) ;
\draw [-latex] (#1+0,#2+1) -- (#1+2,#2+1) ;
\draw [-latex] (#1+0,#2+1.5) -- (#1+2,#2+1.5) ; 
\draw [-latex] (#1+1,#2+0) -- (#1+1,#2+1) ; 
\draw [-latex] (#1+1,#2+2) -- (#1+1,#2+1.5) ;}
\newcommand{\robifivebas}[2]{
\draw (#1,#2) rectangle (#1+2,#2+2) ;
\draw [-latex] (#1+1,#2+2) -- (#1+1,#2+0) ; 
\draw [-latex] (#1+0,#2+1) -- (#1+1,#2+1) ; 
\draw [-latex] (#1+2,#2+1) -- (#1+1,#2+1) ;
\draw [-latex] (#1+0,#2+0.5) -- (#1+1,#2+0.5) ; 
\draw [-latex] (#1+2,#2+0.5) -- (#1+1,#2+0.5) ;}
\newcommand{\robifivegauche}[2]{
\draw (#1,#2) rectangle (#1+2,#2+2) ;
\draw [-latex] (#1+2,#2+1) -- (#1+0,#2+1) ; 
\draw [-latex] (#1+1,#2+0) -- (#1+1,#2+1) ; 
\draw [-latex] (#1+1,#2+2) -- (#1+1,#2+1) ;
\draw [-latex] (#1+0.5,#2+0) -- (#1+0.5,#2+1) ; 
\draw [-latex] (#1+0.5,#2+2) -- (#1+0.5,#2+1) ;}
\newcommand{\robifivehaut}[2]{
\draw (#1,#2) rectangle (#1+2,#2+2) ;
\draw [-latex] (#1+1,#2+0) -- (#1+1,#2+2) ; 
\draw [-latex] (#1+0,#2+1) -- (#1+1,#2+1) ; 
\draw [-latex] (#1+2,#2+1) -- (#1+1,#2+1) ;
\draw [-latex] (#1+0,#2+1.5) -- (#1+1,#2+1.5) ; 
\draw [-latex] (#1+2,#2+1.5) -- (#1+1,#2+1.5) ;}
\newcommand{\robisixbas}[2]{
\draw (#1,#2) rectangle (#1+2,#2+2) ;
\draw [-latex] (#1+1,#2+2) -- (#1+1,#2+0) ;
\draw [-latex] (#1+0.5,#2+2) -- (#1+0.5,#2+0) ; 
\draw [-latex] (#1+0,#2+1) -- (#1+0.5,#2+1) ; 
\draw [-latex] (#1+2,#2+1) -- (#1+1,#2+1) ;
\draw [-latex] (#1+0,#2+0.5) -- (#1+0.5,#2+0.5) ; 
\draw [-latex] (#1+2,#2+0.5) -- (#1+1,#2+0.5) ;}
\newcommand{\robisixgauche}[2]{
\draw (#1,#2) rectangle (#1+2,#2+2) ;
\draw [-latex] (#1+2,#2+1) -- (#1+0,#2+1) ;
\draw [-latex] (#1+2,#2+0.5) -- (#1+0,#2+0.5) ; 
\draw [-latex] (#1+1,#2+0) -- (#1+1,#2+0.5) ; 
\draw [-latex] (#1+1,#2+2) -- (#1+1,#2+1) ;
\draw [-latex] (#1+0.5,#2+0) -- (#1+0.5,#2+0.5) ; 
\draw [-latex] (#1+0.5,#2+2) -- (#1+0.5,#2+1) ;}
\newcommand{\robisixhaut}[2]{
\draw (#1,#2) rectangle (#1+2,#2+2) ;
\draw [-latex] (#1+1,#2+0) -- (#1+1,#2+2) ;
\draw [-latex] (#1+0.5,#2+0) -- (#1+0.5,#2+2) ; 
\draw [-latex] (#1+0,#2+1) -- (#1+0.5,#2+1) ; 
\draw [-latex] (#1+2,#2+1) -- (#1+1,#2+1) ;
\draw [-latex] (#1+0,#2+1.5) -- (#1+0.5,#2+1.5) ; 
\draw [-latex] (#1+2,#2+1.5) -- (#1+1,#2+1.5) ;}
\newcommand{\robisixdroite}[2]{
\draw (#1,#2) rectangle (#1+2,#2+2) ;
\draw [-latex] (#1+0,#2+1) -- (#1+2,#2+1) ;
\draw [-latex] (#1+0,#2+0.5) -- (#1+2,#2+0.5) ; 
\draw [-latex] (#1+1,#2+0) -- (#1+1,#2+0.5) ; 
\draw [-latex] (#1+1,#2+2) -- (#1+1,#2+1) ;
\draw [-latex] (#1+1.5,#2+0) -- (#1+1.5,#2+0.5) ; 
\draw [-latex] (#1+1.5,#2+2) -- (#1+1.5,#2+1) ;}
\newcommand{\robisevenbas}[2]{
\draw (#1,#2) rectangle (#1+2,#2+2) ;
\draw [-latex] (#1+1,#2+2) -- (#1+1,#2+0) ;
\draw [-latex] (#1+1.5,#2+2) -- (#1+1.5,#2+0) ; 
\draw [-latex] (#1+0,#2+1) -- (#1+1,#2+1) ; 
\draw [-latex] (#1+2,#2+1) -- (#1+1.5,#2+1) ;
\draw [-latex] (#1+0,#2+0.5) -- (#1+1,#2+0.5) ; 
\draw [-latex] (#1+2,#2+0.5) -- (#1+1.5,#2+0.5) ;}
\newcommand{\robisevengauche}[2]{
\draw (#1,#2) rectangle (#1+2,#2+2) ;
\draw [-latex] (#1+2,#2+1) -- (#1+0,#2+1) ;
\draw [-latex] (#1+2,#2+1.5) -- (#1+0,#2+1.5) ; 
\draw [-latex] (#1+1,#2+0) -- (#1+1,#2+1) ; 
\draw [-latex] (#1+1,#2+2) -- (#1+1,#2+1.5) ;
\draw [-latex] (#1+0.5,#2+0) -- (#1+0.5,#2+1) ; 
\draw [-latex] (#1+0.5,#2+2) -- (#1+0.5,#2+1.5) ;}
\newcommand{\robisevenhaut}[2]{
\draw (#1,#2) rectangle (#1+2,#2+2) ;
\draw [-latex] (#1+1,#2+0) -- (#1+1,#2+2) ;
\draw [-latex] (#1+1.5,#2+0) -- (#1+1.5,#2+2) ; 
\draw [-latex] (#1+0,#2+1) -- (#1+1,#2+1) ; 
\draw [-latex] (#1+2,#2+1) -- (#1+1.5,#2+1) ;
\draw [-latex] (#1+0,#2+1.5) -- (#1+1,#2+1.5) ; 
\draw [-latex] (#1+2,#2+1.5) -- (#1+1.5,#2+1.5) ;}
\newcommand{\robisevendroite}[2]{
\draw (#1,#2) rectangle (#1+2,#2+2) ;
\draw [-latex] (#1+0,#2+1) -- (#1+2,#2+1) ;
\draw [-latex] (#1+0,#2+1.5) -- (#1+2,#2+1.5) ; 
\draw [-latex] (#1+1,#2+0) -- (#1+1,#2+1) ; 
\draw [-latex] (#1+1,#2+2) -- (#1+1,#2+1.5) ;
\draw [-latex] (#1+1.5,#2+0) -- (#1+1.5,#2+1) ; 
\draw [-latex] (#1+1.5,#2+2) -- (#1+1.5,#2+1.5) ;}
\newcommand{\robibluebasgauche}[2]{
\fill[blue!40] (#1+0.5,#2+0.5) rectangle (#1+1,#2+2) ;
\fill[blue!40] (#1+0.5,#2+0.5) rectangle (#1+2,#2+1) ;
\node[scale=0.75] at (#1+1.5,#2+1.5) {0};
\draw (#1,#2) rectangle (#1+2,#2+2) ;
\draw [-latex] (#1+0.5,#2+0.5) -- (#1+0.5,#2+2) ; 
\draw [-latex] (#1+0.5,#2+0.5) -- (#1+2,#2+0.5) ;
\draw [-latex] (#1+1,#2+1) -- (#1+1,#2+2) ; 
\draw [-latex] (#1+1,#2+1) -- (#1+2,#2+1) ; 
\draw [-latex] (#1+0.5,#2+1) -- (#1+0,#2+1) ; 
\draw [-latex] (#1+1,#2+0.5) -- (#1+1,#2+0) ;}
\newcommand{\robibluebasdroite}[2]{
\fill[blue!40] (#1+1.5,#2+0.5) rectangle (#1+1,#2+2) ;
\fill[blue!40] (#1+1.5,#2+0.5) rectangle (#1+0,#2+1) ;
\draw (#1,#2) rectangle (#1+2,#2+2) ;
\node[scale=0.75] at (#1+0.5,#2+1.5) {0};
\draw [-latex] (#1+1.5,#2+0.5) -- (#1+1.5,#2+2) ; 
\draw [-latex] (#1+1.5,#2+0.5) -- (#1+0,#2+0.5) ;
\draw [-latex] (#1+1,#2+1) -- (#1+1,#2+2) ; 
\draw [-latex] (#1+1,#2+1) -- (#1+0,#2+1) ; 
\draw [-latex] (#1+1.5,#2+1) -- (#1+2,#2+1) ; 
\draw [-latex] (#1+1,#2+0.5) -- (#1+1,#2+0) ;}
\newcommand{\robibluehautgauche}[2]{
\fill[blue!40] (#1+2,#2+1) rectangle (#1+0.5,#2+1.5) ;
\fill[blue!40] (#1+1,#2+0) rectangle (#1+0.5,#2+1.5) ;
\draw (#1,#2) rectangle (#1+2,#2+2) ;
\node[scale=0.75] at (#1+1.5,#2+0.5) {0};
\draw [-latex] (#1+0.5,#2+1.5) -- (#1+0.5,#2+0) ; 
\draw [-latex] (#1+0.5,#2+1.5) -- (#1+2,#2+1.5) ;
\draw [-latex] (#1+1,#2+1) -- (#1+1,#2+0) ; 
\draw [-latex] (#1+1,#2+1) -- (#1+2,#2+1) ; 
\draw [-latex] (#1+0.5,#2+1) -- (#1+0,#2+1) ; 
\draw [-latex] (#1+1,#2+1.5) -- (#1+1,#2+2) ;}
\newcommand{\robibluehautdroite}[2]{
\fill[blue!40] (#1+0,#2+1) rectangle (#1+1.5,#2+1.5) ;
\fill[blue!40] (#1+1,#2+0) rectangle (#1+1.5,#2+1.5) ;
\draw (#1,#2) rectangle (#1+2,#2+2) ;
\node[scale=0.75] at (#1+0.5,#2+0.5) {0};
\draw [-latex] (#1+1.5,#2+1.5) -- (#1+1.5,#2+0) ; 
\draw [-latex] (#1+1.5,#2+1.5) -- (#1+0,#2+1.5) ;
\draw [-latex] (#1+1,#2+1) -- (#1+1,#2+0) ; 
\draw [-latex] (#1+1,#2+1) -- (#1+0,#2+1) ; 
\draw [-latex] (#1+1.5,#2+1) -- (#1+2,#2+1) ; 
\draw [-latex] (#1+1,#2+1.5) -- (#1+1,#2+2) ;}
\newcommand{\robibluebastgauche}[2]{
\fill[blue!40] (#1+0.5,#2+0.5) rectangle (#1+1,#2+2) ;
\fill[blue!40] (#1+0.5,#2+0.5) rectangle (#1+2,#2+1) ;
\draw (#1,#2) rectangle (#1+2,#2+2) ;
\draw [-latex] (#1+0.5,#2+0.5) -- (#1+0.5,#2+2) ; 
\draw [-latex] (#1+0.5,#2+0.5) -- (#1+2,#2+0.5) ;
\draw [-latex] (#1+1,#2+1) -- (#1+1,#2+2) ; 
\draw [-latex] (#1+1,#2+1) -- (#1+2,#2+1) ; 
\draw [-latex] (#1+0.5,#2+1) -- (#1+0,#2+1) ; 
\draw [-latex] (#1+1,#2+0.5) -- (#1+1,#2+0) ;}
\newcommand{\robibluebastdroite}[2]{
\fill[blue!40] (#1+1.5,#2+0.5) rectangle (#1+1,#2+2) ;
\fill[blue!40] (#1+1.5,#2+0.5) rectangle (#1+0,#2+1) ;
\draw (#1,#2) rectangle (#1+2,#2+2) ;
\draw [-latex] (#1+1.5,#2+0.5) -- (#1+1.5,#2+2) ; 
\draw [-latex] (#1+1.5,#2+0.5) -- (#1+0,#2+0.5) ;
\draw [-latex] (#1+1,#2+1) -- (#1+1,#2+2) ; 
\draw [-latex] (#1+1,#2+1) -- (#1+0,#2+1) ; 
\draw [-latex] (#1+1.5,#2+1) -- (#1+2,#2+1) ; 
\draw [-latex] (#1+1,#2+0.5) -- (#1+1,#2+0) ;}
\newcommand{\robibluehauttgauche}[2]{
\fill[blue!40] (#1+2,#2+1) rectangle (#1+0.5,#2+1.5) ;
\fill[blue!40] (#1+1,#2+0) rectangle (#1+0.5,#2+1.5) ;
\draw (#1,#2) rectangle (#1+2,#2+2) ;
\draw [-latex] (#1+0.5,#2+1.5) -- (#1+0.5,#2+0) ; 
\draw [-latex] (#1+0.5,#2+1.5) -- (#1+2,#2+1.5) ;
\draw [-latex] (#1+1,#2+1) -- (#1+1,#2+0) ; 
\draw [-latex] (#1+1,#2+1) -- (#1+2,#2+1) ; 
\draw [-latex] (#1+0.5,#2+1) -- (#1+0,#2+1) ; 
\draw [-latex] (#1+1,#2+1.5) -- (#1+1,#2+2) ;}
\newcommand{\robibluehauttdroite}[2]{
\fill[blue!40] (#1+0,#2+1) rectangle (#1+1.5,#2+1.5) ;
\fill[blue!40] (#1+1,#2+0) rectangle (#1+1.5,#2+1.5) ;
\draw (#1,#2) rectangle (#1+2,#2+2) ;
\draw [-latex] (#1+1.5,#2+1.5) -- (#1+1.5,#2+0) ; 
\draw [-latex] (#1+1.5,#2+1.5) -- (#1+0,#2+1.5) ;
\draw [-latex] (#1+1,#2+1) -- (#1+1,#2+0) ; 
\draw [-latex] (#1+1,#2+1) -- (#1+0,#2+1) ; 
\draw [-latex] (#1+1.5,#2+1) -- (#1+2,#2+1) ; 
\draw [-latex] (#1+1,#2+1.5) -- (#1+1,#2+2) ;}
\newcommand{\robiredbasgauche}[2]{
\fill[red!40] (#1+0.5,#2+0.5) rectangle (#1+1,#2+2) ;
\fill[red!40] (#1+0.5,#2+0.5) rectangle (#1+2,#2+1) ;
\draw (#1,#2) rectangle (#1+2,#2+2) ;
\draw [-latex] (#1+0.5,#2+0.5) -- (#1+0.5,#2+2) ; 
\draw [-latex] (#1+0.5,#2+0.5) -- (#1+2,#2+0.5) ;
\draw [-latex] (#1+1,#2+1) -- (#1+1,#2+2) ; 
\draw [-latex] (#1+1,#2+1) -- (#1+2,#2+1) ; 
\draw [-latex] (#1+0.5,#2+1) -- (#1+0,#2+1) ; 
\draw [-latex] (#1+1,#2+0.5) -- (#1+1,#2+0) ;}
\newcommand{\robiredbasdroite}[2]{
\fill[red!40] (#1+1.5,#2+0.5) rectangle (#1+1,#2+2) ;
\fill[red!40] (#1+1.5,#2+0.5) rectangle (#1+0,#2+1) ;
\draw (#1,#2) rectangle (#1+2,#2+2) ;
\draw [-latex] (#1+1.5,#2+0.5) -- (#1+1.5,#2+2) ; 
\draw [-latex] (#1+1.5,#2+0.5) -- (#1+0,#2+0.5) ;
\draw [-latex] (#1+1,#2+1) -- (#1+1,#2+2) ; 
\draw [-latex] (#1+1,#2+1) -- (#1+0,#2+1) ; 
\draw [-latex] (#1+1.5,#2+1) -- (#1+2,#2+1) ; 
\draw [-latex] (#1+1,#2+0.5) -- (#1+1,#2+0) ;}
\newcommand{\robiredhautgauche}[2]{
\fill[red!40] (#1+2,#2+1) rectangle (#1+0.5,#2+1.5) ;
\fill[red!40] (#1+1,#2+0) rectangle (#1+0.5,#2+1.5) ;
\draw (#1,#2) rectangle (#1+2,#2+2) ;
\draw [-latex] (#1+0.5,#2+1.5) -- (#1+0.5,#2+0) ; 
\draw [-latex] (#1+0.5,#2+1.5) -- (#1+2,#2+1.5) ;
\draw [-latex] (#1+1,#2+1) -- (#1+1,#2+0) ; 
\draw [-latex] (#1+1,#2+1) -- (#1+2,#2+1) ; 
\draw [-latex] (#1+0.5,#2+1) -- (#1+0,#2+1) ; 
\draw [-latex] (#1+1,#2+1.5) -- (#1+1,#2+2) ;}
\newcommand{\robiredhautdroite}[2]{
\fill[red!40] (#1+0,#2+1) rectangle (#1+1.5,#2+1.5) ;
\fill[red!40] (#1+1,#2+0) rectangle (#1+1.5,#2+1.5) ;
\draw (#1,#2) rectangle (#1+2,#2+2) ;
\draw [-latex] (#1+1.5,#2+1.5) -- (#1+1.5,#2+0) ; 
\draw [-latex] (#1+1.5,#2+1.5) -- (#1+0,#2+1.5) ;
\draw [-latex] (#1+1,#2+1) -- (#1+1,#2+0) ; 
\draw [-latex] (#1+1,#2+1) -- (#1+0,#2+1) ; 
\draw [-latex] (#1+1.5,#2+1) -- (#1+2,#2+1) ; 
\draw [-latex] (#1+1,#2+1.5) -- (#1+1,#2+2) ;}
\newcommand{\supertiletwobasgauche}[2]{
\robibluebasgauche{#1+0}{#2+0};
\robibluebasgauche{#1+8}{#2+0};
\robibluebasgauche{#1+0}{#2+8};
\robibluebasgauche{#1+8}{#2+8};
\robiredbasgauche{#1+2}{#2+2};
\robiredbasgauche{#1+6}{#2+6};
\robibluehautgauche{#1+0}{#2+4};
\robibluehautgauche{#1+8}{#2+4};
\robibluehautgauche{#1+0}{#2+12};
\robibluehautgauche{#1+8}{#2+12};
\robiredhautgauche{#1+2}{#2+10};
\robibluebasdroite{#1+4}{#2+0};
\robibluebasdroite{#1+12}{#2+0};
\robibluebasdroite{#1+4}{#2+8};
\robibluebasdroite{#1+12}{#2+8};
\robiredbasdroite{#1+10}{#2+2};
\robibluehautdroite{#1+4}{#2+4};
\robibluehautdroite{#1+12}{#2+4};
\robibluehautdroite{#1+4}{#2+12};
\robibluehautdroite{#1+12}{#2+12};
\robiredhautdroite{#1+10}{#2+10};
\robitwobas{#1+2}{#2+0}
\robitwobas{#1+10}{#2+0}
\robitwobas{#1+6}{#2+2}
\robitwogauche{#1+0}{#2+2}
\robitwogauche{#1+2}{#2+6}
\robitwogauche{#1+0}{#2+10}
\robitwodroite{#1+12}{#2+2}
\robitwodroite{#1+12}{#2+10}
\robitwohaut{#1+2}{#2+12}
\robitwohaut{#1+10}{#2+12}
\robionehaut{#1+6}{#2+0}
\robionehaut{#1+6}{#2+4}
\robionegauche{#1+0}{#2+6}
\robionegauche{#1+4}{#2+6}
\robisixbas{#1+2}{#2+8}
\robisixdroite{#1+4}{#2+2}
\robisixhaut{#1+2}{#2+4}
\robisevendroite{#1+4}{#2+10}
\robithreehaut{#1+6}{#2+8}
\robisixhaut{#1+6}{#2+10}
\robithreehaut{#1+6}{#2+12}
\robisixgauche{#1+8}{#2+2}
\robisevengauche{#1+8}{#2+10}
\robisevenbas{#1+10}{#2+8}
\robisevenhaut{#1+10}{#2+4}
\robisixdroite{#1+10}{#2+6}
\robithreedroite{#1+8}{#2+6}
\robithreedroite{#1+12}{#2+6}}
\newcommand{\supertiletwobasdroite}[2]{
\robibluebasgauche{#1+0}{#2+0};
\robibluebasgauche{#1+8}{#2+0};
\robibluebasgauche{#1+0}{#2+8};
\robibluebasgauche{#1+8}{#2+8};
\robiredbasgauche{#1+2}{#2+2};
\robiredbasdroite{#1+6}{#2+6};
\robibluehautgauche{#1+0}{#2+4};
\robibluehautgauche{#1+8}{#2+4};
\robibluehautgauche{#1+0}{#2+12};
\robibluehautgauche{#1+8}{#2+12};
\robiredhautgauche{#1+2}{#2+10};
\robibluebasdroite{#1+4}{#2+0};
\robibluebasdroite{#1+12}{#2+0};
\robibluebasdroite{#1+4}{#2+8};
\robibluebasdroite{#1+12}{#2+8};
\robiredbasdroite{#1+10}{#2+2};
\robibluehautdroite{#1+4}{#2+4};
\robibluehautdroite{#1+12}{#2+4};
\robibluehautdroite{#1+4}{#2+12};
\robibluehautdroite{#1+12}{#2+12};
\robiredhautdroite{#1+10}{#2+10};
\robitwobas{#1+2}{#2+0}
\robitwobas{#1+10}{#2+0}
\robitwobas{#1+6}{#2+2}
\robitwogauche{#1+0}{#2+2}
\robitwodroite{#1+10}{#2+6}
\robitwogauche{#1+0}{#2+10}
\robitwodroite{#1+12}{#2+2}
\robitwodroite{#1+12}{#2+10}
\robitwohaut{#1+2}{#2+12}
\robitwohaut{#1+10}{#2+12}
\robionehaut{#1+6}{#2+0}
\robionehaut{#1+6}{#2+4}
\robionedroite{#1+12}{#2+6}
\robionedroite{#1+8}{#2+6}
\robisixbas{#1+2}{#2+8}
\robisixdroite{#1+4}{#2+2}
\robisixhaut{#1+2}{#2+4}
\robisevendroite{#1+4}{#2+10}
\robifourhaut{#1+6}{#2+8}
\robisevenhaut{#1+6}{#2+10}
\robifourhaut{#1+6}{#2+12}
\robisixgauche{#1+8}{#2+2}
\robisevengauche{#1+8}{#2+10}
\robisevenbas{#1+10}{#2+8}
\robisevenhaut{#1+10}{#2+4}
\robisixgauche{#1+2}{#2+6}
\robithreegauche{#1+4}{#2+6}
\robithreegauche{#1+0}{#2+6}}
\newcommand{\supertiletwohautdroite}[2]{
\robibluebasgauche{#1+0}{#2+0};
\robibluebasgauche{#1+8}{#2+0};
\robibluebasgauche{#1+0}{#2+8};
\robibluebasgauche{#1+8}{#2+8};
\robiredbasgauche{#1+2}{#2+2};
\robiredhautdroite{#1+6}{#2+6};
\robibluehautgauche{#1+0}{#2+4};
\robibluehautgauche{#1+8}{#2+4};
\robibluehautgauche{#1+0}{#2+12};
\robibluehautgauche{#1+8}{#2+12};
\robiredhautgauche{#1+2}{#2+10};
\robibluebasdroite{#1+4}{#2+0};
\robibluebasdroite{#1+12}{#2+0};
\robibluebasdroite{#1+4}{#2+8};
\robibluebasdroite{#1+12}{#2+8};
\robiredbasdroite{#1+10}{#2+2};
\robibluehautdroite{#1+4}{#2+4};
\robibluehautdroite{#1+12}{#2+4};
\robibluehautdroite{#1+4}{#2+12};
\robibluehautdroite{#1+12}{#2+12};
\robiredhautdroite{#1+10}{#2+10};
\robitwobas{#1+2}{#2+0}
\robitwobas{#1+10}{#2+0}
\robitwohaut{#1+6}{#2+10}
\robitwogauche{#1+0}{#2+2}
\robitwodroite{#1+10}{#2+6}
\robitwogauche{#1+0}{#2+10}
\robitwodroite{#1+12}{#2+2}
\robitwodroite{#1+12}{#2+10}
\robitwohaut{#1+2}{#2+12}
\robitwohaut{#1+10}{#2+12}
\robionebas{#1+6}{#2+12}
\robionebas{#1+6}{#2+8}
\robionedroite{#1+12}{#2+6}
\robionedroite{#1+8}{#2+6}
\robisixbas{#1+2}{#2+8}
\robisixdroite{#1+4}{#2+2}
\robisixhaut{#1+2}{#2+4}
\robisevendroite{#1+4}{#2+10}
\robifourbas{#1+6}{#2+4}
\robisevenbas{#1+6}{#2+2}
\robifourbas{#1+6}{#2+0}
\robisixgauche{#1+8}{#2+2}
\robisevengauche{#1+8}{#2+10}
\robisevenbas{#1+10}{#2+8}
\robisevenhaut{#1+10}{#2+4}
\robisevengauche{#1+2}{#2+6}
\robifourgauche{#1+4}{#2+6}
\robifourgauche{#1+0}{#2+6}}
\newcommand{\supertiletwohautgauche}[2]{
\robibluebasgauche{#1+0}{#2+0};
\robibluebasgauche{#1+8}{#2+0};
\robibluebasgauche{#1+0}{#2+8};
\robibluebasgauche{#1+8}{#2+8};
\robiredbasgauche{#1+2}{#2+2};
\robiredhautgauche{#1+6}{#2+6};
\robibluehautgauche{#1+0}{#2+4};
\robibluehautgauche{#1+8}{#2+4};
\robibluehautgauche{#1+0}{#2+12};
\robibluehautgauche{#1+8}{#2+12};
\robiredhautgauche{#1+2}{#2+10};
\robibluebasdroite{#1+4}{#2+0};
\robibluebasdroite{#1+12}{#2+0};
\robibluebasdroite{#1+4}{#2+8};
\robibluebasdroite{#1+12}{#2+8};
\robiredbasdroite{#1+10}{#2+2};
\robibluehautdroite{#1+4}{#2+4};
\robibluehautdroite{#1+12}{#2+4};
\robibluehautdroite{#1+4}{#2+12};
\robibluehautdroite{#1+12}{#2+12};
\robiredhautdroite{#1+10}{#2+10};
\robitwobas{#1+2}{#2+0}
\robitwobas{#1+10}{#2+0}
\robitwohaut{#1+6}{#2+10}
\robitwogauche{#1+0}{#2+2}
\robitwogauche{#1+2}{#2+6}
\robitwogauche{#1+0}{#2+10}
\robitwodroite{#1+12}{#2+2}
\robitwodroite{#1+12}{#2+10}
\robitwohaut{#1+2}{#2+12}
\robitwohaut{#1+10}{#2+12}
\robionebas{#1+6}{#2+12}
\robionebas{#1+6}{#2+8}
\robionegauche{#1+0}{#2+6}
\robionegauche{#1+4}{#2+6}
\robisixbas{#1+2}{#2+8}
\robisixdroite{#1+4}{#2+2}
\robisixhaut{#1+2}{#2+4}
\robisevendroite{#1+4}{#2+10}
\robithreebas{#1+6}{#2+4}
\robisixbas{#1+6}{#2+2}
\robithreebas{#1+6}{#2+0}
\robisixgauche{#1+8}{#2+2}
\robisevengauche{#1+8}{#2+10}
\robisevenbas{#1+10}{#2+8}
\robisevenhaut{#1+10}{#2+4}
\robisevendroite{#1+10}{#2+6}
\robifourdroite{#1+8}{#2+6}
\robifourdroite{#1+12}{#2+6}}
\newcommand{\supertilesynchrotwo}[2]{
\fill[gray!90] (#1-2,#2-2) rectangle (#1+32,#2+32);
\fill[gray!40] (#1+0,#2+0) rectangle (#1+30,#2+30);
\fill[red!40] (#1+0,#2+0) rectangle (#1+14,#2+14);
\fill[orange!40] (#1+16,#2+16) rectangle (#1+30,#2+30);
\fill[purple!40] (#1+0,#2+16) rectangle (#1+14,#2+30);
\fill[yellow!40] (#1+16,#2+0) rectangle (#1+30,#2+14); 
\fill[gray!90] (#1+2,#2+2) rectangle (#1+12,#2+12);
\fill[gray!90] (#1+2,#2+18) rectangle (#1+12,#2+28);
\fill[gray!90] (#1+18,#2+18) rectangle (#1+28,#2+28);
\fill[gray!90] (#1+18,#2+2) rectangle (#1+28,#2+12);
\fill[gray!40] (#1+4,#2+20) rectangle (#1+10,#2+26);
\fill[gray!40] (#1+20,#2+20) rectangle (#1+26,#2+26);
\fill[gray!40] (#1+20,#2+4) rectangle (#1+26,#2+10);
\fill[gray!40] (#1+4,#2+4) rectangle (#1+10,#2+10);
\fill[red!40] (#1+4,#2+4) rectangle (#1+6,#2+6);
\fill[yellow!40] (#1+8,#2+4) rectangle (#1+10,#2+6);
\fill[purple!40] (#1+4,#2+8) rectangle (#1+6,#2+10);
\fill[orange!40] (#1+8,#2+8) rectangle (#1+10,#2+10);
\fill[red!40] (#1+20,#2+4) rectangle (#1+22,#2+6);
\fill[yellow!40] (#1+24,#2+4) rectangle (#1+26,#2+6);
\fill[purple!40] (#1+20,#2+8) rectangle (#1+22,#2+10);
\fill[orange!40] (#1+24,#2+8) rectangle (#1+26,#2+10);
\fill[red!40] (#1+20,#2+20) rectangle (#1+22,#2+22);
\fill[yellow!40] (#1+24,#2+20) rectangle (#1+26,#2+22);
\fill[purple!40] (#1+20,#2+24) rectangle (#1+22,#2+26);
\fill[orange!40] (#1+24,#2+24) rectangle (#1+26,#2+26);
\fill[red!40] (#1+4,#2+20) rectangle (#1+6,#2+22);
\fill[yellow!40] (#1+8,#2+20) rectangle (#1+10,#2+22);
\fill[purple!40] (#1+4,#2+24) rectangle (#1+6,#2+26);
\fill[orange!40] (#1+8,#2+24) rectangle (#1+10,#2+26);}
\newcommand{\supertilesynchroone}[2]{
\fill[gray!90] (#1+0,#2+0) rectangle (#1+10,#2+10);
\fill[gray!40] (#1+2,#2+2) rectangle (#1+8,#2+8);
\fill[red!40] (#1+2,#2+2) rectangle (#1+4,#2+4);
\fill[yellow!40] (#1+6,#2+2) rectangle (#1+8,#2+4);
\fill[purple!40] (#1+2,#2+6) rectangle (#1+4,#2+8);
\fill[orange!40] (#1+6,#2+6) rectangle (#1+8,#2+8);
}
\newcommand{\supertilesynchrothree}{
\fill[gray!90] (-2,-2) rectangle (2*64,2*64);
\fill[gray!40] (0,0) rectangle (2*63,2*63);
\fill[red!40] (0,0) rectangle (2*31,2*31);
\fill[orange!40] (2*32,2*32) rectangle (2*63,2*63);
\fill[yellow!40] (2*32,2*31) rectangle (2*63,0);
\fill[purple!40] (2*31,2*32) rectangle (0,2*63);
\supertilesynchrotwo{16}{16}
\supertilesynchrotwo{16}{16+4*16}
\supertilesynchrotwo{16+4*16}{16+4*16}
\supertilesynchrotwo{16+4*16}{16}
\supertilesynchroone{66}{66}
\supertilesynchroone{-4*16+66}{66}
\supertilesynchroone{-3*16+66}{66}
\supertilesynchroone{-2*16+66}{66}
\supertilesynchroone{-1*16+66}{66}
\supertilesynchroone{16+66}{66}
\supertilesynchroone{2*16+66}{66}
\supertilesynchroone{3*16+66}{66}
\supertilesynchroone{-4*16+66}{16+66}
\supertilesynchroone{-3*16+66}{16+66}
\supertilesynchroone{-2*16+66}{16+66}
\supertilesynchroone{-1*16+66}{16+66}
\supertilesynchroone{16+66}{16+66}
\supertilesynchroone{66}{16+66}
\supertilesynchroone{2*16+66}{16+66}
\supertilesynchroone{3*16+66}{16+66}
\supertilesynchroone{-4*16+66}{2*16+66}
\supertilesynchroone{-3*16+66}{2*16+66}
\supertilesynchroone{-2*16+66}{2*16+66}
\supertilesynchroone{-1*16+66}{2*16+66}
\supertilesynchroone{66}{2*16+66}
\supertilesynchroone{16+66}{2*16+66}
\supertilesynchroone{2*16+66}{2*16+66}
\supertilesynchroone{3*16+66}{2*16+66}

\supertilesynchroone{-4*16+66}{3*16+66}
\supertilesynchroone{-3*16+66}{3*16+66}
\supertilesynchroone{-2*16+66}{3*16+66}
\supertilesynchroone{-1*16+66}{3*16+66}
\supertilesynchroone{66}{3*16+66}
\supertilesynchroone{16+66}{3*16+66}
\supertilesynchroone{2*16+66}{3*16+66}
\supertilesynchroone{3*16+66}{3*16+66}

\supertilesynchroone{-4*16+66}{-1*16+66}
\supertilesynchroone{-3*16+66}{-1*16+66}
\supertilesynchroone{-2*16+66}{-1*16+66}
\supertilesynchroone{-1*16+66}{-1*16+66}
\supertilesynchroone{66}{-1*16+66}
\supertilesynchroone{16+66}{-1*16+66}
\supertilesynchroone{2*16+66}{-1*16+66}
\supertilesynchroone{3*16+66}{-1*16+66}

\supertilesynchroone{-4*16+66}{-2*16+66}
\supertilesynchroone{-3*16+66}{-2*16+66}
\supertilesynchroone{-2*16+66}{-2*16+66}
\supertilesynchroone{-1*16+66}{-2*16+66}
\supertilesynchroone{66}{-2*16+66}
\supertilesynchroone{16+66}{-2*16+66}
\supertilesynchroone{2*16+66}{-2*16+66}
\supertilesynchroone{3*16+66}{-2*16+66}

\supertilesynchroone{-4*16+66}{-3*16+66}
\supertilesynchroone{-3*16+66}{-3*16+66}
\supertilesynchroone{-2*16+66}{-3*16+66}
\supertilesynchroone{-1*16+66}{-3*16+66}
\supertilesynchroone{66}{-3*16+66}
\supertilesynchroone{16+66}{-3*16+66}
\supertilesynchroone{2*16+66}{-3*16+66}
\supertilesynchroone{3*16+66}{-3*16+66}

\supertilesynchroone{-4*16+66}{-4*16+66}
\supertilesynchroone{-3*16+66}{-4*16+66}
\supertilesynchroone{-2*16+66}{-4*16+66}
\supertilesynchroone{-1*16+66}{-4*16+66}
\supertilesynchroone{66}{-4*16+66}
\supertilesynchroone{16+66}{-4*16+66}
\supertilesynchroone{2*16+66}{-4*16+66}
\supertilesynchroone{3*16+66}{-4*16+66}
}
\begin{document}

\maketitle

\begin{abstract} 
In this text, 
we introduce the notion of block gluing
with gap function. We study the interplay between these properties
and properties related to computability
for $\Z^2$-SFTs.
In particular, we prove that there exist linearly block 
gluing SFTs which are aperiodic and 
that all the non-negative $\Pi_1$-computable numbers can 
be realized as entropy of linearly block gluing 
$\Z^2$-subshifts of finite type. 
As linearly block gluing implies transitivity, 
this last point answers 
a question asked in~\cite{Hochman-Meyerovitch-2010} about 
the characterization of the entropies of 
transitive subshift of finite type.
\end{abstract}

\section{Introduction}

It appeared recently that 
it is possible to understand many dynamical properties 
of multidimensional subshifts of finite type 
through characterizations results, 
beginning with the entropy~\cite{Hochman-Meyerovitch-2010}. 
This interplay between dynamical properties and   
computability or decidability properties was 
shown by a lot of recent works: 
a characterization of the projective sub-action of a 
SFT~\cite{Hochman-2009,Aubrun-Sablik-2010,DRS}, 
measure of the computationnally simplest configurations 
with Medvedev degrees 
\cite{sim2014} and sets of Turing degrees~
\cite{JeandelV13}, characterization of the possible 
sets of periods in terms of 
complexity theory~\cite{Jeandel-Vanier-2014}, etc.

The importance of computability 
considerations in these models has been clearly established for 
decades now. A new direction is to see if dynamical properties 
can prevent embedding universal computing. 

One can observe for instance this phenomenon for some mixing-like properties.
To be more precise, the result proved by 
M. Hochman and T. Meyerovitch~\cite{Hochman-Meyerovitch-2010} 
states that the set of numbers that are entropy of a multidimensional SFT 
is exactly the set of non-negative $\Pi_1$-computable real numbers. 
When the SFT is strongly irreducible, the entropy 
becomes computable. 
Only some step results 
towards a characterization are known~\cite{Pavlov-Schraudner-2014}. 
We interpret this obstruction as a reduction of the computational 
power of the model under this restriction, which is manifested 
by the reduction of possible entropies.

In~\cite{Pavlov-Schraudner-2014} the authors studied SFT
which are block gluing. This means 
that there exists a 
constant $c$ such that for any couple of square 
blocks in the language of the subshift, 
the pattern 
obtained by gluing these two blocks in any 
way respecting 
a distance $c$ between them is also in the language. 

We propose in this text to study similar 
properties that consist
in imposing the gluing property when 
the two blocks are spaced by 
$f(n)$, where $n$ is the length 
of two square blocks having the same size, 
and $f:\mathbb{N}\to\mathbb{N}$. \bigskip   

We got interested in the influence of this property on 
the possibility of non-existence of periodic orbits 
(which is the first ingredient for embedding computations in SFT), 
and on the set of possible entropies, according to the gap
function $f$. We observed two regimes:
\begin{itemize}
\item \textbf{Sub-logarithmic:}
if $f\in o(\log(n))$ and $f \le id$, then the set of periodic orbits of any $f$-block gluing SFT is dense 
(Proposition~\ref{prop.6}) and its 
language is decidable. Moreover, 
the entropy is computable 
(Proposition~\ref{prop.computability.sublog}).

 \item \textbf{Linear:}
\begin{itemize}
 \item There exists some 
$f\in O(n)$ such that 
there exists an aperiodic $f$-block gluing SFT (Theorem~\ref{thm.aperiodicity}).
 \item There exists $f\in O(n)$ such that there  exists a $f$-block gluing SFT with non decidable language 
(Proposition~\ref{prop.langundec}).
 \item The possible
entropies of $f$-block gluing SFT 
with $f \in O(n)$ are the right recursively enumerable non-negative real numbers 
(Theorem~\ref{thm.entropy.block.gluing}).
\end{itemize}
\end{itemize}

The aim was to understand the limit between these 
two regimes.
We extended to the 
sub-logarithmic regime some 
results 
on block gluing subshifts of~\cite{Pavlov-Schraudner-2014}.
We adapted the realization result 
of~\cite{Hochman-Meyerovitch-2010} 
to block gluing with linear 
gap function. This adaptation 
involved more complex and structured 
constructions.
The mechanisms introduced exhibit 
an attractive analogy with very 
simple molecular biology objects.
This analogy is present in the words that 
we use in to describe 
the construction, that are useful to visualize and 
understand the construction.
In particular, the applied constraint 
has resulted in the 
centralization and fixation
of information (called DNA) 
in the centers (called nuclei) of the computing 
units (called cells). The computing 
machines  
communicate using error 
signals to have access to this information, 
which codes for its behavior.

This leads to the intuition that this type of results could be read in 
order to understand the principles of \textit{information 
processing systems}, as 
\textit{''mixing implies centralization of the information''}. \bigskip

The text is organized as follows: 
\begin{itemize} 
\item In Section~\ref{sec.definitions.generales} we recall 
some definitions related to the dynamics of subshifts 
of finite type. 
\item Section~\ref{sec.definition.robinson} we present some 
rigidified version of the Robinson subshift used 
in the main construction. We proved 
some properties of this subshift. 
\item In Section \ref{sec.definition.block.gluing}
we define the 
 notion of block gluing with gap function, 
and provide many examples.
\item Section \ref{section.periodic} is 
devoted to study of the existence and density 
of periodic points under block gluing 
constraint.
\item Section \ref{sec.Entropy} 
is devoted to the entropy.
\end{itemize}

\section{\label{sec.definitions.generales} Definitions}

In this section, we recall some definitions: subshifts, entropy dimension, 
$\Delta_2$-computable numbers. 

\subsection{Subshifts dynamical systems}

Let $\mathcal{A}$ be some finite set, called
\textbf{alphabet}. Let $d \ge 1$ be an integer.
The set 
$\mathcal{A}^{\Z^d}$ is a topological 
space with the product of the discrete 
topology on $\mathcal{A}$. Its 
elements are called \textbf{configurations}. 
We denote $(\vec{e}^1 , ... , \vec{e}^d)$
the canonical sequence of generators 
of $\Z^d$. Let us 
denote $\sigma$ the action 
of $\Z^d$ on this space defined 
by the following equality for all $\vec{u} \in 
\Z^d$ and $x$ element of the space:
\[\left(\sigma^{\vec{u}}.x\right)_{\vec{v}} 
= x_{\vec{v}+\vec{u}}.\]
A compact subset 
$X$ of this space is called a \textbf{subshift} 
when this subset is stable 
under the action of the shift. This means 
that for all $\vec{u} \in \Z^d$: 
\[\sigma^{\vec{u}}.X \subset X.\]

Consider some finite subset $\mathbb{U}$ 
of $\Z^d$. An element $p$ 
of $\mathcal{A}^{\mathbb{U}}$ 
is called a \textbf{pattern} on \textbf{support}
$\mathbb{U}$. This pattern \textbf{appears} in 
a configuration $x$ when there 
exists a translate $\mathbb{V}$ of $\mathbb{U}$ 
such that $x_{\mathbb{V}}=p$.
It appears in a subshift $X$ when it appears 
in a configuration of $X$. 
The set of patterns of $X$ that appear in 
it is called the \textbf{language} of $X$. 
The number of patterns on 
support $\llbracket 1,n \rrbracket ^d$ that 
appear in $X$ 
is denoted $N_n (X)$.

We say that a subshift $X$ is \textbf{minimal} when any pattern 
in its language appears in any of its configurations.

A subshift $X$ defined by forbidding patterns 
in some finite set $\mathcal{F}$ 
to appear in the configurations, 
formally: 
\[X= \bigcap_{\mathbb{U}
\subset \Z^2} \left\{ x \in \mathcal{A}^{\Z^2} : 
x_{\mathbb{U}} \notin \mathcal{F} \right\}\]
is called 
a subshift of \textbf{finite type} (SFT).

\subsection{Computability notions}

\begin{definition}
A real number $x$ is said to be \textbf{computable}
when there exists an algorithm which given 
as input some integer $n$ outputs some rational 
number $r_n$ such that 
\[|x-r_n|\le 2^{-n}.\]
\end{definition}

\begin{definition}
A real number $x$ is said to be 
$\Pi_1$-\textbf{computable}
when there exists an algorithm which given 
as input some integer $n$ outputs some rational 
number $r_n$ such that 
\[x= \inf_n r_n.\]
\end{definition}

\subsection{Entropy}

The \textbf{entropy}  
of $X$ is the number
\[h (X)
= \lim \frac{\log_2 
(N_n (X))}{n}.
\]

\begin{theorem}[\cite{Hochman-Meyerovitch-2010}]
The possible entropies of bidimensional 
subshifts of finite type are exactly 
the $\Pi_1$-computable numbers. 
\end{theorem}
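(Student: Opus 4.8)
The plan is to establish the two inclusions separately, following \cite{Hochman-Meyerovitch-2010}.

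For the easy direction I would show that the entropy of any $\Z^2$-SFT $X$ is $\Pi_1$-computable. Fix a finite set of forbidden patterns defining $X$, say all supported in $\llbracket 1,k\rrbracket^2$. For $m\ge n$ let $N_n^{(m)}$ be the number of patterns on $\llbracket 1,n\rrbracket^2$ that extend to a pattern on $\llbracket 1,m\rrbracket^2$ containing no forbidden pattern: this is computable by a finite search, it is non-increasing in $m$, and $N_n^{(m)}\to N_n(X)$ as $m\to\infty$. By Fekete's subadditivity lemma applied to the logarithm of the number of legal patterns on rectangles, the limit defining $h(X)$ exists and equals an infimum over finite windows; combined with the computable over-approximations $N_n^{(m)}$ this exhibits $h(X)$ as the infimum of an explicitly computable sequence of rationals, hence a $\Pi_1$-computable number.

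For the converse, given $h\ge 0$ I would fix a computable, non-increasing sequence of rationals $(r_k)_{k\ge 1}$ with $\inf_k r_k=h$ and build a $\Z^2$-SFT realizing $h$ by a hierarchical self-simulating construction in the spirit of the Robinson and Mozes tilings. The configurations carry a rigid system of nested square grids at a rapidly increasing sequence of scales $N_1\ll N_2\ll\cdots$, so that a level-$k$ block is tiled by level-$(k-1)$ blocks; this skeleton, together with dedicated computation zones in which a fixed Turing machine receives the level $k$ and computes enough binary digits of $r_1,\dots,r_k$, is essentially deterministic and therefore contributes only sub-exponentially many patterns, hence zero entropy. In addition, each level-$k$ block contains a sparse \emph{free} region whose symbols are unconstrained except that their number is forced, via a local transfer of the machine's output, to realize a prescribed growth rate equal to $r_k$. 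Because $(r_k)$ is non-increasing and the grids are nested, the normalized pattern counts $\tfrac1n\log_2 N_n(X)$ decrease to $\inf_k r_k=h$, giving $h(X)=h$.

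The main obstacle is the simultaneous control of three competing requirements inside the hierarchy: the Turing machine at level $k$ must be given enough time and tape to produce $r_1,\dots,r_k$, which forces the scale ratios $N_{k+1}/N_k$ to dominate that running time and hence to grow like a tower of exponentials; the skeleton and computation zones must nevertheless be rigid enough to contribute zero entropy, which requires an essentially unique way of nesting and synchronizing the grids across all levels at once; and the free regions must be laid out, and their sizes propagated from the computation zones, so that the resulting pattern counts are exactly the prescribed densities and are monotone across scales. Carrying out this bookkeeping — the layout of computation cells, the communication of the computed density to the free region, and the entropy accounting over all levels — is the technical core of \cite{Hochman-Meyerovitch-2010}; it is precisely this machinery that must be redesigned (into the cells/nuclei/DNA structure of the present paper) to be made compatible with linear block gluing.
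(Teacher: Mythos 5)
This theorem is not proved in the paper: it is imported verbatim from \cite{Hochman-Meyerovitch-2010}, so there is no internal proof to compare against. Your outline is the standard argument for that result, and it is sound as a sketch: the easy direction (entropy of an SFT is $\Pi_1$-computable) via the computable, non-increasing over-approximations $N_n^{(m)}$ of $N_n(X)$ together with (multidimensional) subadditivity is correct, modulo the routine replacement of $\log_2 N_n^{(m)}/n^2$ by rational upper approximations and a small slip in the hard direction where you normalize by $n$ instead of $n^2$. For the converse you correctly identify the architecture — a rigid hierarchical skeleton of zero entropy, computation zones producing the approximations $r_k$, and sparse free bits whose admissible density is forced by the machines so that the counts converge to $\inf_k r_k$ — but, as you say yourself, the actual bookkeeping (layout of computation areas, transport of the computed densities, and the entropy accounting over all levels) is the technical core of \cite{Hochman-Meyerovitch-2010} and is not carried out here; note that this is exactly the machinery the present paper rebuilds, in its cells/nuclei/DNA/frequency-bit form, in Sections 6--8 in order to additionally enforce linear block gluing.
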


\section{\label{sec.definition.robinson} Robinson subshift - a rigid version}

The Robinson subshift was constructed by R. Robinson~\cite{R71} in order to prove 
undecidability results. It has been used by in other constructions 
of subshifts of finite type as a structure layer~\cite{Pavlov-Schraudner-2014}.

In this section, we present a version of this subshift which is 
adapted to constructions under the 
dynamical constraints 
that we consider. In order to understand this 
section, it is preferable to read before the description 
of the Robinson subshift done in~\cite{R71}.
Some results are well known and we don't give a proof.
We refer instead to the initial article of R. Robinson. 

Let us denote $X_{adR}$ this subshift, which is constructed as the product of two layers.
We present the first layer in Section~\ref{sec.valued.robinson}, and then 
describe some hierarchical structures appearing in this layer in Section~\ref{sec.hierarchical.structures}.
In Section~\ref{sec.alignment.positioning}, we describe the second layer. This layer allows 
adding rigidity to 
the first layer, in order 
to enforce dynamical properties.

\subsection{\label{sec.valued.robinson} Robinson layer}

The first layer has the following {\textit{symbols}}, and their transformation by 
rotations by $\frac{\pi}{2}$, $\pi$ or $\frac{3\pi}{2}$: 

\[\begin{tikzpicture}
\draw (0,0) rectangle (1.2,1.2) ;
\draw (0.6,1.2) -- (0.6,1.1);
\draw [-latex] (0.6,0.7) -- (0.6,0) ;
\draw (0,0.6) -- (0.2,0.6);
\draw [-latex] (0.4,0.6) -- (0.6,0.6) ; 
\draw [-latex] (1.2,0.6) -- (0.6,0.6) ;
\node[scale=1,color=gray!90] at (0.6125,0.9) {\textbf{i}};
\node[scale=1,color=gray!90] at (0.3,0.6) {\textbf{j}};
\end{tikzpicture} \ \
\begin{tikzpicture}
\draw (0,0) rectangle (1.2,1.2) ;
\draw (0.6,1.2) -- (0.6,1.1);
\draw [-latex] (0.6,0.7) -- (0.6,0) ;
\draw [-latex] (0,0.6) -- (0.6,0.6) ; 
\draw [-latex] (0,0.3) -- (0.6,0.3) ; 
\draw [-latex] (1.2,0.6) -- (0.6,0.6) ;
\draw [-latex] (1.2,0.3) -- (0.6,0.3) ;
\node[scale=1,color=gray!90] at (0.6125,0.9) {\textbf{i}};
\node[scale=1,color=gray!90] at (0.3,0.45) {\textbf{j}};
\end{tikzpicture} \ \ \begin{tikzpicture}
\draw (0,0) rectangle (1.2,1.2) ;
\draw [-latex] (0.6,1.2) -- (0.6,0) ;
\draw [-latex] (0.3,1.2) -- (0.3,0) ; 
\draw [-latex] (0,0.6) -- (0.3,0.6) ; 
\draw [-latex] (0.8,0.6) -- (0.6,0.6) ;
\draw (1.2,0.6) -- (1,0.6);
\node[scale=1,color=gray!90] at (0.45,0.9) {\textbf{i}};
\node[scale=1,color=gray!90] at (0.9,0.6) {\textbf{j}};
\end{tikzpicture} \ \  \begin{tikzpicture}
\draw (0,0) rectangle (1.2,1.2) ;
\draw [-latex] (0.6,1.2) -- (0.6,0) ;
\draw [-latex] (0.9,1.2) -- (0.9,0) ; 
\draw (0,0.6) -- (0.2,0.6);
\draw [-latex] (0.4,0.6) -- (0.6,0.6) ; 
\draw [-latex] (1.2,0.6) -- (0.9,0.6) ;
\node[scale=1,color=gray!90] at (0.75,0.9) {\textbf{i}};
\node[scale=1,color=gray!90] at (0.3,0.6) {\textbf{j}};
\end{tikzpicture} \ \ 
\begin{tikzpicture}
\draw (0,0) rectangle (1.2,1.2) ;
\draw [-latex] (0.6,1.2) -- (0.6,0) ;
\draw [-latex] (0.3,1.2) -- (0.3,0) ; 
\draw [-latex] (0,0.6) -- (0.3,0.6) ; 
\draw [-latex] (1.2,0.6) -- (0.6,0.6) ;
\draw [-latex] (0,0.3) -- (0.3,0.3) ; 
\draw [-latex] (1.2,0.3) -- (0.6,0.3) ;
\node[scale=1,color=gray!90] at (0.9,0.45) {\textbf{i}};
\node[scale=1,color=gray!90] at (0.45,0.9) {\textbf{j}};
\end{tikzpicture} \ \
\begin{tikzpicture}
\draw (0,0) rectangle (1.2,1.2) ;
\draw [-latex] (0.6,1.2) -- (0.6,0) ;
\draw [-latex] (0.9,1.2) -- (0.9,0) ; 
\draw [-latex] (0,0.6) -- (0.6,0.6) ; 
\draw [-latex] (1.2,0.6) -- (0.9,0.6) ;
\draw [-latex] (0,0.3) -- (0.6,0.3) ; 
\draw [-latex] (1.2,0.3) -- (0.9,0.3) ;
\node[scale=1,color=gray!90] at (0.3,0.45) {\textbf{i}};
\node[scale=1,color=gray!90] at (0.75,0.9) {\textbf{j}};
\end{tikzpicture} \ \ \begin{tikzpicture}
\fill[blue!40] (0.3,0.3) rectangle (0.6,1.2) ;
\fill[blue!40] (0.3,0.3) rectangle (1.2,0.6) ;
\draw (0,0) rectangle (1.2,1.2) ;
\draw [-latex] (0.3,0.3) -- (0.3,1.2) ; 
\draw [-latex] (0.3,0.3) -- (1.2,0.3) ;
\draw [-latex] (0.6,0.6) -- (0.6,1.2) ; 
\draw [-latex] (0.6,0.6) -- (1.2,0.6) ; 
\draw [-latex] (0.3,0.6) -- (0,0.6) ; 
\draw [-latex] (0.6,0.3) -- (0.6,0) ;
\node[scale=1,color=gray!90] at (0.9,0.9) {\textbf{0}};
\end{tikzpicture} \ \ \begin{tikzpicture}
\fill[red!40] (0.3,0.3) rectangle (0.6,1.2) ;
\fill[red!40] (0.3,0.3) rectangle (1.2,0.6) ;
\draw (0,0) rectangle (1.2,1.2) ;
\draw [-latex] (0.3,0.3) -- (0.3,1.2) ; 
\draw [-latex] (0.3,0.3) -- (1.2,0.3) ;
\draw [-latex] (0.6,0.6) -- (0.6,1.2) ; 
\draw [-latex] (0.6,0.6) -- (1.2,0.6) ; 
\draw [-latex] (0.3,0.6) -- (0,0.6) ; 
\draw [-latex] (0.6,0.3) -- (0.6,0) ;
\node[scale=1,color=gray!90] at (0.9,0.9) {\textbf{0}};
\end{tikzpicture} \ \ \begin{tikzpicture}
\fill[red!40] (0.3,0.3) rectangle (0.6,1.2) ;
\fill[red!40] (0.3,0.3) rectangle (1.2,0.6) ;
\draw (0,0) rectangle (1.2,1.2) ;
\draw [-latex] (0.3,0.3) -- (0.3,1.2) ; 
\draw [-latex] (0.3,0.3) -- (1.2,0.3) ;
\draw [-latex] (0.6,0.6) -- (0.6,1.2) ; 
\draw [-latex] (0.6,0.6) -- (1.2,0.6) ; 
\draw [-latex] (0.3,0.6) -- (0,0.6) ; 
\draw [-latex] (0.6,0.3) -- (0.6,0) ;
\node[scale=1,color=gray!90] at (0.9,0.9) {\textbf{1}};
\end{tikzpicture}\]

The symbols $i$ and $j$ can have 
value $0,1$ and 
are attached respectively to vertical and 
horizontal arrows. In the text, we refer to this value as 
the value of the \textbf{$0,1$-counter}. In 
order to simplify the representations, these 
values will often be omitted on the figures. \bigskip

In the text we will often designate as \textbf{corners} the two last symbols.
The other ones are called \textbf{arrows symbols} and are 
specified by the number of arrows in the symbol. 
For instance a six arrows symbols are the images by rotation of the fifth and sixth symbols. \bigskip

The {\textit{rules}} 
are the following ones: \begin{enumerate} \item the outgoing arrows and incoming ones 
correspond for two adjacent symbols. For instance, 
the pattern 
\[\begin{tikzpicture}[scale=0.8]
\draw (0,1.2) rectangle (1.2,2.4) ;
\draw [-latex] (0.6,2.4) -- (0.6,1.2) ;
\draw [-latex] (0,1.8) -- (0.6,1.8) ; 
\draw [-latex] (0,1.5) -- (0.6,1.5) ; 
\draw [-latex] (1.2,1.8) -- (0.6,1.8) ;
\draw [-latex] (1.2,1.5) -- (0.6,1.5) ;
\draw (0,0) rectangle (1.2,1.2) ;
\draw [-latex] (0.6,1.2) -- (0.6,0) ;
\draw [-latex] (0.3,1.2) -- (0.3,0) ; 
\draw [-latex] (0,0.6) -- (0.3,0.6) ; 
\draw [-latex] (1.2,0.6) -- (0.6,0.6) ;
\end{tikzpicture}\]
is forbidden, but the pattern 
\[\begin{tikzpicture}[scale=0.8]
\draw (0,0) rectangle (1.2,1.2) ;
\draw [-latex] (0.6,1.2) -- (0.6,0) ;
\draw [-latex] (0,0.6) -- (0.6,0.6) ; 
\draw [-latex] (1.2,0.6) -- (0.6,0.6) ;
\draw (0,1.2) rectangle (1.2,2.4) ;
\draw [-latex] (0.6,2.4) -- (0.6,1.2) ;
\draw [-latex] (0,1.8) -- (0.6,1.8) ; 
\draw [-latex] (0,1.5) -- (0.6,1.5) ; 
\draw [-latex] (1.2,1.8) -- (0.6,1.8) ;
\draw [-latex] (1.2,1.5) -- (0.6,1.5) ;
\end{tikzpicture}\]
is allowed. 
\item in every $2 \times 2$ square there is a blue 
symbol and the presence of a blue symbol in position $\vec{u} \in \Z^2$ 
forces the presence of a blue symbol in the positions $\vec{u}+(0,2), \vec{u}-(0,2), 
\vec{u}+(2,0)$ and $\vec{u}-(2,0)$.
\item on a position having 
mark $(i,j)$, the first coordinate is transmitted 
to the horizontally adjacent positions and the second one is transmitted 
to the vertically adjacent positions.
\item on a six arrows symbol, like 
\[\begin{tikzpicture}[scale=0.83]
\draw (0,0) rectangle (1.2,1.2) ;
\draw [-latex] (0.6,1.2) -- (0.6,0) ;
\draw [-latex] (0.3,1.2) -- (0.3,0) ; 
\draw [-latex] (0,0.6) -- (0.3,0.6) ; 
\draw [-latex] (1.2,0.6) -- (0.6,0.6) ;
\draw [-latex] (0,0.3) -- (0.3,0.3) ; 
\draw [-latex] (1.2,0.3) -- (0.6,0.3) ;
\end{tikzpicture},\] 
or a five arrow symbol, 
like 
\[\begin{tikzpicture}[scale=0.83]
\draw (0,0) rectangle (1.2,1.2) ;
\draw [-latex] (0.6,1.2) -- (0.6,0) ;
\draw [-latex] (0,0.6) -- (0.6,0.6) ; 
\draw [-latex] (0,0.3) -- (0.6,0.3) ; 
\draw [-latex] (1.2,0.6) -- (0.6,0.6) ;
\draw [-latex] (1.2,0.3) -- (0.6,0.3) ;
\end{tikzpicture},\]
the marks $i$ and $j$ are different.
\end{enumerate} \bigskip

The Figure~\ref{figure.order2supertile} shows some pattern in the language of 
this layer. The subshift on this alphabet 
and generated by these rules is denoted $X_R$: this 
is the Robinson subshift.\bigskip

The main aspect of this subshift is 
the following property: 

\begin{definition}
A $\Z^d$-subshift $X$ is said aperiodic when 
for all configuration $x$ in the subshift, 
and $\vec{u} \in \Z^d \backslash (0,0)$, 
\[{\sigma}^{\vec{u}} (x) \neq x.\]
\end{definition}

\begin{theorem}[\cite{R71}]
The subshift $X_R$ is non-empty and aperiodic.
\end{theorem}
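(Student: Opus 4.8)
The plan is to reproduce, for the present symbols and rules, the two classical facts about the Robinson subshift proved in~\cite{R71}: that order‑$n$ supertiles exist for all $n$ (giving non‑emptiness) and that in every configuration the corner symbols are forced into the corresponding hierarchy (giving aperiodicity). Throughout I would call \emph{arms} the maximal straight chains of arrows that rule~1 forces to emanate from a corner symbol.

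\textbf{Non-emptiness.} First I would build a configuration by an explicit hierarchical construction. Define an \emph{order-$1$ supertile} to be a $2\times 2$ block made of one corner symbol together with the three arrow symbols that the $2\times 2$ matching constraint (rule~1) forces around it; there are four of them, indexed by the orientation of the central corner. Given an order-$n$ supertile, build an \emph{order-$(n+1)$ supertile} by placing four order-$n$ supertiles at the four positions of a $2\times 2$ array, rotated so that their central corners all point toward the centre of the array, inserting the straight arm-chains that link them as prescribed by rules~1 and~3, putting a fresh ``level $n+1$'' corner at the centre, and emitting from it the two arms that let it interact with the next level. A finite case check shows that each order-$n$ supertile is a legal pattern and that each one extends to an order-$(n+1)$ supertile; one also checks that rule~2 (a blue symbol in every $2\times 2$, propagating by $\pm(2,0),\pm(0,2)$) and rule~4 (on five- and six-arrow symbols the $0,1$-counter marks $i,j$ differ) can be met consistently inside the hierarchy, by an appropriate choice of the counter values along the arms. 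Translating so that $\vec{0}$ lies in the support of an order-$n$ supertile for every $n$, these patterns exhaust $\Z^2$; since $X_R$ is an SFT, a compactness argument in $\A^{\Z^2}$ yields a configuration all of whose finite sub-patterns are legal, so $X_R\neq\emptyset$.

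\textbf{The hierarchy is forced.} The heart of the argument is the converse: in \emph{every} $x\in X_R$ the corners organise themselves into exactly this hierarchy. I would prove by induction on $n$ that, up to at most one bi-infinite row and one bi-infinite column of ``fault'' arms, $\Z^2$ is partitioned into translates of order-$n$ supertiles. The base case $n=1$ comes from rule~2: the blue symbols occupy a coset of $(2\Z)^2$, which together with rule~1 pins down the order-$1$ supertiles up to a global translation. For the inductive step one follows an arm emitted by an order-$n$ central corner: rule~3 forces the $0,1$-counter marks to be constant along the arm, while rule~4 forbids a five- or six-arrow symbol from carrying equal marks, and the only way to reconcile these along every arm is for the arms of consecutive order-$n$ central corners to meet precisely where the border of an order-$(n+1)$ supertile must lie, which forces the $2\times 2$ grouping. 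This is Robinson's lemma; it involves a somewhat delicate case analysis over the orientations of corners and over the ways arms of different levels cross, and it is the step I expect to be the main obstacle, so for the bookkeeping I would lean on the verification in~\cite{R71}.

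\textbf{Aperiodicity.} Suppose $\vec{u}\in\Z^2\setminus\{\vec{0}\}$ and $\sigma^{\vec{u}}(x)=x$ for some $x\in X_R$. By the structure lemma, for each $n\ge 2$ the set $P_n$ of positions of the level-$n$ central corners of one fixed orientation is, after deleting at most one row and one column coming from the fault lines, a translate of a lattice $c_n\Z\times c_n\Z$, where the side length $c_n$ of an order-$n$ supertile tends to infinity with $n$. The equality $\sigma^{\vec{u}}(x)=x$ makes $P_n$ invariant under the translation by $\vec{u}$, and the only translations preserving such a set lie in $c_n\Z\times c_n\Z$; hence $\vec{u}\in c_n\Z\times c_n\Z$ for all $n$. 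Choosing $n$ with $c_n>\|\vec{u}\|_\infty$ forces $\vec{u}=\vec{0}$, a contradiction. Therefore $X_R$ is aperiodic.
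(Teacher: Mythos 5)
Your sketch is essentially the standard Robinson argument — build the supertile hierarchy explicitly and use compactness for non-emptiness, then show the hierarchy is forced in every configuration so that any period vector would have to be divisible by the size of order-$n$ supertiles for every $n$ — which is exactly the proof the paper is pointing to: it states this theorem as a citation of~\cite{R71} and gives no proof of its own, just as you defer the delicate forced-structure lemma to~\cite{R71}. The only caveats are bookkeeping ones (the paper's order-$0$ supertiles are single corner tiles with order-$n$ supports of side $2^{n+2}-1$, and in configurations with a fault line the two infinite supertiles need not be aligned, so the aperiodicity step should be run inside one infinite supertile rather than on a single global lattice), but these do not affect the soundness of the approach.
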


In the following, we state some properties of 
this subshift. The proofs of these properties 
can also be found in \cite{R71}.

\subsection{\label{sec.hierarchical.structures} 
Hierarchical structures}

In this section we describe some observable hierarchical 
structures in the elements of the Robinson subshift.

Let us recall that for 
all $d \ge 1$ and $k \ge 1$, 
we denote $\mathbb{U}^{(d)}_{k}$
the set $\llbracket 0,k-1 \rrbracket^d$.

\subsubsection{Finite supertiles}

\begin{figure}[h!]
\[\begin{tikzpicture}[scale=0.4]

\begin{scope}
\robibluebastgauche{0}{0};
\robibluebastgauche{8}{0};
\robibluebastgauche{0}{8};
\robibluebastgauche{8}{8};
\robiredbasgauche{2}{2};
\robiredbasgauche{6}{6};
\robibluehauttgauche{0}{4};
\robibluehauttgauche{8}{4};
\robibluehauttgauche{0}{12};
\robibluehauttgauche{8}{12};
\robiredhautgauche{2}{10};
\robibluebastdroite{4}{0};
\robibluebastdroite{12}{0};
\robibluebastdroite{4}{8};
\robibluebastdroite{12}{8};
\robiredbasdroite{10}{2};
\robibluehauttdroite{4}{4};
\robibluehauttdroite{12}{4};
\robibluehauttdroite{4}{12};
\robibluehauttdroite{12}{12};
\robiredhautdroite{10}{10};
\robitwobas{2}{0}
\robitwobas{10}{0}
\robitwobas{6}{2}
\robitwogauche{0}{2}
\robitwogauche{2}{6}
\robitwogauche{0}{10}
\robitwodroite{12}{2}
\robitwodroite{12}{10}
\robitwohaut{2}{12}
\robitwohaut{10}{12}
\robionehaut{6}{0}
\robionehaut{6}{4}
\robionegauche{0}{6}
\robionegauche{4}{6}
\robisixbas{2}{8}
\robisixdroite{4}{2}
\robisixhaut{2}{4}
\robisevendroite{4}{10}
\robithreehaut{6}{8}
\robisixhaut{6}{10}
\robithreehaut{6}{12}
\robisixgauche{8}{2}
\robisevengauche{8}{10}
\robisevenbas{10}{8}
\robisevenhaut{10}{4}
\robisixdroite{10}{6}
\robithreedroite{8}{6}
\robithreedroite{12}{6}
\draw (0,0) rectangle (14,14);
\foreach \x in {1,...,6} \draw (0,2*\x) -- (14,2*\x);
\foreach \x in {1,...,6} \draw (2*\x,0) -- (2*\x,14);
\end{scope}

\begin{scope}[xshift=17.5cm]
\fill[blue!40] (8.5,0.5) rectangle (13.5,5.5);
\fill[blue!40] (8.5,8.5) rectangle (13.5,13.5);
\fill[blue!40] (0.5,8.5) rectangle (5.5,13.5);
\fill[white] (9,1) rectangle (13,5);
\fill[white] (9,9) rectangle (13,13);
\fill[white] (1,9) rectangle (5,13);

\fill[blue!40] (0.5,0.5) rectangle (5.5,5.5);
\fill[white] (1,1) rectangle (5,5);

\fill[red!40] (2.5,2.5) rectangle (3,11.5);
\fill[red!40] (2.5,2.5) rectangle (11.5,3);
\fill[red!40] (2.5,11) rectangle (11.5,11.5);
\fill[red!40] (11,2.5) rectangle (11.5,11.5);

\fill[orange!40] (6.5,6.5) rectangle (7,14); 
\fill[orange!40] (6.5,6.5) rectangle (14,7);

\draw[step=2] (0,0) grid (14,14);
\end{scope}
\end{tikzpicture}\]
\caption{The south west order $2$ supertile denoted $St_{sw}(2)$ 
and petals intersecting it.}\label{figure.order2supertile}
\end{figure}
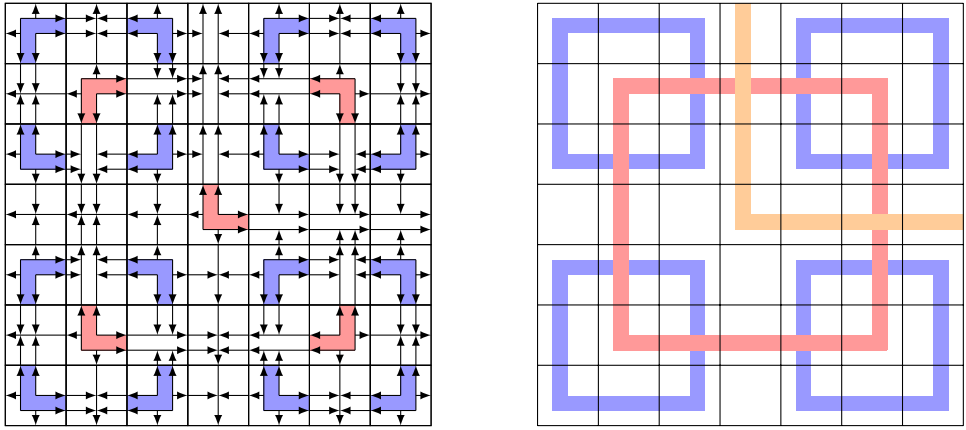 \bigskip

Let us define by induction the south west 
(resp. south east, north west, north east) 
\define{supertile of order $n\in\N$}. For $n=0$, one has
\[
St_{sw} (0)=\vbox to 14pt{\hbox{
\begin{tikzpicture}[scale=0.4]
 \robibluebastgauche{0}{0};
\end{tikzpicture}}},\quad
St_{se} (0)=\vbox to 14pt{\hbox{
\begin{tikzpicture}[scale=0.4]
 \robibluebastdroite{0}{0};
\end{tikzpicture}}},\quad
St_{nw} (0)=\vbox to 14pt{\hbox{
\begin{tikzpicture}[scale=0.4]
 \robibluehauttgauche{0}{0};
\end{tikzpicture}}},\quad
St_{ne} (0)=\vbox to 14pt{\hbox{
\begin{tikzpicture}[scale=0.4]
 \robibluehauttdroite{0}{0};
\end{tikzpicture}}}.
\] 

For $n\in\N$, the support of the supertile $St_{sw} (n+1)$ 
(resp. $St_{se} (n+1)$, $St_{nw} (n+1)$, $St_{ne} (n+1)$) is 
$\U^{(2)}_{2^{n+2}-1}$. 
On position $\vec{u}=(2^{n+1}-1,2^{n+1}-1)$ write
\[
St_{sw} (n+1) _{\vec{u}}=\vbox to 14pt{\hbox{
\begin{tikzpicture}[scale=0.4]
 \robiredbasgauche{0}{0};
\end{tikzpicture}}},\quad
St_{se} (n+1) _{\vec{u}}=\vbox to 14pt{\hbox{
\begin{tikzpicture}[scale=0.4]
 \robiredbasdroite{0}{0};
\end{tikzpicture}}},\quad
St_{nw} (n+1) _{\vec{u}}=\vbox to 14pt{\hbox{
\begin{tikzpicture}[scale=0.4]
 \robiredhautgauche{0}{0};
\end{tikzpicture}}},\quad
St_{ne} (n+1) _{\vec{u}}=\vbox to 14pt{\hbox{
\begin{tikzpicture}[scale=0.4]
 \robiredhautdroite{0}{0};
\end{tikzpicture}}}.
\]

Then complete the supertile such that the restriction 
to $\mathbb{U}^{(2)}_{2^{n+1}-1}$ (resp. 
$(2^{n+1},0)+\mathbb{U}^{(2)}_{2^{n+1}-1}$, 
 $(0,2^{n+1})+\mathbb{U}^{(2)}_{2^{n+1}-1}$,
$(2^{n+1},2^{n+1})+\mathbb{U}^{(2)}_{2^{n+1}-1}$)
is 
$St_{sw}(n)$ (resp. $St_{se} (n)$, $St_{nw} (n)$,
$St_{ne} (n)$).

Then complete the cross with 
the symbol \[\begin{tikzpicture}[scale=0.6]
\draw (0,0) rectangle (1.2,1.2) ;
\draw [-latex] (0.6,1.2) -- (0.6,0) ;
\draw [-latex] (0,0.6) -- (0.6,0.6) ; 
\draw [-latex] (1.2,0.6) -- (0.6,0.6) ;
\end{tikzpicture}\]
or with the symbol
\[\begin{tikzpicture}[scale=0.6]
\draw (0,0) rectangle (1.2,1.2) ;
\draw [-latex] (0.6,1.2) -- (0.6,0) ;
\draw [-latex] (0,0.6) -- (0.6,0.6) ; 
\draw [-latex] (0,0.3) -- (0.6,0.3) ; 
\draw [-latex] (1.2,0.6) -- (0.6,0.6) ;
\draw [-latex] (1.2,0.3) -- (0.6,0.3) ;
\end{tikzpicture}\] in the south vertical arm with the first symbol when there is one incoming arrow, and the second when there are two. 
The other arms are completed in a similar way. For instance, Figure \ref{figure.order2supertile}
shows the south west supertile of order two.

\begin{proposition}[\cite{R71}]
For all configuration $x$, if an order $n$ supertile 
appears in this configuration, 
then there is an order $n+1$ supertile, having 
this order $n$ supertile as sub-pattern, which appears 
in the same configuration.
\end{proposition}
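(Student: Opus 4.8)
I would prove this by induction on $n$, exploiting the self-similar structure of $X_R$; since the combinatorial core is Robinson's analysis in~\cite{R71}, I would sketch the argument rather than reproduce every case. First I would reformulate. By the inductive definition of the supertiles, an order $n+1$ supertile of a given type --- say $St_{sw}(n+1)$, the three other types being symmetric --- is \emph{uniquely} obtained by juxtaposing $St_{sw}(n)$, $St_{se}(n)$, $St_{nw}(n)$, $St_{ne}(n)$ at the positions $\vec{0}$, $(2^{n+1},0)$, $(0,2^{n+1})$, $(2^{n+1},2^{n+1})$, by writing the red $sw$-corner at the center $(2^{n+1}-1,2^{n+1}-1)$, and by filling the central row and column (the ``cross'') with the arrow symbols forced by rules~1 and~3. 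Hence it suffices to show: whenever $St_{sw}(n)$ appears at a position $\vec{v}$ in $x$, the supertiles $St_{se}(n)$, $St_{nw}(n)$, $St_{ne}(n)$ appear in $x$ at $\vec{v}+(2^{n+1},0)$, $\vec{v}+(0,2^{n+1})$, $\vec{v}+(2^{n+1},2^{n+1})$, and a red $sw$-corner appears at $\vec{v}+(2^{n+1}-1,2^{n+1}-1)$; the rest of the cross is then automatic.

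\textbf{Structural input.} I would use the following facts, all contained in~\cite{R71}. Rule~2 forces the blue corners --- the order $0$ supertiles --- to occupy exactly one coset of $2\Z^2$, with the four types laid out in a fixed $4$-periodic pattern; and, proceeding by induction, for every $k$ the centers of order $k$ supertiles appearing in $x$ form a coset of $2^{k+1}\Z^2$, the corner at such a center (red as soon as $k\ge 1$) has a type determined by the residue of its position, and it emits two arms of arrow symbols in the two directions prescribed by its type, which propagate by rules~1 and~3 until absorbed by a corner. The role of rule~4 --- that the marks $i$ and $j$ differ on every five and six arrow symbol --- is to pin the parity along an arm, so that an arm cannot be absorbed, nor meet a perpendicular arm, before a prescribed distance; this is exactly what makes the next level rigid. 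I would establish this package of facts together with the Proposition, by induction on $k$: for $k=0$ it is the analysis of the blue symbols via rule~2, and the inductive step is the arm-tracing argument below.

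\textbf{Inductive step.} Assume the lattice picture up to level $n$ and let $St_{sw}(n)$ appear at $\vec{v}$; translate so that $\vec{v}=\vec{0}$. The center of $St_{sw}(n)$ lies in the level $n$ coset, so the three points obtained from it by the shifts $(2^{n+1},0)$, $(0,2^{n+1})$, $(2^{n+1},2^{n+1})$ also lie in that coset, and by the level $n$ structure each is the center of an order $n$ supertile whose type is fixed by its residue; a short residue check gives exactly the types $se$, $nw$, $ne$, so these three supertiles appear at the required positions. Next, the arms emitted by the four order $n$ centers run, by rules~1 and~3, toward the point $(2^{n+1}-1,2^{n+1}-1)$; the parity bookkeeping of rule~4 forces them to reach it and to be absorbed there by a single corner whose four mouths face the four quadrants, i.e.\ a red $sw$-corner. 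This establishes the level $n+1$ lattice, and together with the previous paragraph it shows that $St_{sw}(n+1)$ appears at $\vec{0}$ and contains the given $St_{sw}(n)$ as a sub-pattern. The other three types are identical up to the symmetries of the construction.

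\textbf{Main obstacle.} The delicate point is the arm-tracing/parity argument: ruling out that an arm is absorbed, or crosses another arm, before reaching $(2^{n+1}-1,2^{n+1}-1)$, and checking that the corner it does meet has the correct orientation. This is precisely the verification carried out in~\cite{R71}, so I would invoke it there rather than repeat the case analysis; everything else is a direct unwinding of the inductive definition of the supertiles and of the rules of Section~\ref{sec.valued.robinson}.
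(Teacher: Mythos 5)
The paper does not actually prove this statement: it is imported verbatim from Robinson's article, and the surrounding text says explicitly that the proofs of these properties are to be found in \cite{R71}. So there is no in-paper argument to measure you against, and your proposal, like the paper, ultimately delegates the only hard point (the arm-tracing and parity case analysis) to \cite{R71}. Your reduction is sound as far as it goes: a full $St_{sw}(n)$ can sit inside an order $n+1$ supertile only as its south-west quadrant, so it suffices to produce the three sibling order $n$ supertiles at the translates by $(2^{n+1},0)$, $(0,2^{n+1})$, $(2^{n+1},2^{n+1})$ and a red corner at $\vec{v}+(2^{n+1}-1,2^{n+1}-1)$, the cross being then forced.

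Two of your auxiliary claims are nonetheless wrong as stated and would have to be repaired. First, your ``structural input'' that the centres of the order $k$ supertiles of a configuration form a coset of $2^{k+1}\Z^2$, with types read off from residues, is false for general configurations of $X_R$: in a type (ii) configuration two infinite supertiles are separated by a fault row or column and the finite supertiles on the two sides need not be aligned --- this is precisely the defect that the alignment layer of $X_{adR}$ is introduced to remove, so it cannot be an unconditional property of $X_R$ and cannot serve as a global induction hypothesis. Robinson's argument is local: it inspects the tiles in a bounded neighbourhood of the given supertile and shows directly that the three facing supertiles and the central corner are forced; your induction has to be phrased in that local form. Second, the orientation of the central corner is not forced to be south-west: the four quadrants of an order $n+1$ supertile are the same four order $n$ supertiles whatever its type, so the centre is compatible with all four red corners, and which one occurs is decided only by the next level of the hierarchy (this freedom is exactly what lets the hierarchy branch upward). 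What has to be shown, and what suffices, is that the central position carries \emph{some} red corner; whichever orientation occurs, one obtains an order $n+1$ supertile of that type containing the given $St_{sw}(n)$. With these two corrections, and the deferred case analysis, your sketch coincides with the classical proof of \cite{R71}.
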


\subsubsection{Infinite supertiles}

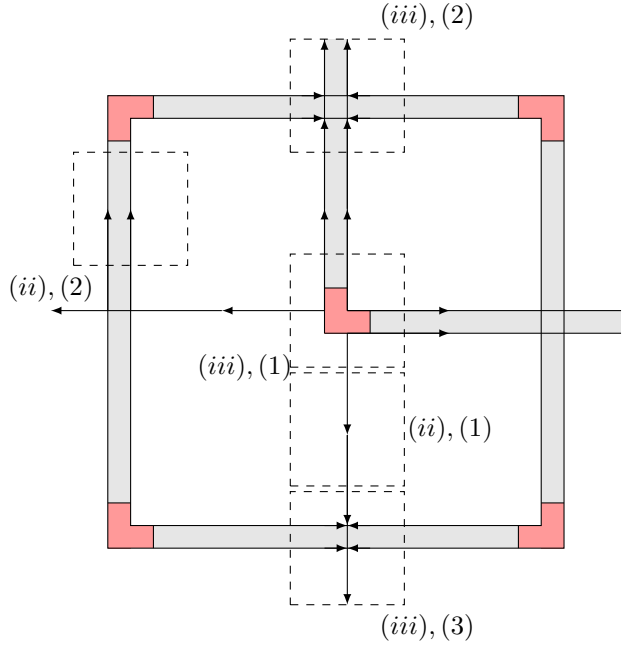
\begin{figure}[ht]
\[\begin{tikzpicture}[scale=0.3]
\fill[gray!20] (0,0) rectangle (20,20); 
\fill[white] (1,1) rectangle (19,19);
\fill[gray!20] (9.5,9.5) rectangle (10.5,22.5);
\fill[gray!20] (9.5,9.5) rectangle (22.5,10.5);
\fill[red!40] (9.5,9.5) rectangle (10.5,11.5);
\fill[red!40] (9.5,9.5) rectangle (11.5,10.5);
\fill[red!40] (0,0) rectangle (1,2);
\fill[red!40] (0,0) rectangle (2,1);
\draw (0,2) -- (1,2);
\draw (2,0) -- (2,1);

\fill[red!40] (0,20) rectangle (1,18);
\fill[red!40] (0,20) rectangle (2,19);
\draw (0,18) -- (1,18);
\draw (2,19) -- (2,20);

\fill[red!40] (20,20) rectangle (18,19);
\fill[red!40] (20,20) rectangle (19,18);
\draw (19,18) -- (20,18);
\draw (18,19) -- (18,20);

\fill[red!40] (20,0) rectangle (18,1);
\fill[red!40] (20,0) rectangle (19,2);
\draw (19,2) -- (20,2);
\draw (18,1) -- (18,0);

\draw[-latex] (9.5,10.5) -- (5,10.5);
\draw[-latex] (5,10.5) -- (-2.5,10.5);
\draw[-latex] (10.5,9.5) -- (10.5,5);
\draw[-latex] (10.5,5) -- (10.5,-2.5);
\draw[-latex] (10.5,10.5) -- (10.5,15);
\draw[-latex] (9.5,10.5) -- (9.5,15);
\draw[-latex] (10.5,10.5) -- (15,10.5);
\draw[-latex] (10.5,9.5) -- (15,9.5);
\draw[-latex] (9.5,15) -- (9.5,19);
\draw[-latex] (10.5,15) -- (10.5,19);
\draw[-latex] (9.5,20) -- (9.5,22.5);
\draw[-latex] (10.5,20) -- (10.5,22.5);
\draw[-latex] (8.5,19) -- (9.5,19);
\draw[-latex] (8.5,20) -- (9.5,20);
\draw[-latex] (11.5,19) -- (10.5,19);
\draw[-latex] (11.5,20) -- (10.5,20);
\draw[-latex] (10.5,5) -- (10.5,1);
\draw[-latex] (9.5,1) -- (10.5,1);
\draw[-latex] (9.5,0) -- (10.5,0);
\draw[-latex] (11.5,1) -- (10.5,1);
\draw[-latex] (11.5,0) -- (10.5,0);
\draw[-latex] (0,10.5) -- (0,15);
\draw[-latex] (1,10.5) -- (1,15);
\draw (9.5,11.5) -- (10.5,11.5);
\draw (11.5,9.5) -- (11.5,10.5);
\draw (0,0) rectangle (20,20);
\draw (1,1) rectangle (19,19);
\draw (9.5,9.5) -- (22.5,9.5);
\draw (9.5,9.5) -- (9.5,22.5);
\draw (10.5,10.5) -- (10.5,22.5);
\draw (10.5,10.5) -- (22.5,10.5);
\draw[dashed] (8,8) rectangle (13,13);
\draw[dashed] (8,17.5) rectangle (13,22.5);
\draw[dashed] (8,-2.5) rectangle (13,2.5);
\draw[dashed] (-1.5,12.5) rectangle (3.5,17.5);
\draw[dashed] (8,2.75) rectangle (13,7.75);
\node at (15,5.25) {$(ii),(1)$};
\node at (14,23.5) {$(iii),(2)$};
\node at (6,8) {$(iii),(1)$};
\node at (14,-3.5) {$(iii),(3)$};
\node at (-2.5,11.5) {$(ii),(2)$};
\end{tikzpicture}\]
\caption{\label{fig.infinite.supertiles} Correspondence 
between infinite supertiles and sub-patterns of order $n$ supertiles. The whole picture 
represents a schema of some finite order supertile.}
\end{figure}

Let $x$ be a configuration in the first layer and consider 
the equivalence relation $\sim_x$ on $\Z^2$ defined by 
$\vec{i} \sim_x \vec{j}$ if there is a finite
supertile in $x$ which contains $\vec{i}$ and $\vec{j}$.
An \define{infinite order} supertile is an infinite 
pattern over an equivalence 
class of this relation. Each configuration is amongst the 
following types (with types corresponding with 
types numbers on Figure~\ref{fig.infinite.supertiles}): 
\begin{itemize}
\item[(i)] A unique infinite order supertile which covers $\Z^2$.
\item[(ii)] Two infinite order supertiles separated by a line or a column with only three-arrows symbols
(1) or only four arrows symbols (2).  
In such a configuration, the order $n$
finite supertiles appearing in the two 
infinite supertiles 
are not necessary aligned, whereas this 
is the case in a type (i) or (iii) configuration.
\item[(iii)] Four infinite order supertiles, separated by a cross, whose center is superimposed with:
\begin{itemize}
\item a red symbol, and arms are filled with arrows symbols induced by the red one. (1)
\item a six arrows symbol, and arms are filled with double arrow symbols 
induced by this one. (2)
\item a five arrow symbol, and arms are filled with double arrow symbols and simple 
arrow symbols induced 
by this one. (3)
\end{itemize}
\end{itemize} 

Informally, the types of infinite 
supertiles correspond to configurations 
that are limits (for type $(ii)$ infinite supertiles 
this will be true after alignment 
[Section~\ref{sec.alignment.positioning}]) of 
a sequence of configurations centered on 
particular sub-patterns of 
finite supertiles of order $n$. 
This correspondence is illustrated on 
Figure~\ref{fig.infinite.supertiles}.
We notice this fact so that it helps 
to understand how 
patterns in configurations having multiple 
infinite supertiles are sub-patterns of 
finite supertiles. \bigskip

We say that a 
pattern $p$ on support $\mathbb{U}$ appears 
periodically in the horizontal (resp. vertical) direction 
in a configuration $x$ of a subshift $X$
when there exists some $T>0$ and 
$\vec{u}_0 \in \Z^2$ 
such that 
for all $k \in \Z$, 
\[x_{\vec{u}_0+\mathbb{U}+kT(1,0)}=p\]
(resp. $x_{\vec{u}_0+\mathbb{U}+kT(0,1)}=p$).
The number $T$ is called the period of this periodic appearance.

\begin{lemma}[\cite{R71}] 
\label{lem.repetition.supertiles}
For all $n$ and $m$ integers 
such that $n \ge m$, any order $m$ supertile 
appears periodically, horizontally and vertically, in 
any supertile of order $n \ge m$ with period $2^{m+2}$.
This is also true inside any infinite supertile. 
\end{lemma}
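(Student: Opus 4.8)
The plan is to prove the statement by induction on $n$, with the base case $n = m$ being trivial (a supertile of order $m$ trivially "appears periodically" in itself in the degenerate sense, or one can start the induction at $n = m+1$). The key structural fact I will use is the recursive definition of supertiles: a supertile $St_\bullet(n+1)$ of order $n+1$ is assembled from four supertiles of order $n$ — one in each quadrant — together with a central cross of arrow symbols anchored at a corner symbol at position $(2^{n+1}-1, 2^{n+1}-1)$. First I would record the precise offsets: inside $St_\bullet(n+1)$, supported on $\U^{(2)}_{2^{n+2}-1}$, the four order-$n$ sub-supertiles sit at offsets $(0,0)$, $(2^{n+1},0)$, $(0,2^{n+1})$, $(2^{n+1},2^{n+1})$, so horizontally (and vertically) consecutive order-$n$ sub-supertiles are spaced exactly $2^{n+1}$ apart. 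Since an order-$n$ supertile has side length $2^{n+1}-1$, the "spacing of copies of $St_\bullet(n)$" inside $St_\bullet(n+1)$ is $2^{n+1}$; but the statement asks for period $2^{m+2}$ of an order-$m$ supertile, so this is the induction step, not the conclusion itself.

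The inductive step is: assume every order-$m$ supertile appears periodically with period $2^{m+2}$ inside every order-$n$ supertile (horizontally and vertically), and deduce the same inside every order-$(n+1)$ supertile. Here I would argue as follows. Fix an occurrence of an order-$m$ supertile $q$ inside $St_\bullet(n+1)$. Because $q$ is "blue-connected" and of size $2^{m+1}-1 < 2^{n+1}$, it must lie entirely within one of the four quadrant copies of an order-$n$ supertile (one needs to check it cannot straddle the central cross — this follows because the cross consists of single/double arrow symbols which cannot be interior symbols of a genuine order-$m$ supertile for $m \ge 1$; the case $m = 0$ must be handled separately and directly). By the induction hypothesis applied inside that quadrant's order-$n$ supertile, $q$ recurs with period $2^{m+2}$, as long as the shifted copy still lies inside that quadrant. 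To extend past the quadrant boundary I use the fact that the four quadrants are translates of order-$n$ supertiles of the *same hierarchical shape pattern* — more precisely, that $St_{sw}(n)$, $St_{se}(n)$, $St_{nw}(n)$, $St_{ne}(n)$ all contain the same order-$m$ supertiles in the same relative positions modulo $2^{m+2}$ (for $m \le n-1$), because all four are built by the same recursion and differ only in the orientation of corner symbols and the filling of cross arms, which do not affect where order-$m$ sub-supertiles with $m \le n - 2$ sit; the boundary case $m = n-1$ again deserves a direct check. Hence the periodic pattern matches across the seam at $x$-coordinate $2^{n+1}$, which is a multiple of $2^{m+2}$ since $m \le n-1$ means $m+2 \le n+1$.

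For the final clause about infinite supertiles, I would note that an infinite-order supertile is, by definition (Section on infinite supertiles), an increasing union of finite supertiles, or equivalently a limit of order-$n$ supertiles as $n \to \infty$ along the equivalence class. Any finite pattern inside an infinite supertile therefore lies inside some finite order-$n$ supertile, and the periodic appearance of an order-$m$ supertile with period $2^{m+2}$ is a local/finitary assertion (it says that shifting by $2^{m+2}(1,0)$ or $2^{m+2}(0,1)$ preserves the pattern on the relevant support), so it passes to the limit: for each occurrence of $q$ and each $k \in \Z$, the shifted occurrence lies in some large enough finite supertile, where the finite case applies. The main obstacle I anticipate is the careful bookkeeping showing that an order-$m$ supertile occurrence cannot straddle the central cross and that the four quadrant sub-supertiles agree modulo $2^{m+2}$ on the positions of their order-$m$ sub-supertiles; this is essentially a "supertiles are uniquely and rigidly nested" argument, and the cleanest route is probably to first prove by induction the auxiliary statement that the set of positions (mod $2^{m+2}$) at which order-$m$ supertiles occur is the same in all four order-$n$ supertiles for $n > m$, and then read off periodicity as a corollary.
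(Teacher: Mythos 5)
The paper does not actually prove this lemma: it is stated with the attribution \cite{R71}, and the surrounding text explicitly defers the proofs of these structural facts to Robinson's original article. So there is no in-paper argument to compare against; what I can do is assess your sketch on its own terms. Your overall plan (induction on $n$ using the recursive quadrant structure, noting that the four order-$n$ sub-supertiles of an order-$(n+1)$ supertile sit at offsets that are multiples of $2^{m+2}$ once $m\le n-1$, that all four orientations carry their order-$m$ sub-supertiles at the same relative positions, and that the infinite case follows because an infinite supertile is an increasing union of finite ones) is the standard argument and is sound in outline, including your flagging of the vacuous edge cases $m=n-1$, $m=n$.

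There is, however, one concretely wrong justification. You rule out an occurrence of an order-$m$ supertile straddling the central cross on the grounds that the cross consists of arrow symbols ``which cannot be interior symbols of a genuine order-$m$ supertile for $m\ge 1$''. That is false: the interior of any order-$m$ supertile with $m\ge 1$ is largely made of three- and five-arrow symbols (its own cross arms and those of all its sub-supertiles), so the presence of arrow symbols excludes nothing. Ruling out \emph{all} non-aligned occurrences is a genuine rigidity statement about Robinson's hierarchy (it goes through the forced $2\Z^2$ lattice of blue corners and the induced alignment of red corners of each rank), and your sketch does not supply it. The good news is that, with the paper's definition of ``appears periodically'' (existence of some $\vec u_0$ and period $T$ with occurrences at $\vec u_0+kT(1,0)$, resp. $(0,1)$), you do not need it: it suffices to track only the \emph{canonical} occurrences produced by the recursion, which sit at the lattice positions $2^{m+2}\Z^2$ relative to the supertile's corner, match across the quadrant seams because the quadrant offsets are multiples of $2^{m+2}$, and are consistent under the nesting of finite supertiles inside an infinite one (this consistency is also what makes your limiting argument give a single $\vec u_0$ valid for all $k$, a point you should state explicitly). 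I would therefore either drop the straddling claim and restrict the induction to canonical occurrences, or, if you want the stronger statement that all occurrences are aligned, prove the blue-corner/red-corner alignment lemma first rather than appeal to the nature of the cross symbols.
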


\subsubsection{Petals}

For a configuration 
$x$ of the Robinson subshift some finite subset of $\Z^2$ 
which has the following properties is called 
a \textbf{petal}.

\begin{itemize}
\item this set is minimal with respect to the inclusion, 
\item it contains some 
symbol with more than three arrows, 
\item  if a position is in the petal, 
the next position in the direction, or the opposite one, 
of the double arrows, is also in it,
\item and in the case of a six arrows 
symbol, the previous 
property is true only for one couple of arrows.
\end{itemize}

These sets are represented on 
the figures as squares joining 
four corners when these corners 
have the right orientations.
\bigskip

Petals containing blue symbols are called 
order $0$ petals. Each one intersect 
a unique greater order petal.
The other ones intersect 
four smaller petals and a greater 
one: if the intermediate petal is of order $n \ge 1$, 
then the four smaller are of order $n-1$ and the greatest 
one is of order $n+1$. Hence they 
form a hierarchy, and 
we refer to this in the text as 
the \textbf{petal hierarchy} (or hierarchy).

We usually call 
the petals valued with $1$ 
\textbf{support petals}, 
and the other ones 
are called 
\textbf{transmission petals}.

\begin{lemma}[\cite{R71}]
For all $n$, an order $n$ petal has size $2^{n+1}+1$.
\end{lemma}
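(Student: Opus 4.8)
The plan is to anchor the petal hierarchy to the hierarchy of finite supertiles defined above, and to read the size off the coordinates appearing in the inductive definition of the supertiles. First I would record a purely geometric fact. Inside an order $n+1$ supertile, each of the four constituent order $n$ sub-supertiles carries a distinguished central corner symbol: a blue symbol when $n=0$, and the red symbol placed at relative position $(2^{n}-1,2^{n}-1)$ when $n\ge 1$. Reading off the placements in the definition of $St_{sw}(n+1)$ (and similarly for the three other orientations), the central corner of the south-west sub-supertile sits at $(2^{n}-1,2^{n}-1)$ and that of the south-east sub-supertile at $(2^{n+1}+2^{n}-1,\,2^{n}-1)$, so consecutive central corners differ by $2^{n+1}$ in the first coordinate and, by symmetry, by $2^{n+1}$ in the second. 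Hence these four central corners are the vertices of an axis-parallel square with $2^{n+1}+1$ lattice points on each side; I will call such a square a \emph{candidate petal of order $n$}.

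Next I would prove, by induction on $n$, that the order $n$ petals of any configuration of $X_R$ are exactly its candidate petals of order $n$, which yields the lemma. Since a petal is a finite set, the extension property of finite supertiles together with Lemma~\ref{lem.repetition.supertiles} and the classification of the infinite-supertile configurations shows that every petal lies inside some finite supertile, so it suffices to analyse petals inside a fixed finite supertile. There one unwinds the petal axioms: a petal contains a symbol with at least four arrows, is closed under following double-arrow channels (with the one-channel exception at six-arrow symbols), and is minimal for inclusion. Using the arrow-matching rule (1), the arm-filling rules (3)--(4), and the constraint (5) on five- and six-arrow symbols, one checks that a double-arrow channel issued at a central corner of level $k$ runs straight until it reaches the next level-$k$ central corner, where it turns; tracing the four channels shows that the smallest closed square obtained this way is a candidate petal of order $k$, and conversely that each candidate petal of order $k$ satisfies the petal axioms. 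The case $k=0$ is the same statement phrased with the four blue corners of the order $1$ sub-supertiles, and the resulting intersection pattern (an order $n$ candidate petal meets exactly the four order $n-1$ candidate petals sitting in its four sub-supertiles, together with one order $n+1$ candidate petal) matches the hierarchy description of petals, so the two notions coincide. Combined with the first paragraph, this gives that an order $n$ petal has size $2^{n+1}+1$.

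The main obstacle is the channel-tracing inside a finite supertile: one has to verify that a double-arrow channel leaving a corner neither short-circuits through a sub-supertile nor overshoots the next corner of the same level, and that a petal cannot have a vertex anywhere other than at a central corner of a sub-supertile, so that no closed square smaller or larger than a candidate petal can arise. This is precisely where the self-similarity of the supertiles and the rigidity imposed by rules (1) and (3)--(5) must be exploited with care; once it is in place, the size formula is immediate from the coordinates in the definition of $St_{sw}(n+1)$, and configurations with several infinite supertiles require no separate treatment, since any finite petal already lies in a finite supertile.
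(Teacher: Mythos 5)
The paper itself does not prove this lemma: it is imported from Robinson's article \cite{R71} (the text explicitly defers all proofs of these structural facts to that paper), so there is no internal argument to compare yours with; it has to be judged on its own. The easy half of your sketch is correct: reading the coordinates in the inductive definition of $St_{sw}(n+1)$ does place the central corners of the four order-$n$ sub-supertiles of an order-$(n+1)$ supertile at mutual distance $2^{n+1}$, so the candidate square has $2^{n+1}+1$ lattice points per side, and the intersection pattern of candidate squares does match the petal hierarchy. But the substance of the lemma is the identification of petals with exactly these candidate squares, and that is the step your argument defers: the channel-tracing verification (a double arm issued at a corner neither short-circuits nor overshoots, and a petal can have no vertex away from a central corner) is announced as something to ``be exploited with care \dots once it is in place''. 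That is precisely the content a proof must supply; as written, the proposal is a plan rather than a proof.

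There is also a concrete flaw in your reduction to finite supertiles. You assert that configurations with several infinite supertiles need no separate treatment because ``any finite petal already lies in a finite supertile'', citing the extension property and Lemma~\ref{lem.repetition.supertiles}. For the first layer $X_R$ — which is where the petal lemma lives — this does not follow: in a type (ii) configuration the two infinite supertiles may be shifted one against the other, and a pattern crossing the separating row or column is in general \emph{not} a sub-pattern of any finite supertile; this is exactly the defect that the alignment layer of Section~\ref{sec.alignment.positioning} is introduced to repair. A priori a petal could contain corners on both sides of the fault, joined by double-arrow channels running through the fault column of three- or four-arrow symbols, and one must rule out such anomalous closed sets separately (for instance using that the fault carries no corner symbols and that the corners adjacent to it have their double arms oriented away from it). Without that case analysis, and without the channel-tracing argument inside a finite supertile, the stated size formula is not established.
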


We call order $n$ \textbf{two dimensional cell} the part 
of $\Z^2$ which is enclosed in an order 
$2n+1$ petal, for $n \ge 0$.
We also sometimes refer 
to the order $2n+1$ petals 
as the cells borders.

In particular, order $n \ge 0$ 
two-dimensional cells 
have size $4^{n+1}+1$ and 
repeat periodically with 
period $4^{n+2}$, vertically 
and horizontally, 
in every cell or supertile 
having greater order. 

See an illustration on 
Figure~\ref{figure.order2supertile}.

\subsection{ \label{sec.alignment.positioning} 
Alignment positioning
} 

If a configuration of the first layer has two infinite order 
supertiles, then the two sides of the column or line which separates them are 
non dependent. The two infinite order supertiles of this configuration 
can be shifted vertically (resp. horizontally) one from each 
other, while the configuration obtained stays 
an element of the subshift.
This is an obstacle to dynamical properties such as 
minimality or transitivity, 
since a pattern which crosses the separating line can not 
appear in the other configurations.
In this section, we describe additional layers 
that allow aligning all the supertiles 
having the same order and 
eliminate this phenomenon. \bigskip

Here is a description of the second layer: \bigskip

\textit{Symbols:} $nw, ne, sw, se$, 
and a blank symbol. \bigskip

The {\textit{rules}} are 
the following ones: 

\begin{itemize}
\item \textbf{Localization:} 
the symbols $nw$, $ne$ ,$sw$ and $se$
are superimposed only on three arrows and five arrows 
symbols in the Robinson layer.
\item \textbf{Induction 
of the orientation:} on a position with 
a three arrows symbol such 
that the long arrow 
originate in a corner
is superimposed a symbol 
corresponding to the orientation 
of the corner.
\item \textbf{Transmission rule:} 
on a three or five arrows symbol position, 
the symbol in this layer is 
transmitted to the position in the direction pointed 
by the long arrow when the Robinson symbol 
 is a three or five arrows symbol 
with long arrow pointing in the same direction. 
\item \textbf{Synchronization rule:} 
On the pattern 
\[\begin{tikzpicture}[scale=0.5]
\robionedroite{0}{0}
\robionebas{2}{0}
\robionegauche{4}{0}
\end{tikzpicture}\]
or 
\[\begin{tikzpicture}[scale=0.5]
\robionedroite{0}{0}
\robifourhaut{2}{0}
\robionegauche{4}{0}
\end{tikzpicture}\]
in the Robinson layer, if 
the symbol on the left side is $ne$
(resp. $se$), 
then the symbol on the right side is $nw$
(resp. $sw$).
On the images by rotation of these patterns, 
we impose similar rules.
\item \textbf{Coherence rule:} 
the other couples of symbols are forbidden 
on these patterns.
\end{itemize}
 \bigskip

\textit{Global behavior:} the 
symbols 
$ne,nw,sw,se$ designate 
orientations: 
north east, north west, south 
west and south east. We will 
re-use this symbolisation 
in the following. The 
localization rule implies 
that these symbols 
are superimposed on and only on 
straight paths connecting 
the corners of adjacent order $n$ 
cells for some integer $n$.

The effect of transmission and synchronization rules 
is stated by the following lemma: 

\begin{lemma}
In any configuration $x$ of the subshift $X_{adR}$, 
any order $n$ supertile appears 
periodically in the whole configuration, 
with period $2^{n+2}$, horizontally and vertically.
\end{lemma}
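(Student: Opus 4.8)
The plan is to show that in any configuration $x$ of $X_{adR}$, the extra "alignment" layer forces all order $n$ supertiles to sit at positions that differ by multiples of $2^{n+2}$ in each coordinate, which combined with Lemma~\ref{lem.repetition.supertiles} gives the stated periodicity. First I would dispose of the easy cases. If $x$ has a unique infinite supertile (type (i)) or is of type (iii), then by the discussion after the definition of infinite supertiles the finite order $n$ supertiles appearing in $x$ are already aligned along a common grid of mesh $2^{n+2}$, so Lemma~\ref{lem.repetition.supertiles} applied inside the infinite supertile already yields the conclusion directly, with no need for the second layer. The only problematic case is type (ii): two infinite supertiles separated by a bi-infinite line (or column) of three-arrows symbols (case (1)) or of four-arrows symbols (case (2)), where a priori the two halves can be shifted relative to one another.

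The heart of the argument is therefore to analyze the second layer along this separating line. Suppose without loss of generality the separation is a vertical column $C$ (the horizontal case being symmetric), so that $C$ consists entirely of three-arrows symbols pointing horizontally (case (1)) or of four-arrows symbols (case (2)); in either case, along $C$ one finds, at the corner positions of the order $n$ cells on each side, the patterns appearing in the \textbf{Synchronization rule}. I would argue as follows: by the \textbf{Localization} and \textbf{Induction of the orientation} rules, the symbols $ne,nw,sw,se$ of the second layer occur exactly along the straight arms joining corners of adjacent order $n$ cells, and the \textbf{Transmission rule} forces such a symbol to propagate unchanged all the way along such an arm until it reaches a corner. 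Now consider a corner of an order $n$ cell on the left infinite supertile that abuts the column $C$: it emits (via the arm running into $C$) an orientation symbol, say $ne$, and the synchronization/coherence rules force the symbol immediately across $C$ to be $nw$, which in turn, by localization and transmission, must be the orientation symbol carried by the corresponding arm of an order $n$ cell of the \emph{right} infinite supertile. Since corners of order $n$ cells determine the positions of the order $n$ supertiles (the red corner of $St(n)$ sits at a fixed offset inside the cell), this identifies a corner of an order $n$ supertile on the left with a corner of one on the right at the \emph{same} height. Propagating this vertically using Lemma~\ref{lem.repetition.supertiles} (which gives the period $2^{n+2}$ on each side separately) shows the two grids coincide; hence every order $n$ supertile of $x$ lies on one global grid of mesh $2^{n+2}$, and appears with period $2^{n+2}$ horizontally and vertically.

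I expect the main obstacle to be the bookkeeping in this last paragraph: one has to check carefully that the synchronization rule, which is stated only on the three explicit length-three patterns and their rotations, really does pin down the relative vertical offset of the two infinite supertiles, rather than merely constraining it modulo something coarser; this requires matching up, for \emph{every} order $n$, the specific corner configurations that occur along $C$ in the Robinson layer (the alternation of three-, four-, five- and six-arrows symbols dictated by the finite-supertile structure) with the places where orientation symbols are created and absorbed. A secondary point to be careful about is that in a type (ii) configuration the order $n$ supertiles on the two sides need not have been aligned \emph{before} adding the layer — this is exactly the phenomenon the layer removes — so one cannot invoke alignment as an input; the argument must derive it purely from the rules of the second layer together with the rigid structure of corners in $X_R$. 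Once these two points are handled, the conclusion is immediate from Lemma~\ref{lem.repetition.supertiles}.
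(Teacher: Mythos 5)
Your overall plan coincides with the paper's: dispose of type (i) and type (iii) configurations via Lemma~\ref{lem.repetition.supertiles}, then analyse the alignment layer along the separating column $C$ in a type (ii) configuration. The genuine gap is at the decisive step, where you assert that the $nw$ symbol forced on the right side of $C$ ``must be the orientation symbol carried by the corresponding arm of an order $n$ cell of the right infinite supertile.'' The orientation alphabet is only $\{ne,nw,sw,se\}$ plus a blank: a symbol in the column just to the right of $C$ carries no information about the order of the cell whose arm it lies on. So matching an order-$n$ arm on the left with \emph{some} $nw$-carrying arm on the right does not identify order-$n$ corners at the same height; that arm could belong to a cell of a different order, and a priori the two order-$n$ grids could still be vertically offset while every local synchronization constraint is satisfied by arms of mismatched orders. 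You flag this yourself as ``the main obstacle,'' but flagging it is not resolving it: this is precisely the content of the lemma, since the periodic appearance of order $n$ supertiles inside each infinite supertile separately is already Lemma~\ref{lem.repetition.supertiles}.

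What is missing is the argument the paper supplies at this point. Let $z_n$ (resp. $z'_n$) be the height modulo $2^{n+2}$ at which the $se$ (resp. $sw$) symbols attached to order $n$ structures occur in the column adjacent to $C$ on the left (resp. right); the corresponding $ne$ (resp. $nw$) symbols then occur at $z_n+2^{n+1}$ (resp. $z'_n+2^{n+1}$) modulo $2^{n+2}$, because an order $n$ petal has size $2^{n+1}+1$. Suppose $z_n\neq z'_n$ for some $n$. Since the heights $z_m+k2^{m+2}$, over all orders $m$ and all $k\in\Z$, exhaust $\Z$, some height of the left order-$n$ grid coincides with a height of the right order-$m$ grid for some $m\neq n$, say $z_n+k2^{n+2}=z'_m+k'2^{m+2}$ with $m<n$. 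Shifting this height by $2^{n+1}$, which is a multiple of $2^{m+2}$ because $m<n$, the left column shows the north symbol $ne$, while the right column is at a position of the form $z'_m+(\text{integer})\cdot 2^{m+2}$ and therefore shows the south symbol $sw$; this contradicts the synchronization rule, which forces a north symbol opposite a north symbol across $C$. It is this contradiction that pins down $z_n=z'_n$ for every $n$, and only then does your final appeal to Lemma~\ref{lem.repetition.supertiles} yield the common grid of mesh $2^{n+2}$. Without an argument of this kind (or some substitute ruling out cross-order matchings), the proposal does not establish the lemma.
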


\begin{proof}
\begin{itemize}
\item This property is true in an infinite supertile: 
this is the statement of Lemma~\ref{lem.repetition.supertiles}.
Hence the statement is true in a type $(i)$ configuration. 
This is also true in a type $(iii)$ configuration, 
since the infinite supertiles are aligned, 
and that the positions where the order $n$ supertiles appear 
are the same in any infinite supertile. This statement uses 
the property that 
an order $n$ supertile forces the 
presence of an order $n+1$ 
one. 
\item Consider a configuration of the subshift $X_{adR}$ 
which is of type $(ii)$. Let 
us assume that the separating line is vertical, the 
other case being similar. In order to simplify 
the exposition we assume that this column intersects 
$(0,0)$.

\begin{enumerate}

\item \textbf{Positions of the supertiles along 
the infinite line:}

From Lemma~\ref{lem.repetition.supertiles}, 
there exists a sequence of numbers $ 0 \le z_n < 2^{n+2}-1$ 
and $ 0 \le z'_n < 2^{n+2}-1$ 
such that for all $k \in \Z$,
the orientation symbol on positions 
$(-1,z_n + k.2^{n+2})$ (in 
the column on the left of the 
separating one) is $se$ and the 
orientation symbols on positions 
$(1,z'_n + k.2^{n+2})$ is $sw$. The symbol 
on positions $(-1,z_n + k.2^{n+2}+2^{n+1})$ 
is then $ne$ and is $nw$ on 
positions 
on positions $(1,z'_n + k.2^{n+2}+2^{n+1})$: 
this comes from the fact that an order $n$ petal 
has size $2^{n+1}+1$. 

Let us prove 
that for all $n$, $z_n = z'_n$. 
This means that the supertiles 
of order $n$ on the 
two sides of the separating line 
are aligned.

\item \textbf{Periodicity of these positions:}

Since for all $n$, there is a space of
$2^{n}$ columns 
between the rightmost or 
leftmost order $n$ supertile in 
a greater order supertile and the border 
of this supertile (by a recurrence argument), 
this means that the space between 
the rightmost order $n$ supertiles of the left 
infinite supertile and the leftmost order $n$ supertile 
of the right infinite supertile 
is $2^{n+1}+1$. Since two adjacent of these supertiles 
have opposite orientations, this implies 
that each supertile appears periodically 
in the horizontal 
direction (and hence both horizontal 
and vertical directions) 
with period $2^{n+2}$. \bigskip

\begin{figure}[ht]
\[\begin{tikzpicture}[scale=0.4]
\fill[gray!20] (0,0) rectangle (1,20);
\draw[dashed] (0,0) rectangle (1,20);  
\draw[-latex] (0.5,-1) -- (0.5,21);
\draw (0.25,1) -- (0.75,1);
\node at (7.5,1) {$z'_m + k' 2^{m+2}$};

\draw[-latex] (-4,1) -- (-4,17); 
\node at (-5,9) {$2^{n+1}$};

\robiredbasgauche{2}{1}
\robiredbasdroite{-3}{1}

\robiredhautgauche{2}{5}
\robiredbasgauche{2}{9}
\robiredhautgauche{2}{13}
\robiredbasgauche{2}{17}

\robiredhautdroite{-3}{17}
\end{tikzpicture} \]
\caption{\label{fig.decalage.toeplitz} Schema
of the proof. The separating line 
is colored gray.}
\end{figure}
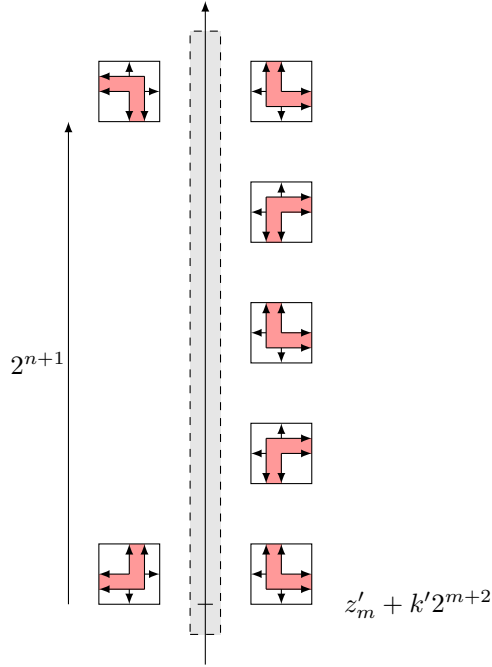

\item \textbf{The orientation symbols 
force alignment:}

Assume that there exists 
some $n$ such that $z_n \neq z'_n$.
Since $\{z_n + k.2^{n+2}, \ (n,k) \in \N \times \Z\} = \Z$,
this implies that there exist some $m \neq n$ and 
some $k,k'$ such that 
\[z_n + k.2^{n+2} = z'_m +k'.2^{m+2}.\] 
One can assume without loss of generality that 
$m<n$, exchanging $m$ and $n$ if necessary.
Then the position 
$(-1,z_n+k.2^{n+2}+2^{n+1})$ has
orientation symbol equal to $ne$.
As a consequence, the position 
$(1,z'_m +k'.2^{m+2} + 2^{n-m-1}.2^{m+2})$ 
has the same symbol. However, by definition,
this position has symbol $se$: there is 
a contradiction. This situation is illustrated on 
Figure~\ref{fig.decalage.toeplitz}. 

\end{enumerate}
\end{itemize}
\end{proof}

\subsection{Completing blocks}

Let $\chi : \N^{*} \rightarrow \N^{*}$ such 
that for all $n \ge 1$, 
\[\chi(n) = \Bigl\lceil \log_2 (n) \Bigr\rceil + 4.\]
Let us also denote $\chi'$ the function 
such that for all $n \ge 1$,
\[\chi'(n) =  \Bigl\lceil\frac{\Bigl\lceil \log_2 (n) 
\Bigr\rceil}{2} \Bigr\rceil + 2.\]

The following lemma will be extensively used 
in the following of this text, in order to prove 
dynamical properties of the constructed subshifts: 

\begin{lemma} \label{prop.complete.rob} 
For all $n \ge 1$, any $n$-block in the language of 
$X_{adR}$ is sub-pattern of some order $\chi(n)$ 
supertile, and is sub-pattern of some order $\chi'(n)$ 
order cell.
\end{lemma}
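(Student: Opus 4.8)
The plan is to show that any $n$-block $p$ occurring in $X_{adR}$, after possibly being relocated, lies within distance $n$ of the central cross of a finite supertile (resp. of the border petal of a finite cell), and that this $n$-neighbourhood stabilizes as soon as the order exceeds $\chi(n)$ (resp. $\chi'(n)$).

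\textbf{Step 1: reduction to a finite supertile.} First I would show that $p$ is a sub-pattern of some finite supertile of $X_{adR}$. If $p$ occurs in a type $(i)$ configuration $x$, its support lies inside the unique infinite supertile; since two finite supertiles sharing a position are nested (as is standard for this construction), the finite supertiles containing a fixed position of $p$ form a chain, and using the proposition that an order $m$ supertile forces an order $m+1$ supertile, one of them eventually contains the whole support of $p$. If $p$ occurs in a type $(ii)$ or $(iii)$ configuration, $p$ may straddle the separating line/column or the central cross; but, as recorded in the discussion around Figure~\ref{fig.infinite.supertiles}, these separating structures are precisely limits of central crosses (and of their arms) of finite supertiles of growing order, so a window straddling them already appears inside a sufficiently large finite supertile, including the alignment layer, whose symbols along such a line or cross are forced from the Robinson layer by the localization, transmission and synchronization rules. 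Hence in every case $p$ is a sub-pattern of a finite supertile.

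\textbf{Step 2: relocation near a cross.} Let $S$ be the smallest finite supertile containing this occurrence of $p$, of order $M$ (well-defined since the relevant orders form a non-empty set of non-negative integers). If $M \le \chi(n)$ we are done, because an order $M$ supertile occurs as a sub-pattern (an iterated quadrant) of an order $\chi(n)$ one. If $M > \chi(n)$, then $p$ is not contained in any of the four order $M-1$ sub-supertiles of $S$, so $p$ meets the central cross of $S$; as $M > \chi(n) \ge \lceil\log_2 n\rceil$, the arms of that cross have length $2^{M+1}-1 > n$, so $p$ lies within distance $n$ of the cross. The key point is that the restriction of a finite supertile of order $M$ to the $n$-neighbourhood of its central cross does not depend on $M$ once $M$ is large enough: it consists of the central red corner symbol with its $0/1$-counter value, the four arms filled with the induced one- and two-arrow symbols, the outermost $O(\log n)$ levels of the sub-supertile hierarchy hanging off the arms, and the alignment symbols induced on the three- and five-arrow positions. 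Since $\chi(n)=\lceil\log_2 n\rceil+4$, an order $\chi(n)$ supertile has arms of length $\ge 32n-1$ and carries at least $\lceil\log_2 n\rceil$ levels of sub-hierarchy, so the $n$-neighbourhood of its central cross coincides with that of $S$, and a translate of $p$ occurs in it. (When $p$ meets only one arm, not the cross center, the same reasoning applies along the generic middle stretch of the corresponding arm of an order $\chi(n)$ supertile, which is why the additive slack is needed.)

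\textbf{Step 3: cells.} For the statement about cells I would run the identical scheme with the petal hierarchy replacing the supertile hierarchy: every $n$-block is a sub-pattern of a finite cell, and if the smallest cell $C$ (enclosed in an order $2k+1$ petal) containing $p$ has order $k > \chi'(n)$, then $p$ lies within distance $n$ of the border petal of $C$, whose $n$-neighbourhood stabilizes once $k$ is large enough, in particular for $k = \chi'(n)$: then the cell has side $4^{\chi'(n)+1}+1 \ge 64n+1$ and carries roughly $\tfrac12\log_2 n$ levels of inner petal hierarchy, using the size $2^{k+1}+1$ of order $k$ petals and the periodicities of Lemma~\ref{lem.repetition.supertiles} to count levels.

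The main obstacle is Step 2 together with its cell analogue: making rigorous that the $n$-neighbourhood of the central cross (resp. border petal) of a finite supertile (resp. cell) stabilizes with the order, i.e. carefully enumerating the finitely many hierarchical levels, the $0/1$-counter values and the superimposed alignment symbols such a window can see, and verifying that the additive $+4$ (resp. $+2$) is exactly what is needed to place a copy of $p$ comfortably inside an order $\chi(n)$ (resp. $\chi'(n)$) supertile (resp. cell), in particular in the sub-case where $p$ runs along an arm rather than around a cross center.
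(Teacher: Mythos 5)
There is a genuine gap, and you have in fact pointed at it yourself: the ``stabilization'' claim in Step 2 (and its cell analogue in Step 3) is not a technical detail to be filled in later, it \emph{is} the content of the lemma. Saying that the $n$-neighbourhood of the central cross (or of a generic stretch of an arm) of an order $M$ supertile is independent of $M$ for large $M$, and is already realized at order $\chi(n)$, is exactly what has to be proven, and as literally stated it is even false: the $0,1$-counter values carried by the cross and its arms depend on the level (this dependence is what distinguishes odd-order petals from even-order ones and makes the notion of cell meaningful), and the alignment symbols along an arm depend on the orientations of the neighbouring supertiles. The paper's proof consists precisely of carrying out this verification: it observes that the completed picture around the separating cross is determined by the symbol at the cross centre together with the orientations of the four adjacent supertiles, enumerates the finitely many possibilities (Figures~\ref{fig.orientation.supertiles0}, \ref{fig.orientation.supertiles1}, \ref{fig.orientation.supertiles2}), completes the alignment layer accordingly, and checks via the correspondence of Figure~\ref{fig.completing.supertile} that each case embeds in a supertile at most four orders higher (and, choosing the order with the right parity, inside an order $\chi'(n)$ cell). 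Your additive slack ``$+4$'' is not justified by any counting in your argument; in the paper it comes out of this explicit case analysis.

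Step 1 has a second gap of the same nature. For a block occurring in a type $(ii)$ or $(iii)$ configuration and straddling the separating line or cross, you assert that it ``already appears inside a sufficiently large finite supertile'', citing the informal correspondence around Figure~\ref{fig.infinite.supertiles}. For exactly those blocks this assertion is essentially the lemma itself, so it cannot be taken as input. The paper's route avoids needing it: thanks to the alignment layer, order $\lceil \log_2 n\rceil$ supertiles appear periodically in the \emph{whole} configuration of $X_{adR}$ (not just inside each infinite supertile), so any $n$-block meets at most four such supertiles plus the cross separating them; one then completes and classifies this bounded picture directly, with no need to first locate the block inside a finite supertile of the same configuration, and no need for your ``smallest supertile containing $p$'' reduction. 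So while your global strategy (localize near a cross, compare with a supertile of order $\log_2 n + O(1)$) is the right one, the two steps that make it work are asserted rather than proven.
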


\begin{proof}

\begin{enumerate}
\item \textbf{Completing into 
an order $2^{\lceil \log_2 (n) \rceil +1}-1$ block:}

Consider some $n$-block $p$ that appears 
in some configuration $x$ 
of the SFT $X_{adR}$. We can complete it into 
a $2^{\lceil \log_2 (n) \rceil +1}-1$ block, 
since $2^{\lceil \log_2 (n) \rceil +1} - 1 
\ge 2n-1 \ge n$ for all $n\ge 1$.

\item \textbf{Intersection with four order 
$\lceil \log_2 (n) \rceil$ supertiles:}

From the periodic appearance property of the order 
$\lceil \log_2 (n) \rceil$ supertiles 
in each configuration, this last block 
intersects at most four
supertiles having this order. Let us complete $p$ 
into the block whose support is the union 
of the supports of the supertiles and the 
cross separating these. 

\item \textbf{Possible patterns after this completion
according to the center symbol:}

Since this pattern 
is determined by the symbol at the center of 
the cross and the orientations of the supertiles, 
the possibilities for this pattern 
are listed on Figure~\ref{fig.orientation.supertiles0}, 
Figure~\ref{fig.orientation.supertiles1} and 
Figure~\ref{fig.orientation.supertiles2}. Indeed, 
when the orientations of the supertiles are 
like on Figure~\ref{fig.orientation.supertiles0}, 
each of the supertiles forcing the 
presence of an order $\lfloor \log_2 (n) \rfloor+1$
supertile, the center is a red corner.
When the orientations of the supertiles are 
like on Figure~\ref{fig.orientation.supertiles1},
the center of the block can not be superimposed with 
a red corner since the two west supertiles 
force an order $\lfloor \log_2 (n) \rfloor+1$ 
supertile, as well as the two east supertiles. 
This forces a non-corner symbol on the position 
considered.

For type $4,5,9,10$ patterns, there are 
two possibilities: the values of the two arms 
of the central cross are equal or not. 
Hence the notation $4,4'$, where $4'$ 
designates the case where the two values are different.

One completes the alignment layer on $p$ 
according to the restriction of the configuration
$x$.

On these patterns, the value of symbols on the 
cross is opposed to the value of the 
symbols on the crosses of the four supertiles composing 
it.

\item \textbf{Localization of these patterns 
as part of a greater cell:}

The way to complete the obtained pattern 
is described as follows: 

\begin{enumerate}
\item When the pattern is the one 
on Figure~\ref{fig.orientation.supertiles0}, this is an order 
$\lfloor \log_2 (n) \rfloor+1$ supertile 
and the statement is proved. Indeed,  
any order $\lceil \log_2 (n) \rceil+1$ supertile
is 
a sub-pattern of any order 
$\lfloor \log_2 (n) \rfloor+4$ one.

\item One can see the patterns on
Figure~\ref{fig.orientation.supertiles1} and 
Figure~\ref{fig.orientation.supertiles2} 
in an order 
$\lfloor \log_2 (n) \rfloor+1$, 
$\lfloor \log_2 (n) \rfloor+2$, 
$\lfloor \log_2 (n) \rfloor+3$, or
$\lfloor \log_2 (n) \rfloor +4$ supertile,
depending on how was completed the initial pattern 
thus far (this correspondance is shown 
on Figure~\ref{fig.completing.supertile}), 
hence a sub-pattern of an order 
$\lfloor \log_2 (n) \rfloor+4$
supertile.
\end{enumerate}

The orientation of the greater order 
supertiles implied 
in this completion are chosen according 
to the symbols of the alignment layer. 
This layer is then completed.

\item This
implies that 
any $n$-block is the sub-pattern 
of an order $2 ( \Bigl \lceil 
\frac{1}{2} \lceil \log_2 (n) \rceil \Bigr \rceil +2)
+1$ supertile, which is included into 
an order $\Bigl \lceil 
\frac{1}{2} \lceil \log_2 (n) 
\rceil \Bigr \rceil +2$
cell.
\end{enumerate}
\end{proof}

\section{Notion of block gluing with gap function}\label{sec.definition.block.gluing}

In this section, we introduce the notion of block gluing with intensity function. 
Then we give some examples of subshifts of finite type which are 
block gluing for some particular intensity functions ($f$ is constant,
$f=\log$ and $f$ is linear).

In this 
text, a configuration $x\in\A^{\Z^2}$ is said 
to be (doubly) \define{periodic} if there exist
some $m,n>0$ such that 
\[\sigma^{(m,0)} (x) = \sigma^{(0,n)}(x)=x.\] 

In all the following, we say 
that a subset $\Lambda$ of $\Z^d$ 
is \textbf{connected} when for 
any two of its elements 
$\vec{l}^1 , \vec{l}^2$, 
there exists $n \ge 1$ and a function 
$$\varphi : \llbracket 0, n\rrbracket \rightarrow \Lambda$$
such that $\varphi (0) = \vec{l}^1$, $\varphi(1) = \vec{l}^2$, 
and for all $t \in \llbracket 0, n-1\rrbracket$, 
$$\varphi (t+1) - \varphi (t) \in \{\vec{e}^1 , ... , \vec{e}^d\}.$$

\subsection{Block gluing notions}

\subsubsection{Definitions}

In this section, $X$ is a 
subshift on the alphabet $\A$ and 
$f : \N \rightarrow \N$ is a non decreasing function. We denote 
$||.||_{\infty}$ the norm 
defined by 
\[||\vec{i}||_{\infty} = \max\{\vec{i}_1,\vec{i}_2\}\] 
for all $\vec{i}\in\Z^2$. 
We denote $d_{\infty}$ the associated distance 
function.

\begin{definition}\label{definition.GluingSet} Let $n\in\N$ be an integer. 
The \define{gluing set} in the subshift $X$ of some $n$-block $p$ relative 
to some other $n$-block $q$ is the set of $\vec{u} \in \Z^2$ such 
that there exists a configuration in $X$ where $q$ appears in position $
(0,0)$ and $p$ appears in position $\vec{u}$ (see Figure~\ref{Figure.GluingSet}). 
This set is denoted $\Delta_X (p,q)$. Formally
$$\Delta_X (p,q)=\left\{\vec{u}\in\Z^2:\textrm{ $\exists x \in X$ 
such that $x_{\llbracket 0,n-1 \rrbracket^2}=q$ and $x_{\vec{u}+\llbracket 0,n-1 \rrbracket^2}=p$}\right\}$$

When the intersection of the sets 
$\Delta_X (p,q)$ for $(p,q)$ couples 
of $n$-blocks is non empty, 
we denote this intersection $\Delta_X (n)$. 
This set is called the \define{gluing set of $n$-blocks} in $X$.

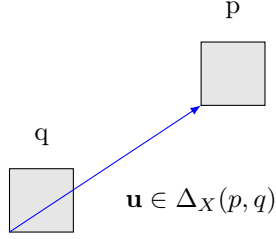
\begin{figure}[ht]
\begin{center}\begin{tikzpicture}[scale=0.7]
\fill[gray!20] (0,0) rectangle (1.2,1.2) ;
\fill[gray!20] (3.6,2.4) rectangle (4.8,3.6);
\draw (0,0) rectangle (1.2,1.2) ; 
\draw (3.6,2.4) rectangle (4.8,3.6);
\draw[->,>=latex,color=blue] (0,0) -- (3.6,2.4);
\node at (0.6,1.8) {q} ;
\node at (4.2,4.2) {p} ; 
\node at (3.6,0.6) {$ \vec{u} \in \Delta_X (p,q)$} ;
\end{tikzpicture}
\end{center}
\caption{ Illustration of Definition~\ref{definition.GluingSet}.}\label{Figure.GluingSet}
\end{figure} \bigskip
\end{definition}

\begin{definition}
 A subshift $X$ is said 
to be \define{$f$-block transitive} 
if for all $n\in\N$ one has
 $$\Delta_X (n)\cap\left\{\vec{u}\in\Z^2:||\vec{u}||_{\infty}\leq n+f(n)\right\}\ne\emptyset.$$
The function $f$ is called the \define{gap 
function}.
\end{definition}

\begin{remark}
The condition on the vectors $\vec{u}\in\Z^2$ 
is $||\vec{u}||_{\infty}\leq n+f(n)$ since we consider 
the distance between the positions 
where the patterns appear 
instead of the space between them.
\end{remark}

\begin{definition}\label{definition.NetGluing}
A subshift $X$ is said to be 
\define{$f$ net gluing} 
if there exists a function 
$\vec{u} : \mathcal{L} (X) ^2 \rightarrow \Z^2$
and a function $\widetilde{f} : 
\mathcal{L} (X) ^2 \rightarrow \N$ 
such that 
 for all $n\in\N$ and 
for all $n$-blocks $p$ and $q$, 
\[\vec{u}(p,q)+(n+\widetilde{f}(p,q)) (\Z^2 \backslash \{0\}) \subset \Delta_X (p,q)\]
and 
\[\max_{p,q \in \mathcal{L}_n (X)} \widetilde{f}(p,q) \leq f(n).\] 
\end{definition} 

\begin{remark}
This property is different from 
quasiperiodicity properties 
in the sense that 
the configuration where two patterns 
appears can depend on the relative 
position of the two patterns.
\end{remark}

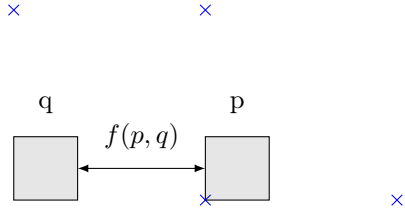
\begin{figure}[ht]
\begin{center}
\begin{tikzpicture}[scale=0.7]
\fill[gray!20] (0,0) rectangle (1.2,1.2) ;
\fill[gray!20] (3.6,0) rectangle (4.8,1.2);
\draw (0,0) rectangle (1.2,1.2) ; 
\node at (0.6,1.8) {q} ;
\draw[color=blue] (-0.1,3.5) -- (0.1,3.7);
\draw[color=blue] (-0.1,3.7) -- (0.1,3.5);
\draw[color=blue] (3.5,-0.1) -- (3.7,0.1);
\draw[color=blue] (3.7,-0.1) -- (3.5,0.1);
\draw[color=blue] (3.5,3.5) -- (3.7,3.7);
\draw[color=blue] (3.7,3.5) -- (3.5,3.7);
\draw[<->,>=latex] (1.2,0.6) -- (3.6,0.6);
\node at (2.4,1.2) {$f(p,q)$};
\draw (3.6,0) rectangle (4.8,1.2);
\node at (4.2,1.8) {p};
\draw[color=blue] (7.1,-0.1) -- (7.3,0.1);
\draw[color=blue] (7.3,-0.1) -- (7.1,0.1);
\end{tikzpicture}
\end{center}
\caption{Illustration of Definition~\ref{definition.NetGluing}. The crosses 
designate elements of the gluing set of $p$ 
relatively to $q$ in $X$.}\label{figure.NetGluing}
\end{figure}

\begin{definition} A subshift $X$ is 
\define{$f$-block gluing} when  
$$\left\{\vec{u} \in \Z^2, ||\vec{u}||_{\infty} \ge f(n)+n\right\} \subset \Delta_X(n).$$ 
\end{definition}

For any function $f$, one has
\[\textrm{$f$-block gluing}\Longrightarrow 
\textrm{$f$-net gluing}\Longrightarrow 
\textrm{$f$-block transitive}\]

A subshift is said \define{$O(f)$-block gluing} (resp. \define{$O(f)$-net gluing}, \define{$O(f)$-block transitive}) if it is $g$ 
(block gluing) (resp. $g$-net gluing, $g$-block transitive) for a function $g:\N\to\N$ such that there 
exists $C>0$ such that $g(n)\leq C\, f(n)$ for all $n\in\N$. 
A property verified on the class of $O(f)$ block gluing (resp. $g$ 
net gluing, $g$ block transitive) subshifts
is said to be 
\define{sharp} if the property is false
for all $h\in o(f)$ (this means that for all $\epsilon >0$ 
there exists $n_0$ such that $h(n)\leq 
\epsilon f(n)$ for all $n\geq n_0$).

A subshift is \define{linearly block gluing} 
(resp. \define{linearly net gluing}, \define{linearly transitive}) if it 
is $O(n)$ block gluing (resp. $O(n)$ 
net gluing, $O(n)$ block transitive).

\subsubsection{Equivalent definition}

The following proposition gives an equivalent definition for linear 
block gluing and net gluing subshifts using some exceptional values: 

\begin{proposition}\label{proposition.DefLinearPower}
A subshift $X\subset\A^{\Z^2}$ is 
linearly block gluing 
if and only if there exists 
a function $f\in O(n)$, $c\geq 2$ an integer 
and $m\in\N$ such that
$$\{\vec{u} \in \Z^2, ||\vec{u}||_{\infty} \geq f(c^l+m)+c^l+m\} \subset \Delta_X (c^l+m) \qquad\forall l \geq 0.$$

A similar assertion is true for net gluing.
\end{proposition}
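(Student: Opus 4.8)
The plan is to prove the two implications separately; the non-trivial direction is showing that the restricted condition (gluing sets controlled only for the exceptional block sizes $c^l+m$) implies full linear block gluing. The easy direction is immediate: if $X$ is linearly block gluing, then it is $f$-block gluing for some $f \in O(n)$, so the inclusion $\{\vec u : \|\vec u\|_\infty \geq f(n)+n\} \subset \Delta_X(n)$ holds for \emph{every} $n$, in particular for $n = c^l+m$ with, say, $c=2$ and $m=0$.

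For the converse, suppose we are given $f \in O(n)$, an integer $c \geq 2$, and $m \in \N$ such that $\{\vec u : \|\vec u\|_\infty \geq f(c^l+m)+c^l+m\} \subset \Delta_X(c^l+m)$ for all $l \geq 0$. The key observation is a monotonicity property of gluing sets under completion: if $n \leq N$, then any $n$-block $p$ can be completed (inside a configuration of $X$) to an $N$-block $\hat p$, and likewise $q$ to $\hat q$; since any configuration realizing $\hat p$ at position $\vec u$ and $\hat q$ at the origin also realizes $p$ at $\vec u$ and $q$ at the origin, one gets $\Delta_X(N) \subset \Delta_X(p,q)$ for the particular completions, but more usefully, for the intersected sets one has a containment allowing us to transfer a gluing statement for $N$-blocks to one for $n$-blocks, at the cost of increasing the displacement bound by roughly $N-n$ (to account for where the smaller block sits inside the larger one, in the worst corner position). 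Concretely: given arbitrary $n$, choose $l$ minimal with $c^l + m \geq n$, so that $N := c^l+m$ satisfies $n \leq N \leq c \cdot n + m$ (using $c^{l-1}+m < n$, hence $c^l + m < cn + m(1-c) + cm \le cn+cm$, up to adjusting constants). Then every displacement $\vec u$ with $\|\vec u\|_\infty \geq f(N) + N + (N - n)$ lies in $\Delta_X(n)$, because one glues the completed $N$-blocks and then reads off the sub-blocks $p, q$, whose relative displacement differs from $\vec u$ by at most $N-n$ in each coordinate.

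It remains to check that $g(n) := f(N) + N + (N-n) - n$ (the effective gap for $n$-blocks) is $O(n)$. Since $N \leq cn + cm$ we have $f(N) \leq f(cn+cm) \le C'(cn+cm) \in O(n)$ using $f \in O(n)$ and $f$ non-decreasing; also $N - n \leq (c-1)n + cm \in O(n)$ and $N + (N-n) - n = 2(N-n) \in O(n)$. Summing, $g \in O(n)$, which is exactly linear block gluing. The net gluing statement is proved by the same argument applied to the witnessing functions $\vec u(\cdot,\cdot)$ and $\widetilde f(\cdot,\cdot)$ of Definition~\ref{definition.NetGluing}: completion of blocks transfers the net-gluing lattice inclusion with a bounded shift of the base point $\vec u(p,q)$ and a bounded additive loss in $\widetilde f$, both $O(N-n) = O(n)$.

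The main obstacle is bookkeeping the exact constants in the step $n \leq N \leq cn + cm$ and making sure the corner-position loss $N-n$ is correctly accounted for in both coordinates of $\vec u$ simultaneously (the $\|\cdot\|_\infty$ norm makes this clean, since a loss of at most $N-n$ in each coordinate changes $\|\vec u\|_\infty$ by at most $N-n$); beyond that the argument is routine, and the whole point of the proposition is that it suffices to \emph{only} verify the gluing condition along the sparse sequence $(c^l+m)_l$, which is what the later constructions will exploit.
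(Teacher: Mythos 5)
Your proof is correct and follows essentially the same route as the paper: complete each $n$-block into a block of the nearest exceptional size $N=c^{l}+m$ with $l$ minimal, glue the completed blocks, and check that the resulting gap $g(n)$ is still $O(n)$ since $N\leq c(n+|m|)$. The only cosmetic difference is that you allow the sub-block to sit anywhere in the completion and pay an extra $N-n$ slack, whereas the paper anchors $p,q$ at the corner $\llbracket 0,n-1\rrbracket^2$ of $p',q'$ so that $\Delta_X(p',q')\subset\Delta_X(p,q)$ directly; both give the same conclusion.
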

\begin{proof}
Clearly a linear-block gluing subshift verifies this property. Reciprocally, 
let $p$ and $q$ be two $n$-blocks, and consider 
$l(n)=\left \lceil{\log_c (n-m)}\right \rceil$, where for 
all real number $x$, $\left \lceil{x}\right \rceil$ designates the smallest integer 
greater than $x$. Consider $p'$ and $q'$ some $c^{l(n)}+m$-blocks
whose restrictions on $\llbracket 0,n-1 \rrbracket ^k$ are 
respectively $p$ and $q$. The set $\Delta_X (p',q')$ 
contains $\{\vec{u} \in \Z^2, ||\vec{u}||_{\infty} 
\ge f(c^{l(n)}+m)+c^{l(n)}+m\}$. As a consequence, $\Delta_X (p,q)$ contains 
$\{\vec{u} \in \Z^2, ||\vec{u}||_{\infty} \ge g(n)+n\}$, 
where $g(n)=f(c^{l(n)}+m)+c^{l(n)}-n+m$. Since $c^{l(n)} \le c*(n+|m|)$, 
the function $g$ is in $O(n)$, hence $X$ is $O(n)$-block gluing.
\end{proof}

The proofs of block gluing and net gluing properties 
often rely on this proposition. In our 
construction, we first complete 
patterns into patterns over cells, 
whose sizes are given by an exponential 
sequence as in the statement of the 
proposition. Then we prove 
the gluing property for two cells.

\subsubsection{Block gluing and morphisms}

The following proposition shows that a factor of a 
block gluing (resp. net gluing) subshift
is also block gluing (resp. net gluing) and gives 
a precise gap function.

\begin{proposition} Let $\varphi : X \rightarrow Y$
 be some onto $r$-block map 
between two $\Z^2$-subshifts,
and $f:\N\to\N$ a non decreasing function. If the subshift $X$ is $f$-block gluing (resp. $f$-
net gluing), then $Y$ is $g$-block gluing (resp. $g$-net gluing) where 
$g:n\longmapsto f(n+2r)+2r$.
\end{proposition}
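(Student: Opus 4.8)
The plan is to transfer the gluing property from $X$ to $Y$ by pulling back blocks of $Y$ along $\varphi$, using that $\varphi$ is an $r$-block map (hence defined by a local rule on blocks of size $2r+1$, or rather, that $r$ is the radius so that $\varphi(x)_{\vec{v}}$ depends only on $x$ restricted to $\vec{v}+\llbracket -r,r\rrbracket^2$).

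First I would fix $n\in\N$ and two $n$-blocks $p,q$ in the language of $Y$, together with a vector $\vec{u}$ with $\|\vec{u}\|_\infty\ge g(n)+n = f(n+2r)+2r+n$. I want to produce a configuration of $Y$ in which $q$ appears at the origin and $p$ appears at $\vec{u}$. To do this, pick configurations $y^p,y^q\in Y$ realizing $p$ and $q$ respectively at the origin, and choose preimages $x^p,x^q\in X$ under $\varphi$. Let $\widetilde{p}=x^p_{\llbracket -r,n-1+r\rrbracket^2}$ and $\widetilde{q}=x^q_{\llbracket -r,n-1+r\rrbracket^2}$; these are $(n+2r)$-blocks in the language of $X$ (after translating the support to $\llbracket 0,n+2r-1\rrbracket^2$). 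The key point is that $\widetilde{p}$ determines $p$ via the local rule: any configuration of $X$ containing $\widetilde{p}$ on a translate of $\llbracket 0,n+2r-1\rrbracket^2$ maps under $\varphi$ to a configuration of $Y$ containing $p$ on the corresponding central translate of $\llbracket 0,n-1\rrbracket^2$, and similarly for $\widetilde{q}$ and $q$.

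Next I would apply $f$-block gluing of $X$ to $\widetilde{q}$ and $\widetilde{p}$, which are $(n+2r)$-blocks: for every vector $\vec{w}$ with $\|\vec{w}\|_\infty\ge f(n+2r)+(n+2r)$ there is $x\in X$ with $\widetilde{q}$ at the origin and $\widetilde{p}$ at $\vec{w}$. Given the target vector $\vec{u}$ for $Y$, the corresponding vector for the bigger blocks is $\vec{w}=\vec{u}$ once we account for the $r$-shift of supports; since $\|\vec{u}\|_\infty\ge f(n+2r)+2r+n \ge f(n+2r)+(n+2r)$, such a gluing configuration $x\in X$ exists (one should be a little careful that the shift by $r$ in each block's support means the relevant separation inequality is exactly $\|\vec{u}\|_\infty\ge f(n+2r)+n+2r$, which is why $g(n)=f(n+2r)+2r$ and not $f(n+2r)$). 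Then $\varphi(x)\in Y$ contains $q$ at the origin and $p$ at $\vec{u}$, as desired, so $\vec{u}\in\Delta_Y(p,q)$; since $p,q$ were arbitrary, $\vec{u}\in\Delta_Y(n)$, giving $g$-block gluing of $Y$.

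For the net gluing case the argument is parallel but one must track the auxiliary functions: given the net gluing data $\vec{u}_X$ and $\widetilde{f}_X$ on $\mathcal{L}(X)^2$, I would define $\vec{u}_Y(p,q)=\vec{u}_X(\widetilde{p},\widetilde{q})$ and $\widetilde{f}_Y(p,q)=\widetilde{f}_X(\widetilde{p},\widetilde{q})+2r$ (for a fixed choice of preimage blocks $\widetilde{p},\widetilde{q}$), check that $\vec{u}_Y(p,q)+(n+\widetilde{f}_Y(p,q))(\Z^2\setminus\{0\})\subset\Delta_Y(p,q)$ by pushing the inclusion for $X$ through $\varphi$, and verify $\max_{p,q\in\mathcal{L}_n(Y)}\widetilde{f}_Y(p,q)\le f(n+2r)+2r=g(n)$ using that $\widetilde{p},\widetilde{q}$ are $(n+2r)$-blocks of $X$. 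The main obstacle, and the only place requiring care, is bookkeeping the support shifts by $r$: one must be consistent about whether blocks are indexed on $\llbracket 0,n-1\rrbracket^2$ or on the enlarged $\llbracket -r,n-1+r\rrbracket^2$, and confirm that the translation relating "$p$ at $\vec{u}$ in $Y$" to "$\widetilde{p}$ at $\vec{u}$ in $X$" introduces no net displacement (the $r$-shifts on the $p$-side and $q$-side cancel), so that the clean inequality $g(n)=f(n+2r)+2r$ comes out rather than something larger. Everything else is a routine diagram chase through the block map.
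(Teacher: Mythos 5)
Your proposal is correct and follows essentially the same route as the paper: lift the two $n$-blocks of $Y$ (using surjectivity of $\varphi$) to $(n+2r)$-blocks of $X$, apply the $f$-gluing property of $X$ at size $n+2r$, and push the gluing configuration through the $r$-block map, observing that the $r$-shifts of supports cancel so the displacement vector is unchanged and $g(n)=f(n+2r)+2r$. The paper phrases this as the inclusion $\Delta_X(n+2r)\subset\Delta_Y(n)$ obtained by applying $\varphi$ to $\sigma^{r(1,1)}(x)$, which is the same bookkeeping you carry out with supports $\llbracket -r,n-1+r\rrbracket^2$.
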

\begin{proof}
Denote $\overline{\varphi}:\A_X^{\llbracket-r,r 
\rrbracket ^2}\to\A_Y$ the local 
rule of $\varphi$.

\begin{enumerate} 
\item \textbf{Relation between 
gluing sets:}

Let $p',q'$ be two $n$-blocks  in the language of $Y$. There exist $p$ and $q$ two $(n+2r)$-blocks in 
the language of $X$ such that $p'$ and $q'$ are respectively the image of $p$ and $q$ by $\overline{
\varphi}$. Let $\vec{u}\in\Delta_X (p,q)$. There exists $x \in X$ such that $x_{\llbracket 0,n+2r-1
\rrbracket ^2}=p$, and $x_{\vec{u}+\llbracket 0,n+2r-1\rrbracket^2}=q$. 
Applying $\varphi$ to $\sigma^{r(1,1)}(x)$, 
we obtain some $y \in Y$ such that $y_{\llbracket 0,n-1\rrbracket ^k}=p'$, and 
$y_{\vec{u}+ \llbracket 0,n-1\rrbracket ^k}=q'$. We deduce that
$$\Delta_X(p,q)\subset \Delta_Y(p',q') \quad\textrm{ so }\quad\Delta_X(n+2r)\subset\Delta_Y(n).$$

\item \textbf{Consequence of the gluing property:}

Thus if $X$ is $f$-block gluing then $Y$ is $g$-block gluing where $g:n\longmapsto f(n+2r)+2r$.

If $X$ is $f$-net gluing, then the gluing set of two $(2n+r)$-blocks $p,q$ contains 
\[\vec{u}(p,q)+(n+2r+\widetilde{f}(p,q))(\Z^2 \backslash \{(0,0)\}),\]
such that $\widetilde{f}(p,q) \leq f(n+2r)$. Hence the gluing set of $p'$, image of $p$ by $\overline{\varphi}$, relative to $q'$, image of $q$ by $\overline{\varphi}$, in $Z$ contains this set. One deduces that $Y$ is $g$-net gluing where $g:n\longmapsto f(n+2r)+2r$.
\end{enumerate}

\end{proof}

We deduce that the classes of subshifts defined by 
these properties 
are invariant of conjugacy under some assumption 
on $f$. The set functions that verify 
this assumption includes all the possible gap 
functions we already know: 

\begin{corollary}
Let $f$ be some non decreasing function. If for 
all $r\in\N$,
there is a constant $C$ such that for all $n \ge 0$, 
$Cf(n) \geq \,f(n+2r)$ then the following classes of 
subshifts are invariant under conjugacy: $O(f)$-block transitive, $O(f)$-net gluing, $O(f)$-block gluing, 
sharp $O(f)$-net gluing and sharp $O(f)$-block gluing subshifts.

In particular it is verified when $f$ is constant or 
$n\mapsto n^k$ with $k>0$ or 
$n\mapsto e^n$ or $n\mapsto\log (n)$.
\end{corollary}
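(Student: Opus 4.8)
The plan is to feed a conjugacy, viewed as a pair of mutually inverse block maps, into the previous Proposition, and then to check that the hypothesis on $f$ is exactly what makes the resulting gap function land in the same asymptotic class.

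First, recall that a conjugacy $\varphi : X \to Y$ between two $\Z^2$-subshifts is an onto block map whose inverse is also a block map (Curtis--Hedlund--Lyndon); let $r$ be an integer which is simultaneously a radius for $\varphi$ and for $\varphi^{-1}$. Applying the previous Proposition to $\varphi$ gives: if $X$ is $h$-block gluing (resp. $h$-net gluing) for a non decreasing $h$, then $Y$ is $g$-block gluing (resp. $g$-net gluing) with $g : n \mapsto h(n+2r)+2r$; applying it to $\varphi^{-1}$ gives the symmetric statement with $X$ and $Y$ exchanged. The same argument covers block transitivity, even though the Proposition was not stated for it: from the inclusion $\Delta_X(n+2r)\subset\Delta_Y(n)$ established in the proof of the Proposition, together with a vector $\vec u\in\Delta_X(n+2r)$ with $\|\vec u\|_{\infty}\le (n+2r)+h(n+2r)$, one gets $\vec u\in\Delta_Y(n)$ with $\|\vec u\|_{\infty}\le n+g(n)$, so $Y$ is $g$-block transitive. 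Hence for each of the three properties it suffices to show that the operation $h\mapsto g=h(\cdot+2r)+2r$ preserves both $O(f)$ and $o(f)$.

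Fix $r$ and let $C$ be the constant provided by the hypothesis, so that $f(n+2r)\le C f(n)$ for all $n$. If $h\le C_0 f$, then $g(n)=h(n+2r)+2r\le C_0 C f(n)+2r$; since $f$ is non decreasing and integer valued we may assume it is not eventually $0$ (otherwise $f\equiv 0$ and all the classes are trivial), so $f(n)\ge 1$ for large $n$ and the additive term $2r$ is absorbed into the multiplicative constant, giving $g\in O(f)$. Likewise, if $h\in o(f)$ then
\[
\frac{g(n)}{f(n)}=\frac{h(n+2r)+2r}{f(n)}\le C\,\frac{h(n+2r)}{f(n+2r)}+\frac{2r}{f(n)}\longrightarrow 0\quad (n\to\infty),
\]
the first term because $h\in o(f)$ and $n+2r\to\infty$, the second because $f$ is unbounded (which covers the examples $n^k$, $e^n$, $\log n$). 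Consequently, if $X$ is $O(f)$-block gluing then so is $Y$, and conversely by applying the argument to $\varphi^{-1}$; and if moreover $X$ fails to be $O(h)$-block gluing for every $h\in o(f)$, then so does $Y$: were $Y$ $O(h)$-block gluing with $h\in o(f)$, applying $\varphi^{-1}$ would make $X$ be $O(h(\cdot+2r)+2r)$-block gluing with $h(\cdot+2r)+2r\in o(f)$, a contradiction. The identical reasoning applies to net gluing and to block transitivity, yielding the five claimed invariances. The only genuinely delicate point is bookkeeping the constant shift of the argument of the gap function, and the inequality $Cf(n)\ge f(n+2r)$ is precisely what is needed to control it; the final ``in particular'' is then just the remark that positive constants (trivially), $n\mapsto n^k$ (via $(n+2r)^k\le C n^k$), $n\mapsto e^n$ (via $e^{2r}\cdot e^n$) and $n\mapsto\log(n)$ (via $\log(n+2r)\le \log(n)+1$ for large $n$) all satisfy this inequality.
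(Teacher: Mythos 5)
Your proof is correct and is exactly the deduction the paper intends (the paper states this corollary without proof): view the conjugacy as a pair of mutually inverse $r$-block maps, apply the preceding proposition in both directions (extending it to block transitivity via the inclusion $\Delta_X(n+2r)\subset\Delta_Y(n)$), and use the hypothesis $f(n+2r)\le C f(n)$ to check that $h\mapsto h(\cdot+2r)+2r$ preserves the classes $O(f)$ and $o(f)$. The only caveat worth noting is that your $o(f)$ estimate for the sharp classes uses $2r/f(n)\to 0$, i.e.\ that $f$ is unbounded (and you set aside $f\equiv 0$), so the sharpness part of the corollary is established for the unbounded examples $n^k$, $e^n$, $\log n$ but not for bounded $f$; this limitation, however, lies in the statement itself rather than in your argument.
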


\subsection{Some examples}\label{sec.examples}

We say that two blocks $p,q$ 
having respective supports $\mathbb{U},\mathbb{V}$
are \define{spaced by distance $k$}
when  
\[\max_{\vec{u} \in \mathbb{U}} 
\min_{\vec{v} \in \mathbb{V}} ||\vec{u}-\vec{v}||_{\infty} \ge k.\] 
Since the subshifts that we consider in this text are bi-dimensional, 
this means that there are 
at least $k$ column or at least $k$ lines between the two blocks.

\subsubsection{First examples} 

We present here some examples of block gluing SFT. 

\begin{example} Consider the SFT $X_{\texttt{Chess}}$ defined by the following set of forbidden patterns:
\begin{center}
\begin{tikzpicture}[scale=0.25]
\begin{scope}[xshift=0cm]
\fill[black] (0,0) rectangle ++(1,2);
\draw[gray!50,line width=1pt] (0,0) grid++(1,2);
\end{scope}

\begin{scope}[xshift=4cm,yshift=0.5cm]
\fill[black] (0,0) rectangle ++(2,1);
\draw[gray!50,line width=1pt] (0,0) grid++(2,1);
\end{scope}

\begin{scope}[xshift=9cm]
\draw[gray!50,line width=1pt] (0,0) grid++(1,2);
\end{scope}

\begin{scope}[xshift=13cm,yshift=0.5cm]
\draw[gray!50,line width=1pt] (0,0) grid++(2,1);
\end{scope}
\end{tikzpicture} 

\end{center}

 This subshift has two configurations (see Figure~\ref{figure.echec} for an example) 
and both of them are periodic. 
It is $1$-net gluing, but not block gluing.
Indeed, the gluing set of the pattern $\blacksquare$ relatively to itself is
 $$\Delta_{X_{\texttt{Chess}}}(\blacksquare,\blacksquare)=2\Z^2\setminus\{(0,0)\} \cup \left(2\Z^2 + (1,1)\right)$$

\begin{figure}[ht]
\begin{center}
\begin{tikzpicture}[scale=0.5]
 \clip[draw,decorate,decoration={random steps, segment length=3pt, amplitude=1pt}] (-7.75,-3.75) rectangle (7.75,3.75);
\foreach \x in{-8,-7,...,8}{
\foreach \y in{-4,-3,...,4}{
\fill[black] (\x,\y) rectangle ++(0.5,0.5);
}}
\foreach \x in{-7.5,-6.5,...,8}{
\foreach \y in{-3.5,-2.5,...,4}{
\fill[black] (\x,\y) rectangle ++(0.5,0.5);
}}

\draw[gray!50,line width=1pt,step=0.5] (-8,-4) grid (8,4);

\end{tikzpicture}
\caption{An example of configuration of $X_{\texttt{Chess}}$.}\label{figure.echec}
\end{center}
\end{figure}
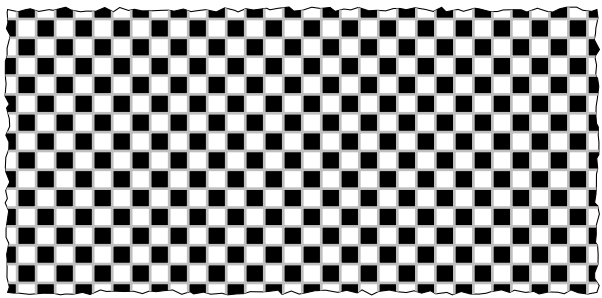
\end{example}  

\begin{example} Consider the SFT $X_{\texttt{Even}}$ defined by the following set of forbidden patterns:
\begin{center}
\begin{tikzpicture}[scale=0.25]
\begin{scope}[xshift=0cm]
\fill[black] (0,0) rectangle ++(1,2);
\draw[gray!50,line width=1pt] (0,0) grid++(1,2);
\end{scope}

\begin{scope}[xshift=4cm,yshift=0.5cm]
\fill[black] (0,0) rectangle ++(2,1);
\draw[gray!50,line width=1pt] (0,0) grid++(2,1);
\end{scope}
\end{tikzpicture} 
\end{center}

An example of configuration in this subshift is given in Figure~\ref{figure.even}. This subshift 
is $1$-block gluing since two blocks in its language can be glued with distance 1, 
filling the configuration with $\square$ symbols. 
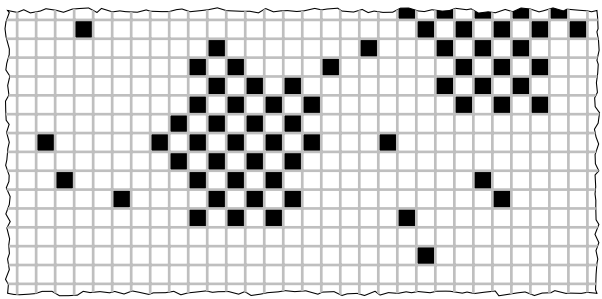
\begin{figure}[h!]
\begin{center}
\begin{tikzpicture}[scale=0.5]
 \clip[draw,decorate,decoration={random steps, segment length=3pt, amplitude=1pt}] (-7.75,-3.75) rectangle (7.75,3.75);

\begin{scope}
  \clip (-3,-2) -- ++(3,0)--++(0,2)--++(0.5,0)--++(0,2)--++(-2,0)--++(0,1)--++(-1.5,0)--++(0,-2)--++(-1,0)--++(0,-1.5)--++(1,0)--cycle;
\foreach \x in{-8,-7,...,8}{
\foreach \y in{-4,-3,...,4}{
\fill[black] (\x,\y) rectangle ++(0.5,0.5);
}}
\foreach \x in{-7.5,-6.5,...,8}{
\foreach \y in{-3.5,-2.5,...,4}{
\fill[black] (\x,\y) rectangle ++(0.5,0.5);
}}
\end{scope}

\begin{scope}
  \clip (3.5,1) -- ++(3,0)--++(0,2)--++(1,0)--++(0,1)--++(-5,0)--++(0,-1)--++(1,0)--++(0,-2.5)--cycle;
\foreach \x in{-8,-7,...,8}{
\foreach \y in{-4,-3,...,4}{
\fill[black] (\x,\y) rectangle ++(0.5,0.5);
}}
\foreach \x in{-7.5,-6.5,...,8}{
\foreach \y in{-3.5,-2.5,...,4}{
\fill[black] (\x,\y) rectangle ++(0.5,0.5);
}}
\end{scope}

\foreach \x/\y in{3/-3,2.5/-2,5/-1.5,4.5/-1,-5/-1.5,-6/3,-7/0,-6.5/-1,0.5/2,1.5/2.5,0.5/2,2/0}
{\fill[black] (\x,\y) rectangle ++(0.5,0.5);}

\draw[gray!50,line width=1pt,step=0.5] (-8,-4) grid (8,4);

\end{tikzpicture}
\caption{An example of configuration of $X_{\texttt{Even}}$.}\label{figure.even}
\end{center}
\end{figure}
\end{example}  

\begin{example} \label{ex.linear} 
Consider the SFT $X_{\texttt{Linear}}$ defined by the following set of forbidden patterns:
\begin{center}
\begin{tikzpicture}[scale=0.25]
\begin{scope}[xshift=0cm]
\fill[black] (0,0) rectangle ++(1,2);
\draw[gray!50,line width=1pt] (0,0) grid++(1,2);
\draw[gray!50,line width=1pt] (0,0) rectangle ++(-1,1);
\end{scope}

\begin{scope}[xshift=4cm]
\fill[black] (0,0) rectangle ++(1,2);
\draw[gray!50,line width=1pt] (0,0) grid++(2,1);
\draw[gray!50,line width=1pt] (1,1) rectangle ++(1,-1);
\end{scope}

\begin{scope}[xshift=9cm]
\fill[black] (0,0) rectangle ++(3,1);
\draw[gray!50,line width=1pt] (0,0) grid++(3,1);
\draw[gray!50,line width=1pt] (1,1) rectangle ++(1,1);
\end{scope}

\end{tikzpicture} 
\end{center}

The local rules imply that if a configuration contains the pattern $\square\blacksquare^n\square$ 
then it contains $\ast\square\blacksquare^{n-2}\square\ast$ just above, 
where $\ast\in\{\square,\blacksquare\}$. Thus a configuration of $X_{\texttt{Linear
}}$ 
can be seen as a layout 
of triangles of made of symbols~$\blacksquare$ on a background of $\square$ symbols (an example 
of configuration is given on 
Figure~\ref{figure.linear}). 

This subshift is sharp linearly block gluing. Indeed, consider two  $n$-blocks in its language 
separated horizontally 
or vertically by $2n$ cells. They contain pieces 
of triangles that we complete with the smallest triangle possible, 
the other symbols of the configuration being all $\square$ symbols. The worst case for gluing 
two $n$-blocks 
is when the blocks are filled with the symbol $\blacksquare$. In this case we can complete each of 
the two blocks 
by a triangle which base is constituted by $\blacksquare^{3n}$. Hence 
every couple of blocks can be glued horizontally and vertically with linear distance. 
To prove that $X_{\texttt{Linear}}$ 
is not $f$-block gluing with $f(n)\in 
o(n)$, we consider the rectangle 
\[\square\blacksquare^n\square\] 
that we would like 
to glue above itself. To do that we need to separate the two copies of this pattern
by about $\lceil\frac{n}{2}\rceil$ cells.
\begin{figure}[ht]
\begin{center}
\begin{tikzpicture}[scale=0.5]
 \clip[draw,decorate,decoration={random steps, segment length=3pt, amplitude=1pt}] (-7.75,-3.75) rectangle (7.75,3.75);

\foreach \i in{0,0.5,...,2}{
\fill[black] (0+\i,\i) rectangle (4-\i,\i+0.5);
}
\foreach \i in{0,0.5,...,2}{
\fill[black] (-10.5+\i,1.5+\i) rectangle (-\i-4.5,\i+2);
}
\foreach \i in{0,0.5,...,3}{
\fill[black] (-6+\i,-2+\i) rectangle (-\i,\i-1.5);
}
\foreach \i in{0,0.5,...,3}{
\fill[black] (2+\i,\i-5) rectangle (8.5-\i,\i-4.5);
}

\draw[gray!50,line width=1pt,step=0.5] (-8,-4) grid (8,4);

\end{tikzpicture}
\caption{An example of configuration of $X_{\texttt{Linear}}$.}\label{figure.linear}
\end{center}
\end{figure}
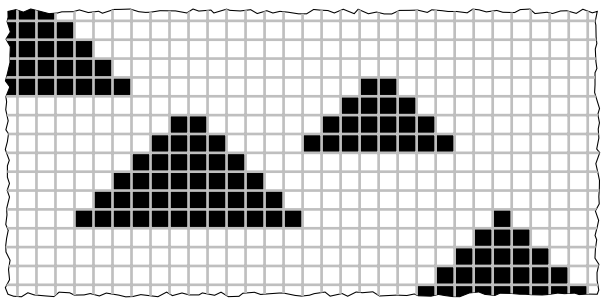
\end{example}

\subsubsection{Linearly net gluing subshifts given by substitutions}

\begin{figure}[ht]
\begin{center}
 \begin{tikzpicture}[scale=0.5]
 \clip[draw,decorate,decoration={random steps, segment length=3pt, amplitude=1pt}] (-7.75,-3.75) rectangle (7.75,3.75);
\foreach \x in{-8,-7,...,8}{
\foreach \y in{-4,-3,...,4}{
\fill[black] (\x,\y) rectangle ++(0.5,0.5);
}}
\foreach \x in{-7.5,-5.5,...,8}{
\foreach \y in{-3.5,-1.5,...,4}{
\fill[black] (\x,\y) rectangle ++(0.5,0.5);
}}
\foreach \x in{-4.5,-0.5,...,8}{
\foreach \y in{-0.5,3.5,...,4}{
\fill[black] (\x,\y) rectangle ++(0.5,0.5);
}}

\foreach \x in{(-2.5,1.5),(1.5,-2.5),(5.5,1.5)}{
\fill[black] \x rectangle ++(0.5,0.5);}
\draw[gray!50,line width=1pt,step=0.5] (-8,-4) grid (8,4);
\end{tikzpicture}
\end{center}
\caption{A part of a configuration of $X_s$.}\label{figure.ExSubstituiton}
\end{figure}
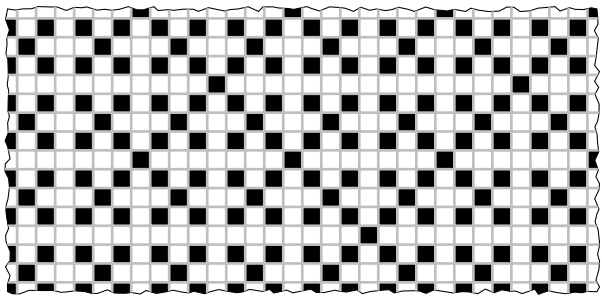

Let $\A$ be a finite alphabet. A \define{substitution} rule is a map $s : \A \rightarrow \A ^{
\mathbb{U}_m}$, for some $m \ge 1$. This function can be extended naturally on blocks in view to iterate it.
The subshift $X_s$ associated to this substitution is the set of configurations such 
that any pattern appearing in it appears
as a sub-pattern of some $s^n (a)$ with $n \geq 0$ and $a \in \A$.

Consider the following substitution $s$ defined by
\begin{center}
  \begin{tikzpicture}[scale=0.25]
  \draw[gray!50,line width=1pt] (0,0.5) rectangle++(1,1);
  \draw[|->,>=latex] (1.5,1)--(3.5,1);
  \draw[gray!50,line width=1pt] (4,0) grid (6,2);
  \fill[black] (4,0) rectangle++(1,1);
\begin{scope}[xshift=8cm]
   \draw[gray!50,line width=1pt,fill=black] (0,0.5) rectangle++(1,1);
  \draw[|->,>=latex] (1.5,1)--(3.5,1);
  \draw[gray!50,line width=1pt] (4,0) grid (6,2);
  \fill[black] (4,0) rectangle++(1,1);
  \fill[black] (5,1) rectangle++(1,1);
\end{scope}
  \end{tikzpicture}
\end{center}
where an exemple of configuration is given in Figure~\ref{figure.ExSubstituiton}. Since $\blacksquare$ appears on position $(0,0)$ in $s(\square)$ and $s(\blacksquare)$, we deduce that for any configuration $x$, there exists $\vec{i}_1\in\llbracket 0,1\rrbracket^2$ such that $x_{\vec{i}_1+2\Z^2}=\blacksquare$. By induction, for all $n\geq 1$,  there exists $\vec{i}_n \in\llbracket 0,2^n-1\rrbracket^2$ such that $x_{\vec{i}_n+2^n\Z^2}=s^n(\blacksquare)$. Since every pattern of  $X_s$ appears in $s^n(\blacksquare)$ for some $n\in\N$, we deduce that $X_s$ has the linear net-gluing property, using Proposition~\ref{proposition.DefLinearPower}.

This argument can be easily generalized for substitution $s$ 
for which there exists $i\in\N$, a subset $\mathcal{Z} \subset \llbracket 0,m^i-1
\rrbracket^2$ and an invertible map $\nu : \A \rightarrow \mathcal{Z}$ such that $a\in\A$ appears on the same position $\nu (a)$ 
in any pattern of the 
patterns $s^i (d)$ with $d \in \A$. 

\subsubsection{Intermediate intensities}

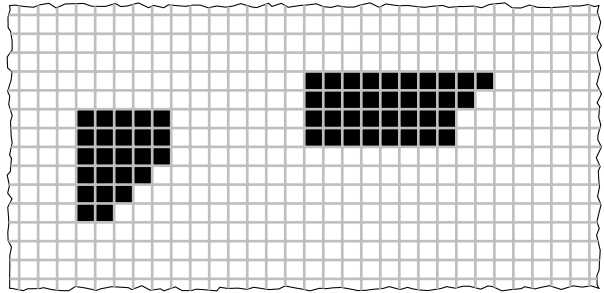
\begin{figure}[ht]
\begin{center}
\begin{tikzpicture}[scale=0.5]
 \clip[draw,decorate,decoration={random steps, segment length=3pt, amplitude=1pt}] (-7.75,-3.75) rectangle (7.75,3.75);

\fill[black] (0,0) rectangle ++(4,0.5);
\fill[black] (0,0.5) rectangle ++(4,0.5);
\fill[black] (0,1) rectangle ++(4.5,0.5);
\fill[black] (0,1.5) rectangle ++(5,0.5);

\fill[black] (-6,-2) rectangle ++(1,0.5);
\fill[black] (-6,-1.5) rectangle ++(1.5,0.5);
\fill[black] (-6,-1) rectangle ++(2,0.5);
\fill[black] (-6,-0.5) rectangle ++(2.5,0.5);
\fill[black] (-6,0) rectangle ++(2.5,0.5);
\fill[black] (-6,0.5) rectangle ++(2.5,0.5);

\draw[gray!50,line width=1pt,step=0.5] (-8,-4) grid (8,4);

\end{tikzpicture}
\caption{An example of configuration that respects the rules of the 
first layer of $X_{\texttt{Log}}$.}\label{figure.log}
\end{center}
\end{figure}

Here we present an example of block gluing 
SFT whose gap 
function is strictly between 
linear and constant classes.

Consider the SFT $X_{\texttt{Log}}$ having 
two layers, with 
the following characteristics: 

\noindent \textbf{\textit{Symbols:}} 

The first layer 
has symbols  $\begin{tikzpicture}[scale=0.3]
\fill[black] (0,0) rectangle (1,1);
\end{tikzpicture}$ and  $\begin{tikzpicture}[scale=0.3]
\draw (0,0) rectangle (1,1);
\end{tikzpicture}$, and the second one the symbols: 
\[\begin{tikzpicture}
\begin{scope}[xshift=0cm]
\draw (0,0) rectangle (1,1);
\node at (0.15,0.5) {0};
\node at (0.85,0.5) {0};
\node at (0.5,0.15) {0};
\node at (0.5,0.85) {0};
\end{scope}

\begin{scope}[xshift=2cm]
\draw (0,0) rectangle (1,1);
\node at (0.15,0.5) {0};
\node at (0.85,0.5) {0};
\node at (0.5,0.15) {1};
\node at (0.5,0.85) {1};
\end{scope}

\begin{scope}[xshift=4cm]
\draw (0,0) rectangle (1,1);
\node at (0.15,0.5) {1};
\node at (0.85,0.5) {0};
\node at (0.5,0.15) {0};
\node at (0.5,0.85) {1};
\end{scope}

\begin{scope}[xshift=6cm]
\draw (0,0) rectangle (1,1);
\node at (0.15,0.5) {1};
\node at (0.85,0.5) {1};
\node at (0.5,0.15) {1};
\node at (0.5,0.85) {0};
\end{scope}

\begin{scope}[xshift=8cm]
\draw (0,0) rectangle (1,1);
\end{scope}\end{tikzpicture}\]
The first four of these 
symbols are thought as coding for the adding machine. Each one contains 
four symbols: the west one is the initial state of the machine, the east one the forward state, 
the south one the input letter, and the last symbol is the output. \bigskip

\noindent \textbf{\textit{Local rules:}}

\begin{itemize}
\item \textbf{First layer:}

The following patterns 
are forbidden in the first layer: 

\begin{center}
\begin{tikzpicture}[scale=0.25]
\begin{scope}[xshift=0cm]
\fill[black] (0,0) rectangle ++(1,2);
\fill[black] (0,1) rectangle ++(-1,1);
\draw[gray!50,line width=1pt] (0,1) grid++(-1,1);
\draw[gray!50,line width=1pt] (0,0) grid++(1,2);
\draw[gray!50,line width=1pt] (0,0) rectangle ++(-1,1);
\end{scope}

\begin{scope}[xshift=4cm]
\fill[black] (0,0) rectangle ++(1,2);
\fill[black] (0,0) rectangle ++(-1,1);
\draw[gray!50,line width=1pt] (0,1) grid++(-1,1);
\draw[gray!50,line width=1pt] (0,0) grid++(1,2);
\draw[gray!50,line width=1pt] (0,0) rectangle ++(-1,1);
\end{scope}

\begin{scope}[xshift=9cm]
\fill[black] (-1,0) rectangle ++(1,2);
\fill[black] (-1,0) rectangle ++(2,1);
\draw[gray!50,line width=1pt] (0,1) grid++(-1,1);
\draw[gray!50,line width=1pt] (0,0) grid++(1,2);
\draw[gray!50,line width=1pt] (0,0) rectangle ++(-1,1);
\end{scope}

\begin{scope}[xshift=13cm]
\fill[black] (-1,0) rectangle ++(1,2);
\fill[black] (-1,1) rectangle ++(3,1);
\draw[gray!50,line width=1pt] (0,1) grid++(-1,1);
\draw[gray!50,line width=1pt] (0,0) grid++(2,2);
\draw[gray!50,line width=1pt] (0,0) rectangle ++(-1,1);
\end{scope}

\end{tikzpicture} 
\end{center}

These local rules imply that if a configuration contains 
the pattern $\square\blacksquare^n\square$ in the first layer, then it contains $
\square\blacksquare^{n}\square$, $\square \blacksquare^{n+1}$, or $\square ^{n+2}$ just above. 
Thus a configuration of the first layer of $X_{\texttt{Log
}}$ can be seen as triangular shapes of  symbols~$\blacksquare$ on a background of $\square$ symbols (an example of configuration is given in 
Figure~\ref{figure.log}). \bigskip

\item \textbf{Second layer:}

\begin{itemize}
\item the adding machine symbols are superimposed on black squares, 
the other ones on blank squares. 
\item for two adjacent machine symbols, the symbols on the sides have to match. 
\item on a pattern $\blacksquare \square$, 
on the machine symbol over the black square, the east symbol have 
to be $0$. 
\item on a pattern $\begin{tikzpicture}[scale=0.25]
 \fill[black] (-1,1) rectangle ++(2,1);
\fill[black] (-1,0) rectangle ++(1,1);
\draw[gray!50,line width=1pt] (0,1) grid++(-1,1);
\draw[gray!50,line width=1pt] (0,0) grid++(1,2);
\draw[gray!50,line width=1pt] (0,0) rectangle ++(-1,1);
\end{tikzpicture}$, the machine have a south symbol 
being $0$ on the north west black square.
\end{itemize}

\end{itemize}

This subshift is sharp $O(log)$-block gluing. Indeed, any two $n$-blocks in its language can 
be glued vertically with distance 1. 
For the horizontal gluing, 
the worst case for gluing two
$n$-blocks is when the two blocks are 
filled with black squares and the adding machine symbols on the leftmost 
column of the blocks are only $1$ (thus maximizing the number of lines 
where the rectangular shape into which we complete the block have to be greater in length
than the one just below). 
In this case, we can complete the block such that each line (from the bottom to the top) is extended 
from the one below with one $\blacksquare$ symbol on the 
right when the machine symbol have a 1 on its west side, and adding 
blank squares to obtain a rectangle. The number of columns added is smaller than the maximal number 
of bits added by the adding 
machine to a length $n$ string of $0,1$ symbols 
in $n$ steps, which is $O(log(n))$.
This means that 
two $n$-blocks can be glued horizontally with distance $O(log(n))$. To see that this property is sharp, 
consider the horizontal gluing of two $1 \times n$ rectangles of black squares, similarly 
as in the linear case.

\begin{remark}
The set of possible tight gap 
functions of block gluing SFT 
seems restricted.
We don't know for instance if, when $f$ is the square root function, 
there exist subshifts for 
which the $f$ block gluing property 
is tight. 
\end{remark}

\subsection{The rigid version 
of the Robinson subshift is linearly net gluing}

Let us denote $id : \N \rightarrow \N$ the function 
defined by $id(n)=n$ for all $n$.

\begin{proposition} \label{prop.rob.linear.net.gluing} 
The subshift $X_{adR}$  is 
$32\textrm{id}$-net gluing, 
hence linearly net gluing.
\end{proposition}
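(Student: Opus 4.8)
The plan is to reduce, via Lemma~\ref{prop.complete.rob}, to a gluing statement about cells (equivalently, about the $2\times2$ arrangements of Robinson supertiles plus separating cross that appear in the proof of that lemma), and then to realise the required gluings by hand inside configurations of $X_{adR}$ whose global shape is dictated by the hierarchy of the Robinson layer. Fix two $n$-blocks $p$ and $q$ of $X_{adR}$. Using the completions of the proof of Lemma~\ref{prop.complete.rob}, extend $p$ and $q$ into patterns $P$ and $Q$ supported on order-$\chi'(n)$ cells; both have size $O(n)$, and by Lemma~\ref{lem.repetition.supertiles} together with the alignment lemma of Section~\ref{sec.alignment.positioning} each of $P$, $Q$ recurs, horizontally and vertically, with a period $s=O(n)$ in any configuration of $X_{adR}$ in which it appears. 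It then suffices to prove: for every such pair $P$, $Q$ and every $\vec{z}\in\Z^2\setminus\{(0,0)\}$ there is a configuration of $X_{adR}$ (depending on $\vec{z}$) in which $Q$ occurs at a fixed reference position and $P$ occurs at that position translated by $s\vec{z}$. Reading off $\vec{u}(p,q)$ from the reference position and from the offsets of $p$ in $P$ and of $q$ in $Q$, and setting $\widetilde{f}(p,q)=s-n$, one recovers Definition~\ref{definition.NetGluing}; a direct computation of the constants coming out of $\chi'$ (and of the supertile orders used in the proof of Lemma~\ref{prop.complete.rob}) yields $\widetilde{f}(p,q)\le 32n$, hence $X_{adR}$ is $32\,\textrm{id}$-net gluing. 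Proposition~\ref{proposition.DefLinearPower} is what makes legitimate this reduction to an exponential family of block sizes.

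The heart of the argument is the construction of those configurations, which I would carry out by combining the \emph{flexibility} of the Robinson layer with the \emph{rigidity} of the alignment layer. First handle $\vec{z}$ with $\|\vec{z}\|_\infty=1$. Both $P$ and $Q$, being cells, extend to infinite-order supertiles of $X_R$; the Robinson rules leave free, in such a completion, precisely the orientations ($sw,se,nw,ne$) of the nested supertiles, the values of the $0,1$-counters, and the choice of central symbol (red corner versus five- or six-arrow symbol) at each cross --- this is exactly the case analysis performed in the proof of Lemma~\ref{prop.complete.rob}. I take a type~(ii) configuration of $X_{adR}$, i.e.\ two infinite-order supertiles separated by a fault line of three-arrow (respectively four-arrow) symbols, as on Figure~\ref{fig.infinite.supertiles}, arranged so that $P$ is the order-$\chi'(n)$ cell of one side adjacent to the fault and $Q$ the cell of the other side adjacent to it; prescribing the free data on each side realises the contents $P$ and $Q$, and completing the alignment layer near the fault --- which is possible by the alignment lemma of Section~\ref{sec.alignment.positioning}, the equality $z_n=z'_n$ there being precisely the statement that the two supertile- (hence cell-) sublattices coincide --- places $P$ at the wanted displacement from $Q$. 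For a general $\vec{z}$, instead realise $P$ and $Q$ as two prescribed cells inside a common finite supertile of order $\chi'(n)+O(\log\|\vec{z}\|_\infty)$, again by tuning orientations and counters, and then close it up into a full configuration of $X_{adR}$ using Lemma~\ref{lem.repetition.supertiles} and Lemma~\ref{prop.complete.rob}.

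The main obstacle, and where essentially all the technical work sits, is proving that two arbitrarily prescribed cell-contents $P$ and $Q$ genuinely coexist in one configuration of $X_{adR}$ at the exact prescribed displacement. This splits into: (i)~verifying that no Robinson rule --- arrow matching, the $2\times2$ blue-symbol rule and the periodicity it induces, the $(i,j)$-transmission rules, and the inequality of the marks on five- and six-arrow symbols --- is violated when one prescribes independently the free data of $P$, of $Q$ and of the ambient super-structure joining them, a finite but delicate check resting on the list of possible completions in the proof of Lemma~\ref{prop.complete.rob}; (ii)~verifying that the alignment-layer rules (localization, orientation induction, transmission, synchronization, coherence) can simultaneously be satisfied along the fault and around both cells, which amounts to re-running the argument of Section~\ref{sec.alignment.positioning} in this two-cell situation; and (iii)~the bookkeeping that upgrades the crude $O(n)$ into the precise bound $32n$, by tracking $\chi'$, the cell size $4^{\chi'(n)+1}+1$ and the recurrence period through the construction. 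I expect step~(i) to be the real difficulty, as it is the point at which the quantitative gluing constraint collides with the rigidity that makes $X_R$ aperiodic.
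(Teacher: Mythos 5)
Your proposal contains a genuine gap, and it sits exactly where you say it does: the claim that for every $\vec{z}$ one can build a configuration of $X_{adR}$ containing \emph{both} prescribed completions $P$ and $Q$ at the exact displacement $s\vec{z}$ is never proved, and your proposed realizations are doubtful. For $\|\vec{z}\|_\infty=1$ you want to put $P$ and $Q$ on the two sides of a type~(ii) fault with freely prescribed data; but the whole purpose of the alignment layer is to synchronize the two sides of such a fault (this is the content of the lemma of Section~\ref{sec.alignment.positioning}: the supertiles are forced to be aligned, and same-order supertiles facing each other across the separating column have forced, opposite orientations), so the free data of the two sides cannot be prescribed independently, and nothing in your argument shows the resulting decorated configuration is admissible. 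Similarly, for general $\vec{z}$, inside a common finite supertile the contents of the same-order cells are dictated by the hierarchy, so "tuning orientations and counters" to realize two arbitrary prescribed cell contents at arbitrary lattice sites is precisely the rigidity obstruction you flag in your step~(i) and then leave unresolved. In short, you have reduced the proposition to a harder, unproven coexistence statement.

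The paper's proof avoids all of this because net gluing (Definition~\ref{definition.NetGluing}) only asks that the gluing set contain one lattice of positions, and a single configuration already supplies it. Concretely: complete only $p$ (not $q$) into a supertile $S$ of order $m+5$ using Lemma~\ref{prop.complete.rob}, where $2^{m+1}-1<n\le 2^{m+2}-1$; take \emph{any} configuration $x\in X_{adR}$ with $q$ at the origin; by the lemma of Section~\ref{sec.alignment.positioning}, $S$ appears in $x$ periodically, horizontally and vertically, with period $2^{m+6}\le 32n$, so $\Delta_{X_{adR}}(p,q)$ contains $\vec{u}+2^{m+6}(\Z^2\setminus\{(0,0)\})$ for some $\vec{u}$, which is the $32\,\textrm{id}$-net gluing condition. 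No per-$\vec{z}$ construction, no simultaneous prescription of two completions, and no analysis of fault lines is needed; the only inputs are the completion lemma for $p$ and the periodic-appearance lemma applied to an arbitrary configuration containing $q$. If you repair your argument by dropping the completion of $q$ and replacing your hand-built configurations with this periodicity argument, you recover the paper's proof; as written, the central step is missing and the route chosen (prescribing both blocks' completions at chosen displacements) collides with the very rigidity that makes $X_{adR}$ aperiodic and not block gluing.
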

\begin{proof}
Let $p,q$ be two $n$-blocks in the language of $X_{adR}$ 
with $n\geq 1$. There exists $m\in\N
$ such that $2^{m+1}-1< n\leq 2^{m+2}-1$. Hence, 
there is a supertile $S$ of 
order $m+5$ (this comes from the 
completion result on this subshift) 
where $p$ appears. Consider a 
configuration $x\in X_{adR}$ in which 
the pattern $q$ appears in position $(0,0)$. 
The supertile $S$ appears periodically in $x$ with 
period $2^{m+6}=32.2^{m+1} \leq 32n$. Thus the 
gluing set of $p$ relatively to $q$ in $X_{adR}$ contains a set $\vec{u}+2^{m+6}(\Z^2 
\backslash \{(0,0)\})$ for some 
$\vec{u} \in \Z^2$. Thus $X_{adR}$ 
is $32\textrm{id}$ net gluing.
 \end{proof}


\section{Existence of periodic points for \texorpdfstring{$f$}{f}-block gluing SFT}\label{section.periodic} 

In this section we study the existence of a periodic point in 
$f$-block gluing SFT according to the gap 
function $f$.

\subsection{Under some threshold for the gap function, 
there exist periodic points}

In~\cite{Pavlov-Schraudner-2014}, the authors show that any constant 
block gluing SFT admits a periodic point. Using a 
similar argument, we obtain an upper bound on the 
gap functions forcing 
the existence of periodic points. 

\begin{proposition}\label{proposition.StongBlockGluingPeriodicity} Let $X\subset\A^{\Z^2}$ be some SFT having 
rank $r \ge 2$ which is $f$-block 
gluing for some function $f$. If this function verifies that 
there exists $n\in\N$ such that 
\[f(n)<\frac{\log_{|\A|} (n-r+2)}{r-1} -r+2,\] then $X$ admits a periodic point.
\end{proposition}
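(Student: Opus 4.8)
The plan is to follow the strategy of \cite{Pavlov-Schraudner-2014} for the constant-gap case, the extra room here being exactly what makes a pigeonhole argument go through with the threshold in the statement. Two preliminary remarks shape the approach. First, since $X$ is an SFT of rank $r$, a coloring of $\Z^2$ belongs to $X$ as soon as none of its $r\times r$ windows is forbidden; so it suffices to build a doubly periodic coloring all of whose $r\times r$ windows are admissible — in fact a coloring periodic in a single direction is enough, since fixing that period yields a non-empty $\Z$-SFT, which carries a periodic point. Second, rewriting the hypothesis, we are given $n$ with $|\A|^{(r-1)(f(n)+r-2)}<n-r+2$; in particular $n\geq r$, and the right-hand side is precisely the number of translates of a window of thickness $r-1$ inside a segment of length $n$, while the left-hand side bounds the number of patterns on an $(r-1)\times(f(n)+r-2)$ rectangle.

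The raw material is produced by block gluing: fixing an $n$-block $p\in\mathcal{L}(X)$ and applying $f$-block gluing to the pair $(p,p)$ with a vector of $||\cdot||_\infty$-norm $n+f(n)$, one obtains a configuration $x\in X$ in which $p$ occurs twice, far apart along one axis with a gap of $f(n)$ between the two copies; using the rank-$r$ property one may moreover periodize in that direction (the seam introduces only windows already present in $x$, because the edge of the far copy of $p$ agrees with the corresponding edge of $p$), which reduces the problem to closing up the orthogonal direction. Now I would apply the pigeonhole: among the $n-r+2$ vertical windows of thickness $r-1$ spanning $p$, when each is read together with the $f(n)+r-2$ rows forming the ``bridge plus collar'' region between the two copies of $p$, there are at most $|\A|^{(r-1)(f(n)+r-2)}<n-r+2$ possible patterns, so two of these windows carry the same pattern. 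Having two coinciding thickness-$(r-1)$ windows makes it possible to cut and re-glue: one folds the intervening slab into a cylinder, the rank-$r$ condition ensuring that the new seam creates only windows already occurring in $x$. Carried out so that the seam runs through the bridge rather than through $p$, this fold, together with the periodicity obtained in the other direction, yields a doubly periodic admissible coloring, hence a periodic point of $X$.

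The step I expect to be the real obstacle is this last cut-and-paste, namely organizing the fold so that it is simultaneously compatible with the periodicity already obtained in the other direction and locally admissible at every seam. This is what dictates the precise shape of the region used in the pigeonhole — a bridge of height $f(n)$ padded by a collar of thickness $r-2$, total height $f(n)+r-2$, matching the exponent in the hypothesis — and it requires a careful rank-$r$ bookkeeping of the $r\times r$ windows straddling the new seam; the counting estimate and the block-gluing inputs themselves are routine.
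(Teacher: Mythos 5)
Your overall skeleton is the paper's (glue a pattern over itself with gap $f(n)$, pigeonhole on vertical strips of width $r-1$, cut and re-glue), but the object you glue is wrong, and that creates a genuine gap at exactly the step you flag as "the real obstacle". First, the preliminary step "periodize in that direction" is not justified: gluing the square $n$-block $p$ over itself controls the seam only on the $n$ columns occupied by the two copies of $p$; outside those columns the rows just below and just above the seam are unrelated, so repeating the horizontal slab of height $n+f(n)$ vertically can create forbidden $r\times r$ windows. You therefore do not yet have a configuration of $X$ with one period (your $\Z$-SFT reduction, though valid in principle, has nothing to be applied to). Second, and decisively, your pigeonhole is applied to the restriction of the width-$(r-1)$ strips to the bridge-plus-collar region of height $f(n)+r-2$ -- that is indeed the only way the bound $|\A|^{(r-1)(f(n)+r-2)}<n-r+2$ is usable -- but then the two matched strips agree only on that region and in general disagree inside $p$. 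The horizontal-period seam of the folded configuration is a full vertical line, which necessarily crosses the $n$ rows of $p$; the $r\times r$ windows straddling it there need not occur in $x$ and can be forbidden, and there is no way to "run the seam through the bridge rather than through $p$". If instead you insist that the strips agree over the whole fundamental domain of your construction (height $n+f(n)$), the number of candidate strips is $|\A|^{(r-1)(n+f(n))}$ and the hypothesis gives no collision. So with the square block the two requirements (pigeonhole count, admissible seam) cannot be met simultaneously.

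The missing idea, which is the crux of the paper's proof, is to abandon the square block altogether: glue over itself a flat rectangle $w$ of width $n$ and height exactly $r-1$ (the interaction range of the SFT), with $f(n)$ rows between the two copies. Then the vertical fundamental domain has height only about $f(n)+r-1$, so the width-$(r-1)$ strips on which you pigeonhole span its \emph{entire} height while their number of possible values is exactly the quantity $|\A|^{(r-1)(f(n)+r-2)}$ bounded by the hypothesis. With two equal full-height strips at horizontal distance $l$, the rectangle between them tiles $\Z^2$ into a doubly periodic configuration whose every $r\times r$ window already occurs in $x$: windows across the horizontal seam are admissible because the strips agree on the whole height, and windows across the vertical seam are admissible because $w$ (of thickness $r-1$) sits over itself. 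This yields the periodic point directly, with no need for the one-direction reduction; your "collar of thickness $r-2$" gestures at the right quantity, but the construction only works if the glued block itself is that thin.
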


\begin{figure}[ht]
\[\begin{tikzpicture}
\fill[gray!20] (0,0) rectangle (8,0.5);
\fill[gray!20] (0,2.5) rectangle (8,3);
\draw (0,0) rectangle++ (8,0.5);
\draw (8,0.25) node[right] {$w$};
\draw (0,2.5) rectangle++ (8,0.5);
\draw[<->] (8.75,2.5) -- ++ (0,0.5);
\draw (8.75,2.75) node[right] {$r-1$};
\draw[<->] (8.75,0) -- ++ (0,0.5);
\draw (8.75,0.25) node[right] {$r-1$};
\draw (8,2.75) node[right] {$w$};
\draw[<->,>=latex] (0,3.25) -- (8,3.25);
\draw (4,3.25) node[above] {$n$};
\draw[<->,>=latex] (7.75,0.5) -- (7.75,2.5);
\draw (7.75,1.5) node[right] {$f(n)$};
\draw (0.5,0) rectangle ++(0.5,2.5);
\fill[pattern=north west lines, pattern color=blue]  (0.5,0) rectangle ++(0.5,2.5);
\draw (0.5,1.5) node[left]  {$x_{\{k\}\times\llbracket 0, f(n)+r-2 \rrbracket}$};
\draw (3.5,0) rectangle ++(0.5,2.5);
\fill[pattern=north west lines, pattern color=blue]  (3.5,0) rectangle ++(0.5,2.5);
\draw (4,1.5) node[right] {$x_{\{k+l\}\times\llbracket 0, f(n)+r-2 \rrbracket}$};
\draw[<->] (0.5,-0.25) -- ++(0.5,0);
\draw (0.5,-0.5) node[right] {$r-1$};
\draw[<->] (3.5,-0.25) -- ++(0.5,0);
\draw (3.5,-0.5) node[right] {$r-1$};

\end{tikzpicture}\] 
\caption{If $n-r+2>|\A|^{(r-1)(f(n)+r-2)}$, it is possible to find a pattern of $x$ for 
which the horizontal and vertical borders are similar.}\label{figure.PeriodicAndGluing} 
\end{figure}
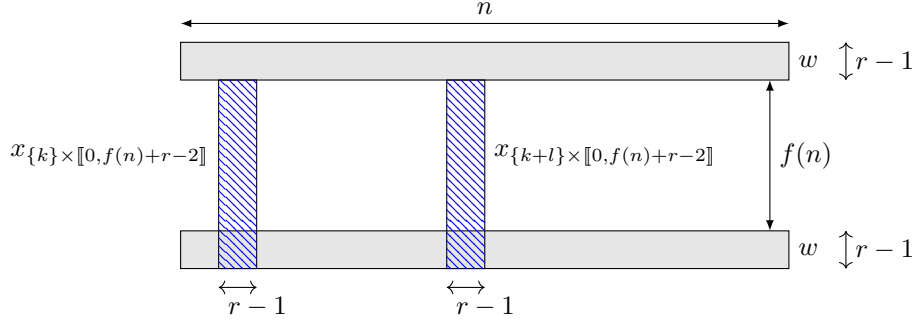

\begin{proof}
 Let $w$ be a $n \times (r-1)$ pattern in the language of $X$. 

\begin{enumerate}
\item \textbf{Gluing the pattern $w$ 
over itself:}

By the $f$-block gluing property, there exists $x \in X$ such that 
\[x_{\llbracket 0, n-1 \rrbracket \times \llbracket 0, r-2 \rrbracket} = w = x_{\llbracket 0, n-1 \rrbracket \times \llbracket f(n)+r-1,f(n)+2r-3\rrbracket}.\] 

\item \textbf{Taking $w$ long enough, 
two of the columns in the obtained pattern 
are equal:}

Consider the 
sub-patterns of $x_{\llbracket 0, n-1 \rrbracket \times \llbracket 0,f(n)+r-2 \rrbracket}$ over 
supports $\llbracket k,k+r-2 \rrbracket \times \llbracket 0,(f(n)+r-2) \rrbracket$. 
There are $n-(r-2)$ of them and the number of possibilities 
is $|\A|^{(r-1)(f(n)+r-2)}$. Since we have 
\[f(n)<\frac{\log_{|\A|}(n-r+2)}{r-1}-r+2,\] 
by the pigeon hole principle, there exists $k
\in\llbracket 0, n-r+1 \rrbracket$ and $l \ge 1$ 
such that 
 \[x_{\llbracket k,k+r-2 \rrbracket\times\llbracket 0, f(n)+r-2 \rrbracket}=
x_{\llbracket k+l,k+l+r-2 \rrbracket\times\llbracket 0, f(n)+r-2 \rrbracket}\] 
(see Figure~\ref{figure.PeriodicAndGluing}). 

\item \textbf{Construction 
of a periodic configuration:}

Consider the configuration defined by 
\[z_{(il, j(f(n
)+r-2))+ \llbracket 0,l-1 \rrbracket\times\llbracket 0,f(n)+r-2\rrbracket}=x_{\llbracket 
k,k+l-1\rrbracket\times\llbracket 0,f(n)+r-2\rrbracket}\]
 for all $(i,j)\in\Z^2$. It consists in 
covering $\Z^2$ with the pattern $x_{\llbracket 
k,k+l-1\rrbracket\times\llbracket 0,f(n)+r-2\rrbracket}$.
This configuration is periodic by definition. 
Moreover, it satisfies the local 
rules of $X$. We deduce that $z\in X$. Thus $X$ admits 
a periodic point.

\end{enumerate}
\end{proof}

\subsection{Under some smaller threshold, 
the set of periodic points is dense and 
the language is decidable}

Using a similar argument as in \cite{Pavlov-Schraudner-2014}, we obtain also an upper bound on 
the gap functions that force 
the density of periodic points, and 
as a consequence the decidability of 
the language. 

\begin{proposition} \label{prop.6} Let $X$ be 
some $f$-block gluing 
$\Z^2$-SFT, where $f$ is 
a function such that \[f(n) \in o(\log (n))\]
and $f \le id$.
Then $X$ has a dense set of periodic points.
\end{proposition}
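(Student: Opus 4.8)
The plan is to refine the argument of Proposition~\ref{proposition.StongBlockGluingPeriodicity}, which builds one periodic configuration, into one that builds a periodic configuration \emph{containing a prescribed pattern}. Write $r$ for the rank of $X$. First I would reduce to square patterns of arbitrarily large side: since $X$ is an SFT every pattern of $X$ is a sub-pattern of some $n$-block of $X$, and a periodic configuration in which an $n$-block occurs also exhibits every sub-pattern of that block; so it suffices to prove that for arbitrarily large $n$ and every $n$-block $p$ in the language of $X$ there is a periodic $x\in X$ with $x_{\llbracket 0,n-1\rrbracket^2}=p$. Fix such an $n$, taken large, and such a $p$.

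Second, I would make $p$ periodic in the horizontal direction. By $f$-block gluing there is $x\in X$ with $x_{\llbracket 0,n-1\rrbracket^2}=p$ and $x_{(L,0)+\llbracket 0,n-1\rrbracket^2}=p$, where $L:=n+f(n)$, so $n\le L\le 2n$ since $f\le\mathrm{id}$. Put $B:=x_{\llbracket 0,L-1\rrbracket\times\llbracket 0,n-1\rrbracket}$. Because $p$ occurs in $x$ at columns $\llbracket 0,n-1\rrbracket$ and again at columns $\llbracket L,L+n-1\rrbracket$, every $r\times r$ window straddling a vertical seam of the horizontally $L$-periodic repetition of $B$ coincides, row by row, with a genuine window of $x$; hence this repetition defines a pattern $Y_0$ on the strip $\mathbb Z\times\llbracket 0,n-1\rrbracket$ that is horizontally $L$-periodic, contains $p$, and has all its finite sub-patterns in $\mathcal L(X)$. (The symmetric construction also provides, via $f$-block gluing at displacement $(0,H)$ with $H:=n+f(n)$, a configuration in which $p$ occurs both at $\llbracket 0,n-1\rrbracket^2$ and at $(0,H)+\llbracket 0,n-1\rrbracket^2$, the region between the two copies having height only $f(n)$.)

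Third — this is the heart of the argument — I would close $Y_0$ up in the vertical direction by a pigeonhole in the spirit of Proposition~\ref{proposition.StongBlockGluingPeriodicity}, applied to the thin region between two vertically glued copies of $p$ (thickened by $r$ rows on each side): this region has height $O(f(n)+r)$ and width $n$, so it contains roughly $n$ distinct width-$(r-1)$ columns while the number of possible such columns is at most $|\mathcal A|^{(r-1)(f(n)+O(r))}$. The hypothesis $f\in o(\log n)$ is exactly what makes the latter quantity negligible compared with $n$ for $n$ large (whereas for mere existence of a periodic point the weaker bound $f=O(\log n)$ suffices); by the pigeonhole two of these columns agree, which yields a vertical period, and after combining it with the horizontal periodicity already obtained one gets a configuration of $X$ that is periodic in both directions and still contains $p$. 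Running this for all $n$ in an infinite set gives a dense set of periodic points.

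The step I expect to be the main obstacle is exactly this last reconciliation: arranging that the periodicity produced by the pigeonhole — which lives on a strip thinner and narrower than $p$ — is compatible with keeping the entire block $p$ inside a single fundamental domain of the final doubly periodic configuration. This is precisely the point that separates the density statement from the existence statement of Proposition~\ref{proposition.StongBlockGluingPeriodicity}, and it is why one must strengthen the hypothesis to $f\in o(\log n)$ so that the two (horizontal and vertical) pigeonhole counts can be made to succeed simultaneously while still enclosing $p$.
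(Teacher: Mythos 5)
Your reduction and your horizontal step are essentially fine: gluing $p$ to itself at displacement $(n+f(n),0)$ and repeating the resulting $(n+f(n))\times n$ rectangle does give a horizontally periodic, locally admissible strip containing $p$ (although ``all finite sub-patterns lie in $\mathcal{L}(X)$'' is more than your window argument shows and more than you need, since local admissibility suffices for an SFT). The genuine gap is the vertical closing-up, and it is not merely the technical reconciliation you flag at the end: the pigeonhole you set up cannot produce what you want. Two equal width-$(r-1)$ \emph{columns} inside the thin band between two vertically glued copies of $p$ give, at best, a horizontal repetition of that band; they do not give a vertical period, and the vertical analogue (pigeonhole over height-$(r-1)$ rows of that band) has only $O(f(n)+r)$ candidates against $|\A|^{(r-1)n}$ possibilities, so it fails. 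Moreover, equality of columns only in the gap says nothing about the columns passing through the copies of $p$ themselves, and the distance between your two equal columns has no reason to be compatible with the horizontal period $n+f(n)$ fixed in your second step. So no doubly periodic configuration containing $p$ is actually produced.

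The paper closes the figure by a different mechanism, which is the missing idea. Fix $n$ and $p$ and let $k\to\infty$: glue $2^k$ copies of $p$ horizontally (iteratively, at gaps prescribed by $f$), obtaining a rectangle $Q$ of width roughly $3^k n$ and height $n$; then glue one and the same $(r-1)$-row pattern $R$ both above and below $Q$, so that the top and bottom boundary strips of the resulting rectangle are literally identical and vertical periodicity is automatic. The pigeonhole is then applied \emph{horizontally}, to the $2^k$ full-height, $(r-1)$-wide columns adjacent to the $2^k$ occurrences of $p$: the number of possible such columns is at most $|\A|^{2(r+n)+2f(3^k n)}$, and this is exactly where $f\in o(\log)$ enters --- the width is about $3^k n$, so one needs $f(3^k n)=o(k)$ to get this count below $2^k$ for large $k$ (with $n$ fixed). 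Two of these columns then coincide, and the rectangle between them, capped by $R$ above and below, tiles the plane periodically, contains a copy of $p$, and is locally admissible, hence lies in $X$. Your use of $o(\log n)$ (to beat about $n$ candidates in a band of height $f(n)$) is a different and, as it stands, insufficient mechanism; the exponential horizontal stacking together with the identical top/bottom cap $R$ is what makes the two periodicities compatible while keeping all of $p$ inside one fundamental domain.
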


\begin{figure}[ht]
\[\begin{tikzpicture}[scale=0.5]

\fill[gray!20] (0,0) rectangle (17,1);
\fill[gray!20] (0,8) rectangle (17,9);
\fill[gray!20] (2,3) rectangle (5,6);
\fill[gray!20] (5.5,3) rectangle (8.5,6);
\fill[gray!20] (12,3) rectangle (15,6);

\fill[pattern=north west lines, pattern color=blue] (0,0) rectangle (1,8);
\fill[pattern=north west lines, pattern color=blue] (10,0) rectangle (11,8);
\node at (8.5,10) {R};
\node at (8.5,-1) {R};
\draw (0,0) -- (0,9);
\draw (0,0) -- (17,0);
\draw (0,9) -- (17,9);
\node at (3.5,4.5) {P};
\draw (2,3) rectangle (5,6);
\node at (7,4.5) {P};
\draw (5.5,3) rectangle (8.5,6);
\node at (13.5,4.5) {P};
\draw (12,3) rectangle (15,6);
\draw (0,0) rectangle (1,9);
\draw (10,0) rectangle (11,9);
\draw (0,0) rectangle (17,1);
\draw (0,8) rectangle (17,9);
\end{tikzpicture}\]
\caption{An illustration of the proof of Proposition~\ref{prop.6}. 
The result of the procedure 
is the colored rectangle.}\label{pic.illustrationdensity}
\end{figure}
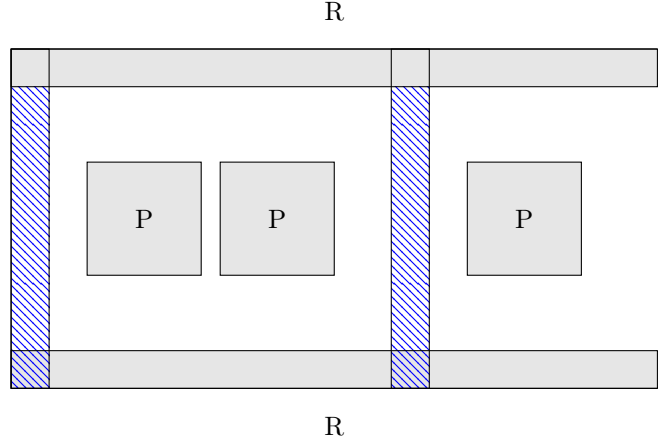

\begin{proof} 

\begin{enumerate}
\item \textbf{Gluing multiple times 
the same block $P$ horizontally:}

Consider $P$ some $n$-block in the language of $X$, and take $2^k$ copies of it. We group them 
by two and glue the couples horizontally, at distance $f(n)$. Then glue 
the obtained patterns 
after grouping them by two, at distance $f(2n+f(n))$, 
and repeat this operation until having 
one rectangular block $Q$. The size of this block is equal to $(2id+f)^{\circ k} (n)$.

\item \textbf{Glue the same rectangular pattern 
over and under the obtained one:}

Then consider some $(2id+f)^{\circ k} (n) \times (r-1)$ pattern $R$, 
where $r$ is the rank of the SFT $X$. 
Glue it on the top of $Q$ with 
$f(\text{max} ((2id+f)^{\circ k} (n), n , r))$ lines between the two rectangles. 
Then glue the rectangle $R$ under the 
obtained pattern
with \[f( \text{max} (f(\text{max} ((2id+f)^{\circ k} (n), n , r))+r+n,(2id+f)^{\circ k} (n)))\]
lines between them. For $k$ great enough (depending on n), 
these two last distances are equal to $f((2id+f)^{\circ k} (n))$, and 
$f(f((2id+f)^{\circ k} (n)) + n + r)$ respectively.
By the gluing property, the obtained pattern (see Figure~\ref{pic.illustrationdensity}) 
is in the language of $X$.

\item \textbf{Pigeon hole 
principle on the columns of the obtained 
pattern:}

Consider the 
$(r-1) \times (r+n+f((2id+f)^{\circ k} (n))+f(f((2id+f)^{\circ k} (n) + n + r))$ sub-patterns that appear on the bottom 
of the columns just on the right of each occurrence of the pattern $P$. There are $2^k$ of them, 
and there are at most
$(|\A|) ^{r+n+f((2id+f)^{\circ k} (n)+f(f((2id+f)^{\circ k} (n) + n + r))}$ different possibilities. 
From the fact that $f \le id$,
it follows that 
\[(|\A|) ^{r+n+f((2id+f)^{\circ k} (n)+f(f((2id+f)^{\circ k} (n) + n + r))} \le 
(|\A|) ^{2(r+n)+2f(3^{k} n)} \le 2^k\]
for $k$ great enough.

By the pigeon hole principle, two of these 
patterns are equal. Consider the rectangle 
between these two occurrences (including the second one).

\item \textbf{Construction 
of a periodic configuration 
containing $P$:}

This rectangle can be repeated on the whole plane to get a periodic configuration which is in $X$.

The set of periodic configurations obtained by this method is dense 
in $X$ (for every pattern 
in its language appear in such 
a configuration).
\end{enumerate}

\end{proof}

\begin{proposition} \label{proposition.decidable.language} A subshift $X$ verifying the conditions of 
Proposition~\ref{prop.6} with $f$ computable 
has decidable language. 
\end{proposition}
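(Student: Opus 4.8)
The plan is to combine the general fact that the complement of the language of an SFT is recursively enumerable with an effective version of the density argument used in the proof of Proposition~\ref{prop.6}.

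First I would recall that for any SFT $X$ on an alphabet $\A$ defined by a finite set $\mathcal{F}$ of forbidden patterns, the set $\mathcal{L}(X)^c$ is recursively enumerable: given a pattern $p$ on a support $S$, for each $N$ one checks whether some pattern on a box $\llbracket -N,N\rrbracket^2 \supseteq S$ that restricts to $p$ on $S$ avoids every pattern of $\mathcal{F}$; by compactness, $p\notin\mathcal{L}(X)$ if and only if this check fails for some $N$. So it suffices to produce a semi-algorithm accepting exactly the patterns of $\mathcal{L}(X)$, and then run the two semi-algorithms in parallel.

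For $\mathcal{L}(X)$ I would revisit the proof of Proposition~\ref{prop.6}. That proof shows that every $n$-block $P\in\mathcal{L}(X)$ occurs in a doubly periodic configuration of $X$: one glues $2^k$ copies of $P$ into a rectangle of side at most $(2\,\mathrm{id}+f)^{\circ k}(n)$, frames it above and below with two $(r-1)$-row patterns using gaps controlled by $f$, and extracts via a pigeon-hole argument a sub-rectangle tiling the plane. When $f$ is computable, this construction yields a \emph{computable} bound $B(n)$ on the periods of the resulting configuration: one takes $k=k(n)$ to be the least integer with $(|\A|)^{2(r+n)+2f(3^{k}n)}\le 2^{k}$, which exists because $f\in o(\log)$ and is effectively found because $f$ is computable and the inequality is decidable; the side of the final fundamental domain is then bounded by an explicit expression in $n$, $k(n)$, $|\A|$, $r$ and values of $f$.

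The algorithm is then: on input an $n$-block $p$, compute $B(n)$; enumerate all $m\times m'$-periodic configurations with $m,m'\le B(n)$ (finitely many, each given by a colouring of an $m\times m'$ fundamental domain); discard those in which a pattern of $\mathcal{F}$ occurs (a finite check); accept if and only if $p$ occurs in one of the survivors. A surviving configuration satisfies the local rules everywhere by periodicity, hence lies in $X$, so acceptance implies $p\in\mathcal{L}(X)$; conversely, if $p\in\mathcal{L}(X)$ then by the effective version of Proposition~\ref{prop.6} above, $p$ occurs in one of the enumerated valid periodic configurations, so the algorithm accepts. The hard part will be the middle step: turning the clause ``for $k$ great enough (depending on $n$)'' of the proof of Proposition~\ref{prop.6} into an explicit computable $k(n)$ and checking that the period of the constructed configuration is bounded computably in terms of it --- this is precisely where $f\in o(\log)$ (existence of $k(n)$) and $f$ computable (effectiveness of $k(n)$ and of $B(n)$) are used; everything else is a finite brute-force search. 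One may also avoid computing $B(n)$ by enumerating all valid doubly periodic configurations with no a priori bound and running this in parallel with the semi-algorithm for $\mathcal{L}(X)^c$ --- Proposition~\ref{prop.6} guarantees the former halts on every $p\in\mathcal{L}(X)$ --- but the bounded version makes the decision procedure self-contained.
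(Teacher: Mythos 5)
Your proposal is correct and takes essentially the same route as the paper: an effective version of Proposition~\ref{prop.6} (using that $f$ is computable to get a computable bound on the size of a periodic fundamental rectangle in which any $n$-block of $\mathcal{L}(X)$ must appear), followed by a finite brute-force search over these bounded periodic witnesses. The preliminary observation that the complement of the language is recursively enumerable, and the unbounded parallel variant, are harmless but unnecessary, since your bounded search already decides membership outright, exactly as in the paper's argument.
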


\begin{proof}
It is sufficient to prove that it is possible 
to decide if a block is in the language of $X$. 
Indeed, a pattern is in the language of $X$ 
if and only if it is a sub-pattern of a block
in the language of $X$ whose size is 
determined by the pattern. In order to decide 
if a pattern is in the language of $X$, we consider 
all the blocks of this size having this pattern 
as sub-pattern, and use the algorithm on 
block to decide if these blocks are in the 
language of $X$. The pattern is in 
the language of $X$ if and only if one
of these blocks is in it.

We proved, in the proof of Proposition~\ref{prop.6}, 
that any $n$-block is sub-pattern of a rectangular 
pattern whose size is bounded by a computable 
function of $n$ (since $f$ is computable), 
whose topmost $r$ lines are equal to the bottommost $r$ 
lines and the leftmost $r$ columns are equal to 
the rightmost $r$ columns. To decide if 
some $n$-block $P$ is in the language of $X$, 
we test for all the rectangular patterns 
that verify these conditions, if it has $P$ as 
sub-pattern. The block $P$ is in the language
if and only if this is the case for at least one of these 
patterns.
\end{proof}

\subsection{An example of linearly block gluing SFT with non decidable language}

The property obtained in Proposition~\ref{proposition.decidable.language}
is no longer true considering linearly block gluing subshifts of finite type.
In this section we provide an example of 
linearly block gluing subshift of finite type 
having non decidable language: 

\begin{proposition} \label{prop.langundec}
There exists some $O(n)$-block gluing $\Z^2$-SFT with 
non decidable language.
\end{proposition}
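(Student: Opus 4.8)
The plan is to combine a Robinson-type hierarchical skeleton with a computation layer --- the ``cells / nuclei / DNA / error-signal'' machinery alluded to in the introduction --- and to plug into the cells a Turing machine whose halting behaviour is undecidable, so that non-decidability of the language becomes a reduction from the complement of a non-recursive recursively enumerable set. Concretely, fix a Turing machine $M$ such that $A=\{w:\ M\ \text{halts on input}\ w\}$ is recursively enumerable but not recursive. I would build a $\Z^2$-SFT $X$ whose geometry is a variant of $X_{adR}$ \emph{engineered to be linearly block gluing} (the same kind of skeleton as for Theorem~\ref{thm.aperiodicity}), on top of which every order-$k$ cell simulates $M$ on the input $w$ written in its nucleus (the \emph{DNA}), with a space--time budget $\rho(k)$ increasing with $k$; the finitely many local rules produce a forbidden pattern as soon as such a simulation reaches a halting state within its budget, and they force the DNA to be constant inside a cell and transmitted unchanged (up to padding) to the cell of next order containing it, so that a given DNA value propagates to ancestor cells of every order.

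Next I would set up the reduction. For a word $w$, let $p_w$ be the finite pattern describing an entire cell of the least order $k_0(w)$ whose nucleus holds $w$, carrying DNA $w$ together with a legal computation state; $p_w$ has size polynomial in $|w|$ and is computable from $w$. I claim that $p_w\in\mathcal{L}(X)$ if and only if $w\notin A$. If $M$ halts on $w$ in $T$ steps, then in any configuration containing $p_w$ this cell is --- by the hierarchy, since an order-$n$ supertile forces an order-$(n{+}1)$ one, so that cells of all orders appear and nest --- contained in a cell of some order $k$ with $\rho(k)>T$; the simulation of $M$ on $w$ inside that cell then reaches a halting state and produces a forbidden pattern, a contradiction, so $p_w\notin\mathcal{L}(X)$. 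Conversely, if $M$ never halts on $w$, take any configuration of the skeleton, give every cell DNA $w$ (this is consistent with the transmission/padding rules), and run the simulations: none ever halts, so the configuration is legal and it contains $p_w$. Hence $\{w:\ p_w\in\mathcal{L}(X)\}$ equals the complement of $A$, which is not recursive, and therefore $\mathcal{L}(X)$ is not decidable.

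It remains to check that $X$ is $O(n)$-block gluing, and this --- rather than the reduction --- is the crux, and the reason the construction has to be so structured. Two points are involved. First, the skeleton must itself be linearly block gluing: $X_{adR}$ is only shown to be linearly \emph{net} gluing (Proposition~\ref{prop.rob.linear.net.gluing}), and upgrading this to block gluing is part of the structured construction underlying Theorem~\ref{thm.aperiodicity}; one argues via Proposition~\ref{proposition.DefLinearPower}, reducing the gluing of two $n$-blocks to that of two patterns over cells (by Lemma~\ref{prop.complete.rob} an $n$-block completes to a pattern over a cell of order $O(\log n)$, hence of size $O(n)$), and then one realizes these two cells at an arbitrary large offset inside a single configuration that carries them in two \emph{independent} hierarchical regions (as in the type $(ii)$ and $(iii)$ configurations, separated by a line or a cross), the part in between being filled by fresh cells all carrying a fixed non-halting DNA $w_0$ (which exists since $A\neq\N$). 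Second, the computation layer must not obstruct this: the two blocks to be glued come from legal configurations, so their cells already carry DNA on which $M$ has not halted within the relevant budget, the filler cells carry the harmless $w_0$, and since distinct hierarchical regions have independent DNA no conflict arises, while the gap stays linear because it is governed by the linear block gluing of the skeleton plus a bounded overhead from the finitely many extra layers. The main obstacle is precisely this reconciliation: the device that makes the language undecidable --- coupling each cell's computation to an unbounded budget through the hierarchy, so that fixing one cell's DNA constrains $M$ on that input \emph{forever} --- is exactly the kind of long-range dependency that naively destroys block gluing, and arranging that it nonetheless coexists with a legal configuration for every pair of $n$-blocks at every relative position of norm at least of order $n$ is what the structured construction is built to achieve.
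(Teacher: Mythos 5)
The crux of your argument --- the block-gluing verification --- does not go through as described, and the failure is structural rather than a matter of missing detail. Your reduction needs the DNA $w$ of a cell to be transmitted unchanged to its ancestor cells of \emph{every} order, since only then does a halting computation of $M$ on $w$ (at whatever time $T$) eventually exceed some budget $\rho(k)$ and produce a forbidden pattern; if the budget attached to $w$ stayed bounded, membership of $p_w$ in the language would only encode ``$M$ does not halt on $w$ within $\rho(k_0(w))$ steps'', which is decidable. But with upward transmission, two $n$-blocks each containing a complete cell with distinct non-halting DNA words $w_1\neq w_2$ are both in the language, yet they can never coexist in a type $(i)$ configuration: they eventually lie inside a common ancestor cell whose DNA would have to equal both. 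Placing them in different infinite supertiles does not rescue block gluing either, because in $X_{adR}$ (and in any decoration of it) the infinite supertiles are aligned and separated by a single line, column or cross, so the admissible relative offsets form a thin rigid set --- this is precisely why the paper only establishes linear \emph{net} gluing for $X_{adR}$ (Proposition~\ref{prop.rob.linear.net.gluing}) and why Theorem~\ref{thm.aperiodicity} has to pass through the distortion operator of Theorem~\ref{theorem.transformation.block.gluing}. Invoking that operator does not close the gap either: it turns linearly net gluing SFTs into linearly block gluing ones, so you would first have to prove that your Robinson-plus-computations subshift is linearly net gluing, and your DNA-transmission rule destroys exactly that (gluing would fail at \emph{every} offset for blocks with conflicting DNA). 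Making Robinson cells carry computations while keeping net gluing is the heavy content of the $X_{s,N}$ construction later in the paper, and it works there only because the ``DNA'' is a bounded symbol fixed by the construction (which quarters compute), not an arbitrary input word propagated through the hierarchy.

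The paper's proof of this proposition avoids the hierarchy altogether. It builds a two-layer SFT whose first layer consists of infinite, constantly growing triangular computation areas on a blank background (a decorated version of Example~\ref{ex.linear}), in which each successive row of an area may additionally be shifted one step left or right; the second layer runs a universal Turing machine inside each area with all halting states forbidden. Undecidability follows because deciding the language would decide which input words $w\#^n$ can legally appear at the base of an area, i.e.\ the halting problem; linear block gluing is then almost immediate, since any two blocks complete into bottoms of areas surrounded by blank symbols, horizontal gluing costs nothing (rows can be shifted in opposite directions), and vertical gluing costs only a linear number of rows. If you want to salvage your approach, the lesson is to anchor the unbounded computation to a single infinite, deformable area attached to the pattern itself, rather than to an infinite chain of nested ancestors whose data must agree globally.
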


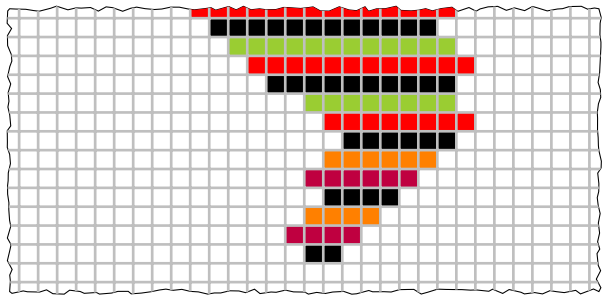
\begin{figure}[ht]
\begin{center}
\begin{tikzpicture}[scale=0.5]
 \clip[draw,decorate,decoration={random steps, segment length=3pt, amplitude=1pt}] (-7.75,-3.75) rectangle (7.75,3.75);

\fill[black] (0,-3) rectangle ++(1,0.5);
\fill[purple] (-0.5,-2.5) rectangle ++(2,0.5);
\fill[orange] (0,-2) rectangle ++(2,0.5);
\fill[black] (0.5,-1.5) rectangle ++(2,0.5);
\fill[purple] (0,-1) rectangle ++(3,0.5);
\fill[orange] (0.5,-0.5) rectangle ++(3,0.5);
\fill[black] (1,0) rectangle ++(3,0.5);
\fill[red] (0.5,0.5) rectangle ++(4,0.5);
\fill[YellowGreen] (0,1) rectangle ++(4,0.5);
\fill[black] (-1,1.5) rectangle ++(5,0.5);
\fill[red] (-1.5,2) rectangle ++(6,0.5);
\fill[YellowGreen] (-2,2.5) rectangle ++(6,0.5);
\fill[black] (-2.5,3) rectangle ++(6,0.5);
\fill[red] (-3,3.5) rectangle ++(7,0.5);

\draw[gray!50,line width=1pt,step=0.5] (-8,-4) grid (8,4);

\end{tikzpicture}
\caption{An example of configuration that respects the rules of the 
first layer of $X_{\texttt{Undec}}$.}\label{figure.undec}
\end{center}
\end{figure}

\noindent \textbf{Idea of the proof:} \textit{we construct 
a structure subshift which consists of infinite and constantly growing 
areas for the computations of machines. These areas 
can be distorted by shifting its adjacent lines one with respect to the 
other. This can be done in any of the two possible directions.
This allows the linear block gluing. Then we implement 
a universal Turing machine, and forbid it to stop.
This implies the non decidability.}

\begin{proof} Let $X_{undec}$ 
be a subshift, product of two layers. Here 
is a description of these layers: 

\begin{enumerate}
\item \textbf{Computation areas layer:} 

This layer has the following 
symbols: 
\[\redsq, \purplesq, \greensq, \orangesq, \blacksq, 
\blanksq\]
Its local rules are the following ones:
\begin{itemize}
\item two horizontally 
adjacent non blank symbols have the same color.
\item two vertically adjacent non blank symbols verify the following rules: 
\begin{enumerate}
\item if the bottom symbol is $\blacksq$, the top symbol is 
$\redsq$ or $\purplesq$.
\item if the bottom symbol is $\redsq$, the top symbol is 
$\greensq$.
\item if the bottom symbol is $\purplesq$, the top symbol 
is $\orangesq$.
\item if the bottom symbol is $\greensq$ or $\orangesq$, 
the top symbol is $\blacksq$.
\end{enumerate}
These rules allow to shift a row of 
the area from the one under, 
choosing the direction. Moreover the shifts 
happen each time by groups of two, 
so that the shift counterbalances the growth 
of the area.
\item the patterns \[
\begin{tikzpicture}[scale=0.25]
\begin{scope}[xshift=-4cm]
\fill[gray!70] (-1,1) rectangle ++(1,1);
\fill[black] (-1,0) rectangle ++(1,1);
\draw[gray!50,line width=1pt] (0,1) grid++(-1,1);
\draw[gray!50,line width=1pt] (0,0) grid++(1,2);
\draw[gray!50,line width=1pt] (0,0) rectangle ++(-1,1);
\end{scope}

\begin{scope}[xshift=0cm]
\fill[gray!70] (0,1) rectangle ++(1,1);
\fill[black] (0,0) rectangle ++(1,1);
\draw[gray!50,line width=1pt] (0,1) grid++(-1,1);
\draw[gray!50,line width=1pt] (0,0) grid++(1,2);
\draw[gray!50,line width=1pt] (0,0) rectangle ++(-1,1);
\end{scope}

\begin{scope}[xshift=4cm]
\fill[black] (0,0) rectangle ++(1,2);
\fill[black] (0,0) rectangle ++(-1,1);
\fill[gray!70] (0,1) rectangle ++(1,1);
\draw[gray!50,line width=1pt] (0,1) grid++(-1,1);
\draw[gray!50,line width=1pt] (0,0) grid++(1,2);
\draw[gray!50,line width=1pt] (0,0) rectangle ++(-1,1);
\end{scope}

\begin{scope}[xshift=9cm]
\fill[black] (-1,0) rectangle ++(1,2);
\fill[black] (-1,0) rectangle ++(2,1);
\fill[gray!70] (-1,1) rectangle ++(1,1);
\draw[gray!50,line width=1pt] (0,1) grid++(-1,1);
\draw[gray!50,line width=1pt] (0,0) grid++(1,2);
\draw[gray!50,line width=1pt] (0,0) rectangle ++(-1,1);
\end{scope}

\begin{scope}[xshift=13cm]
\fill[black] (-1,0) rectangle ++(1,2);
\fill[gray!70] (-1,1) rectangle ++(3,1);
\draw[gray!50,line width=1pt] (0,1) grid++(-1,1);
\draw[gray!50,line width=1pt] (0,0) grid++(2,2);
\draw[gray!50,line width=1pt] (0,0) rectangle ++(-1,1);
\end{scope}

\begin{scope}[xshift=17cm]
\fill[black] (1,0) rectangle ++(1,2);
\fill[gray!70] (-1,1) rectangle ++(3,1);
\draw[gray!50,line width=1pt] (0,1) grid++(-1,1);
\draw[gray!50,line width=1pt] (0,0) grid++(2,2);
\draw[gray!50,line width=1pt] (0,0) rectangle ++(-1,1);
\end{scope}

\begin{scope}[xshift=21cm]
\fill[black] (-1,0) rectangle ++(2,1);
\draw[gray!50,line width=1pt] (0,1) grid++(-1,1);
\draw[gray!50,line width=1pt] (0,0) grid++(1,2);
\draw[gray!50,line width=1pt] (0,0) rectangle ++(-1,1);
\end{scope}

\end{tikzpicture} 
\] are forbidden, where 
the gray symbol stands for any non blank symbol.

\item the patterns \[
\begin{tikzpicture}[scale=0.25]
\begin{scope}[xshift=-4cm]
\fill[gray!70] (0,1) rectangle ++(1,1);
\fill[red] (0,0) rectangle ++(1,1);
\draw[gray!50,line width=1pt] (0,1) grid++(-1,1);
\draw[gray!50,line width=1pt] (0,0) grid++(1,2);
\draw[gray!50,line width=1pt] (0,0) rectangle ++(-1,1);
\end{scope}

\begin{scope}[xshift=0cm]
\fill[red] (0,0) rectangle ++(1,2);
\fill[red] (0,0) rectangle ++(-1,1);
\fill[gray!70] (0,1) rectangle ++(1,1);
\draw[gray!50,line width=1pt] (0,1) grid++(-1,1);
\draw[gray!50,line width=1pt] (0,0) grid++(1,2);
\draw[gray!50,line width=1pt] (0,0) rectangle ++(-1,1);
\end{scope}

\begin{scope}[xshift=4cm]
\fill[red] (1,0) rectangle ++(1,2);
\fill[gray!70] (-1,1) rectangle ++(3,1);
\draw[gray!50,line width=1pt] (0,1) grid++(-1,1);
\draw[gray!50,line width=1pt] (0,0) grid++(2,2);
\draw[gray!50,line width=1pt] (0,0) rectangle ++(-1,1);
\end{scope}

\begin{scope}[xshift=8cm]
\fill[gray!70] (-1,1) rectangle ++(2,1);
\fill[red] (-1,0) rectangle ++(1,1);
\draw[gray!50,line width=1pt] (-1,0) grid++(2,2);
\end{scope}

\begin{scope}[xshift=12cm]
\fill[gray!70] (-1,1) rectangle ++(1,1);
\fill[red] (-1,0) rectangle ++(1,1);
\draw[gray!50,line width=1pt] (-1,0) grid++(2,2);
\end{scope}

\begin{scope}[xshift=17cm]
\fill[gray!70] (-1,1) rectangle ++(1,1);
\fill[red] (-1,0) rectangle ++(3,1);
\draw[gray!50,line width=1pt] (-1,0) grid++(3,2);
\end{scope}

\begin{scope}[xshift=21cm]
\fill[red] (-1,0) rectangle ++(2,1);
\draw[gray!50,line width=1pt] (0,1) grid++(-1,1);
\draw[gray!50,line width=1pt] (0,0) grid++(1,2);
\draw[gray!50,line width=1pt] (0,0) rectangle ++(-1,1);
\end{scope}

\end{tikzpicture} 
\] are forbidden, where 
the gray symbol stands for any non blank symbol.
Similar rules are imposed, replacing the red symbol with a green one.

\item the patterns \[
\begin{tikzpicture}[scale=0.25]
\begin{scope}[xshift=-4cm]
\fill[gray!70] (-1,1) rectangle ++(1,1);
\fill[purple] (-1,0) rectangle ++(1,1);
\draw[gray!50,line width=1pt] (0,1) grid++(-1,1);
\draw[gray!50,line width=1pt] (0,0) grid++(1,2);
\draw[gray!50,line width=1pt] (0,0) rectangle ++(-1,1);
\end{scope}

\begin{scope}[xshift=0cm]
\fill[purple] (-1,0) rectangle ++(2,1);
\fill[gray!70] (-1,1) rectangle ++(1,1);
\draw[gray!50,line width=1pt] (0,1) grid++(-1,1);
\draw[gray!50,line width=1pt] (0,0) grid++(1,2);
\draw[gray!50,line width=1pt] (0,0) rectangle ++(-1,1);
\end{scope}

\begin{scope}[xshift=4cm]
\fill[purple] (-1,0) rectangle ++(1,1);
\fill[gray!70] (-1,1) rectangle ++(3,1);
\draw[gray!50,line width=1pt] (0,1) grid++(-1,1);
\draw[gray!50,line width=1pt] (0,0) grid++(2,2);
\draw[gray!50,line width=1pt] (0,0) rectangle ++(-1,1);
\end{scope}

\begin{scope}[xshift=8cm]
\fill[gray!70] (-1,1) rectangle ++(2,1);
\fill[purple] (0,0) rectangle ++(1,1);
\draw[gray!50,line width=1pt] (-1,0) grid++(2,2);
\end{scope}

\begin{scope}[xshift=12cm]
\fill[gray!70] (0,1) rectangle ++(1,1);
\fill[purple] (0,0) rectangle ++(1,1);
\draw[gray!50,line width=1pt] (-1,0) grid++(2,2);
\end{scope}

\begin{scope}[xshift=17cm]
\fill[gray!70] (1,1) rectangle ++(1,1);
\fill[purple] (-1,0) rectangle ++(3,1);
\draw[gray!50,line width=1pt] (-1,0) grid++(3,2);
\end{scope}

\begin{scope}[xshift=21cm]
\fill[purple] (-1,0) rectangle ++(2,1);
\draw[gray!50,line width=1pt] (0,1) grid++(-1,1);
\draw[gray!50,line width=1pt] (0,0) grid++(1,2);
\draw[gray!50,line width=1pt] (0,0) rectangle ++(-1,1);
\end{scope}

\end{tikzpicture} 
\] are forbidden, where 
the gray symbol stands for any non blank symbol.
Similar rules replacing the red symbol with an orange one. 
These rules allow to control the shape of the areas.
\end{itemize}

These rules imply that: 
\begin{itemize}
\item above $\blanksq \ \blacksq ^ n \blanksq$
there is $\redsq \ \redsq ^ n \redsq$, or $\purplesq \ \purplesq ^ n \purplesq$.
\item above $\blanksq \ \redsq ^ n \blanksq$ there is 
$\greensq ^ n \blanksq  \blanksq$.
\item above $\blanksq \ \purplesq ^ n \blanksq$ there is 
$\blanksq \blanksq \orangesq ^ n$.
\item above $\blanksq \ \greensq ^ n \blanksq$ there is 
$\blacksq ^ n \blanksq  \blanksq$.
\item above $\blanksq \ \orangesq ^ n \blanksq$ there is 
$\blanksq \blanksq \blacksq ^ n$.
\end{itemize} 

The computation areas consist of colored areas. They lie on a background of 
$\blanksq$ symbols.
These areas are distorted infinite triangles: in two adjacent rows, the intersection of the
area with the top row is larger than in the bottom row by one position 
on the right and one position on the left. Then this row is shifted or not, 
horizontally in one of the two directions, depending on the colors of the rows.
See Figure~\ref{figure.undec} for an illustration.

\item \textbf{Machines layer:}

The second layer consists in the 
implementation of a Turing machine 
over these areas. 

The symbols are the elements of 
$\mathcal{Q} \times \mathcal{A} \times \{\rightarrow,\leftarrow\}$, 
and the local rules are the following ones: 

\begin{enumerate}
\item the blank symbols are superimposed 
with a blank symbol, and the Turing machine symbols are superimposed over 
non blank symbols. 
\item Moreover, considering a $3 \times 2$ pattern whose projection 
on the first layer is fully colored and the bottom row is black,
the rules of the space-time diagram of the machine apply.
When the bottom row is not black, the symbols of this row are copied 
on the top row, on the shifted position according to the color of the bottom row.
There rules of the space time diagram are adapted when on the border 
of the area.
\item Any halting state is forbidden.
\end{enumerate}

These rules imply that for two adjacent rows of an area: 
\begin{itemize}
\item if the bottom row is colored black, then the 
top row is the image of the bottom row by the machine process.
\item in the other cases, the top row is just the image of the bottom 
row by the shift in the direction corresponding to the color of 
the bottom row.
\end{itemize} \bigskip

We use a universal Turing machine, which has 
the following behavior when the initial tape 
is written with $w \#^{\infty}$ for $w$ a word 
on $\{0,1\}$: 
it reads the word $w$ which codes 
for the number of a Turing 
machine, and then simulates this machine on empty tape. 

\item \textbf{Properties of this subshift:}

\begin{enumerate}
\item \textbf{The language of this subshift is not 
decidable:} if it was, 
we would be able to 
decide which of the words $w \#^{n}$ 
can be written on the bottom of an area. This is impossible 
since the halting problem is not decidable. 

\item 
\textbf{This subshift is sharp 
linearly block gluing:} 
the worst case for gluing two blocks is 
when these blocks are filled with colored symbols in the first layer. 
In order to glue them, we complete 
the projection of 
the two blocks in the first layer, 
as in example~\ref{ex.linear}, into the bottom of 
a computation area, surrounding it with blank symbols except on 
the top. Then we complete the trajectory of the machine head 
if there is one.
The two extended patterns can be glued horizontally 
without constraint on the distance, because lines 
can be shifted towards opposite directions. 
For vertical gluing, we extend one of this patterns shifting the area 
in one direction so that the columns above this patterns are blank.
The number of rows and columns depends linearly on the size of this pattern. 
Then we glue the second pattern on the top, shifting the area in the 
opposite direction and filling all the positions of $\Z^2$ left undefined 
by blank symbols.
\end{enumerate}
\end{enumerate}
\end{proof}

\subsection{Existence of aperiodic linearly 
block gluing subshifts}

In this section, we give a proof of the following theorem: 

\begin{theorem}
\label{thm.aperiodicity}
There exists a linearly block 
gluing aperiodic $\Z^2$-SFT.
\end{theorem}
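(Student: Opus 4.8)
The plan is to realise the required subshift as a product of layers built over a \emph{flexible} variant of the cell hierarchy of the Robinson subshift $X_{adR}$. The base \emph{skeleton} layer organises $\Z^2$ into nested square cells: an order‑$0$ cell has a fixed size, and each order‑$(n+1)$ cell is cut into a fixed array of order‑$n$ cells together with ``padding'' strips between them and around them. The crucial feature, which is what will yield block gluing rather than only net gluing, is that the widths of these padding strips are \emph{not} fixed: they range freely over an interval whose length is comparable to the side of an order‑$n$ cell. As in $X_R$ (and the self‑forcing property of supertiles of~\cite{R71}), the local rules make the hierarchy self‑forcing: every cell is a sub‑cell of a cell of the next order. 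On top of this we put a \emph{DNA} layer, which attaches to each cell, written inside a small region at its centre (its \emph{nucleus}), a bounded amount of data — for the present theorem a pair of residues playing the role of Robinson's $i,j$‑counters is enough — together with a \emph{transmission} layer carrying these residues from the nucleus outward and enforcing compatibility between a cell and its sub‑cells. The rules are written so that they only ever constrain symbols lying on cell boundaries or on nuclei, never on the padding; this ``centralization of information'' is exactly what will let us reposition cells freely.

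Granting the construction, aperiodicity is the easy half. Suppose $\sigma^{\vec u}x = x$ for some $\vec u \neq (0,0)$. Since the hierarchy is self‑forcing, $x$ contains cells of every order; choose an order‑$n$ cell $K$ whose side far exceeds $||\vec u||_{\infty}$. Then $K + \vec u$ is again an order‑$n$ cell overlapping $K$, and climbing the hierarchy of $\vec u$‑translates of $K$ while reading the DNA layer forces a clash with the cell/sub‑cell compatibility rules — precisely the mechanism that makes $X_{adR}$ aperiodic, and one that only invokes the parts of the structure (boundaries and nuclei) that remain rigid. Hence no nontrivial translation fixes a configuration and $X$ is aperiodic.

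The heart of the proof is the linearly block gluing property. Combining Proposition~\ref{proposition.DefLinearPower} with an analogue of Lemma~\ref{prop.complete.rob} for our construction, it suffices to verify the gluing condition for blocks that are patterns on a complete cell, for cells of every order (the cell sides form an exponential sequence, so they provide the ``exceptional values'' needed in Proposition~\ref{proposition.DefLinearPower}). So let $P,Q$ be patterns on order‑$k$ cells of side $s$, and let $\vec u$ satisfy $D := ||\vec u||_{\infty} \ge Cs$ for a suitable constant $C$. Choose an order $m > k$ with order‑$m$ cell side of order $D$; inside one order‑$m$ cell we want to place two order‑$k$ sub‑cells carrying $P$ and $Q$ with centres differing by $\vec u$. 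This is possible because the total ``play'' accumulated between scales $k$ and $m$ is a geometric sum comparable to the order‑$m$ side, hence at least $D$ once $C$ is large enough that several scales lie strictly between $s$ and $D$. We then complete every coarser scale with a canonical choice of padding widths, fill each nucleus with canonical DNA, propagate everything along the unique paths dictated by the transmission rules, and pad the outside of the order‑$m$ cell arbitrarily far. The one thing to check is that no local rule is violated, and here the design pays off: every constraint lives on a cell boundary or a nucleus and the padding is free, so it is enough to choose all the padding widths so that the nuclei we introduce are pairwise disjoint and disjoint from the cell boundaries we are forced to draw — a finite system of inequalities, solvable as soon as $D$ is large compared to $s$. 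This produces a configuration of $X$ with $Q$ at the origin and $P$ at $\vec u$, so $X$ is linearly block gluing.

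The main obstacle is the tension between the two requirements: the skeleton must stay rigid enough (on boundaries and nuclei) for the Robinson‑style argument to force aperiodicity, yet be loose enough (in the padding) that two far‑apart cells embed into a common cell at an arbitrary relative displacement. Resolving this is exactly what dictates the design — bounded but variable padding, and all information confined to nuclei — and the most delicate technical point is the bookkeeping that shows the freely chosen padding widths can always be made to avoid the rigid part of the skeleton.
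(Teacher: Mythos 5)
Your proposal does not follow the paper's route and, as written, it has a genuine gap: everything rests on a construction that is only postulated, and its central design requirement is self-defeating. You ask for an SFT whose local rules (i) force, in every configuration, a self-similar hierarchy of cells with padding strips whose widths vary freely over an interval comparable to the side of the cells of that order, and (ii) never constrain symbols lying in the padding. If the padding really is unconstrained, then the configuration made only of padding symbols violates no rule, so the subshift contains a monochromatic, hence periodic, point and aperiodicity fails. Conversely, to exclude that configuration and to get what your aperiodicity sketch actually uses (cells of every order present everywhere, distinct same-order cells disjoint, so that a period $\vec{u}$ with $||\vec{u}||_{\infty}$ smaller than the cell side would force an order-$n$ cell to be mapped onto itself), you must bound the padding widths relative to the cell sides; since these bounds grow with the order, they cannot be enforced by rules that never touch the padding --- you need signals or counters travelling through it, and then placing two cells at an arbitrary relative displacement is no longer ``free''. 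The bookkeeping you defer at the end is precisely this missing mechanism, and the block-gluing step additionally assumes, without justification, that the achievable offsets of an order-$k$ sub-cell inside an order-$m$ cell fill out \emph{all} integer vectors in a box of side comparable to the order-$m$ cell.

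The paper resolves the tension you identify in the opposite way: the hierarchical layer is kept completely rigid (the Robinson subshift, aperiodic and shown in Proposition~\ref{prop.rob.linear.net.gluing} to be linearly net gluing), and the flexibility is added externally by the distortion operator $d_{\A}$ of Section~\ref{sec.pseudo.covering}, which re-reads configurations along curves that may shift rows; Proposition~\ref{prop.8} shows this operation preserves aperiodicity, and Theorem~\ref{theorem.transformation.block.gluing} shows that $d_{\tilde{\A}} \circ \rho \circ d_{\A}$ turns linear net gluing into linear block gluing. So the paper never needs a flexible hierarchy at all, whereas your approach requires building one from scratch --- a substantial construction which your argument does not supply.
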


\begin{figure}[h]
\[\begin{tikzpicture}[scale=0.4]
\begin{scope}
\foreach \x in {1,...,9} {
\draw[color=gray!60] 
(\x,0) -- (\x,10);}
\foreach \x in {1,...,9} {
\draw[line width =0.4mm] (0,\x) -- (10,\x);}
\end{scope}

\draw[-latex] (13,5) -- (17,5);

\begin{scope}[xshift=20cm]
\foreach \x in {1,...,9} {
\draw[color=gray!60] (\x,0) -- (\x,10);}
\draw[line width =0.4mm] (0,9) -- (8,9) 
-- (8,8) -- (10,8);
\draw[line width =0.4mm] (0,8) -- (7,8) 
-- (7,7) -- (10,7);
\draw[line width =0.4mm] (0,7) -- (3,7) 
-- (3,6) -- (10,6);
\draw[line width =0.4mm] (0,5) -- (10,5) ;
\draw[line width =0.4mm] (0,4) -- (10,4);
\draw[line width =0.4mm] (0,3) -- (5,3) 
-- (5,2) -- (10,2);
\draw[line width =0.4mm] (0,1) -- (8,1) 
-- (8,0) -- (10,0);
\end{scope}
\end{tikzpicture}\]
\caption{\label{fig.deformation} Illustration 
of the misshaping principle of the transformation 
for the proof of Theorem~\ref{thm.aperiodicity}.}
\end{figure}
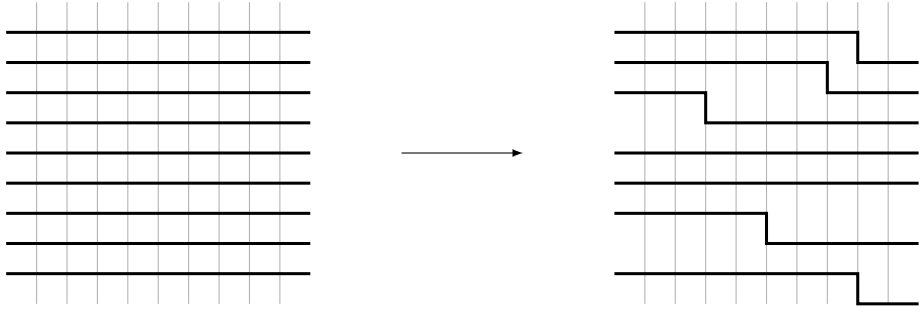

\noindent \textbf{Idea of the proof:} \textit{Let 
us recall the idea of the proof for 
Theorem~\ref{thm.aperiodicity},
presented in the introduction. This proof uses an operator
on subshifts which transforms linearly net gluing subshifts of 
finite type into 
linearly block gluing subshifts of finite type.
Moreover, it preserves the aperiodicity.
The principle 
of this transformation is to distort $\Z^2$,
as illustrated on Figure~\ref{fig.deformation}, multiple 
times and in different directions.}

\textit{We then apply this transformation on the 
Robinson subshift
which is known to be aperiodic and that we proved 
to be linearly net gluing (Proposition~\ref{prop.rob.linear.net.gluing}). }

\subsubsection{ \label{sec.pseudo.covering}
A subshift inducing pseudo-coverings by curves}

\paragraph{Definition}

Let us denote $\Delta$ the $\Z^2$ SFT on alphabet $\{\rightarrow,\downarrow\}$, 
defined by the following 
forbidden patterns: 
\[\begin{array}{c}
\downarrow\\
\downarrow
\end{array}, \begin{array}{cc}
\rightarrow & \downarrow \\
\rightarrow & \rightarrow
\end{array}.\] 

\paragraph{Pseudo-coverings by curves} 
Let us introduce some words in order to talk about the global behavior induced by these rules: 
\begin{itemize}
\item An (infinite) \textbf{curve} in $\Z^2$ is a set ${\mathcal{C}}=\varphi (\Z)$ for some application 
$\varphi : \Z \rightarrow \Z^2$ such that for all $k \in \Z$, 
$\varphi (k+1) =\varphi (k) + (1,0)$ or $\varphi (k+1) =\varphi (k) + (1,-1)$.
\item We say that a curve is {\bf{shifted downwards}} at position $\vec{j}\in \Z^2$ when there exists some $k\in \Z$ 
such that $\varphi (k)=\vec{j}$ and $\varphi(k+1)=\vec{j}+(1,-1)$.
\item A {\bf{pseudo-covering of $\Z^2$ by curves}} is
a sequence of curves $({\mathcal{C}}_k)_{k\in\Z}$ such that for every $\vec{j} \in \Z^2$, there exists some $k \in \Z$ such that
$\{\vec{j}, \vec{j}+(0,1)\} \bigcap {\mathcal{C}}_k \neq \emptyset$ (meaning that every element of $\Z^2$ is in 
a curve or the vector just above is), and for every $k \neq k'$, 
${\mathcal{C}}_k \bigcap {\mathcal{C}}_{k'} = \emptyset$ (the curves do not intersect). 
We say that two curves in this 
pseudo-covering are {\bf{contiguous}} when the area delimited by these two curves does not contain any third curve. The gap 
between two contiguous curves in some column is the distance between the intersection of these two curves with the column. 
This gap is 0 or 1 between two contiguous curves in a pseudo-covering.
\end{itemize}

\paragraph{\label{sec3.3.3} A configuration in $\Delta$ induces a pseudo-covering by curves} 

Let $\delta \in \Delta$. Let us consider the pseudo-covering of $\Z^2$ by curves 
$({\mathcal{C}}_k (\delta))_{k\in\Z}$, such that
${\mathcal{C}}_k (\delta) = \varphi_{\delta,k} (\Z)$ and 
where $\varphi_{\delta,k}$ is as follows. \bigskip

If $\delta_{(0,0)} = \rightarrow$, then $(0,0) = \varphi_{\delta,0} (0)$. Else 
$(0,1) = \varphi_{\delta,0} (0)$.
In addition, for $m$ 
the biinfinite sequence of integers such 
that $\delta_{(m_k,0)}$ is the $k$th $\rightarrow$ in the column 0, counting 
from the previous considered one, then $(m_k,0)=\varphi_{\delta,k} (0)$. \bigskip

For every $\vec{i} \in \Z^2$ such that $\vec{i} = \varphi_{\delta,k} (n)$ for some $k\in \Z , n\in \Z$: 

\begin{itemize}
\item if $\delta_{\vec{i}} = \rightarrow$, and $\delta_{\vec{i}+(1,0)} = \downarrow$, 
then $\varphi_{\delta,k} (n+1) = \vec{i} + (1,-1)$
(the curve is shifted downwards in this column).
\item else $\delta_{\vec{i}} = \rightarrow$ and 
$\delta_{\vec{i} + (1,0)} = \rightarrow$, then $\varphi_{\delta,k} (n+1) = \vec{i} + (1,0)$.
\end{itemize}

The first rule implies that all the curves of this pseudo-covering can not be shifted downwards multiple times 
in the same column. The second one implies that 
if a curve is shifted downwards at position $\vec{i} \in \Z^2$, then
there is no curve going through position $\vec{i} - (1,1)$. \bigskip

Figure~\ref{fig.curves} gives an illustration of the definition of a curve in a configuration of $\Delta$.
 
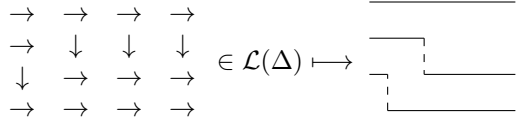
\begin{figure}[ht]

\[\begin{tikzpicture}[scale=0.8]
\draw (2.4,0.3) -- (0.3,0.3);
\draw[dashed] (0.3,0.3) -- (0.3,0.9);
\draw (0.3,0.9) -- (0,0.9);
\draw (0.9,0.9) -- (2.4,0.9);
\draw[dashed] (0.9,1.5) -- (0.9,0.9);
\draw (0.9,1.5) -- (0,1.5);
\draw (2.4,2.1) -- (0,2.1);

\draw(-0.1,1.1) node[left]{
$\begin{array}{cccc}
\rightarrow & \rightarrow & \rightarrow & \rightarrow \\
\rightarrow & \downarrow & \downarrow & \downarrow \\
\downarrow & \rightarrow & \rightarrow & \rightarrow \\
\rightarrow & \rightarrow & \rightarrow & \rightarrow 
\end{array}\in\Lang(\Delta)\longmapsto$
};

\end{tikzpicture}\]
\caption{\label{fig.curves} An example of an 
admissible pattern in $\Delta$ 
and the curves going through it.}
\end{figure}

\subsubsection{Distortion operators on subshifts of finite type}
Let $\A$ be some alphabet.
Denote $S_{\A}$ the set of SFT over $\A$. 
We introduce operators 
$d_{\A} : {\mathcal{S}}_{\A} \rightarrow {\mathcal{S}}_{\tilde{\A}}$, with
$\tilde{\A}= \left(\A \cup \{\begin{tikzpicture} \draw (0,0) rectangle (0.3,0.3);
\end{tikzpicture}\} \right) \times \{\rightarrow,\downarrow\}$.

\paragraph{Pseudo-projection}

Consider the subshift $\Delta_{\A} \subset \left( \A \cup \{\begin{tikzpicture} \draw (0,0) rectangle (0.3,0.3);
\end{tikzpicture}\}\right)^{\Z^2} \times \Delta$, 
where the forbidden patterns are the ones 
defining $\Delta$ and the patterns where a symbol in $\A$ is superimposed 
to a $\downarrow$ symbol or where $\begin{tikzpicture} \draw (0,0) rectangle (0.3,0.3);
\end{tikzpicture}$ is superimposed to a $\rightarrow$ symbol. \bigskip

Define a \textbf{pseudo-projection} ${\mathcal{P}} : 
\Delta_{\A} \rightarrow {\A}^{\Z^2}$, as follows: for $(y,\delta) \in \Delta_{\A}$, 
\[({\mathcal{P}}(y,\delta))_{i,j} = y_{\varphi_{\delta,j} (i)}.\] 
Notice that the function ${\mathcal{P}}$ is continuous but not shift invariant.

We denote $\pi_1$ the projection on the first layer 
($\pi_1 (y,\delta)=y$), and $\pi_2$ the projection on the second layer. \bigskip

\paragraph{Definition of the operators} 

Let $X$ be some SFT on the alphabet $\A$, and define $d_{\A} (X)= \mathcal{P} ^{-1} (X)$. 
Denoting $r$ the rank of the SFT $X$, $d_{\A} (X)$ can be defined 
by imposing that, considering the intersection of a set of $r$ contiguous curves with 
$r$ consecutive columns, the corresponding $r$-block is not a forbidden pattern in $X$. 
Because the gap between two contiguous curves is bounded, 
$d_{\A}(X)$ is defined by a finite set of forbidden patterns. Then this is a SFT. \bigskip

One can think to $d_{\mathcal{A}} (X)$ 
as having two layers. The first one has 
alphabet $\left(\A \cup \{\begin{tikzpicture} \draw (0,0) rectangle (0.3,0.3);
\end{tikzpicture}\} \right)$ and called 
the $X$ layer. The second one has alphabet 
$\{\rightarrow, \downarrow\}$ and is called the $\Delta$ layer.
For $X$ is the subshift $X_R$ or $X_{adR}$, 
Figure~\ref{fig.preprojrob} shows an example of pattern in the $X$ layer of the subshift 
$d_{\A} (X)$ whose pseudo-projection 
is the supertile south west order two supertile.

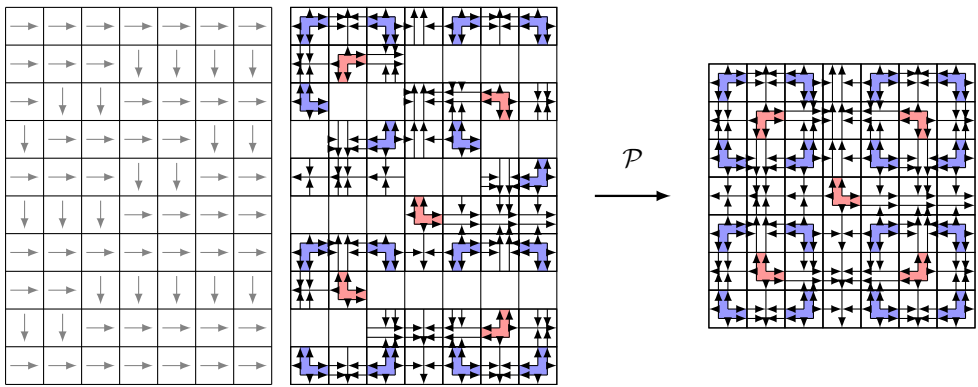
\begin{figure}[ht]
\[\begin{tikzpicture}[scale=0.25]
\robibluebastgauche{0}{-6};
\robibluebastgauche{8}{-6};
\robibluebastgauche{0}{8};
\robibluebastgauche{8}{6};
\robiredbasgauche{2}{-2};
\robiredbasgauche{6}{2};
\robibluehauttgauche{0}{0};
\robibluehauttgauche{8}{0};
\robibluehauttgauche{0}{12};
\robibluehauttgauche{8}{12};
\robiredhautgauche{2}{10};
\robibluebastdroite{4}{-6};
\robibluebastdroite{12}{-6};
\robibluebastdroite{4}{6};
\robibluebastdroite{12}{4};
\robiredbasdroite{10}{-4};
\robibluehauttdroite{4}{0};
\robibluehauttdroite{12}{0};
\robibluehauttdroite{4}{12};
\robibluehauttdroite{12}{12};
\robiredhautdroite{10}{8};
\robitwobas{2}{-6}
\robitwobas{10}{-6}
\robitwobas{6}{-4}
\robitwogauche{0}{-2}
\robitwogauche{2}{4}
\robitwogauche{0}{10}
\robitwodroite{12}{-4}
\robitwodroite{12}{8}
\robitwohaut{2}{12}
\robitwohaut{10}{12}
\robionehaut{6}{-6}
\robionehaut{6}{0}
\robionegauche{0}{4}
\robionegauche{4}{4}
\robisixbas{2}{6}
\robisixdroite{4}{-4}
\robisixhaut{2}{0}
\robisevendroite{4}{10}
\robithreehaut{6}{6}
\robisixhaut{6}{8}
\robithreehaut{6}{12}
\robisixgauche{8}{-4}
\robisevengauche{8}{8}
\robisevenbas{10}{4}
\robisevenhaut{10}{0}
\robisixdroite{10}{2}
\robithreedroite{8}{2}
\robithreedroite{12}{2}
\draw (0,-6) rectangle (14,14);
\foreach \x in {-2,-1,0,1,...,6} \draw (0,2*\x) -- (14,2*\x);
\foreach \x in {1,...,6} \draw (2*\x,-6) -- (2*\x,14);
\foreach \x in {0,...,7} \draw (-1-2*\x,-6) -- (-1-2*\x,14);
\foreach \x in {-3,...,7} \draw (-1,2*\x) -- (-15,2*\x);

\begin{scope}[xshift= 22cm,yshift=-3cm]
\robibluebastgauche{0}{0};
\robibluebastgauche{8}{0};
\robibluebastgauche{0}{8};
\robibluebastgauche{8}{8};
\robiredbasgauche{2}{2};
\robiredbasgauche{6}{6};
\robibluehauttgauche{0}{4};
\robibluehauttgauche{8}{4};
\robibluehauttgauche{0}{12};
\robibluehauttgauche{8}{12};
\robiredhautgauche{2}{10};
\robibluebastdroite{4}{0};
\robibluebastdroite{12}{0};
\robibluebastdroite{4}{8};
\robibluebastdroite{12}{8};
\robiredbasdroite{10}{2};
\robibluehauttdroite{4}{4};
\robibluehauttdroite{12}{4};
\robibluehauttdroite{4}{12};
\robibluehauttdroite{12}{12};
\robiredhautdroite{10}{10};
\robitwobas{2}{0}
\robitwobas{10}{0}
\robitwobas{6}{2}
\robitwogauche{0}{2}
\robitwogauche{2}{6}
\robitwogauche{0}{10}
\robitwodroite{12}{2}
\robitwodroite{12}{10}
\robitwohaut{2}{12}
\robitwohaut{10}{12}
\robionehaut{6}{0}
\robionehaut{6}{4}
\robionegauche{0}{6}
\robionegauche{4}{6}
\robisixbas{2}{8}
\robisixdroite{4}{2}
\robisixhaut{2}{4}
\robisevendroite{4}{10}
\robithreehaut{6}{8}
\robisixhaut{6}{10}
\robithreehaut{6}{12}
\robisixgauche{8}{2}
\robisevengauche{8}{10}
\robisevenbas{10}{8}
\robisevenhaut{10}{4}
\robisixdroite{10}{6}
\robithreedroite{8}{6}
\robithreedroite{12}{6}
\draw (0,0) rectangle (14,14);
\foreach \x in {1,...,6} \draw (0,2*\x) -- (14,2*\x);
\foreach \x in {1,...,6} \draw (2*\x,0) -- (2*\x,14);
\end{scope}

\draw[gray!97,-latex] (-2.75,13) -- (-1.25,13);
\draw[gray!97,-latex] (-4.75,13) -- (-3.25,13);
\draw[gray!97,-latex] (-6.75,13) -- (-5.25,13);
\draw[gray!97,-latex] (-8.75,13) -- (-7.25,13);
\draw[gray!97,-latex] (-10.75,13) -- (-9.25,13);
\draw[gray!97,-latex] (-12.75,13) -- (-11.25,13);
\draw[gray!97,-latex] (-14.75,13) -- (-13.25,13);

\draw[gray!97,-latex] (-10.75,11) -- (-9.25,11);
\draw[gray!97,-latex] (-12.75,11) -- (-11.25,11);
\draw[gray!97,-latex] (-14.75,11) -- (-13.25,11);
\draw[gray!97,-latex] (-2,11.75) -- (-2,10.25);
\draw[gray!97,-latex] (-4,11.75) -- (-4,10.25);
\draw[gray!97,-latex] (-6,11.75) -- (-6,10.25);
\draw[gray!97,-latex] (-8,11.75) -- (-8,10.25);

\draw[gray!97,-latex] (-14.75,9) -- (-13.25,9);
\draw[gray!97,-latex] (-2.75,9) -- (-1.25,9);
\draw[gray!97,-latex] (-4.75,9) -- (-3.25,9);
\draw[gray!97,-latex] (-6.75,9) -- (-5.25,9);
\draw[gray!97,-latex] (-8.75,9) -- (-7.25,9);
\draw[gray!97,-latex] (-10,9.75) -- (-10,8.25);
\draw[gray!97,-latex] (-12,9.75) -- (-12,8.25);

\draw[gray!97,-latex] (-6.75,7) -- (-5.25,7);
\draw[gray!97,-latex] (-8.75,7) -- (-7.25,7);
\draw[gray!97,-latex] (-10.75,7) -- (-9.25,7);
\draw[gray!97,-latex] (-12.75,7) -- (-11.25,7);
\draw[gray!97,-latex] (-14,7.75) -- (-14,6.25);
\draw[gray!97,-latex] (-2,7.75) -- (-2,6.25);
\draw[gray!97,-latex] (-4,7.75) -- (-4,6.25);

\draw[gray!97,-latex] (-2.75,5) -- (-1.25,5);
\draw[gray!97,-latex] (-4.75,5) -- (-3.25,5);
\draw[gray!97,-latex] (-10.75,5) -- (-9.25,5);
\draw[gray!97,-latex] (-12.75,5) -- (-11.25,5);
\draw[gray!97,-latex] (-14.75,5) -- (-13.25,5);
\draw[gray!97,-latex] (-6,5.75) -- (-6,4.25);
\draw[gray!97,-latex] (-8,5.75) -- (-8,4.25);

\draw[gray!97,-latex] (-2.75,3) -- (-1.25,3);
\draw[gray!97,-latex] (-4.75,3) -- (-3.25,3);
\draw[gray!97,-latex] (-6.75,3) -- (-5.25,3);
\draw[gray!97,-latex] (-8.75,3) -- (-7.25,3);
\draw[gray!97,-latex] (-14,3.75) -- (-14,2.25);
\draw[gray!97,-latex] (-12,3.75) -- (-12,2.25);
\draw[gray!97,-latex] (-10,3.75) -- (-10,2.25);

\draw[gray!97,-latex] (-2.75,1) -- (-1.25,1);
\draw[gray!97,-latex] (-4.75,1) -- (-3.25,1);
\draw[gray!97,-latex] (-6.75,1) -- (-5.25,1);
\draw[gray!97,-latex] (-8.75,1) -- (-7.25,1);
\draw[gray!97,-latex] (-10.75,1) -- (-9.25,1);
\draw[gray!97,-latex] (-12.75,1) -- (-11.25,1);
\draw[gray!97,-latex] (-14.75,1) -- (-13.25,1);

\draw[gray!97,-latex] (-12.75,-1) -- (-11.25,-1);
\draw[gray!97,-latex] (-14.75,-1) -- (-13.25,-1);
\draw[gray!97,-latex] (-10,-0.25) -- (-10,-1.75);
\draw[gray!97,-latex] (-8,-0.25) -- (-8,-1.75);
\draw[gray!97,-latex] (-6,-0.25) -- (-6,-1.75);
\draw[gray!97,-latex] (-4,-0.25) -- (-4,-1.75);
\draw[gray!97,-latex] (-2,-0.25) -- (-2,-1.75);

\draw[gray!97,-latex] (-2.75,-3) -- (-1.25,-3);
\draw[gray!97,-latex] (-4.75,-3) -- (-3.25,-3);
\draw[gray!97,-latex] (-6.75,-3) -- (-5.25,-3);
\draw[gray!97,-latex] (-8.75,-3) -- (-7.25,-3);
\draw[gray!97,-latex] (-10.75,-3) -- (-9.25,-3);
\draw[gray!97,-latex] (-14,-2.25) -- (-14,-3.75);
\draw[gray!97,-latex] (-12,-2.25) -- (-12,-3.75);

\draw[gray!97,-latex] (-2.75,-5) -- (-1.25,-5);
\draw[gray!97,-latex] (-4.75,-5) -- (-3.25,-5);
\draw[gray!97,-latex] (-6.75,-5) -- (-5.25,-5);
\draw[gray!97,-latex] (-8.75,-5) -- (-7.25,-5);
\draw[gray!97,-latex] (-10.75,-5) -- (-9.25,-5);
\draw[gray!97,-latex] (-12.75,-5) -- (-11.25,-5);
\draw[gray!97,-latex] (-14.75,-5) -- (-13.25,-5);

\draw[-latex,line width =0.3mm] (16,4) -- (20,4);
\node at (18,6) {$\mathcal{P}$};
\end{tikzpicture}\]

\caption{\label{fig.preprojrob} Example of a pattern 
which is sent to the south west two order supertile 
by pseudo-projection. }
\end{figure}

\paragraph{Properties of the operators 
\texorpdfstring{$d_{\A}$}{dA}}

We use the following properties of the operators $d_{\A}$ 
in order to prove 
Theorem~\ref{thm.aperiodicity}.

\begin{proposition} \label{prop.8} For an aperiodic 
SFT $X$ on the alphabet $\A$, 
$d_{\A}(X)$ is also aperiodic.
\end{proposition}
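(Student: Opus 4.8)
The plan is to argue by contradiction: suppose $d_{\A}(X)$ admits a configuration $(y,\delta)$ with a nontrivial period $\vec{v}=(a,b)\in\Z^2\setminus\{(0,0)\}$, i.e. $\sigma^{\vec{v}}(y,\delta)=(y,\delta)$. I want to produce from this a nontrivial period of a configuration of $X$, contradicting the aperiodicity of $X$. The natural candidate is the pseudo-projection $\mathcal{P}(y,\delta)\in X$, but since $\mathcal{P}$ is continuous and not shift-invariant, I have to be careful about how the period $\vec{v}$ of $(y,\delta)$ translates into a symmetry of $\mathcal{P}(y,\delta)$.

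First I would analyze what a period $\vec{v}$ does to the $\Delta$-layer $\delta$. Since $\delta$ is periodic of period $\vec{v}$, the pseudo-covering by curves $(\mathcal{C}_k(\delta))_k$ is invariant under translation by $\vec{v}$: translating by $\vec{v}$ permutes the curves. In particular the number of $\rightarrow$-symbols in any column is the same for columns $x$ and $x+a$ (when $a\neq 0$), and more precisely the ``curve structure'' has a horizontal quasi-period; I would extract from this that there is an integer shift relating the curve indices, say $\sigma^{\vec{v}}$ maps $\mathcal{C}_k(\delta)$ to $\mathcal{C}_{k-s}(\delta)$ for some fixed $s$ depending only on $\vec{v}$ and $\delta$ (the vertical displacement of a curve crossing $a$ columns, combined with $b$, forces such an $s$). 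The key combinatorial fact I need here is that the curves never intersect and have gap $0$ or $1$, so their relative order is preserved by any translation, giving a well-defined index shift $s$.

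Next I would translate this into the first layer. Recall $(\mathcal{P}(y,\delta))_{i,j}=y_{\varphi_{\delta,j}(i)}$. Because $\sigma^{\vec{v}}(y)=y$ and the parametrizations $\varphi_{\delta,j}$ are shifted compatibly — $\varphi_{\delta,j}$ and $\varphi_{\delta,j'}$ for curves related by the $\vec{v}$-translation differ by $\vec{v}$ up to re-parametrization — one gets that $\mathcal{P}(y,\delta)$ is invariant under the translation $(i,j)\mapsto(i+t, j+s)$ for a suitable nonzero vector $(t,s)$ built from $a,b$ and $s$. Concretely: moving $j$ by $s$ passes to a curve that is the $\vec{v}$-translate, and along that curve $y$ is unchanged because $y$ is $\vec{v}$-periodic; the horizontal coordinate $i$ gets shifted by the amount needed to realign the parametrizations, which is constant. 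So $\mathcal{P}(y,\delta)$ has a period $(t,s)$. I must check $(t,s)\neq(0,0)$: if $b\neq 0$ then $s\neq 0$ (a vertical shift of the curves forces an index shift); if $b=0$ and $a\neq 0$, then $s=0$ and I need $t\neq 0$, which holds because a nonzero horizontal shift of a pseudo-covering with bounded gaps and at least one $\rightarrow$ per column induces a nonzero horizontal shift on the projected rows. Either way $(t,s)\neq(0,0)$, so $\mathcal{P}(y,\delta)\in X$ is a nontrivially periodic configuration of $X$, contradicting aperiodicity of $X$.

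The main obstacle I anticipate is the bookkeeping in the middle step: making precise the claim that a period of $\delta$ induces a \emph{constant} index shift $s$ on the family of curves and a \emph{constant} horizontal realignment $t$, rather than something that drifts with $j$. This requires using both forbidden patterns of $\Delta$ (a curve cannot be shifted downwards twice in the same column, and a downward shift blocks the cell diagonally below-left) to control the global geometry of the pseudo-covering tightly enough that periodicity of $\delta$ genuinely forces periodicity of the whole curve family with a single shift vector. Once that rigidity is established, transporting the period through $\mathcal{P}$ and checking non-degeneracy of $(t,s)$ is routine. A degenerate-looking case to handle separately is when $\delta$ itself forces all curves to be pure horizontal lines (no downward shifts at all): then $\mathcal{P}$ is essentially a shift-equivariant projection and the argument collapses to the obvious one.
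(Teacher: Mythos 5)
Your proposal is correct, and it reaches the conclusion by a somewhat different mechanism than the paper. The paper assumes a \emph{doubly} periodic configuration $z$ (period $n$ in both directions), encodes the heights at which the curves meet each column as residues modulo $n$, observes that following the curves across $n$ columns induces an invertible map $\psi$ on a finite set of residues, and uses $\psi^c=\id$ to conclude that the pseudo-projection is horizontally periodic with period $cn$ — a contradiction. You instead start from a single period vector $\vec{v}=(a,b)$ and transport it directly: translation by $\vec{v}$ permutes the (pairwise disjoint, column-wise graph-like) curves, and since an increasing bijection of $\Z$ onto $\Z$ is a translation, it shifts their indices by a constant $s$; because $\varphi_{\delta,j}(i)$ is exactly the point of curve $j$ in column $i$, one gets $\varphi_{\delta,j+s}(i+a)=\varphi_{\delta,j}(i)+\vec{v}$, hence $\mathcal{P}(y,\delta)$ is $\sigma^{(a,s)}$-invariant with $(a,s)\neq(0,0)$ (if $a=0$ then $b\neq 0$ forces $s\neq 0$), contradicting aperiodicity of $X$ since $\mathcal{P}(y,\delta)\in X$. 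This buys two things: you work with the strong notion of aperiodicity (no single nontrivial period) directly, avoiding the paper's implicit reduction to doubly periodic points, and you obtain the period of the projection explicitly rather than through the order of a permutation. Two remarks on your anticipated difficulties: the feared ``drift'' does not arise — the horizontal realignment is exactly $t=a$ because the parametrization is anchored column-by-column, and the constancy of $s$ follows from the increasing-bijection argument (the well-definedness of the curve family as a translation-covariant object uses the second forbidden pattern of $\Delta$, which makes backward extension of a curve unique); and your side claims ``$b\neq 0\Rightarrow s\neq 0$'' and ``$b=0\Rightarrow s=0$'' are not justified in general, but they are only needed (and are true) in the case $a=0$, so the non-degeneracy check stands.
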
 

\noindent \textbf{Idea:} \textit{the main 
argument of this proof is that 
if a configuration in $d_{\mathcal{A}}(X)$ 
is periodic, then the projection on 
the $\Delta$ layer is periodic. This means that
although there is a distortion of the configuration 
in $X$, the distortion is done in a periodic 
way. From this, we deduce that the 
pseudo-projection on $X$ of this configuration 
is periodic.}

\begin{figure}[ht] 
\begin{center}
\begin{tikzpicture} 
\draw (0,-0.3) -- (0,5.1) ;
\draw (0.6,-0.3) -- (0.6,5.1);
\foreach \x in {1,2,4,5,7,8} \draw (0,\x*0.6) -- (0.6,\x*0.6);
\foreach \x in {0,3,6} \draw[red!] (0,\x*0.6) -- (0.6,\x*0.6);
\draw (4.8,-0.3) -- (4.8,5.1) ;
\draw (5.4,-0.3) -- (5.4,5.1);
\foreach \x in {1,2,4,5,7,8} \draw (4.8,\x*0.6) -- (5.4,\x*0.6);
\foreach \x in {0,3,6} \draw[red!] (4.8,\x*0.6) -- (5.4,\x*0.6);
\node at (-1.5,1.8) {$\omega^0_i$};
\node at (-1.5,3.6) {$\omega^0_j$};

\node at (6.9,0) {$\omega^{c}_i$};
\node at (6.9,1.8) {$\omega^{c}_j$};
\draw[<->] (0,5.4) -- (4.8,5.4);
\node at (2.4,5.7) {c};
\draw[dashed] (0.6,3.6) -- (2.4,3.6);
\draw[dashed] (2.4,3.6) -- (2.4,3);
\draw[dashed] (2.4,3) -- (3,3);
\draw[dashed] (3,3) -- (3,2.4);
\draw[dashed] (3,2.4) -- (3.6,2.4);
\draw[dashed] (3.6,2.4) -- (3.6,1.8);
\draw[dashed,-latex] (3.6,1.8) -- (4.8,1.8);
\draw[dashed] (0.6,1.8) -- (1.2,1.8);
\draw[dashed] (1.2,1.8) -- (1.2,1.2);
\draw[dashed] (1.2,1.2) -- (1.8,1.2);
\draw[dashed] (1.8,1.2) -- (1.8,0.6);
\draw[dashed] (1.8,0.6) -- (2.4,0.6);
\draw[dashed] (2.4,0.6) -- (2.4,0);
\draw[dashed,-latex] (2.4,0) -- (4.8,0);
\end{tikzpicture}
\end{center}
\caption{\label{fig.aperiodicity} Schema of the proof of Proposition~\ref{prop.8}.}
\end{figure}
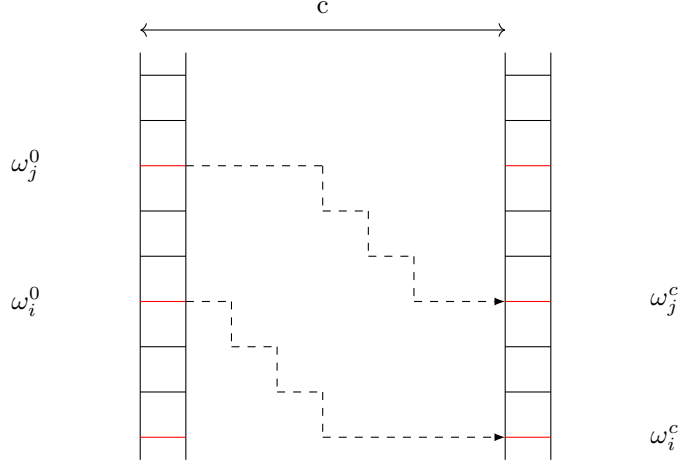

\begin{proof}

Assume that there exists a 
configuration $z \in d_{\A}(X)$ which 
is periodic: there exists $n>0$ such that 
for all $i,j$, $z_{i+n,j}=z_{i,j+n}=z_{i,j}$.
We will prove that the pseudo-projection of $z$ on $X$, 
$x = {\mathcal{P}}(z)$ is periodic. 

\begin{enumerate}

\item \textbf{Coding 
the positions of intersections 
of the curves with a column:}

To each column $k$ in $z$ we associate the bi-infinite word $\omega^k$ in $(\Z / n \Z)^{\Z}$ 
such that for all $i \in \Z$, $\omega^k_i$ is the 
element $\overline{m_i}$ 
of $\Z/n\Z$, class modulo $n$ of $m_i$ where $(0,m_i)$ is the 
intersection position of the $i$th curve of 
$\pi_2 (z)$ with the column k.

\item \textbf{Function relating 
the codings of two columns:}

Following a curve (see Figure~\ref{fig.aperiodicity}) from the column 0 to the column $n$, we get an application $\psi$
from the set of possible $\overline{m_i}$ into itself. 
This 
comes from the vertical periodicity of the projection of $z$ on the second layer. 
The word $\omega^n$ is obtained from $\omega^0$ applying $\psi$ to all 
the letters in $\omega^n$. 

\item \textbf{Coding
of the intersections and periodicity:}

Since $\psi$ is an invertible
function from a finite set into itself (indeed, we have an inverse map following the curve backwards), 
there exists some $c>0$ integer such that 
$\psi ^c = \id$. As a consequence, 
$\omega^{nc+j} = \omega^j$ for all integers $j$. 
That means that the column $cn+j$ is obtained by shifting 
$kn$ times downwards the column $j$, for some $k\ge 0$. Using the horizontal periodicity of $y$, we then have that \[(x_{j,l})_{l\in \Z} = (x_{cn+j,l+kn})_{l \in \Z}.\] 
Using the vertical periodicity, that $(x_{j,l})_{l\in \Z} = (x_{cn+j,l})_{l \in \Z}$, hence 
the configuration $x \in X$ is periodic, which can not be true.
\end{enumerate}

As a consequence, no configuration in $d_{\A}(X)$ can be periodic. Thus this subshift is aperiodic.
\end{proof}

The following proposition will be a 
useful tool in order to prove that 
the operators $d_{\A}$ transform linearly 
net gluing subshifts into block gluing ones.

\begin{proposition}[Completing blocks]

\label{prop.completing.block.aperiodicity}
There exists an algorithm ${\mathcal{T}}$ that, taking as input some locally admissible 
$n$-block $p$ of $\Delta$, outputs 
a rectangular pattern ${\mathcal{T}}(p)$ which has $p$ as a sub-pattern and such that:
\begin{itemize}
\item the number of curves in ${\mathcal{T}}(p)$ is equal to the number of its columns,
\item the dimensions of 
${\mathcal{T}}(p)$ are smaller than $5n$,
\item the top and bottom rows of ${\mathcal{T}}(p)$ have only $\rightarrow$ symbols- 
this means that all the curves crossing ${\mathcal{T}}(p)$ comes from 
its left side and go to the right side.
\end{itemize}
\end{proposition}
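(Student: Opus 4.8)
The plan is to start from $p$, which we place on the support $\llbracket 0,n-1\rrbracket^2$, and to enlarge it into the rectangle $\mathcal T(p)$ in three stages: first push every curve descent of $p$ out through its left edge by appending a controlled number of rows below; then cap the bottom and the top with rows consisting only of $\rightarrow$; finally pad on the right, or if necessary on the top, with straight rows or columns so that the number of curves crossing the pattern equals its number of columns.

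First I would record two elementary facts about the curves of a locally admissible block of $\Delta$. Each curve meets a given column in exactly one cell and drops by at most one cell per column, so a curve entering an $n$-column window at the left drops by at most $n-1$ across it; and since two vertically adjacent $\downarrow$ are forbidden, a column of height $h$ carries at most $\lceil h/2\rceil$ symbols $\downarrow$. The second, crucial, fact is a cascade property produced by the forbidden pattern $\begin{array}{cc}\rightarrow & \downarrow\\\rightarrow & \rightarrow\end{array}$: if $\delta_{(k,h)}=\rightarrow$ and $\delta_{(k+1,h)}=\downarrow$ (a curve dropping between columns $k$ and $k+1$), then necessarily $\delta_{(k,h-1)}=\downarrow$. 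Hence, if one extends a block downward row by row by the greedy rule "put $\downarrow$ exactly where this constraint forces one and $\rightarrow$ everywhere else", then the forced $\downarrow$'s of a new row are never adjacent (two adjacent ones would require a cell of the row above to be simultaneously $\rightarrow$ and $\downarrow$), the descents of the new row are exactly those of the row just above shifted one step to the left, no pair of vertically adjacent $\downarrow$ is created, and no occurrence of the forbidden $2\times2$ pattern is created — the verification of the last point being precisely the case where a cell is set to $\downarrow$, which is exactly what kills the offending pattern.

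With this in hand the clearing stage is immediate: every descent of $p$ lies in a column of index at most $n-2$, so after at most $n-1$ appended rows all descents have been shifted off the left edge, and the resulting bottom row carries $\rightarrow$ everywhere except possibly for a few isolated $\downarrow$ in its leftmost column; in particular it has no factor $\rightarrow\downarrow$ read left to right, i.e. it is descent-free, and the block obtained is locally admissible with width $n$ and height at most $2n-1$. Now I use two easy gluing moves: a full row of $\rightarrow$ can always be glued on top of a locally admissible block (it has no $\downarrow$, so neither forbidden pattern can appear), and it can be glued below a block precisely when the current bottom row is descent-free (same check). Capping the cleared block with one $\rightarrow$-row below and one above, the rectangle obtained has only $\rightarrow$ in its top and bottom rows, so no curve enters or leaves through the top or bottom; every curve crossing it therefore meets its leftmost column and spans its whole width, and the number $c$ of these curves equals the common number of $\rightarrow$ per column. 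If $c$ exceeds the current width one appends $c-w$ columns of $\rightarrow$ on the right (this changes neither $c$ nor the extreme rows); if $c$ is smaller than the width one glues $w-c$ extra $\rightarrow$-rows on top, each raising $c$ by one and disturbing nothing else. Either way one reaches a rectangle with as many curves as columns, still containing $p$, with $\rightarrow$-only top and bottom rows; and summing the increments ($n$ for $p$, at most $n-1$ clearing rows, two caps, at most $n$ further rows or columns) keeps both dimensions comfortably below $5n$. Every step is plainly effective, which yields the algorithm $\mathcal T$.

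The step I expect to be the main obstacle is the clearing stage: one must check that the greedy downward filling never creates a forbidden pattern and that the cascades really terminate at the left edge within $n$ rows. This rests on the non-adjacency of forced $\downarrow$'s and on a short case analysis of the $2\times2$ windows straddling the bottom row of $p$ and the first appended rows, in particular the treatment of $\downarrow$'s that reach column $0$ and their interaction with the $\downarrow$-over-$\downarrow$ rule. Everything else — the two gluing moves, the final padding to equalise curves and columns, and the bound $5n$ — is routine bookkeeping.
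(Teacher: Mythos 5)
Your first two stages are correct, and they follow a genuinely different route from the paper's: the paper extends the incoming curves signalled by $\downarrow\rightarrow$ in the top row (adding a $\downarrow$ above each such $\rightarrow$) and then straightens the top and bottom by appending curves that hug the boundary, whereas you push every descent of the bottom row out through the left edge using the forced cascade (if $\delta_{(k,h)}=\rightarrow$ and $\delta_{(k+1,h)}=\downarrow$ then $\delta_{(k,h-1)}=\downarrow$). Your verification that this greedy downward filling stays locally admissible, that forced $\downarrow$'s are never adjacent, and that it terminates within $n-1$ rows is sound, as are the two capping moves.

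The gap is in the equalization step. The invariant you rely on, namely that ``the number $c$ of these curves equals the common number of $\rightarrow$ per column'', is false: the number of $\rightarrow$'s per column need not be constant, because a locally admissible block may contain a $\rightarrow$ whose left neighbour is $\downarrow$ and whose upper neighbour is $\rightarrow$ (e.g.\ the $2\times 2$ block with top row $\rightarrow\;\rightarrow$ and bottom row $\downarrow\;\rightarrow$, which contains no forbidden pattern). Such a position has no predecessor, neither in the pattern nor in any completion of it, so it lies on no curve coming from the left, and its column carries strictly more $\rightarrow$'s than there are curves crossing the block. Your construction even manufactures such positions: capping a top row containing $\downarrow\rightarrow$ with an all-$\rightarrow$ row (instead of extending that incoming curve upward, as the paper's first step does) turns that $\rightarrow$ into the start of a piece that never reaches the left side, and each all-$\rightarrow$ column you append creates one new interior start for every $\downarrow$ of the previous last column; so, with $c$ defined as you define it, the claim that appending columns ``changes neither $c$'' does not hold. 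The step can be repaired by taking $c$ to be the number of curves entering through the leftmost column (equivalently, the number of $\rightarrow$'s in that column once the top and bottom rows are straight): these curves cannot leave through the straight bottom row, so each spans the full width, your top-row padding raises this count by exactly one, your column padding preserves it, and the bound still holds (you may need up to $n+1$ extra columns, which is still well below $5n$). Note that the paper avoids the issue altogether by appending copies of the last column, which equalizes the count without creating any new interior curve starts; if you keep all-$\rightarrow$ columns, you must make explicit that the curves counted in the statement are the full-width ones.
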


\begin{remark} The properties of the pattern 
${\mathcal{T}}(p)$ ensure that 
this is a globally admissible pattern. Hence every locally admissible pattern of 
the subshift $\Delta$ is globally admissible.
\end{remark}

\noindent \textbf{Idea:} \textit{the proof 
consists in extending the curves 
that cross a pattern from above and below.
Then we add curves on the top and bottom 
that are straighter and straighter.}

\begin{proof} If $p$ is a $1$-block, and $p$ is a 
single $\rightarrow$, then the result is direct. If $p$ is a single $\downarrow$, 
then it can be extended in 
\[\begin{array}{ccc} \rightarrow & \rightarrow & \rightarrow \\
 \rightarrow & \downarrow & \downarrow \\
\downarrow & \rightarrow & \rightarrow \\
\rightarrow & \rightarrow & \rightarrow \end{array}, \]
which verifies the previous assertion. \bigskip

If $p$ is a $n$-block with $n \ge 2$: \bigskip

\textbf{First step. Extending the curves that enter in the block upside/downside:} 

\begin{enumerate} \item If
in the top row of the block $p$ 
there is the pattern $\downarrow \rightarrow$ -
this means that there 
is an incoming curve 
(the position of the $\rightarrow$ is 
in this curve) - then we 
add symbols in the row just above.
We extend each of 
the incoming curves, considering 
the curves from left to right, in this row. 
For this purpose we add a $\downarrow$ over 
the $\rightarrow$ for each 
of the patterns $\downarrow \rightarrow$. 
Then we add $\rightarrow$ symbols 
on the left of this one 
until meeting another $\downarrow$ or 
the left side of the block.
If in the added row there are 
$\downarrow \rightarrow$ patterns, 
then return to the beginning 
of this step. Else, stop.

\item Do similar operations on the bottom of the block.\end{enumerate} \bigskip

Since the number of $\downarrow \rightarrow$
patterns in the top row is strictly 
decreasing, this series of operations 
stops at some point.

\begin{example} If we take $p$ the following $4$-block 
\[\begin{array}{ccccccc}
\rightarrow & \rightarrow & \downarrow & \rightarrow \\
\rightarrow & \downarrow & \rightarrow & \rightarrow \\
\downarrow & \rightarrow & \rightarrow & \rightarrow \\
\rightarrow & \rightarrow & \rightarrow & \rightarrow 
\end{array}\]
at this point, we obtain: 

\[\begin{array}{ccccccc}
\rightarrow & \rightarrow & \rightarrow & \downarrow  \\
\rightarrow & \rightarrow & \downarrow & \rightarrow \\
\rightarrow & \downarrow & \rightarrow & \rightarrow \\
\downarrow & \rightarrow & \rightarrow & \rightarrow \\
\rightarrow & \rightarrow & \rightarrow & \rightarrow
\end{array}\]
\end{example} \bigskip

\textbf{Second step. Completing the pattern on the top and bottom until the top row and 
bottom row are straight:} \bigskip

While the top curve of the pattern is not straight, apply the following procedure: 
\begin{enumerate}
\item On the top of the last column, add a $\downarrow$ and keep adding $\downarrow$ on the left until meeting 
on the left an already defined symbol (there 
can be such symbols, introduced in 
the first step) or 
the left extremity.
\item Add another curve above by the following procedure. Add a $\rightarrow$ on the top of the last column, and then 
add $\rightarrow$ symbols on the left until meeting an already defined symbol on the left. When that happens, 
add a $\downarrow$ above and then add $\rightarrow$'s on the left until reaching a defined symbol. Repeat this operation 
until reaching the first column.
\end{enumerate} \bigskip

Since the number of times that the top 
curve is shifted downwards decreases at each step, 
this series of operations stops. \bigskip

Do similar operations on the bottom. \bigskip

\begin{example}
At this point, we obtain: 

\[\begin{array}{ccccccc}
\rightarrow & \rightarrow & \rightarrow & \rightarrow \\
\rightarrow & \rightarrow & \rightarrow & \downarrow \\
\rightarrow & \rightarrow & \downarrow & \rightarrow  \\
\rightarrow & \downarrow & \rightarrow & \rightarrow \\
\downarrow & \rightarrow & \rightarrow & \rightarrow \\
\rightarrow & \rightarrow & \rightarrow & \rightarrow 
\end{array}\]
\end{example} \bigskip

\textbf{Third step. Equalization of the number of curves and the number of columns:} \bigskip

If the number of columns is smaller than the number of curves, then add a number of columns equal to the difference, 
by adding copies of the last column 
on its right side. If the number of curves is smaller, then add lines of $\rightarrow$ symbols 
on the top. \bigskip

\begin{example} After this last step we obtain: \[\begin{array}{cccccc}
\rightarrow & \rightarrow & \rightarrow & \rightarrow & \rightarrow \\
\rightarrow & \rightarrow & \rightarrow & \downarrow & \downarrow \\
\rightarrow & \rightarrow & \downarrow & \rightarrow & \rightarrow \\
\rightarrow & \downarrow & \rightarrow & \rightarrow & \rightarrow \\
\downarrow & \rightarrow & \rightarrow & \rightarrow & \rightarrow \\
\rightarrow & \rightarrow & \rightarrow & \rightarrow & \rightarrow 
\end{array}\]\end{example}

For $p$ some $n$-block, 
the dimensions of ${\mathcal{T}}(p)$ 
are smaller than the sum of:
\begin{enumerate}
\item the dimension of $p$ (equal to $n$)
\item  two times 
the number of entering curves by the top and outgoing by 
the bottom (one for completing the curves (first step)
\item 
and one for 
reducing the shifts (second step)).
\end{enumerate} 
Each one of these numbers is 
smaller than $n$. The third step does not 
make this bound greater, 
because in this pattern the number of curves is smaller than the number of lines. 
As a consequence, the dimensions of ${\mathcal{T}}(p)$ 
are smaller than $5n$. \end{proof}

\begin{proposition} \label{proposition.transfo.gluing.sets}
Let $X$ be some linearly 
net gluing subshift on 
alphabet $\A$. There exists some vector function 
$\vec{u}'$ and a function $g$ that:
\begin{enumerate}
\item take as arguments two $n$-blocks 
$p,q$ for some $n$, 
\item respectively 
associates to these blocks 
an element of $\Z^2$ and an element of $\N$
\end{enumerate}
These functions verify that 
for any couple of $n$-blocks 
$p,q$ in the language of $d_{\mathcal{A}}(X)$,
the gluing set of $p$ relative to $q$ 
contains infinite columns regularly 
displayed.
Moreover, the gluing set contains 
regularly displayed 
positions in a central column:

\[ 
g(p,q) (\Z\backslash \{0\})\vec{e}^2 + \vec{u}'(p,q) \subset \Delta_{d_{\A}(X)}(p,q),\]
\[ g(p,q) (\Z \backslash \{-3,...,3\}) \vec{e}^1 + \Z \vec{e}^2 +
\vec{u}'(p,q) \subset \Delta_{d_{\A}(X)}(p,q),\]
where function $g$ verifies that 
\[\max_{p,q} g(p,q) = O(n),\]
where the maximum is over the $n$-blocks.
\end{proposition}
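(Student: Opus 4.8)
\textbf{Strategy.} The plan is to transfer the linear net gluing of $X$ through the pseudo-projection $\mathcal{P}$: we first replace the $\Delta$-layers of $p$ and $q$ by the rectangular patterns produced by the completion algorithm $\mathcal{T}$ of Proposition~\ref{prop.completing.block.aperiodicity}, and then, for each admissible target position, we build a global configuration of the layer $\Delta$ that carries a copy of each of these two patterns while keeping the amount of distortion under control.

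\textbf{Completing the blocks.} Write $p=(p_X,p_\Delta)$ and $q=(q_X,q_\Delta)$, with $p_\Delta,q_\Delta$ the $\Delta$-layers. Apply $\mathcal{T}$ to obtain rectangular globally admissible $\Delta$-patterns $\mathcal{T}(p_\Delta)\supseteq p_\Delta$ and $\mathcal{T}(q_\Delta)\supseteq q_\Delta$ of dimensions $\le 5n$, in which the number of curves equals the number of columns and the top and bottom rows carry only $\rightarrow$; after padding with rows of $\rightarrow$, we may assume both have the same number $N\le 5n$ of curves and of columns. Every curve of $\mathcal{T}(p_\Delta)$ crosses all $N$ of its columns, so reading a configuration of $d_{\A}(X)$ along these $N$ curves over the $N$ columns produces an $N$-block of $X$; since $p$ lies in the language of $d_{\A}(X)$ and $\mathcal{P}$ sends it into $X$, we may choose $p'\in\mathcal{L}_N(X)$ agreeing with $p_X$ on the (bounded) region swept by the curves of $p_\Delta$, and similarly $q'\in\mathcal{L}_N(X)$. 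It is enough to prove the statement for the completed blocks $\widehat{p}=(p',\mathcal{T}(p_\Delta))$ and $\widehat{q}=(q',\mathcal{T}(q_\Delta))$: restricting back to $p$ and $q$ inside their completions only changes $\vec{u}'$ by an $O(n)$ vector.

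\textbf{Net gluing and assembly.} By linear net gluing of $X$ (Definition~\ref{definition.NetGluing}) there are $\vec{v}\in\Z^2$ and $T=N+\widetilde{f}(p',q')\le N+f(N)=O(n)$ with $\vec{v}+T(\Z^2\setminus\{0\})\subseteq\Delta_X(p',q')$; we set $g(p,q)=T$, so that $\max g(p,q)=O(n)$ over $n$-blocks. Fix $\vec{k}\in\Z^2\setminus\{0\}$, put $\vec{w}=\vec{v}+T\vec{k}$, and take $x\in X$ with $q'$ at the origin and $p'$ at $\vec{w}$. We build a globally admissible $\delta\in\Delta$ made of: a copy of $\mathcal{T}(q_\Delta)$ over the columns of $x$ carrying $q'$; a copy of $\mathcal{T}(p_\Delta)$ over those carrying $p'$; a ``descent band'' in between, in which every curve of a prescribed range is shifted downwards the same number $D\ge 0$ of times, at most once per column; and, elsewhere, horizontal curves separated by rows of $\downarrow$'s. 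These pieces fit together because inside $\mathcal{T}(p_\Delta)$ and inside the band curves only move right or down-right, hence each leaves at a row no higher than the one at which it entered, which matches the horizontal-curve filler on both sides; since in $\Delta$ local admissibility implies global admissibility (remark after Proposition~\ref{prop.completing.block.aperiodicity}), $\delta\in\Delta$. Taking $y$ to be the first layer determined by $(\delta,x)$ through $y_{\varphi_{\delta,k}(i)}=x_{i,k}$ on the $\rightarrow$-positions (blank on the $\downarrow$-positions) gives $(y,\delta)\in\mathcal{P}^{-1}(X)=d_{\A}(X)$, containing $\widehat{q}$ and $\widehat{p}$, hence $q$ and $p$.

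\textbf{Locating $p$ with respect to $q$, and the obstacle.} Since the along-curve parameter coincides with the spatial column, the horizontal displacement of $p$ from $q$ equals $w_1+O(n)=v_1+Tk_1+O(n)$, a lattice of step $g=T$ up to an $O(n)$ shift absorbed into $\vec{u}'$. The vertical displacement equals $w_2-D+O(n)$, because the curve reading row $w_2$ of $x$ sits $w_2$ curve-indices above the bottom of the $q$-window and is lowered by $D$ inside the band. When $|k_1|\ge 4$ the horizontal gap between the two windows is $\ge 4g-O(n)\ge T$, which leaves room for $D$ to take every value in an interval of length $\ge T$; choosing $k_2$ large enough that $D\ge 0$, the vertical displacement $v_2+Tk_2-D$ then hits every integer, which yields the full vertical lines $g(p,q)(\Z\setminus\{-3,\dots,3\})\vec{e}^1+\Z\vec{e}^2+\vec{u}'(p,q)\subseteq\Delta_{d_{\A}(X)}(p,q)$ (the indices $|k_1|\le 3$ are dropped because then the windows are too close to host a descent band, or overlap). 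When the two windows are stacked vertically the horizontal gap is only $O(n)$, so $D=O(n)$ is forced, $\vec{k}=(0,k_2)$ with $k_2\ne 0$, and the vertical displacement ranges over $v_2+T\Z$ up to $O(n)$, giving the central column $g(p,q)(\Z\setminus\{0\})\vec{e}^2+\vec{u}'(p,q)\subseteq\Delta_{d_{\A}(X)}(p,q)$. The main difficulty is precisely this last step together with the assembly: splicing the two completed windows, the descent band and the $\downarrow$-filler into one locally (hence globally) admissible $\delta$, and then reading off exactly where $p$ lands relative to $q$ as a function of $\vec{k}$ and of the freely chosen $D$; tracking the entry and exit rows of $\mathcal{T}(p_\Delta)$, the curve indices involved, and the dichotomy between the ``horizontal, lots of room'' regime and the ``stacked, no room'' regime is what fixes the step $g=T$, the excluded set $\Z\setminus\{-3,\dots,3\}$, and the $O(n)$ bounds.
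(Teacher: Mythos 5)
Your proposal is correct and follows essentially the same route as the paper: complete the $\Delta$-layers of $p,q$ with the algorithm $\mathcal{T}$ of Proposition~\ref{prop.completing.block.aperiodicity}, apply the linear net gluing of $X$ to the pseudo-projections to get the step $g=O(n)$, and realize each target position by building a $\Delta$-configuration with staggered downward shifts, which forces the exclusion of $|k_1|\le 3$ and yields the exact arithmetic progression in the central column. The only (harmless) difference is bookkeeping: where the paper splits the column-covering into the cases $|l|\ge 2$ (inserting gap-$1$ curves) and $l=\pm 1$ (the shift construction), you use the descent-band mechanism uniformly and let $k_2$ together with $D\in[0,T]$ sweep the column, which matches the paper's construction in substance.
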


\noindent \textbf{Idea:} \textit{this proof 
consists in analyzing how the operator 
acts on the gluing set of a $n$-block $p$ 
relative to another pattern $q$. The operator 
allows perturbations to be introduced on 
these sets.}

\begin{proof}

Let $X$ be some $f$ net 
gluing subshift on alphabet $\mathcal{A}$, 
where $f(n) = O(n)$.

\paragraph{Formulation of the linear net 
gluing of $X$:} \bigskip

This means that there exist 
two function $\vec{u} : \mathcal{L}_n (X) ^ 2 
\rightarrow \Z^2$ and $\tilde{f} : \mathcal{L}_n (X) ^ 2 
\rightarrow \N$ such that  
for all $n >0$ and every couple 
of $n$-blocks $r,s$ in the language of $X$, the gluing 
set of $r$ relative to $s$ in $X$ contains 
\[\vec{u} (r,s)
+(n+\tilde{f}(r,s))(\Z^2 -(0,0)),\] 
and for all $r,s$ $n$-blocks,
\[\tilde{f}(r,s) \le f(n).\] 

We consider in this proof that 
$\vec{u}=\vec{0}$, since this proof 
can be adapted to a general function $\vec{u}$ 
without difficulty.

\paragraph{Sufficient conditions to verify:} \bigskip

It is sufficient to prove the statement 
of the proposition for 
patterns whose projection on 
$\Delta$ are $\mathcal{T}(\pi_2(p))$ 
and $\mathcal{T}(\pi_2(q))$. Indeed,
the size of these patterns is bounded by 
a linear function of the size of the patterns $p,q
\in d_{\mathcal{A}}(X)$.

Let $p,q$ two $n$-blocks in the language 
of the subshift $d_{\A}(X)$. 
Without loss of generality, 
we can consider that the 
patterns ${\mathcal{T}}(\pi_2(q))$ 
and ${\mathcal{T}}(\pi_2(p))$ 
have the same number of curves
crossing them.
We denote 
$\tilde{p}$ and $\tilde{q}$ some 
admissible patterns whose projections 
of $\Delta$ are respectively 
 ${\mathcal{T}}(\pi_2(p))$ 
and ${\mathcal{T}}(\pi_2(q))$.
The pseudo-projections 
of these patterns on $X$ are $m$-block of $X$, 
where $m$ is the number of curves 
in ${\mathcal{T}}(\pi_2(p))$ and 
${\mathcal{T}}(\pi_2(q))$. 
This is due to the 
fact that the top and bottom 
rows of these patterns 
are straight. These pseudo-projections 
are denoted $\mathcal{P} (\tilde{p})$ and 
$\mathcal{P} (\tilde{q})$, according 
to previous notations.

We place the pattern 
$\mathcal{P}(\tilde{q})$
on position $(0,0)$.

\paragraph{The gluing sets of $d_{\A}(X)$ contain 
infinite columns periodically displayed:} \bigskip

Let us show that the 
gluing set of $\tilde{p}$ 
relative 
to $\tilde{q}$ 
contains infinite columns periodically 
displayed.

Let $k \ge 4$ and $l$ integers such 
that $l \neq 0$, and consider some vector
\[\vec{u} = (m+\tilde{f}(\mathcal{P} (\tilde{p}),
\mathcal{P} (\tilde{q})) (k,l).\]

We will prove the following: 

\begin{itemize}
\item When $l \ge 2$ and $t$ 
is any integer such that 
\[0 \le t \le \tilde{f}(\mathcal{P} (\tilde{p}),\mathcal{P} (\tilde{q}))+m,\]
the 
position 
$\vec{u} +t.\vec{e}^2$ 
is in the gluing set in $d_{\A} (X)$ 
of the pattern $\tilde{p}$ relative 
to $\tilde{q}$.
\item When $l \le -2$, 
this set contains the position 
$\vec{u}-t.\vec{e}^2$, for the 
same integers $t$.
\item When $l =1$ or $l=-1$,
then this set contains
$\vec{u}-t.\vec{e}^2$, for all 
$t$ such that
\[0 \le t \le 2(\tilde{f}(\mathcal{P} (\tilde{p}),\mathcal{P} (\tilde{q}))+m).\] 
\end{itemize}

As a consequence, this gluing set contains 
the whole infinite column that contains
\[(m+\tilde{f}(\mathcal{P} (\tilde{p}),
\mathcal{P} (\tilde{q})) (k,0)\]
for all $k \ge 8$. Indeed, it contains 
an infinity of segments which overlap only 
on their border. 
We have the same property for $k \le -4$, 
by reversing $p$ and $q$.

\begin{enumerate}

\item \textbf{When $|l| \ge 2$:}

Let $t$ be some integer such that 
\[0 \le t \le \tilde{f}(\mathcal{P} 
(\tilde{p}),\mathcal{P} (\tilde{q}))+m.\]

In the case $l \ge 2$, 
we do the following operations. 
See a schema on 
Figure~\ref{fig.schema.branching.thm.0}.

\begin{enumerate}
\item We extend the curves crossing 
the pattern $\tilde{q}$ in 
a straight way until infinity.
\item In the case $l \ge 2$, 
we add straight curves below the 
obtained pattern. We do that in 
such a way that these curves have gap $0$
between them. On the top, 
we introduce $t$ times a straight infinite 
curve with gap $1$ with the curve below. 
Then we add \[l(\tilde{f}(\mathcal{P}(\tilde{p}),\mathcal{P} (\tilde{q}))+m)- t-m\]
straight infinite curves with gap $0$ with 
the curve below. 
\item Then we add 
the pattern $\tilde{p}$
on position $\vec{u}+t.\vec{e}^2$.
\item We extend the curves crossing 
this pattern in a straight way until infinity.
\item We add straight lines on the top, 
without gaps between them.
\item Then we color the curves with elements 
of the curves with elements of $\mathcal{A}$ 
such that the configuration is admissible. 
This is possible from the net gluing property of 
the subshift $X$. Indeed, the position 
of the pattern $\mathcal{P} (\tilde{p})$ 
relatively to $\mathcal{P} (\tilde{q})$ 
in the pseudo-projection of this configuration 
is $\vec{u}$. Moreover, this vector 
is in the gluing set of the first pattern 
relatively to the second one.
\end{enumerate}

The case $l \le -2$ is similar. The difference is 
that 
the pattern $\tilde{p}$ 
appears on position 
$\vec{u}-t.\vec{e}^2$.

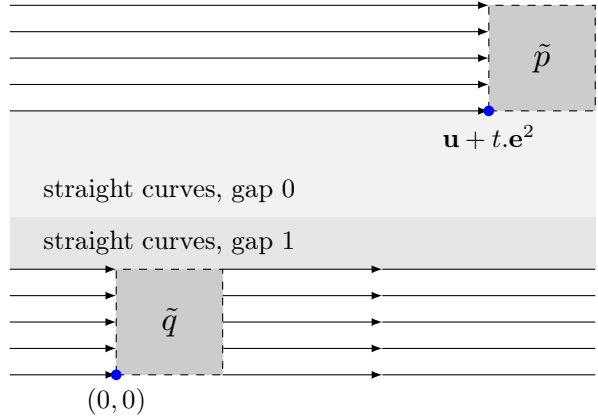
\begin{figure}[h]
\[\begin{tikzpicture}[scale=0.35]

\fill[gray!20] (-4,4) rectangle (18,6);
\fill[gray!10] (18,6) rectangle (-4,10);

\node at (14,9) {$\vec{u}+t.\vec{e}^2$};
\node at (2,7) {straight curves, gap $0$};
\node at (2,5) {straight curves, gap $1$};

\fill[gray!40] (0,0) rectangle (4,4);
\node[scale=1.25] at (2,2) {$\tilde{q}$};
\draw[dashed] (0,0) rectangle (4,4);
\draw[-latex] (4,4) -- (10,4);
\draw (10,4) -- (18,4);
\draw[-latex] (4,3) -- (10,3);
\draw (10,3) -- (18,3);
\draw[-latex] (4,2) -- (10,2);
\draw (10,2) -- (18,2);
\draw[-latex] (4,1) -- (10,1);
\draw (10,1) -- (18,1);
\draw[-latex] (4,0) -- (10,0);
\draw (10,0) -- (18,0);
\draw[latex-] (0,0) -- (-4,0);
\draw[latex-] (0,1) -- (-4,1);
\draw[latex-] (0,2) -- (-4,2);
\draw[latex-] (0,3) -- (-4,3);
\draw[latex-] (0,4) -- (-4,4);

\draw[-latex] (-4,10) -- (14,10);
\draw[-latex] (-4,11) -- (14,11);
\draw[-latex] (-4,12) -- (14,12);
\draw[-latex] (-4,13) -- (14,13);
\draw[-latex] (-4,14) -- (14,14);

\fill[gray!40] (14,10) rectangle (18,14);
\draw[dashed] (14,10) rectangle (18,14);
\node[scale=1.25] at (16,12) {$\tilde{p}$};

\fill[blue] (0,0) circle (0.2);
\fill[blue] (14,10) circle (0.2);
\node at (0,-1) {$(0,0)$};
\end{tikzpicture}\]
\caption{\label{fig.schema.branching.thm.0}
Illustration of the construction 
for the proof of 
Theorem~\ref{theorem.transformation.block.gluing} 
when $l \ge 2$.}
\end{figure}

\item \textbf{When $l=1$ or $-1$:}

Here we prove that the pattern $\tilde{p}$ 
can be glued relatively to $\tilde{q}$ on 
position $\vec{u}-t.\vec{e}^2$.

The steps of a construction 
of a configuration that supports this gluing 
are as follows:

\begin{enumerate}

\item \textbf{Compactification of the 
outgoing curves:} \bigskip

We extend ${\mathcal{T}}(\pi_2(q))$ using the 
following procedure.
While in the last column of the pattern there is some sub-pattern $\begin{array}{c} \rightarrow \\
\downarrow \end{array}$ (meaning that 
there is a gap between two outgoing curves), do the following: on the right of the 
patterns $\begin{array}{c} \rightarrow \\
\downarrow \end{array}$, write $\begin{array}{c} \downarrow \\
\rightarrow \end{array}$ and write a copy of the other $\rightarrow$ symbols on their right side.

\begin{example} Taking the same example as in 
the proof of 
Proposition~\ref{prop.completing.block.aperiodicity}, 
the result is: 
\[\begin{array}{ccccccc}
\rightarrow & \rightarrow & \rightarrow & \rightarrow & \rightarrow & \rightarrow & \downarrow \\
\rightarrow & \rightarrow & \rightarrow & \downarrow & \downarrow & \downarrow  & \rightarrow\\
\rightarrow & \rightarrow & \downarrow & \rightarrow & \rightarrow & \rightarrow & \rightarrow\\
\rightarrow & \downarrow & \rightarrow & \rightarrow & \rightarrow & \rightarrow & \rightarrow\\
\downarrow & \rightarrow & \rightarrow & \rightarrow & \rightarrow & \rightarrow & \rightarrow \\
\rightarrow & \rightarrow & \rightarrow & \rightarrow & \rightarrow & \rightarrow & \rightarrow 
\end{array}\] \end{example}

Since there are $m$ curves the number 
of additional columns on the right for this step is smaller 
than $m$. Indeed, one column is sufficient to 
reduce the gap between a curve and the curve 
just below.

\item \textbf{Making the curves shift:} \bigskip

We add columns on the right of 
the extension of ${\mathcal{T}}(\pi_{2} (q))$. We follow the 
following procedure, in order to make all the 
curves in it shift $t$ times: 
\begin{enumerate} 
\item Consider the right part of the pattern constituted with $\rightarrow$ symbols 
and add a triangle made of $\rightarrow$ symbols except on the diagonal part. On 
this part we write $\downarrow$ symbols (this 
is the first shift). 

\begin{example} Taking the same example as previously, the result is: 
\[\begin{array}{cccccccccccc}
\rightarrow & \rightarrow & \rightarrow & \rightarrow & \rightarrow & \rightarrow & \downarrow & & & & & \\
\rightarrow & \rightarrow & \rightarrow & \downarrow & \downarrow & \downarrow  & \rightarrow & \rightarrow & \rightarrow& \rightarrow 
& \rightarrow & \downarrow \\
\rightarrow & \rightarrow & \downarrow & \rightarrow & \rightarrow & \rightarrow & \rightarrow & \rightarrow & \rightarrow & \rightarrow 
& \downarrow & \\
\rightarrow & \downarrow & \rightarrow & \rightarrow & \rightarrow & \rightarrow & \rightarrow & \rightarrow & \rightarrow & \downarrow & & \\
\downarrow & \rightarrow & \rightarrow & \rightarrow & \rightarrow & \rightarrow & \rightarrow & \rightarrow & \downarrow & & & \\
\rightarrow & \rightarrow & \rightarrow & \rightarrow & \rightarrow & \rightarrow & \rightarrow & \downarrow & & & &
\end{array}\] \end{example}

\item Then repeat $t-1$ times the following operation: add under each $\downarrow$ on the right side 
a $\rightarrow$ under, and after that a $\downarrow$ on the right of the $\rightarrow$. 

\begin{example} Taking the same example as previously, with $t=3$, the result is: 
\[\begin{array}{cccccccccccccc}
\rightarrow & \rightarrow & \rightarrow & \rightarrow & \rightarrow & \rightarrow & \downarrow & & & & & & & \\
\rightarrow & \rightarrow & \rightarrow & \downarrow & \downarrow & \downarrow  & \rightarrow & \rightarrow & \rightarrow& \rightarrow 
& \rightarrow & \downarrow & & \\
\rightarrow & \rightarrow & \downarrow & \rightarrow & \rightarrow & \rightarrow & \rightarrow & \rightarrow & \rightarrow & \rightarrow 
& \downarrow & \rightarrow & \downarrow & \\
\rightarrow & \downarrow & \rightarrow & \rightarrow & \rightarrow & \rightarrow & \rightarrow & \rightarrow & \rightarrow & \downarrow & \rightarrow & \downarrow & \rightarrow & \downarrow \\
\downarrow & \rightarrow & \rightarrow & \rightarrow & \rightarrow & \rightarrow & \rightarrow & \rightarrow & \downarrow & \rightarrow & \downarrow & \rightarrow & \downarrow &\rightarrow \\
\rightarrow & \rightarrow & \rightarrow & \rightarrow & \rightarrow & \rightarrow & \rightarrow & \downarrow & \rightarrow & \downarrow & \rightarrow & \downarrow & \rightarrow & \\
& & & & & & & \rightarrow & \downarrow & \rightarrow & \downarrow & \rightarrow & & \\
& & & & & & & & \rightarrow & \downarrow & \rightarrow & & & \\
& & & & & & & &  & \rightarrow &  & & & \\
\end{array}\] \end{example}

There are at most 
$2(m+\tilde{f}(\mathcal{P}(\tilde{p},\tilde{q})))$
additional columns for this step. Indeed, 
$t$ is smaller than this number and 
only $t$ columns are needed to shift a compact set of curves.

\item Complete the curves with $\rightarrow$ symbols so that they end in the last column added.

\begin{example} Taking the same example as previously, with $t=3$, the result is: 
\[\begin{array}{cccccccccccccc}
\rightarrow & \rightarrow & \rightarrow & \rightarrow & \rightarrow & \rightarrow & \downarrow & & & & & & & \\
\rightarrow & \rightarrow & \rightarrow & \downarrow & \downarrow & \downarrow  & \rightarrow & \rightarrow & \rightarrow& \rightarrow 
& \rightarrow & \downarrow & & \\
\rightarrow & \rightarrow & \downarrow & \rightarrow & \rightarrow & \rightarrow & \rightarrow & \rightarrow & \rightarrow & \rightarrow 
& \downarrow & \rightarrow & \downarrow & \\
\rightarrow & \downarrow & \rightarrow & \rightarrow & \rightarrow & \rightarrow & \rightarrow & \rightarrow & \rightarrow & \downarrow & \rightarrow & \downarrow & \rightarrow & \downarrow \\
\downarrow & \rightarrow & \rightarrow & \rightarrow & \rightarrow & \rightarrow & \rightarrow & \rightarrow & \downarrow & \rightarrow & \downarrow & \rightarrow & \downarrow &\rightarrow \\
\rightarrow & \rightarrow & \rightarrow & \rightarrow & \rightarrow & \rightarrow & \rightarrow & \downarrow & \rightarrow & \downarrow & \rightarrow & \downarrow & \rightarrow & \rightarrow \\
& & & & & & & \rightarrow & \downarrow & \rightarrow & \downarrow & \rightarrow & \rightarrow & \rightarrow \\
& & & & & & & & \rightarrow & \downarrow & \rightarrow & \rightarrow & \rightarrow & \rightarrow \\
& & & & & & & &  & \rightarrow & \rightarrow & \rightarrow & \rightarrow & \rightarrow \\
\end{array}\] \end{example}

Here, no column is added.

\item Then extend the curves straightly 
on a number of columns so 
that the total number of additional columns
is equal to ${\vec{u}}_1 - m$. This is possible since
the number of added columns at this point 
is smaller than $m+2(m+\tilde{f}(\mathcal{P}(\tilde{p}),
\mathcal{P}(\tilde{q})) \le {\vec{u}}_1$, 
since $k \ge 4$.
\end{enumerate}

\item \textbf{Extension:} 

Then, we extend these curves straightly until 
infinity on the east side and on the west side.

\item \textbf{Additional curves:}

We add $\tilde{f} (\mathcal{P}(\tilde{p}),
\mathcal{P}(\tilde{q}))$
curves on the top when $l=1$ (resp. 
on the bottom when $l=-1$) 
of the obtained pattern at 
this point, without gap between them. This means 
that, from left to right, when the curve just below 
is shifted downwards the curve is also shifted 
immediately after.

\item \textbf{Positioning the pattern 
${\mathcal{T}}(\pi_2 (p))$:}

\begin{enumerate}
\item In the last columns where symbols were added in 
the last step, we position the pattern 
${\mathcal{T}}(\pi_2 (p))$. We place it on the top when 
$l=1$ (resp. on the bottom when $l=-1$) of the last added 
curves.
\item After this we extend the curves 
on the west side straightly until infinity. 
\item On the east side, we extend the curve without 
introducing gaps. This step is possible
since the minimal value of $k$ is taken sufficiently 
large. This means that the shifts of the outgoing 
curves of $\tilde{q}$ do not affect the area where 
the pattern $\tilde{p}$ is supposed to be glued.
\item On the top and bottom of the obtained pattern 
we add curves without introducing any gap in such a way that
we fill $\Z^2$.
\item In the end we add $\mathcal{A}$ symbols over 
the curves, when not already determined. 
This is possible from 
the linear net gluing property of 
$X$. Indeed, there are $\tilde{f} (\mathcal{P}
(\tilde{p}),\mathcal{P} (\tilde{q}))$ added 
between the patterns $\tilde{p}$ and $\tilde{q}$ 
and the pattern $\tilde{p}$ is on a position 
in the column containing $\vec{u}$.
\end{enumerate}

See Figure~\ref{fig.schema.branching.thm.1} 
for an illustration of this case, 
when $l=1$.

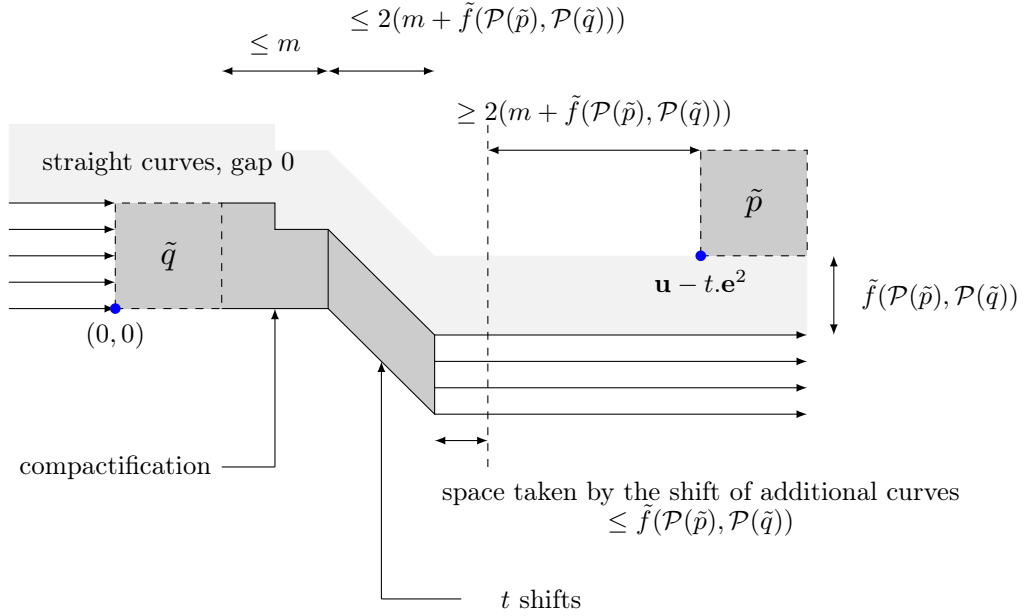
\begin{figure}[h]
\[\begin{tikzpicture}[scale=0.35]

\fill[gray!10] (-4,4) -- (6,4) -- (6,3) -- (8,3) 
-- (12,-1) -- (26,-1) -- (26,2) -- (12,2) -- (8,6) 
-- (6,6) -- (6,7) -- (-4,7) -- (-4,4) ; 

\fill[gray!40] (0,0) rectangle (4,4);
\fill[gray!40] (4,0) -- (8,0) -- (8,3) -- (6,3) -- (6,4) -- (4,4)
-- (4,0);
\fill[gray!40] (8,3) -- (12,-1) -- (12,-4) -- (8,0) -- (8,4);
\node[scale=1.25] at (2,2) {$\tilde{q}$};
\draw[dashed] (0,0) rectangle (4,4);

\draw[latex-] (0,0) -- (-4,0);
\draw[latex-] (0,1) -- (-4,1);
\draw[latex-] (0,2) -- (-4,2);
\draw[latex-] (0,3) -- (-4,3);
\draw[latex-] (0,4) -- (-4,4);
\draw[-latex] (12,-1) -- (26,-1);
\draw[-latex] (12,-2) -- (26,-2);
\draw[-latex] (12,-3) -- (26,-3);
\draw[-latex] (12,-4) -- (26,-4);

\draw (4,0) -- (8,0) -- (8,3) -- (6,3) -- (6,4) -- (4,4);

\draw (8,3) -- (12,-1) -- (12,-4) -- (8,0);

\fill[gray!40] (22,2) rectangle (26,6);
\draw[dashed] (22,2) rectangle (26,6);
\node[scale=1.25] at (24,4) {$\tilde{p}$};

\fill[blue] (0,0) circle (0.2);
\fill[blue] (22,2) circle (0.2);
\node at (22,1) {$\vec{u}-t.\vec{e}^2$};
\node at (0,-1) {$(0,0)$};
\node at (2,5.5) {straight curves, gap $0$};

\draw[-latex] (6,-6) -- (6,0);
\draw (6,-6) -- (4,-6); 
\node at (0,-6) {compactification};
\draw[-latex] (10,-11) -- (10,-2) ; 
\draw (10,-11) -- (13,-11);
\node at (16,-11) {$t$ shifts};
\draw[latex-latex] (4,9) --(8,9);
\node at (6,10) {$\le m$};
\draw[latex-latex] (8,9) --(12,9);
\node at (14,11) {$\le 2(m+\tilde{f} (\mathcal{P} (\tilde{p}),
\mathcal{P} (\tilde{q})))$};
\draw[dashed] (14,-6) -- (14,7);
\draw[latex-latex] (27,-1) -- (27,2);
\node at (31,0.5) {$\tilde{f} (\mathcal{P}(\tilde{p}),
\mathcal{P} (\tilde{q}))$};
\draw[latex-latex] (12,-5) -- (14,-5);
\node at (22,-7) {space taken by the shift of additional curves};
\node at (22,-8) 
{$\le \tilde{f} (\mathcal{P} (\tilde{p}),
\mathcal{P} (\tilde{q}))$};
\draw[latex-latex] (22,6) -- (14,6);
\node at (18,7.5) {$\ge 2(m+\tilde{f} (\mathcal{P} (\tilde{p}),
\mathcal{P} (\tilde{q})))$};
\end{tikzpicture}\]
\caption{\label{fig.schema.branching.thm.1}
schema of the construction 
for the proof of 
Theorem~\ref{theorem.transformation.block.gluing} 
when $l=1$.}
\end{figure}

\end{enumerate}
\end{enumerate}

\begin{figure}[ht]
\[\begin{tikzpicture}[scale=0.5]
\draw (0.5,0) rectangle (1.5,1);
\draw node at (0,0.5) {$q$};
\draw (0,2) rectangle (1,3);
\draw node at (-0.5,2.5) {$p$};
\draw (0,-2) rectangle (1,-1);
\fill[pattern=north west lines, pattern color=blue] (0,2) rectangle (1,3);
\fill[pattern=north west lines, pattern color=blue] (0,-2) rectangle (1,-1);

\draw (0,2) rectangle (0.1,2.1);
\draw (0,-2) rectangle (0.1,-1.9);

\draw (-2,-2.5) -- (-2,3.5);
\draw (-1,-2.5) -- (-1,3.5);
\draw (-3,-2.5) -- (-3,3.5);
\draw (-4,-2.5) -- (-4,3.5);
\draw (2,-2.5) -- (2,3.5);
\draw (3,-2.5) -- (3,3.5);
\fill[pattern=north west lines, pattern color=blue] (-2,-2.5) rectangle (-1,3.5);
\fill[pattern=north west lines, pattern color=blue] (-4,-2.5) rectangle (-3,3.5);
\fill[pattern=north west lines, pattern color=blue] (2,-2.5) rectangle (3,3.5);
\draw[dashed] (-2,-3.5) -- (-2,-2.5);
\draw[dashed] (-2,3.5) -- (-2,4.5);
\draw[dashed] (-1,-3.5) -- (-1,-2.5);
\draw[dashed] (-1,3.5) -- (-1,4.5);
\draw[dashed] (-3,-3.5) -- (-3,-2.5);
\draw[dashed] (-3,3.5) -- (-3,4.5);
\draw[dashed] (-4,-3.5) -- (-4,-2.5);
\draw[dashed] (-4,3.5) -- (-4,4.5);
\draw[dashed] (2,-3.5) -- (2,-2.5);
\draw[dashed] (2,3.5) -- (2,4.5);
\draw[dashed] (3,-3.5) -- (3,-2.5);
\draw[dashed] (3,3.5) -- (3,4.5);
\draw[<->] (4.5,1) -- (4.5,2);
\draw[<->] (-3,-4) -- (-2,-4);
\draw node at (-2.5,-4.5) {$O(n)$};
\draw node at (5.5,1.5)     {$O(n)$};
\draw (-2,-2.5) rectangle (-1.9,3.5);
\draw (-4,-2.5) rectangle (-3.9,3.5);
\draw (2,-2.5) rectangle (2.1,3.5);
\end{tikzpicture}\]
\caption{\label{fig.gluingset.stripes} Schematic representation 
of a set of positions 
included in the gluing set of some couple of $n$-blocks 
in $d_{\A} (X)$.} 
\end{figure}
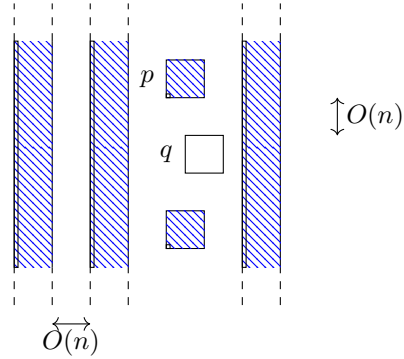

From this construction we deduce that 
\[ \left\{ \vec{w} \in \Z^2 \ | 
\ {\vec{w}}_1 = {\vec{v}}_1 (g(p,q)), 
\ |{\vec{v}}_1| \ge 4 \right\} \subset \Delta_{d_{\A}(X)}(p,q),\]
where 
\[ g(p,q) = m+ \tilde{f} (
\mathcal{P}(\tilde{p}),\mathcal{P}(\tilde{q})).\]
Thus,
\[\max_{p,q} g(p,q) = O(n).\]
Indeed, because $m \le 5n$ 
and $f$ is non decreasing, this 
number is smaller than $4(f(5n)+5n) = O(n)$. This 
comes from the fact that $f(n)=O(n)$.

\paragraph{The gluing sets of $d_{\A}(X)$ contain 
periodic positions in the central column:} \bigskip

Consider some $\vec{u}=(m+\tilde{f}(
\mathcal{P}(\tilde{p}),\mathcal{P}(\tilde{q})))\vec{v}$ with 
$\vec{v}=({\vec{v}}_1,{\vec{v}}_2)$, ${\vec{v}}_2 \neq 0$, and ${\vec{v}}_1=0$.
The pattern $p$ can be glued relatively to $q$ in $d_{\A}(X)$ in position 
$\vec{u}$ in a configuration $x^{\vec{u}}$.\bigskip

Indeed, the pattern $\mathcal{T} (\pi_2 (q))$ can be glued relatively to 
$\mathcal{T} (\pi_2 (p))$ in $\Delta$ with relative position $\vec{u}$.
In order to prove that one can glue the two patterns with this relative position. 
Then one completes straightly the curves that go through the two patterns and 
fulfill $\Z^2$ with straight curves. One shifts the configuration 
in such a way that 
$\pi_2 (q)$ appears in position $(0,0)$.
Then one completes this configuration with letters 
in $\A$ so that the pseudo-projection is $x^{\vec{u}}$.

This means that \[ \left\{ \vec{w} \in \Z^2 \ | 
\ {\vec{w}}_2  \in ( m+ \tilde{f}(
\mathcal{P}(\tilde{p}),\mathcal{P}(\tilde{q})))(\Z\backslash \{0\}), 
\ {\vec{w}}_1 = 0 \right\} \subset \Delta_{d_{\A}(X)}(p,q).\]

The Figure~\ref{fig.gluingset.stripes} shows 
the set of positions that we proved to be in 
the gluing set of the pattern $p$ relatively to $q$.

\end{proof}

\begin{figure}[ht]
\[\begin{tikzpicture}[scale=0.5]
\draw (0,0.5) rectangle (1,1.5);
\draw node at (0.5,0) {$q$};
\draw (2,0) rectangle (3,1);
\draw node at (2.5,-0.5) {$p$};
\draw (-2,0) rectangle (-1,1);
\fill[pattern=north west lines, pattern color=blue] (2,0) rectangle (3,1);
\fill[pattern=north west lines, pattern color=blue] (-2,0) rectangle (-1,1);
\draw (2,0) rectangle (2.1,0.1);
\draw (-2,0) rectangle (-1.9,0.1);
\draw (-2.5,-2) -- (3.5,-2);
\draw (-2.5,-1) -- (3.5,-1);
\draw (-2.5,-3) -- (3.5,-3);
\draw (-2.5,-4) -- (3.5,-4);
\draw (-2.5,2) -- (3.5,2);
\draw (-2.5,3) -- (3.5,3);
\fill[pattern=north west lines, pattern color=blue] (-2.5,-2) rectangle (3.5,-1);
\fill[pattern=north west lines, pattern color=blue] (-2.5,-4) rectangle (3.5,-3);
\fill[pattern=north west lines, pattern color=blue] (-2.5,2) rectangle (3.5,3);
\draw (-2.5,-2) rectangle (3.5,-1.9);
\draw (-2.5,-4) rectangle (3.5,-3.9);
\draw (-2.5,2) rectangle (3.5,2.1);
\draw[dashed] (-3.5,-2) -- (-2.5,-2);
\draw[dashed] (3.5,-2) -- (4.5,-2);
\draw[dashed] (-3.5,-1) -- (-2.5,-1);
\draw[dashed] (3.5,-1) -- (4.5,-1);
\draw[dashed] (-3.5,-3) -- (-2.5,-3);
\draw[dashed] (3.5,-3) -- (4.5,-3);
\draw[dashed] (-3.5,-4) -- (-2.5,-4);
\draw[dashed] (3.5,-4) -- (4.5,-4);
\draw[dashed] (-3.5,2) -- (-2.5,2);
\draw[dashed] (3.5,2) -- (4.5,2);
\draw[dashed] (-3.5,3) -- (-2.5,3);
\draw[dashed] (3.5,3) -- (4.5,3);
\draw[<->] (1,4.5) -- (2,4.5);
\draw[<->] (-4,-3) -- (-4,-2);
\draw node at (-5,-2.5) {$O(n)$};
\draw node at (1.5,5)     {$O(n)$};
\end{tikzpicture}\]
\caption{\label{fig.gluingset.rotated} Schematic representation 
of a set of positions 
included in the gluing set of some couple of $n$-blocks 
in $\rho \circ d_{\A} (X)$.} 
\end{figure}

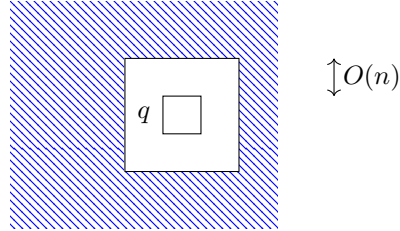
\begin{figure}[ht]
\[\begin{tikzpicture}[scale=0.5]
\fill[pattern=north west lines, pattern color=blue] (-4,-2.5) rectangle (3,3.5);
\fill[white] (-1,-1) rectangle (2,2);
\draw (0,0) rectangle (1,1);
\draw node at (-0.5,0.5) {$q$};
\draw (-1,-1) rectangle (2,2);
\draw[<->] (4.5,1) -- (4.5,2);
\draw node at (5.5,1.5)     {$O(n)$};
\end{tikzpicture}\]
\caption{\label{fig.gluingset.blockgluing} Schematic representation 
of a set of positions 
included in the gluing set of some couple of $n$-blocks 
in $d_{\tilde{\A}} \circ \rho \circ d_{\A} (X)$.} 
\end{figure}

Let us denote $\rho$ the transformation 
on subshifts that 
acts as a rotation by an angle $\pi/2$. Let $X$ be 
a subshift on an alphabet $\mathcal{A}$ and defined 
by a set ${\mathcal{F}}$ of forbidden patterns. Then 
$\rho(X)$ is the subshift 
on alphabet $\mathcal{A}$ 
defined by the set of patterns that are image 
by rotation of the patterns in $\mathcal{F}$.
Thus $\rho$ transforms SFT into SFT.

\begin{theorem} 
\label{theorem.transformation.block.gluing}
The operator $d_{\tilde{\A}} \circ \rho \circ d_{\A}$ 
transforms linear net-gluing subshifts 
of finite type into 
linear block gluing ones.
\end{theorem}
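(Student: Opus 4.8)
The plan is to combine Proposition~\ref{proposition.transfo.gluing.sets} with its rotated version, in two successive applications of the distortion operator. Recall that Proposition~\ref{proposition.transfo.gluing.sets} tells us that for a linearly net gluing SFT $X$, the subshift $d_{\A}(X)$ has, for every couple of $n$-blocks $p,q$, a gluing set $\Delta_{d_{\A}(X)}(p,q)$ that contains a family of full infinite vertical columns, regularly spaced with a linear period $g(p,q)=O(n)$, together with regularly displayed positions inside one central vertical column. Geometrically, the ``vertical direction'' (the one along which curves of $\Delta$ are straight) is special: gluing in that direction is essentially free, while in the ``horizontal direction'' only periodically spaced columns are available. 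So $d_{\A}(X)$ is not yet block gluing, but it is ``block gluing in the vertical direction and net gluing in the horizontal direction''. Conjugating by $\rho$ rotates the roles: $\rho\circ d_{\A}(X)$ has a gluing set for each couple of $n$-blocks containing a family of regularly spaced horizontal \emph{rows} (linear period) plus regularly displayed positions in one central horizontal row (see Figure~\ref{fig.gluingset.rotated}). Since $\rho$ is a conjugacy it preserves the linear net gluing property and, by the earlier discussion, the class of such properties is conjugacy invariant, so $\rho\circ d_{\A}(X)$ is still linearly net gluing, and applying $d_{\tilde\A}$ to it is legitimate.

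The key step is then to apply the distortion operator a second time, to the subshift $Y=\rho\circ d_{\A}(X)$ on alphabet $\tilde\A$, and to track what happens to the gluing sets. By Proposition~\ref{proposition.transfo.gluing.sets} applied to $Y$ (which is linearly net gluing), $d_{\tilde\A}(Y)$ again has, for each couple of $n$-blocks, a gluing set containing regularly spaced infinite vertical columns and a central column with regularly displayed positions. But now the ``net gluing'' input data for $Y$ already provides, for the relevant pseudo-projected blocks, a gluing set much richer than a single orbit: it contains a whole lattice of \emph{rows}. When we pull this back through the pseudo-projection $\mathcal P$ of the second operator, the vertical freedom coming from the new $\Delta$-layer combines with the horizontal-row freedom already present in $Y$, and the resulting gluing set for a couple of $n$-blocks of $d_{\tilde\A}\circ\rho\circ d_{\A}(X)$ contains the complement of a bounded (size $O(n)$) square around the origin --- i.e. $\{\vec u:\|\vec u\|_\infty\ge O(n)\}$ --- which is exactly the linear block gluing condition (Figure~\ref{fig.gluingset.blockgluing}). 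The argument here mirrors the construction in the proof of Proposition~\ref{proposition.transfo.gluing.sets}: given a target vector $\vec u$ with $\|\vec u\|_\infty$ large enough and linear in $n$, one first uses the new $\Delta$-layer to reach the correct column (the horizontal coordinate is a multiple of the period, or can be adjusted in the central column), then inside that column one is free to place $p$ at the required height because the first operator already made vertical gluing at arbitrary offsets possible for $Y$. One also needs to use Proposition~\ref{prop.completing.block.aperiodicity} to first complete the $\Delta$-projections of $p,q$ into well-shaped patterns $\mathcal T(\cdot)$ whose pseudo-projections are genuine blocks of $Y$, with size linear in $n$.

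Concretely, the proof would proceed as follows. First, fix $X$ linearly net gluing of finite type; by Proposition~\ref{prop.8} (not needed here but consistent) $d_{\A}(X)$ is an SFT, and by Proposition~\ref{proposition.transfo.gluing.sets} it is linearly net gluing (the two displayed inclusions give in particular $g(p,q)(\Z\setminus\{0\})\vec e^2+\vec u'(p,q)\subset\Delta$, which is a net gluing witness). Second, note $\rho\circ d_{\A}(X)$ is conjugate to $d_{\A}(X)$, hence also a linearly net gluing SFT (using the conjugacy-invariance corollary, valid since $f=\mathrm{id}$ satisfies its hypothesis). Third, apply Proposition~\ref{proposition.transfo.gluing.sets} to $Y=\rho\circ d_{\A}(X)$: for any couple of $n$-blocks $p,q$ of $d_{\tilde\A}(Y)$, complete $\pi_2(p),\pi_2(q)$ via $\mathcal T$, and analyze, as in that proof, which vectors lie in $\Delta_{d_{\tilde\A}(Y)}(p,q)$; the point is that the horizontal rows available in the gluing sets of $Y$ remove the ``only in the central column'' restriction of the generic conclusion of Proposition~\ref{proposition.transfo.gluing.sets}, upgrading the statement to: all $\vec u$ with $\|\vec u\|_\infty\ge C n$ belong to the gluing set, for some constant $C$. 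Fourth, conclude that $d_{\tilde\A}\circ\rho\circ d_{\A}(X)$ is $O(n)$-block gluing, hence linearly block gluing.

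The main obstacle will be the bookkeeping in the third step: one must verify carefully that the lattice of available positions after the second distortion is \emph{dense enough} in every direction, i.e. that combining the vertical freedom of the outer $\Delta$-layer with the horizontal-row freedom inherited from $Y$ really covers the complement of a bounded set, rather than only a sublattice. This requires choosing the parameter $k$ (the horizontal scale in the curve construction, as in Figure~\ref{fig.schema.branching.thm.0} and Figure~\ref{fig.schema.branching.thm.1}) large enough relative to the periods coming from $Y$, and checking that the various ``$\le m$'', ``$\le 2(m+\tilde f)$'', etc.\ bounds from Proposition~\ref{proposition.transfo.gluing.sets} still add up to something linear in $n$. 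A secondary subtlety is ensuring that $d_{\tilde\A}$ is applied to a genuine SFT with well-defined finite rank --- this is fine since $\rho\circ d_{\A}(X)$ is an SFT --- and that the rank does not blow up in a way that breaks the linear estimate; since each operator increases the rank by a bounded amount and the completion bounds of Proposition~\ref{prop.completing.block.aperiodicity} are linear, this is under control.
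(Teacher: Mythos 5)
Your proposal follows essentially the same route as the paper: apply Proposition~\ref{proposition.transfo.gluing.sets} to $X$, use $\rho$ to turn the column-shaped gluing sets into row-shaped ones, and then re-run the distortion construction on $\rho\circ d_{\A}(X)$ so that the vertical freedom introduced by the second $\Delta$-layer, combined with the inherited horizontal rows, fills the complement of an $O(n)$ box, which is exactly the argument (and the same figures) the paper gives. Two small justification slips — the central column alone is not a net-gluing witness in the sense of Definition~\ref{definition.NetGluing} (one must pass to a coarser sublattice sitting inside the full columns), and $\rho$ is not a conjugacy in the sense of the corollary but preserves the property because the definitions are symmetric under coordinate exchange — do not affect the argument, which matches the paper's.
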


\noindent \textbf{Idea:} \textit{the 
idea is to see how this operator acts 
on the gluing sets of $n$-blocks $p$ 
relative to another $q$ and see that 
it fulfills the part of $\Z^2$ outside 
of a box containing $q$ whose size is 
$O(n)$.}

\begin{proof}

Since $\rho$ acts as a $\pi/2$ rotation over 
patterns (and thus on configurations), 
the gluing set of some $n$-block 
$p$ in the language of $\rho \circ d_{\A} (X)$ relatively to another one $q$
contains the positions shown by Figure~\ref{fig.gluingset.rotated}. 
Using the same procedure as in the 
proof of Proposition~\ref{proposition.transfo.gluing.sets}, 
we get that the gluing set of two $n$-blocks in the language of $d_{\tilde{\A}} \circ 
\rho \circ d_{\A} (X)$ 
contains 
some set of positions as in Figure~\ref{fig.gluingset.blockgluing}.
Indeed, this procedure introduced a vertical 
perturbation in the positions of the 
gluing sets. From 
the form of the sets included 
in the gluing sets on 
Figure~\ref{fig.gluingset.rotated}, this perturbation 
transforms the gluing sets of the subshift 
$d_{\tilde{\A}} \circ 
\rho \circ d_{\A} (X)$ by fulfilling the 
plane outside the box having 
size $O(n)$.

This means that the subshift 
$d_{\tilde{\A}} \circ \rho \circ d_{\A} (X)$ is linearly block gluing.
\end{proof}

\begin{proof} \textit{of 
Theorem~\ref{thm.aperiodicity}:}
We know that $X_{\texttt{R}}$ is linearly net gluing. 
As a consequence, the subshift 
$d_{\tilde{\A}} \circ \rho \circ d_{\A} (X_{\texttt{R}})$ is linearly block gluing. 
It is also aperiodic, since $X_{\texttt{R}}$ 
is aperiodic.
\end{proof}

\section{Entropy of block 
gluing \texorpdfstring{$\Z^2$}{Z2}-SFTs}\label{sec.Entropy}

In this section we present some results about the computability 
of entropy of block gluing $\Z^2$-SFT. The reader will find the 
proof of the main theorem in the next section.

\subsection{Characterization of the entropies of \texorpdfstring{$\Z^2$}{Z2}-SFT}

Let us recall that
$N_n (X) = |\mathcal{L}_n (X)|$ denotes the number 
of $n$-blocks in the language of a subshift $X$.
Its entropy is defined as:
\[h(X)=\inf_{n} \frac{\log_2 (N_n (X))}{n^2}\]

\subsection{Computability of 
the entropy for the sub-logarithmic 
regime}

In this section we show that the entropy of 
block gluing SFT with a sub-logarithmic gap
function is computable. This generalizes 
Proposition 3.3 in~\cite{Pavlov-Schraudner-2014}. 
This proposition states that a multidimensional 
SFT which is block gluing with a constant 
gap function is computable.

\begin{definition}
Let $(a_n)_n$ be a sequence of non-negative 
real numbers. We say that 
the series $\sum_{n} a_n$ converges 
with \textbf{computable rate} when 
there exists an algorithm which 
on input $t$ computes an integer $n(t)$ 
such that 
\[\sum_{n \ge n(t)} a_n < 2^{-t}.\]
\end{definition}

\begin{theorem}[\cite{GH17}]
\label{theorem.summability}
Let $f : \N \rightarrow \N$ be a function, 
$d$ be an integer
and $X$ a $\Z^d$-subshift whose language 
is decidable. If the series 
$\sum_n f(n)/n^2$ converges with 
computable rate, then $h(X)$ is computable.
\end{theorem}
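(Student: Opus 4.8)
The plan is to use that $h(X)=\inf_n\log_2 N_n(X)/n^d$ is the limit of a decreasing computable sequence, and to turn the super-multiplicativity forced by the gluing hypothesis into a \emph{lower} bound on $h(X)$ whose distance to $\log_2 N_n(X)/n^d$ is a tail of the series $\sum_n f(n)/n^2$. Since the language of $X$ is decidable, $N_n(X)$ is computable uniformly in $n$ (enumerate the $|\A|^{n^d}$ patterns on $\llbracket 1,n\rrbracket^d$ and decide membership in $\Lang(X)$), so $\log_2 N_n(X)/n^d$ is a computable real uniformly in $n$, decreasing to $h(X)$. Hence it will suffice to produce, on input $t$, an integer $N$ together with a \emph{certified} inequality $0\le\log_2 N_N(X)/N^d-h(X)<2^{-t}$, and then to output a rational $2^{-t-1}$-approximation of $\log_2 N_N(X)/N^d$.

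First I would record the counting consequence of the $f$-block gluing hypothesis on $X$: gluing $2^d$ arbitrary $n$-blocks at the $2^d$ corners of a box of side $2n+f(n)$ — the corners being at pairwise $\|\cdot\|_\infty$-distance exactly $n+f(n)$, hence admissible — yields, for every $n$,
\[ N_{2n+f(n)}(X)\ \ge\ N_n(X)^{2^d}, \]
since distinct choices of the $2^d$ corner blocks produce distinct $(2n+f(n))$-blocks (that block gluing permits such simultaneous corner gluings is standard, cf.~\cite{Pavlov-Schraudner-2014}). Since $\sum_n f(n)/n^2$ converges and $f$ is non-decreasing one has $f(n)=o(n)$ — indeed $f(m)/(4m)\le\sum_{j=m}^{2m}f(j)/j^2$ — so one may fix, computably from the rate function, an $n_0$ with $f(n)\le n$ for all $n\ge n_0$. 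For $N\ge n_0$ put $m_0=N$ and $m_{k+1}=2m_k+f(m_k)$, so that $2m_k\le m_{k+1}\le 3m_k$ and, iterating the displayed bound, $N_{m_k}(X)\ge N_N(X)^{2^{dk}}$.

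The core of the argument is a telescoping estimate. From $m_{k+1}/2^{k+1}=(m_k/2^k)(1+\varepsilon_k)$ with $\varepsilon_k:=f(m_k)/(2m_k)\le\tfrac12$ one gets $m_k/2^k=N\prod_{j<k}(1+\varepsilon_j)$, hence $\log_2 N_{m_k}(X)/m_k^d\ge\big(\log_2 N_N(X)/N^d\big)\prod_{j<k}(1+\varepsilon_j)^{-d}$; letting $k\to\infty$ and using $\prod_{j\ge 0}(1+\varepsilon_j)^d\le\exp\big(d\sum_{j\ge 0}\varepsilon_j\big)$,
\[ h(X)\ \ge\ \frac{\log_2 N_N(X)}{N^d}\Big(1-d\sum_{j\ge 0}\varepsilon_j\Big). \]
Because the $m_j$ grow at least geometrically while $m_{j+1}\le 3m_j$, the intervals $[m_j,m_{j+1})$ tile $[N,\infty)$ and, by monotonicity of $f$, one has $f(s)/s^2\ge f(m_j)/(3m_j)^2$ on $[m_j,m_{j+1})$; summing, $\sum_{s\ge N}f(s)/s^2\ge\tfrac19\sum_j f(m_j)/m_j=\tfrac29\sum_j\varepsilon_j$. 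Together with $\log_2 N_N(X)/N^d\le\log_2|\A|$ this produces the certificate
\[ 0\ \le\ \frac{\log_2 N_N(X)}{N^d}-h(X)\ \le\ C\sum_{s\ge N}\frac{f(s)}{s^2},\qquad C=\tfrac92\,d\,\log_2|\A|. \]
To conclude, on input $t$ one uses the computable rate of convergence of $\sum_n f(n)/n^2$ to compute an $N\ge n_0$ with $C\sum_{s\ge N}f(s)/s^2<2^{-t-1}$, computes $N_N(X)$, and outputs a rational within $2^{-t-1}$ of $\log_2 N_N(X)/N^d$; this lies within $2^{-t}$ of $h(X)$, so $h(X)$ is computable.

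I expect the main obstacle to be the quantitative passage from the bare super-multiplicative bound to an error controlled by $\sum_s f(s)/s^2$: one must choose the doubling sequence $m_k$ so that the accumulated factor $\prod_j(1+\varepsilon_j)^d$ stays comparable to $\exp\big(d\sum_j f(m_j)/m_j\big)$, and recognize that $\sum_j f(m_j)/m_j$ is, up to a constant, the tail $\sum_{s\ge N}f(s)/s^2$ — this is where monotonicity of $f$ and the geometric growth of the $m_k$ are both essential. The grid-gluing inequality $N_{2n+f(n)}(X)\ge N_n(X)^{2^d}$ and the extraction of $n_0$ from the rate function are the remaining points to verify, but both are routine.
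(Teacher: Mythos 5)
You should first note that the paper offers no proof of this statement to compare against: it is imported wholesale from the external reference cited in the theorem, and as printed the statement even omits the essential hypothesis that $X$ is $f$-block gluing (and that $f$ is non-decreasing, the standing assumption of Section~\ref{sec.definition.block.gluing}); you were right to reinstate both. Judged on its own terms, your argument has the correct architecture — hierarchical doubling $m_{k+1}=2m_k+f(m_k)$, the telescoping product bounding $\frac{\log_2 N_N(X)}{N^d}-h(X)$ by $d\log_2|\A|\sum_j f(m_j)/m_j$, the comparison of that sum with the tail $\sum_{s\ge N}f(s)/s^2$ via monotonicity of $f$ and geometric growth of the $m_j$, and the final algorithm using the computable rate and the decidability of the language — and all of those estimates check out.

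The genuine gap is the one-step inequality $N_{2n+f(n)}(X)\ge N_n(X)^{2^d}$, which you justify by placing $2^d$ arbitrary $n$-blocks \emph{simultaneously} at the corners of a box of side $2n+f(n)$ and calling this standard. The definition of $f$-block gluing in this paper (and in \cite{Pavlov-Schraudner-2014}) is pairwise: for each pair $(p,q)$ there is \emph{some} configuration containing that pair at an admissible displacement, and for a non-constant gap function this does not yield a single configuration containing all $2^d$ blocks at scale-$n$ separations — that simultaneous gluing is exactly the non-trivial content here. The step is load-bearing: the obvious pairwise substitute (glue two $n$-blocks, pad the bounding rectangle to a square of side $2n+f(n)$, glue two such squares, \dots) produces $2^d$ blocks only inside a box of side roughly $2^d n$, so the per-site lower bound loses a fixed factor $2^{d(d-1)}$ at every level of your recursion and the telescoped bound degenerates to $0$. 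The argument is repairable with pairwise gluing only: after gluing $p_1,p_2$ horizontally at gap $f(n)$, keep the $(2n+f(n))\times n$ bounding rectangle, pad it to a $(2n+f(n))$-square with the rectangle placed on the \emph{facing} side (top for the lower copy, bottom for the upper copy), and glue the two squares vertically at gap $f(2n+f(n))$; the two rectangles then lie at distance exactly $f(2n+f(n))$, so the four blocks sit in a box of side $2n+f(3n)$ (for $n\ge n_0$), giving $N_{2n+f(C_d n)}(X)\ge N_n(X)^{2^d}$ after iterating over the $d$ directions. With this replacement your recursion becomes $m_{k+1}=2m_k+f(C_d m_k)$, and the telescoping, the tail comparison (only the constant $C$ changes) and the algorithm go through verbatim. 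As written, however, the key counting inequality is asserted rather than proved, and it is precisely the point where the gluing hypothesis has to be exploited carefully.
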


\begin{proposition} \label{prop.computability.sublog} 
Let $X$ be a SFT which is 
$f$ block gluing. If the function $f$ 
verifies the following conditions: 
\begin{enumerate}
\item $f(n) \in o(log(n))$
\item $f \le id$,
\end{enumerate} 
then $h(X)$ is computable.
\end{proposition}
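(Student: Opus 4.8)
The plan is to combine the two earlier results \textbf{Theorem~\ref{theorem.summability}} and \textbf{Proposition~\ref{proposition.decidable.language}}, since together they already nearly give the statement: Proposition~\ref{proposition.decidable.language} tells us that an $f$-block gluing SFT with $f\in o(\log n)$, $f\le \mathrm{id}$, and $f$ computable has decidable language, while Theorem~\ref{theorem.summability} reduces computability of the entropy to decidability of the language plus computable-rate convergence of $\sum_n f(n)/n^2$. So the two things I must supply are: (1) that we may assume without loss of generality that $f$ is computable (or circumvent the issue), and (2) that $\sum_n f(n)/n^2$ converges with computable rate whenever $f\in o(\log n)$ and $f\le\mathrm{id}$.

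First I would address the convergence. Since $f\le\mathrm{id}$ we certainly have $\sum_n f(n)/n^2 \le \sum_n 1/n <\infty$ is false — that diverges — so the bound $f\le\mathrm{id}$ alone is not enough; the key is $f\in o(\log n)$. Given $\varepsilon>0$ there is $n_0$ with $f(n)\le\varepsilon\log n$ for $n\ge n_0$, hence $\sum_{n\ge N} f(n)/n^2 \le \varepsilon\sum_{n\ge N}\log n/n^2$ for $N\ge n_0$, and $\sum_{n\ge N}\log(n)/n^2$ is a convergent tail with an explicit, computable upper bound (e.g.\ compare with $\int_{N-1}^\infty \log(t)/t^2\,dt = (\log(N-1)+1)/(N-1)$). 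The subtlety for \emph{computable rate} is that, a priori, given $t$ we need to produce $n(t)$ with $\sum_{n\ge n(t)} f(n)/n^2 < 2^{-t}$, and the definition of $o(\log n)$ is non-constructive (no a priori handle on $n_0=n_0(\varepsilon)$). Here is where a wrapping trick helps: the hypotheses of the proposition only guarantee the existence of \emph{some} such $f$; but in fact the property ``$X$ is $f$-block gluing'' is monotone, so $X$ is also $g$-block gluing for any $g\ge f$. I would replace $f$ by a well-chosen computable majorant. Concretely, if we knew $f$ were computable we could define $\tilde f(n) = \max_{k\le n} f(k)$ (still computable, still $o(\log n)$ since $f$ is, still $\le\mathrm{id}$ since $f\le\mathrm{id}$ and $\mathrm{id}$ is non-decreasing, and non-decreasing), and then bounding $\sum_{n\ge N}\tilde f(n)/n^2$ with computable rate: search, for each target $t$, for the first $N$ such that the explicit tail bound (using the known value of $\tilde f$ on an initial segment together with the eventual $\varepsilon\log$ domination, which we can \emph{certify} by checking $\tilde f(n)\le \varepsilon \log n$ on a finite range once $\varepsilon$ is fixed appropriately) falls below $2^{-t}$; this search terminates by the assumption $\tilde f\in o(\log n)$.

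The remaining issue is whether we may assume $f$ computable. The honest resolution is: the hypothesis of the proposition is that $X$ is $f$-block gluing for a function $f$ with $f\in o(\log n)$ and $f\le\mathrm{id}$; from $X$ alone we can compute an upper bound on its gap function. Indeed, since $X$ is an SFT one can, for each $n$, search exhaustively (this is where decidability of the language — Proposition~\ref{proposition.decidable.language} — or merely the finite-type description is used) for the smallest $g(n)$ such that every pair of $n$-blocks can be glued at every distance $\ge n+g(n)$; this search is effective because for a fixed distance the gluing question is a finite combinatorial check over patterns on a bounded support, decidable for an SFT. This produces a \emph{computable} function $g$ with $g\le f$ pointwise (as $X$ is $f$-block gluing, the true minimal gap is at most $f$), hence $g\in o(\log n)$ and $g\le\mathrm{id}$, and $X$ is $g$-block gluing. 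Now apply Proposition~\ref{proposition.decidable.language} with the computable function $g$ to conclude the language of $X$ is decidable, and apply the convergence argument above to $g$ in place of $f$ to get that $\sum_n g(n)/n^2$ converges with computable rate. Theorem~\ref{theorem.summability} then yields that $h(X)$ is computable.

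The main obstacle I anticipate is precisely this last point — turning the \emph{existence} of a good gap function into an \emph{effectively available} one. One must argue carefully that the minimal gap function of an SFT is itself computable (which rests on the fact that, for an SFT, globally admissible patterns on a bounded support can be effectively enumerated, so the gluing-at-distance-$d$ predicate is decidable), and that this computable minimal gap inherits both $f\le\mathrm{id}$ and the $o(\log n)$ growth from the hypothesized $f$. Everything else — the tail estimate for $\sum\log(n)/n^2$, the monotonicity of block gluing in the gap function, and the two invocations of the cited results — is routine.
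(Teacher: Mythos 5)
Your overall route is the same as the paper's: its proof of this proposition consists precisely of invoking Proposition~\ref{proposition.decidable.language} to get decidability of the language and Theorem~\ref{theorem.summability} to conclude, the convergence of $\sum_n f(n)/n^2$ coming from $f\in o(\log n)$ (your tail estimate against $\int \log(t)/t^2\,dt$ is the intended justification, and your observation that $f\le \mathrm{id}$ alone would not give convergence is correct). So the two invocations and the tail bound are fine and match the paper.

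The genuine problem lies in your attempt to dispense with any effectivity assumption on $f$ by computing a gap function from $X$ itself. Your key claim — that for a fixed relative position $\vec{u}$ the question ``$\vec{u}\in\Delta_X(p,q)$?'' is a finite combinatorial check decidable from the finite-type description — is false: by Definition~\ref{definition.GluingSet} this asks whether the partial pattern consisting of the two blocks at that separation is \emph{globally} admissible, i.e.\ extends to a configuration of $X$, and global admissibility is a $\Pi_1$ property that is undecidable for general SFTs; local admissibility on a bounded window does not certify it. Appealing instead to Proposition~\ref{proposition.decidable.language} at this point is circular, since that proposition assumes a computable gap function, which is exactly what you are trying to produce. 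Moreover, even if membership for each fixed $\vec{u}$ were decidable, certifying a candidate value $g(n)$ requires verifying gluing at \emph{all} $\vec{u}$ with $\|\vec{u}\|_\infty\ge n+g(n)$, a co-r.e.\ condition, so your search for the minimal gap has no terminating verification step; and your computable-rate argument additionally needs to certify effectively that the majorant is $o(\log n)$, which no finite check can do. The paper sidesteps all of this by implicitly treating $f$ as effectively given (Proposition~\ref{proposition.decidable.language} explicitly assumes $f$ computable); with that standing assumption your argument collapses to the paper's proof, but without it your patch does not work as written.
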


\begin{proof}
From Proposition~\label{proposition.decidable.language}, 
we get that the language of 
such a subshift is decidable.  
Moreover, the function $f$ verifies 
the conditions of Theorem~\ref{theorem.summability}. 
As a consequence, $h(X)$ is computable.
\end{proof}

\subsection{\label{sec.abstract.block.gluing}
Characterization of the entropies of linearly 
block gluing \texorpdfstring{$\Z^2$}{Z2}-SFT}

In this section, we prove the following theorem:

\begin{theorem}
\label{thm.entropy.block.gluing}
The possible entropies of linearly block gluing $\Z^2$-SFT 
are exactly the non-negative $\Pi_1$-computable numbers. 
\end{theorem}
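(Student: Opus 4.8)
The theorem has two directions. The easy one is the upper bound: linearly block gluing implies block transitive, hence the language is... well, not decidable (Proposition~\ref{prop.langundec}), but we still need $h(X)$ to be $\Pi_1$-computable. This follows from the general Hochman--Meyerovitch result quoted in the excerpt, since every $\Z^2$-SFT has $\Pi_1$-computable entropy; the definition $h(X)=\inf_n \log_2(N_n(X))/n^2$ together with the fact that $N_n(X)$ is a recursively enumerable function of the SFT data (one can semi-decide whether a given $n$-block is globally admissible) shows $h(X)$ is the infimum of a computable sequence of rationals, i.e.\ $\Pi_1$-computable. So this direction requires only a remark.

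\textbf{The hard direction: realization.} Given a non-negative $\Pi_1$-computable number $h$, I must build a linearly block gluing $\Z^2$-SFT of finite type with entropy exactly $h$. The strategy is to re-run the Hochman--Meyerovitch construction but inside a rigid, highly-structured scaffold so that block gluing is forced. First I would recall that $h = \inf_k a_k$ for a computable sequence of rationals $a_k = p_k/q_k$; as in~\cite{Hochman-Meyerovitch-2010}, one encodes this via a hierarchical construction where at scale $N$ a fraction roughly $a_{k(N)}$ of the available cells in a large block are ``free'' (can carry one of two symbols), the remaining cells being frozen by the structure, and where the computation of $k(N)$ and $a_{k(N)}$ is carried out by Turing machines embedded in the tiling. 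The key modification, already foreshadowed in the introduction of the paper (``centralization and fixation of information'', the DNA/nuclei/cells metaphor), is to lay this construction on top of the rigid Robinson subshift $X_{adR}$ whose cells of order $n$ repeat periodically with period $4^{n+2}$ (Lemma~\ref{prop.complete.rob} and the repetition lemmas): the computation machinery for a cell is fixed at the center (nucleus), and communication is done by error signals. The free bits contributing to entropy must be placed so that their density at scale $n$ is controlled, giving $h(X) = \inf a_k = h$ exactly (the frozen part contributes zero entropy because it is determined by the rigid layer plus a polynomially-bounded amount of information).

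\textbf{Forcing the gluing property.} The delicate point is arranging that two arbitrary $n$-blocks $p,q$ in the language can be glued at distance $O(n)$. Here I would use the same philosophy as Proposition~\ref{prop.rob.linear.net.gluing} and Lemma~\ref{prop.complete.rob}: any $n$-block is a sub-pattern of a cell of order $O(\log n)$, hence of side length $O(n)$, and cells repeat periodically with period $O(n)$ in every larger cell; so if the construction were net gluing we would be done via Proposition~\ref{proposition.DefLinearPower}. To go from net gluing to \emph{block} gluing (gluability in \emph{every} direction, not just along a lattice) I would apply the distortion operator machinery of Section~\ref{section.periodic}: the composite $d_{\tilde{\A}} \circ \rho \circ d_{\A}$ transforms linear net gluing SFTs into linear block gluing ones (Theorem~\ref{theorem.transformation.block.gluing}), and — crucially — I must check it preserves entropy up to an explicit affine rescaling (each distortion inserts at most one extra row/column per existing row/column, so it changes $N_n$ in a controlled, entropy-neutral way after renormalizing $n$; more precisely the pseudo-projection is finite-to-one with bounded fibers, so $h$ is unchanged, or changed by a computable factor one can pre-compensate). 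Thus the real work is: (1) design an SFT $Y_h$ that is \emph{linearly net gluing}, of finite type, with entropy $h$; then (2) set $X = d_{\tilde{\A}} \circ \rho \circ d_{\A}(Y_h)$, or a normalized version, to get block gluing.

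\textbf{Main obstacle and how I would attack it.} The principal difficulty is step (1): making the Hochman--Meyerovitch realization \emph{net gluing} while keeping it of finite type and keeping the entropy computation intact. Raw HM constructions are typically \emph{not} even transitive, because a configuration can contain a single bi-infinite computation that ``locks'' a region and cannot be glued to a generic finite block. The fix is the centralization principle: every finite region must only depend on finitely much data stored nearby (in the nucleus of the enclosing cell), and distinct cells at a given scale must be forced by the Robinson rigidity to carry \emph{the same} machine program, so that any finite pattern, once completed to a cell, embeds in a canonical larger cell and hence repeats periodically. This is exactly where the more ``complex and structured constructions'' alluded to in the introduction are needed. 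I would carry this out by: fixing the Robinson-type scaffold with its petal hierarchy; in each order-$n$ cell, writing the machine $M$ computing $k(n)$ and $a_{k(n)}$ on a tape anchored at the petal center, with the number of computation steps budgeted against the cell side $4^{n+1}+1$ (so halting is guaranteed before the cell boundary, as in HM, by using the space as a clock); allocating a block of $\lfloor a_{k(n)} \cdot (\text{cell area}) \rfloor$ free cells whose positions are determined by the machine output; forbidding halting errors via error-signal propagation to neighboring cells so that inconsistent programs cannot coexist. Then net gluing follows because any $n$-block completes to a cell of order $O(\log n)$, side $O(n)$, which appears periodically with period $O(n)$ in every larger cell (Lemma~\ref{lem.repetition.supertiles} analogue for this enriched subshift), giving $\vec u(p,q) + (n + O(n))(\Z^2\setminus\{0\}) \subset \Delta_X(p,q)$. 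The entropy computation is the standard HM estimate: the free bits give a lower bound $\liminf \log_2 N_n / n^2 \ge \inf_k a_k = h$, and an upper bound comes from counting — the non-free structure in an $n\times n$ region is determined by at most $O(\log n)$ machine tapes of polynomial size, contributing $2^{o(n^2)}$, so $h(X) = h$ exactly. Finally, Theorem~\ref{theorem.transformation.block.gluing} upgrades net gluing to block gluing, and Proposition~\ref{prop.8}-style arguments confirm the distortion does not change the entropy (bounded-to-one factor map after rescaling), completing the proof.
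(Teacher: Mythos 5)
Your overall architecture coincides with the paper's: realize the target number as the entropy of a linearly net gluing SFT built from a Hochman--Meyerovitch-type construction centralized on the rigidified Robinson scaffold, then upgrade net gluing to block gluing with the distortion operators. But the step you wave through is false as stated: the operator $d_{\A}$ is \emph{not} entropy-neutral, and the pseudo-projection $\mathcal{P}$ is not finite-to-one. For a fixed configuration of $X$ one can choose the $\Delta$-layer freely (every locally admissible pattern of $\Delta$ is globally admissible), and the shift positions of the curves are free up to local constraints, so the fibers of $\mathcal{P}$ are uncountable and the curve layer contributes strictly positive entropy; moreover the amount contributed has no evident closed form, so you cannot simply ``pre-compensate''. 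This is precisely why the paper does not use $d_{\A}$ for the entropy realization: Section~\ref{sec.linearly.block.gluing.realization} introduces modified operators $d^{(r)}_{\A}$, with a counter layer forcing the curves to be concatenations of length-$r$ straight segments and a deliberately added random-colors layer, so that the count is dominated by straight-curve patterns and one gets the exact identity $h(d^{(r)}_{\A}(Z)) = h(Z) + \frac{\log_2(1+r)}{r}$ (Theorem~\ref{thm.close.form.entropy}, whose upper bound via the sets $\mathcal{T}_{kr}$ is a genuine counting argument, not a remark). Since this added entropy is computable, one realizes $h - 2\frac{\log_2(1+r)}{r}$ by the net gluing construction, applies $d^{(r)}_{\tilde{\A}_r}\circ\rho\circ d^{(r)}_{\A}$ (Theorem~\ref{th.operator.dr}), and treats $h=0$ by a trivial separate example. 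Without this correction your final subshift has entropy strictly larger than $h$. (Minor: global admissibility is co-semi-decidable, not semi-decidable; the $\Pi_1$ upper bound is standard but runs through locally admissible patterns.)

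There is a second gap, in your step (1). You assert the non-free structure contributes $2^{o(n^2)}$, so that the entropy is exactly $\inf_k a_k$. But to make the construction gluable one cannot enforce correct computations everywhere: in infinite cells the computation areas and machine runs can be degenerate (no local rule lets an infinite row or column ``know'' it is free), so degenerate space-time diagrams are unavoidably present, and for net gluing every such pattern must be completable into a \emph{finite} cell and hence must also occur there. The paper's solution (Sections~\ref{sec.construction.xsn}--\ref{sec.choice.parameters}) is the DNA-governed split of each cell into computation quarters and simulation quarters, the latter allowing free initial tapes, heads entering from the sides and undetermined $\texttt{in}/\texttt{out}$ symbols, discounted through error signals. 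This produces genuinely positive parasitic entropy, which is then absorbed via the explicit entropy formula for $X_{s,N}$ and the choice of the parameters $N$, $M$ and of the $\Pi_1$-computable sequence $s$. Your plan of ``forbidding halting errors'' in all cells would either fail to be enforceable by local rules in infinite cells or destroy the gluing property for patterns arising there, and in any case your entropy accounting omits the parasitic term that must be computed and compensated.
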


\begin{corollary}
The possible entropies of transitive $\Z^2$-SFT are 
exactly the non-negative $\Pi_1$-computable numbers.
\end{corollary}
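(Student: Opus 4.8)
Two inclusions must be established. The inclusion ``$\subseteq$'' is immediate: a linearly block gluing $\Z^2$-SFT is in particular a $\Z^2$-SFT, so by the theorem of Hochman and Meyerovitch~\cite{Hochman-Meyerovitch-2010} its entropy is a non-negative $\Pi_1$-computable number (equivalently, $h(X)=\inf_n \log_2 N_n(X)/n^2$ is the infimum of a computable sequence of rationals). All the work lies in the converse realization: given a non-negative $\Pi_1$-computable $h$, fixed together with a computable sequence of rationals $(r_n)_n$ with $h=\inf_n r_n$, one must construct a linearly block gluing $\Z^2$-SFT of entropy exactly $h$.

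The plan is to revisit the realization construction of~\cite{Hochman-Meyerovitch-2010} and carry it out over the \emph{rigid} Robinson subshift $X_{adR}$ of Section~\ref{sec.definition.robinson}, whose hierarchical self-similar structure is rigid enough to control gluing. Recall that $X_{adR}$ is linearly net gluing (Proposition~\ref{prop.rob.linear.net.gluing}), that every $n$-block of $X_{adR}$ is a sub-pattern of a cell of order $\chi'(n)=O(\log n)$ (Lemma~\ref{prop.complete.rob}), and that a cell of order $k$ has size $4^{k+1}+1$ and recurs, horizontally and vertically, with period $4^{k+2}$ throughout every configuration; for $k=\chi'(n)$ both quantities are $O(n)$. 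On top of this grid one adds: (i) a layer implementing, in the nucleus of each cell, a Turing machine which on a cell of order $k$ computes the rational $r_{\phi(k)}$ for a suitable increasing $\phi$; and (ii) a ``free bits'' sublayer allowed to branch only inside sub-regions whose number and size are gated by the value computed in (i). Calibrating the gates so that the exponential growth rate of the number of admissible $n$-blocks is exactly $h$ is the usual Hochman--Meyerovitch counting argument, transported to the rigid grid, and yields $h(Y)=h$ for the resulting SFT $Y$.

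The novelty forced by the gluing requirement is that the data determining a cell's behaviour must be \emph{centralized and fixed} inside its nucleus, the computing machines accessing it only through error signals, so that any finite admissible pattern constrains merely a bounded-size piece of each level of the hierarchy. Granting this, linear block gluing follows from a quantified completion argument in the spirit of Proposition~\ref{prop.completing.block.aperiodicity} and of the completion lemma for $X_{adR}$: an $n$-block of $Y$ is contained in a pattern supported by a cell of order $O(\log n)$, hence of linear size, which recurs with linear period; two such patterns can then be completed into a common configuration at linear distance in both directions, the centralization guaranteeing that the computations and the free bits in the gap can always be filled consistently. (Alternatively, one may first produce a linearly net gluing SFT by an analogous construction and then apply the operator $d_{\tilde{\A}}\circ\rho\circ d_{\A}$ of Theorem~\ref{theorem.transformation.block.gluing}, provided one checks that this operator preserves entropy; that reduces to a good control of the combinatorics of the $\Delta$ layer, whose configurations are pseudo-coverings by curves.)

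The main obstacle is precisely this simultaneous control: arranging the self-simulating hierarchy of cells so that, on the one hand, the computations producing the approximations $r_n$ together with the gated free bits pin the entropy to $h$ as in~\cite{Hochman-Meyerovitch-2010}, and, on the other hand, distant regions of a configuration stay independent enough to be glued at linear distance. Once the construction is in place, verifying $h(Y)=h$ and the linear block gluing property is laborious but conceptually routine, and the inclusion ``$\subseteq$'' is immediate. The corollary then follows since linear block gluing implies transitivity, while every transitive $\Z^2$-SFT is a $\Z^2$-SFT and hence has $\Pi_1$-computable entropy by~\cite{Hochman-Meyerovitch-2010}.
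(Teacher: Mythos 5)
Your proposal is correct and takes essentially the same route as the paper: the corollary is deduced from Theorem~\ref{thm.entropy.block.gluing} (every non-negative $\Pi_1$-computable number is realized as the entropy of a linearly block gluing, hence transitive, $\Z^2$-SFT built over the rigid Robinson hierarchy with centralized nuclei, error signals, and the distortion operators), combined with the Hochman--Meyerovitch upper bound valid for all $\Z^2$-SFTs. The only detail worth correcting in your sketch is that the operator route does not require entropy preservation: the paper's modified operators $d^{(r)}_{\A}$ add the computable quantity $\frac{\log_2 (1+r)}{r}$ to the entropy (Theorem~\ref{thm.close.form.entropy}), and this known offset is compensated when choosing the target entropy of the linearly net gluing subshift.
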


\subsubsection{Outline of the proof of Theorem~\ref{thm.entropy.block.gluing}}

The steps list of the proof of Theorem~\ref{thm.entropy.block.gluing} is the following: 

\begin{enumerate}
\item We first prove that any $\Pi_1$-computable number $h$
is the entropy of a linearly net gluing $\Z^2$-SFT. 
It is sufficient to realize the numbers in $[0,1]$. Indeed,
in order to realize the numbers in $]1,+\infty[$
one can take the product of a linearly 
net gluing SFT having entropy in $[0,1]$ with a full shift.
When $h$ is in $\{0,1/4,1/2,3/4,1\}$, 
this is easy to find a linearly net gluing $\Z^2$-SFT
whose entropy is $h$. Indeed, this can be done by allowing 
random bits on a regularly displayed set of positions in $\Z^2$.
Hence we only have to realize the numbers in 
$[0,1] \backslash \{0,1/4,1/2,3/4,1\}$.

The steps of the realization for these numbers are as follows: 

\begin{enumerate}
\item First, for any $\Pi_1$-computable 
sequence $s \in \{0,1\}^{\N}$ and any $N>0$,
we construct in Section~\ref{sec.construction.xsn} a linearly net gluing $\Z^2$-SFT 
$X_{s,N}$. The integer $N$ corresponds 
to a threshold and the sequence $s$ 
corresponds to a control on the entropy of the subshift.

This part is an adaptation 
of the construction in \cite{Hochman-Meyerovitch-2010}. 
This construction uses an implementation of Turing machines 
whose work is to control the frequency of apparition of some random bits.
These bits generate the entropy through frequency bits specifying 
if a random bit can be present or not.
In this construction, 
the obstacles for transitivity (and thus linear 
net gluing) 
are the following ones: 

\begin{enumerate} 
\item the identification of frequency bits over 
areas that are not closely 
related to the structures  
\item and the possibility of degenerated behaviors of the 
Turing machines dynamics in infinite areas. 
\end{enumerate}

The first obstacle is solved by a modification of the identification areas (this 
mechanism is described in 
Section~\ref{sec.frequency.bits} and 
abstracted on 
Figure~\ref{fig.abstract.freqbits.block.gluing}) 
corresponding 
to the insides of Robinson's structures defining 
computation units
(they will be defined in a more precise way 
in the following).
The second one is solved by the simulation of 
degenerated behaviors in all the computation units
aside the intended behaviors.
This ensures that the results of this 
simulation is not taken into account 
- this means that the frequency 
bits are not affected, using error signals 
propagating through the 
border of Robinson's structures (this 
is described in Section~\ref{sec.cells.coding} and 
Section~\ref{sec.machine}, 
and abstracted on Figure~\ref{fig.machines.mechanism.abstract.block.gluing}). 
The subshift is net gluing since any 
pattern can be completed, with control 
on the size, into a pattern over a simulation 
area.

\item In Section~\ref{sec.choice.parameters} we 
prove an explicit formula for the entropy 
of the subshifts $X_{s,N}$ which depends 
on the parameters $s,N$. 
Afterwards, for any $h \in [0,1] \backslash \{0,1/4,1/2,3/4,1\}$, 
we associate some $\Z^2$-SFT $X_h$ by choosing some parameters $s,N$ 
so that the entropy of $X_h$ is $h$. Then we prove in 
Section~\ref{sec.linear.net.gluing.xh} that this subshift 
is linearly net gluing.

\end{enumerate}

\item We present in Section~\ref{sec.linearly.block.gluing.
realization} 
adaptations $d^{(r)}_{\A}$ of the operator $d_{\A}$ 
presented in Section~\ref{sec.pseudo.covering}. These operators verify the equalities 
\[h(d^{(r)}_{\A} (Z)) = \frac{\log_2 (1+r)}{r} + h(Z)\]
for any subshift $Z$. 
Since the additional 
entropy $\frac{\log_2 (1+r)}{r}$ 
induced by the operator is computable 
and that the operator 
$d^{(r)}_{\A} \circ \rho \circ d^{(r)}_{\A} \circ 
\rho \circ d^{(r)}_{\A}$ transforms linearly net gluing SFTs 
into linearly block gluing ones, 
this allows any $\Pi_1$-computable 
number in $]0,1]$ to be realized
as the entropy of a block gluing SFT. 
Since the entropy zero is trivial 
to realize, this means that any $\Pi_1$-computable 
number in $[0,1]$ is the entropy of a linearly 
block gluing SFT. 

\end{enumerate}

\subsubsection{Description of the layers in 
the construction of 
the subshifts $X_{s,N}$}

Here is a more precise 
description of the construction of $X_{s,N}$ for a given $h \in [0,1]$. 

First, we fix some parameters $s$ and $N$, where $s$ is 
a $\Pi_1$-computable sequence of $\{0,1\}^{\N}$, 
and $N>0$ is an integer. 

The construction of the subshift 
$X_{s,N}$ involves a hierarchy of computing 
units that we call cells. Each cell is divided 
into four parts and in each of these parts a machine 
works. The center (called nucleus) 
of each cell contains an information 
which codes for the behavior of the machines.
Two of these machines compute and the other 
two simulate degenerated 
behaviors. This means in particular that the initial 
tape of the machine is left free 
and that machine heads can 
enter at any time and in any state on the two sides of 
the machine area.
The aim of the computing machines is to control 
some random bits that generate the entropy, through 
frequency. These bits 
are grouped in infinite sets having 
frequency given by a formula. This allows 
the entropy to be expressed as the sum of three 
entropies $h_{\texttt{int}}$, $h_{\texttt{comp}}(s)$ 
and $h_{\texttt{sim}} (N)$. 
The first one is generated by random bits, and 
serve to place the entropy of 
the subshift in one of the quarters
of the segment $[0,1]$.
Each of the two other entropies 
is the sum of a series. The first one 
involves the sequence $s$ and is 
the entropy due to random bits. The second one 
is generated by the symbols left free in the simulation 
area. We choose $N$ so that the sum of $h_{\texttt{int}}$ and 
$h_{\texttt{sim}}(N)$ is smaller than $h_{\texttt{comp}}(s)$. 
By the choice of the sequence 
$s$ we control the 
series whose sum is $h_{\texttt{comp}} (s)$. 
We choose this entropy 
so that the total entropy is $h$.

Here is a detailed description of the layers in 
this construction.

\begin{itemize} 
\item \textit{Structure layer [Section~\ref{sec.structure}]:}  
This layer consists in the subshift $X_{\texttt{R}}$.
Recall that any configuration of this subshift
exhibits a cell hierarchy. In this setting, for all $n$
the order $n$ cells appear periodically in 
the vertical and horizontal directions. 
Some additional marks 
allow the decomposition of the cells into 
sub-structures that we describe 
in this section. In each of these sub-structures  
specific behaviors occur. In particular, 
each cell is decomposed into four 
parts called quarters.

\item \textit{Basis layer} [Section~\ref{basis}]
The blue corners in the structure layer will be superimposed 
with random bits in $\{0,1\}$. This allows to 
an entropy in $[0,1/4]$ to be generated.
Turing machines will 
control the frequency of positions 
where the random bits can be $1$ through the use of
frequency bits. This generates an entropy equal 
to the $\Pi_1$-computable number 
\[h' = h-\frac{\lfloor 4h \rfloor}{4} \in [0,1/4[.\] 
In order to generate the entropy $h$, 
we consider $i = \lfloor 4h \rfloor$. We 
impose that the other positions are superimposed with 
random bits such that 
the positions where the random bit is equal to $1$ 
have with frequency $i/3$ in this set. 
Hence the total frequency of the 
positions where the random bit can be $1$ 
in $\Z^2$ is 
\[\frac{i}{3} \frac{3}{4} + h' = h.\]

\item \textit{Frequency bits layer} [Section~\ref{sec.frequency.bits}]

Each quarter is superimposed with 
a frequency bit. 
On positions with blue corners 
in the structure layer
having frequency bit equal to $0$, 
the random bit is equal to $0$. 

\item \textit{Cells coding layer} [Section~\ref{sec.cells.coding}]

In this layer we superimpose to the center 
of the cells (that we call nuclei) 
of each cell a symbol which specifies two adjacent quarters 
of the cell when $n>N$. It represents 
all the quarters when $n \le N$. 
This symbol is called the DNA of the cell.

The quarters represented in the DNA 
are called computation quarters.
The other ones are called simulation quarters. In these ones 
the function of the machines
is to simulate any degenerated behavior of the 
computing machines.

Since simulation induces parasitic entropy, 
we choose after the construction 
some $N$ such that 
this entropy is smaller than $h$. 
This allows programming the machines in 
such a way that 
the entropy generated by the random bits 
in the other quarters complements this entropy so that 
the total entropy is $h$.

\item {\textit{Synchronization layer}} [Section~\ref{sec.synchr}]
In this section, we synchronize the frequency bits of 
the computation quarters of all the cells 
having the same order.

\begin{figure}[ht]
\[\begin{tikzpicture}[scale=0.4]
\fill[purple!30] (4,4) rectangle (8,12);

\fill[white] (5,9) rectangle (7,11);
\fill[white] (5,5) rectangle (7,7);

\draw[line width = 0.9mm,color=gray!40] 
(8,0) -- (8,8) -- (16,8);

\draw[line width = 0.9mm,color=gray!90]
(4,4) -- (12,4) -- (12,12) -- (4,12) -- (4,4);
\draw[line width = 0.9mm,color=gray!40] 
(2,10) -- (6,10) -- (6,14) -- (2,14) -- (2,10);
\draw[line width = 0.9mm,color=gray!40] 
(10,10) -- (14,10) -- (14,14) -- (10,14) -- (10,10);
\draw[line width = 0.9mm,color=gray!40] 
(10,2) -- (14,2) -- (14,6) -- (10,6) -- (10,2);
\draw[line width = 0.9mm,color=gray!40] 
(2,2) -- (6,2) -- (6,6) -- (2,6) -- (2,2);

\draw[line width = 0.9mm,color=gray!90]
(5,9) -- (7,9) -- (7,11) -- (5,11) -- (5,9);

\draw[line width = 0.9mm,color=gray!90]
(5,5) -- (7,5) -- (7,7) -- (5,7) -- (5,5);

\draw[line width = 0.9mm,color=gray!90]
(9,9) -- (11,9) -- (11,11) -- (9,11) -- (9,9);

\draw[line width = 0.9mm,color=gray!90]
(9,5) -- (9,7) -- (11,7) -- (11,5) -- (9,5);

\draw[line width = 0.2mm,color=purple,-latex] 
(8,0) -- (8,16); 

\draw[line width = 0.2mm,color=purple,-latex] 
(0,8) -- (16,8);

\draw[dashed] (8,4) -- (8,12);
\draw[dashed] (4,8) -- (12,8);
\draw[-latex] (0,4) -- (4,4);
\draw[-latex] (16,4) -- (12,4);
\node[color=purple] at (20,8) {synchr.};
\node at (22,4) {simulation quarters};
\node at (-6,4) {computation quarters};

\end{tikzpicture}\]
\caption{\label{fig.abstract.freqbits.block.gluing} 
Illustration of the frequency bits identification areas 
(colored purple)
in our construction. The squares designate 
some petal structures of the Robinson subshift.}
\end{figure}
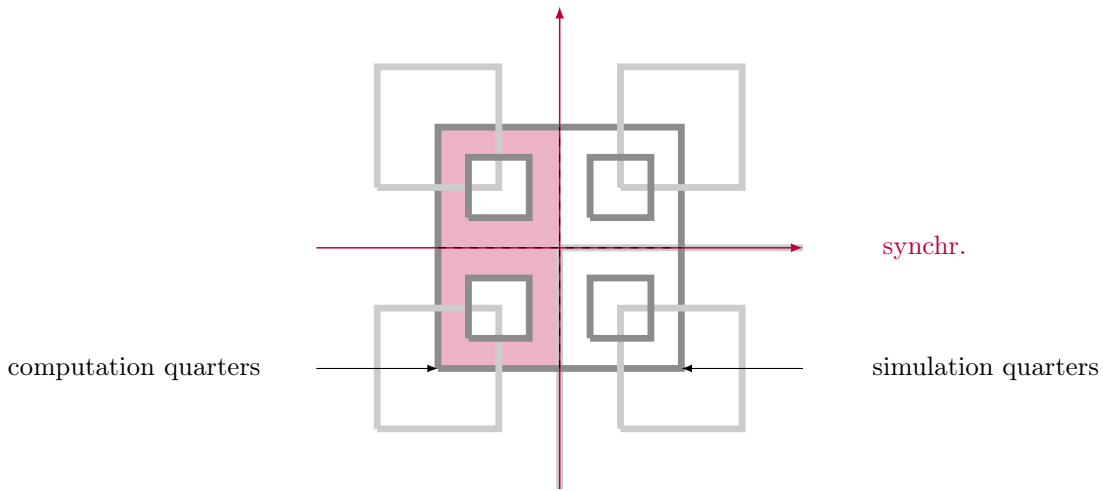

\item {\textit{Computation areas layer}} [Section~\ref{sec.computation.areas}]
In this layer are specified the areas supporting 
the computations of the machines. The function 
of each position in this area amongst the following ones 
are also specified: 
\begin{enumerate}
\item transfer 
of information, vertical 
or horizontal, 
\item or the execution of one step of computation.
\end{enumerate}
This is done using signals that detect the rows 
and columns in a cell 
that do not encounter a smaller cell.
The intersections of such line and column are the 
positions where a machine executes one step of 
its computation. 
The other positions of these lines 
and columns are used to 
transmit information.

Since the construction of these areas can have 
degenerated behaviors in infinite cells, 
these behaviors are also simulated in simulation quarters.
We use error signals in order to impose that the 
computation areas are well constructed 
in computation quarters.

\item \textit{Machines layer [Section~\ref{sec.machine}]:}  

Consider $(s^{(n)}_k)_{n,k} \in \{0,1\}^{\N ^2}$ some 
sequence such that for all $k$, 
\[s_k = \inf_n s^{(n)}_k.\]

This layer supports the computations of the machines in 
all the quarters of each cell. Each quarter 
of a cell has its 
proper direction of time and space. The machine 
is programmed so that when well initialized, 
it writes successively 
for all $n$ 
the bits $s^{(n)}_k$, $k=0...n$ 
on the $2^{k}$th position of its tape corresponding 
to a column that is just one the left of order $k$ cells.
The sequence is chosen afterwards so that 
the total entropy of $X_{s,N}$ is $h$.

The frequency bits of order $k$ cells corresponding 
to the computation quarters is transported in the 
column just on the right of these cells. This is done  
in order for the machine 
to have access to this information.

If at some point one of the frequency bits is 
greater than a bit written by the machine in this column, 
then the machine enters in halting state. 

We allow the machine heads to enter in error state.
However, when this happens it transmits a signal through its trajectory back in time 
until initialization. We forbid the coexistence of this signal with 
the representation of this quarter in the DNA.
As a consequence, the computations of a machine are taken into account if 
and only if in a computation quarter where these computations have 
some meaning.

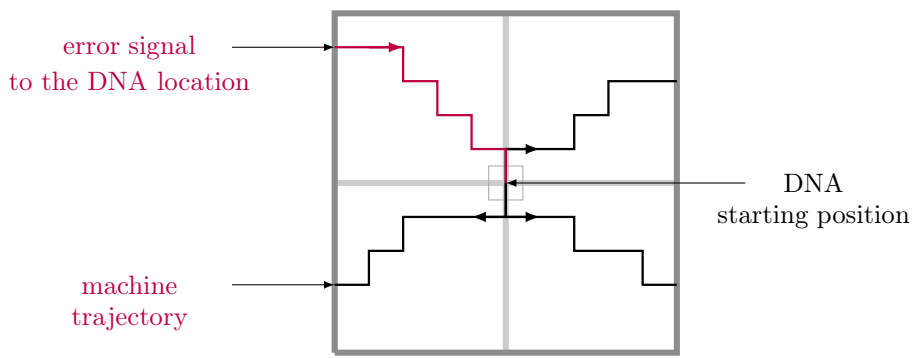
\begin{figure}[ht]
\[\begin{tikzpicture}[scale=0.45]
\draw[line width = 0.8mm,color=gray!40] (5,0) -- (5,10) ; 
\draw[line width = 0.8mm,color=gray!40] (0,5) -- (10,5) ; 
\draw[line width = 0.8mm,color=gray!90] (0,0) -- (10,0) -- 
(10,10) -- (0,10) -- (0,0);
\draw[color=gray!65] (4.5,4.5) rectangle (5.5,5.5);
\draw[-latex] (12,5) -- (5,5); 
\node at (14,5) {DNA};
\node at (14,4) {starting position};
\draw[line width = 0.3mm] (5,5) -- (5,6) -- (7,6) -- (7,7) -- (8,7) -- (8,8) -- (10,8);
\draw[line width = 0.3mm,-latex] (5,6) -- (6,6);
\draw[line width = 0.3mm,color=purple] (5,5) -- (5,6) -- (4,6) -- (4,7) -- (3,7) -- (3,8) -- (2,8) -- (2,9) -- (0,9);
\draw[line width = 0.3mm,latex-,color=purple] (2,9) -- (1,9);
\draw[line width = 0.3mm] (5,5) -- (5,4) -- (2,4) -- (2,3) -- (1,3) -- (1,2) -- (0,2) ;
\draw[line width = 0.3mm,-latex] (5,4) -- (4,4);
\draw[line width = 0.3mm] (5,5) -- (5,4) -- (7,4) -- (7,3) -- (9,3) -- (9,2) -- (10,2);
\draw[line width = 0.3mm,-latex] (5,4) -- (6,4);

\draw[-latex] (-3,9) -- (0,9);
\node[color=purple] at (-6,9) {error signal};
\node[color=purple] at (-6,8) {to the DNA location};

\draw[-latex] (-3,2) -- (0,2);
\node[color=purple] at (-6,2) {machine};
\node[color=purple] at (-6,1) {trajectory};
\end{tikzpicture}\]
\caption{ \label{fig.machines.mechanism.abstract.block.gluing}
Illustration of the machines mechanisms in our construction.
The gray square designates a computing unit
splitted in four parts.
In each of this parts evolves a 
computing machine departing 
from the center of the cell. If the machine 
reaches the border in error state, it triggers an error signal.}
\end{figure}

The directions of time and space depend on the orientation 
of the quarter in the cell.
In each quarter, these directions are given on 
Figure~\ref{fig.time.space.intro}.

\begin{figure}[ht]
\[\begin{tikzpicture}[scale=0.4]
\fill[gray!90] (0,0) rectangle (11,11);
\fill[gray!40] (0.5,0.5) rectangle (10.5,10.5);
\fill[red!50] (0.5,0.5) rectangle (5.25,5.25);
\fill[orange!50] (5.75,5.75) rectangle (10.5,10.5);
\fill[yellow!50] (5.75,0.5) rectangle (10.5,5.25); 
\fill[purple!50] (0.5,5.75) rectangle (5.25,10.5); 
\draw (0,0) rectangle (11,11); 
\draw (5.25,0.5) rectangle (5.75,10.5);
\draw (0.5,5.25) rectangle (10.5,5.75);
\draw (0.5,0.5) rectangle (10.5,10.5);
\draw[-latex] (6.5,6.5) -- (9.5,6.5);
\draw[-latex] (6.5,6.5) -- (6.5,9.5);
\draw[-latex] (4.5,4.5) -- (1.5,4.5);
\draw[-latex] (4.5,4.5) -- (4.5,1.5);
\draw[-latex] (4.5,6.5) -- (4.5,9.5);
\draw[-latex] (4.5,6.5) -- (1.5,6.5);
\draw[-latex] (6.5,4.5) -- (9.5,4.5);
\draw (6.5,4.5) -- (6.5,1.5);
\node at (2,3.5) {space};
\node at (2,7.5) {space};
\node at (3.5,1.5) {time};
\node at (3.5,9.5) {time};

\node at (7.75,9.5) {time};
\node at (9,7.5) {space};

\node at (7.75,1.5) {time};
\node at (9,3.5) {space};

\end{tikzpicture}\]
\caption{\label{fig.time.space.intro} Schema 
of time and space directions 
in the four different quarters of a cell.}
\end{figure}
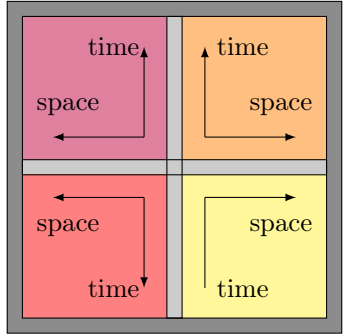

\end{itemize}

The next sections are devoted to make the proof 
of Theorem~\ref{thm.entropy.block.gluing}
more precise.

\section{\label{sec.construction.xsn} Construction of 
the subshifts \texorpdfstring{$X_{s,N}$}{XsN}}

\subsection{\label{sec.structure} 
Structure layer}

In this section, we present the structure layer 
and the structures observable in this layer 
that will be used in the following.

This layer has three sub-layers, as follows.

\subsubsection{\label{sec.substructures} 
Specialized sub-structures of the cells}

The first sublayer of the structure layer 
is the subshift $X_{\texttt{R}}$. \bigskip

Here we list some structures that appear in this layer and 
the designation that we use for them. 
Let $x$ be a configuration in this layer.

\begin{itemize}
\item Recall that an order $n$ cell is the part 
of $\Z^2$ enclosed in an order $2n+1$ petal.
\item The center of a cell is a red corner which 
is valued with $0$. The position 
where this symbol appears is 
called the \textbf{nucleus} of the cell. 
\item In a cell the union of the column and the line containing 
the nucleus is called the 
\textbf{reticle} of the cell. In particular, 
the nucleus is the intersection of 
the reticle's arms.
\item We call the set of 
positions in the border 
of a cell (defined 
as the position which has a 
neighbor outside of the cell) the \textbf{wall}.
\item The set of positions in a cell 
that are not in the wall, in the reticle, or 
in another (smaller) 
cell included to the considered one 
is called the \textbf{cytoplasm}.
\item We call any 
of the four connected 
parts included in the cytoplasm and 
delimited by the reticle and the wall a 
\textbf{quarter} 
of the cell.
\end{itemize}  \bigskip

Figure \ref{pic.9} gives an example 
of pattern that can be superimposed 
over an order 1 cell in the structure 
layer. We illustrate on this figure the 
structures that we listed above. \bigskip

Recall that cells have the following properties: 
\begin{itemize}
\item For all $m \ge 0$, and $i \in \{1,...,m\}$, any order 
$m$ cell contains 
properly $4.12^{i-1}$ order $m-i$ cells (meaning 
that these cells are not included in an 
order $<m$ cell).
As a consequence, each order $m$ cell contains 
properly $4.12^{m}$ blue corners.
\item Moreover, in any configuration
each order $m$ cell (and 
in particular the nuclei) repeats periodically, in the horizontal 
and vertical directions,
with period $4^{m+2}$. 
\end{itemize}

\subsubsection{Coloring}

In this section, we present the representation of
the sub-structures of cells presented in 
Section~\ref{sec.substructures}.
In particular, we color 
differently the four quarters of a cell, 
in order for the machines to have 
access to the direction of time 
and space in their quarter. \bigskip

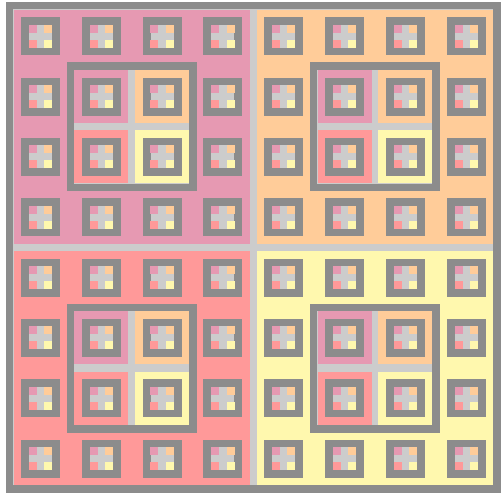
\begin{figure}[ht]
\begin{center}
\begin{tikzpicture}[scale=0.05]
\supertilesynchrothree{0}{0}
\end{tikzpicture}
\end{center}
\caption{\label{fig.cell.ordre3} 
An example of pattern in the 
synchronization layer over an order 2 cell.}
\end{figure}

\noindent \textbf{\textit{Symbols:}} 

\[\begin{tikzpicture}[scale=0.3]
\fill[gray!40] (0,0) rectangle (1.2,1.2);
\draw (0,0) rectangle  (1.2,1.2);
\end{tikzpicture} \ \ \begin{tikzpicture}[scale=0.3]
\fill[gray!90] (0,0) rectangle (1.2,1.2);
\draw (0,0) rectangle  (1.2,1.2);
\end{tikzpicture} \ \ \begin{tikzpicture}[scale=0.3]
\fill[orange!50] (0,0) rectangle (1.2,1.2);
\draw (0,0) rectangle  (1.2,1.2);
\end{tikzpicture} \ \ \begin{tikzpicture}[scale=0.3]
\fill[purple!50] (0,0) rectangle (1.2,1.2);
\draw (0,0) rectangle  (1.2,1.2);
\end{tikzpicture} \ \ \begin{tikzpicture}[scale=0.3]
\fill[yellow!50] (0,0) rectangle (1.2,1.2);
\draw (0,0) rectangle  (1.2,1.2);
\end{tikzpicture} \ \ \begin{tikzpicture}[scale=0.3]
\fill[red!50] (0,0) rectangle (1.2,1.2);
\draw (0,0) rectangle  (1.2,1.2);
\end{tikzpicture}\]

Each symbol corresponds to a part of the cell: 
\begin{itemize}
\item $\begin{tikzpicture}[scale=0.3,baseline=0.5mm]
\fill[gray!40] (0,0) rectangle (1.2,1.2);
\draw (0,0) rectangle  (1.2,1.2);
\end{tikzpicture}$ corresponds to the reticle.
\item $\begin{tikzpicture}[scale=0.3,baseline=0.5mm]
\fill[gray!90] (0,0) rectangle (1.2,1.2);
\draw (0,0) rectangle  (1.2,1.2);
\end{tikzpicture}$ corresponds to the walls.
\item $\begin{tikzpicture}[scale=0.3,baseline=0.5mm]
\fill[orange!50] (0,0) rectangle (1.2,1.2);
\draw (0,0) rectangle  (1.2,1.2);
\end{tikzpicture}$ corresponds to the north east 
quarter.
\item $\begin{tikzpicture}[scale=0.3,baseline=0.5mm]
\fill[purple!50] (0,0) rectangle (1.2,1.2);
\draw (0,0) rectangle  (1.2,1.2);
\end{tikzpicture}$ corresponds to the north west 
synchronization area.
\item $\begin{tikzpicture}[scale=0.3,baseline=0.5mm]
\fill[yellow!50] (0,0) rectangle (1.2,1.2);
\draw (0,0) rectangle  (1.2,1.2);
\end{tikzpicture}$ corresponds to the south east 
quarter.
\item $\begin{tikzpicture}[scale=0.3,baseline=0.5mm]
\fill[red!50] (0,0) rectangle (1.2,1.2);
\draw (0,0) rectangle  (1.2,1.2);
\end{tikzpicture}$ corresponds to the south west 
quarter.
\end{itemize}

\noindent \textbf{\textit{Local rules:}}

\begin{enumerate}
\item \textbf{Localization:}

\begin{itemize}
\item A position 
is colored with 
 $\begin{tikzpicture}[scale=0.3,baseline=0.5mm]
\fill[gray!90] (0,0) rectangle (1.2,1.2);
\draw (0,0) rectangle  (1.2,1.2);
\end{tikzpicture}$ if and 
only if it lies in the walls of a cell.
\item the symbol $\begin{tikzpicture}[scale=0.3,baseline=0.5mm]
\fill[gray!40] (0,0) rectangle (1.2,1.2);
\draw (0,0) rectangle  (1.2,1.2);
\end{tikzpicture}$ can be superimposed only 
on red corners valued $0$ or arrows symbols 
valued $0$ for the direction of the long arrows.
\item the positions 
that are not amongst 
the type of positions 
specified by the 
two last rules 
are colored with else 
\begin{tikzpicture}[scale=0.3,baseline=0.5mm]
\fill[orange!50] (0,0) rectangle (1.2,1.2);
\draw (0,0) rectangle  (1.2,1.2);
\end{tikzpicture}, \begin{tikzpicture}[scale=0.3,baseline=0.5mm]
\fill[purple!50] (0,0) rectangle (1.2,1.2);
\draw (0,0) rectangle  (1.2,1.2);
\end{tikzpicture}, \begin{tikzpicture}[scale=0.3,baseline=0.5mm]
\fill[yellow!50] (0,0) rectangle (1.2,1.2);
\draw (0,0) rectangle  (1.2,1.2);
\end{tikzpicture} or \begin{tikzpicture}[scale=0.3,baseline=0.5mm]
\fill[red!50] (0,0) rectangle (1.2,1.2);
\draw (0,0) rectangle  (1.2,1.2);
\end{tikzpicture}.
\end{itemize}

\item \textbf{Transmission rules:} 

\begin{itemize}
\item Consider
two vertically adjacent positions 
with vertical outgoing arrows (or two 
horizontally adjacent positions 
with horizontal ougoing arrows)
such that the value corresponding 
to this direction in the two 
symbols is $0$. Then  
if one of these positions is colored $\begin{tikzpicture}[scale=0.3,baseline=0.5mm]
\fill[gray!40] (0,0) rectangle (1.2,1.2);
\draw (0,0) rectangle  (1.2,1.2);
\end{tikzpicture}$, then the other one is also colored 
$\begin{tikzpicture}[scale=0.3,baseline=0.5mm]
\fill[gray!40] (0,0) rectangle (1.2,1.2);
\draw (0,0) rectangle  (1.2,1.2);
\end{tikzpicture}$.
\item Consider two adjacent positions which are not 
colored $\begin{tikzpicture}[scale=0.3,baseline=0.5mm]
\fill[gray!90] (0,0) rectangle (1.2,1.2);
\draw (0,0) rectangle  (1.2,1.2);
\end{tikzpicture}$ or $\begin{tikzpicture}[scale=0.3,baseline=0.5mm]
\fill[gray!40] (0,0) rectangle (1.2,1.2);
\draw (0,0) rectangle  (1.2,1.2);
\end{tikzpicture}$. Then these two positions 
have the same color.
\end{itemize}

\item \textbf{Determination of the colors:}

The idea of these rules is to determine
the colors that appear in the neighborhood of particular positions in 
a cell. 
Together with the transmission rules, 
these rules will determine the color of any 
position in any configuration.

In order to describe them, we use the vocabulary 
introduced in this text.
However, one can translate these terms into 
symbols of the Robinson subshift (we do 
this for the second rule as an example).

The rules are the following ones: 

\begin{itemize}
\item 
A south west (resp south east, north east, south west) 
corner of a cell 
induces the position on its north 
east (resp north west, 
south west, south east) 
to be colored $\begin{tikzpicture}[scale=0.3,baseline=0.5mm]
\fill[red!50] (0,0) rectangle (1.2,1.2);
\draw (0,0) rectangle  (1.2,1.2);
\end{tikzpicture}$ (resp. $\begin{tikzpicture}[scale=0.3,baseline=0.5mm]
\fill[yellow!50] (0,0) rectangle (1.2,1.2);
\draw (0,0) rectangle  (1.2,1.2);
\end{tikzpicture}$, $\begin{tikzpicture}[scale=0.3,baseline=0.5mm]
\fill[orange!50] (0,0) rectangle (1.2,1.2);
\draw (0,0) rectangle  (1.2,1.2);
\end{tikzpicture}$, $\begin{tikzpicture}[scale=0.3,baseline=0.5mm]
\fill[purple!50] (0,0) rectangle (1.2,1.2);
\draw (0,0) rectangle  (1.2,1.2);
\end{tikzpicture}$).
\item A position inside the west part 
of the north wall of a cell (meaning 
a four or six arrows symbol whose long arrows 
are horizontal, directed to the right, and 
valued $1$ for the horizontal direction) (resp. 
east part)
has its south neighbor position colored 
with $\begin{tikzpicture}[scale=0.3,baseline=0.5mm]
\fill[purple!50] (0,0) rectangle (1.2,1.2);
\draw (0,0) rectangle  (1.2,1.2);
\end{tikzpicture}$ (resp. 
$\begin{tikzpicture}[scale=0.3,baseline=0.5mm]
\fill[orange!50] (0,0) rectangle (1.2,1.2);
\draw (0,0) rectangle  (1.2,1.2);
\end{tikzpicture}$). Its neighbor position 
on north is colored with the same 
color as the north east position and 
the north west position. We impose similar 
rules for positions in the other walls 
of cells.

\item A nucleus has 
respectively on its north west, north east, south east, 
south west positions the colors $\begin{tikzpicture}[scale=0.3,baseline=0.5mm]
\fill[purple!50] (0,0) rectangle (1.2,1.2);
\draw (0,0) rectangle  (1.2,1.2);
\end{tikzpicture}$, $\begin{tikzpicture}[scale=0.3,baseline=0.5mm]
\fill[orange!50] (0,0) rectangle (1.2,1.2);
\draw (0,0) rectangle  (1.2,1.2);
\end{tikzpicture}$, $\begin{tikzpicture}[scale=0.3,baseline=0.5mm]
\fill[yellow!50] (0,0) rectangle (1.2,1.2);
\draw (0,0) rectangle  (1.2,1.2);
\end{tikzpicture}$ and $\begin{tikzpicture}[scale=0.3,baseline=0.5mm]
\fill[red!50] (0,0) rectangle (1.2,1.2);
\draw (0,0) rectangle  (1.2,1.2);
\end{tikzpicture}$, and this position is colored 
$\begin{tikzpicture}[scale=0.3,baseline=0.5mm]
\fill[gray!40] (0,0) rectangle (1.2,1.2);
\draw (0,0) rectangle  (1.2,1.2);
\end{tikzpicture}$.

\item A position at the center 
of the south wall of a cell has 
its north west, north and north west neighbor 
positions colored with 
$\begin{tikzpicture}[scale=0.3,baseline=0.5mm]
\fill[red!50] (0,0) rectangle (1.2,1.2);
\draw (0,0) rectangle  (1.2,1.2);
\end{tikzpicture}$, $\begin{tikzpicture}[scale=0.3,baseline=0.5mm]
\fill[gray!40] (0,0) rectangle (1.2,1.2);
\draw (0,0) rectangle  (1.2,1.2);
\end{tikzpicture}$ and $\begin{tikzpicture}[scale=0.3,baseline=0.5mm]
\fill[yellow!50] (0,0) rectangle (1.2,1.2);
\draw (0,0) rectangle  (1.2,1.2);
\end{tikzpicture}$. 
Similar rules are imposed for the
centers of the other walls.
\item On the west and east neighbor positions
of a position colored 
\begin{tikzpicture}[scale=0.3,baseline=0.5mm]
\fill[gray!40] (0,0) rectangle (1.2,1.2);
\draw (0,0) rectangle (1.2,1.2); 
\end{tikzpicture},
the possible couples of colors are: 
\begin{itemize}
\item $\begin{tikzpicture}[scale=0.3,baseline=0.5mm]
\fill[purple!50] (0,0) rectangle (1.2,1.2);
\draw (0,0) rectangle  (1.2,1.2);
\end{tikzpicture}$ on the west 
and $\begin{tikzpicture}[scale=0.3,baseline=0.5mm]
\fill[orange!50] (0,0) rectangle (1.2,1.2);
\draw (0,0) rectangle  (1.2,1.2);
\end{tikzpicture}$ on the east,
\item $\begin{tikzpicture}[scale=0.3,baseline=0.5mm]
\fill[red!50] (0,0) rectangle (1.2,1.2);
\draw (0,0) rectangle  (1.2,1.2);
\end{tikzpicture}$ on the west and $\begin{tikzpicture}[scale=0.3,baseline=0.5mm]
\fill[yellow!50] (0,0) rectangle (1.2,1.2);
\draw (0,0) rectangle  (1.2,1.2);
\end{tikzpicture}$ on the east.
\end{itemize}
Similar rules are imposed for the vertical direction.
\end{itemize}
\end{enumerate}

\noindent \textbf{\textit{Global behavior:}} \bigskip

The localization rules impose directly 
that the walls of the finite cells are colored 
$\begin{tikzpicture}[scale=0.3,baseline=0.5mm]
\fill[gray!90] (0,0) rectangle (1.2,1.2);
\draw (0,0) rectangle  (1.2,1.2);
\end{tikzpicture}$.
With the transmission rules combined with 
the rules determining colors, the 
reticle is colored $\begin{tikzpicture}[scale=0.3,baseline=0.5mm]
\fill[gray!40] (0,0) rectangle (1.2,1.2);
\draw (0,0) rectangle  (1.2,1.2);
\end{tikzpicture}$. Moreover, for any cell 
the positions in the same quarter are colored with 
else $\begin{tikzpicture}[scale=0.3,baseline=0.5mm]
\fill[red!50] (0,0) rectangle (1.2,1.2);
\draw (0,0) rectangle  (1.2,1.2);
\end{tikzpicture}$, $\begin{tikzpicture}[scale=0.3,baseline=0.5mm]
\fill[yellow!50] (0,0) rectangle (1.2,1.2);
\draw (0,0) rectangle  (1.2,1.2);
\end{tikzpicture}$, $\begin{tikzpicture}[scale=0.3,baseline=0.5mm]
\fill[orange!50] (0,0) rectangle (1.2,1.2);
\draw (0,0) rectangle  (1.2,1.2);
\end{tikzpicture}$ or $\begin{tikzpicture}[scale=0.3,baseline=0.5mm]
\fill[purple!50] (0,0) rectangle (1.2,1.2);
\draw (0,0) rectangle  (1.2,1.2);
\end{tikzpicture}$. The color is determined 
according to the orientation of the quarter 
in the cell: the first (resp. second, third, 
fourth) color is for 
the south west quarter (resp. south east, north east, 
north west). 

These rules allow the same 
coloration for infinite cells.

Figure~\ref{fig.cell.ordre3} shows an example 
of coloration of an order 2 cell.

\subsubsection{\label{sec.synchronization.net} Synchronization net}

In this section, we specify a network connecting 
the nuclei of order $n$ cells for all $n$. 
This allows the synchronization of the frequency bits 
corresponding to computation quarters of the cells.

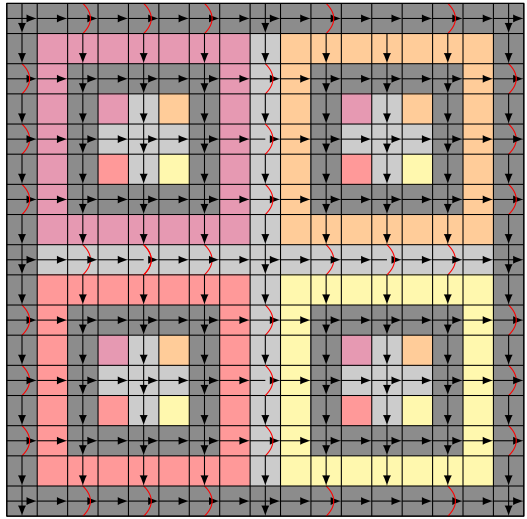
\begin{figure}[ht] \[\begin{tikzpicture}[scale=0.20]
\supertilesynchrotwo{2}{2}
\draw (0,0) rectangle (34,34);
\foreach \x in {0,...,16} 
\draw[-latex] (2*\x,0+1) -- (2*\x+2,0+1);
\foreach \x in {0,...,16} 
\draw[-latex] (2*\x,4+1) -- (2*\x+2,4+1);
\foreach \x in {0,...,16} 
\draw[-latex] (2*\x,12+1) -- (2*\x+2,12+1);
\foreach \x in {0,...,16} 
\draw[-latex] (2*\x,20+1) -- (2*\x+2,20+1);
\foreach \x in {0,...,16} 
\draw[-latex] (2*\x,28+1) -- (2*\x+2,28+1);
\foreach \x in {0,...,16} 
\draw[-latex] (2*\x,32+1) -- (2*\x+2,32+1);
\foreach \x in {16,15,13,11,9,8,7,5,3,1,0}
\draw[-latex] (1,2*\x+2) -- (1,2*\x);

\draw[color=red,domain=0:1] 
plot({2*\x-2*\x^2+1},{4+2*\x});
\draw[color=red,domain=0:1] 
plot({2*\x-2*\x^2+1},{8+2*\x});
\draw[color=red,domain=0:1] 
plot({2*\x-2*\x^2+1},{12+2*\x});
\draw[color=red,domain=0:1] 
plot({2*\x-2*\x^2+1},{20+2*\x});
\draw[color=red,domain=0:1] 
plot({2*\x-2*\x^2+1},{24+2*\x});
\draw[color=red,domain=0:1] 
plot({2*\x-2*\x^2+1},{28+2*\x});

\foreach \x in {15,14,13,12,11,10,9,7,6,5,4,3,2,1}
\draw[-latex] (4+1,2*\x+2) -- (4+1,2*\x);

\draw[color=red,domain=0:1] 
plot({4+2*\x-2*\x^2+1},{2*\x});
\draw[color=red,domain=0:1] 
plot({4+2*\x-2*\x^2+1},{16+2*\x});
\draw[color=red,domain=0:1] 
plot({4+2*\x-2*\x^2+1},{32+2*\x});

\draw[color=red,domain=0:1] 
plot({8+2*\x-2*\x^2+1},{2*\x});
\draw[color=red,domain=0:1] 
plot({8+2*\x-2*\x^2+1},{16+2*\x});
\draw[color=red,domain=0:1] 
plot({8+2*\x-2*\x^2+1},{32+2*\x});

\foreach \x in {15,14,13,12,11,10,9,7,6,5,4,3,2,1}
\draw[-latex] (12+1,2*\x+2) -- (12+1,2*\x);

\draw[color=red,domain=0:1] 
plot({12+2*\x-2*\x^2+1},{2*\x});
\draw[color=red,domain=0:1] 
plot({12+2*\x-2*\x^2+1},{16+2*\x});
\draw[color=red,domain=0:1] 
plot({12+2*\x-2*\x^2+1},{32+2*\x});

\draw[color=red,domain=0:1] 
plot({16+2*\x-2*\x^2+1},{4+2*\x});
\draw[color=red,domain=0:1] 
plot({16+2*\x-2*\x^2+1},{12+2*\x});
\draw[color=red,domain=0:1] 
plot({16+2*\x-2*\x^2+1},{20+2*\x});
\draw[color=red,domain=0:1] 
plot({16+2*\x-2*\x^2+1},{28+2*\x});

\foreach \x in {15,14,13,12,11,10,9,7,6,5,4,3,2,1}
\draw[-latex] (20+1,2*\x+2) -- (20+1,2*\x);

\draw[color=red,domain=0:1] 
plot({20+2*\x-2*\x^2+1},{2*\x});
\draw[color=red,domain=0:1] 
plot({20+2*\x-2*\x^2+1},{16+2*\x});
\draw[color=red,domain=0:1] 
plot({20+2*\x-2*\x^2+1},{32+2*\x});

\foreach \x in {15,14,13,12,11,10,9,7,6,5,4,3,2,1}
\draw[-latex] (28+1,2*\x+2) -- (28+1,2*\x);

\draw[color=red,domain=0:1] 
plot({28+2*\x-2*\x^2+1},{2*\x});
\draw[color=red,domain=0:1] 
plot({28+2*\x-2*\x^2+1},{16+2*\x});
\draw[color=red,domain=0:1] 
plot({28+2*\x-2*\x^2+1},{32+2*\x});

\foreach \x in {16,15,13,11,9,8,7,5,3,1,0}
\draw[-latex] (33,2*\x+2) -- (33,2*\x);

\draw[color=red,domain=0:1] 
plot({32+2*\x-2*\x^2+1},{4+2*\x});
\draw[color=red,domain=0:1] 
plot({32+2*\x-2*\x^2+1},{8+2*\x});
\draw[color=red,domain=0:1] 
plot({32+2*\x-2*\x^2+1},{12+2*\x});
\draw[color=red,domain=0:1] 
plot({32+2*\x-2*\x^2+1},{20+2*\x});
\draw[color=red,domain=0:1] 
plot({32+2*\x-2*\x^2+1},{24+2*\x});
\draw[color=red,domain=0:1] 
plot({32+2*\x-2*\x^2+1},{28+2*\x});

\foreach \x in {0,1,2,3,4,5,6,7,9,10,11,12,13,14,15,16} \draw[-latex] (2*\x,8+1) -- (2*\x+2,8+1);
\foreach \x in {0,1,2,3,4,5,6,7,9,10,11,12,13,14,15,16} \draw[-latex] (2*\x,24+1) -- (2*\x+2,24+1);
\foreach \x in {0,1,2,3,5,6,7,8,9,10,11,13,14,15,16} \draw[-latex] (2*\x,16+1) -- (2*\x+2,16+1);
\foreach \x in {1,2,3,4,5,6,7,9,10,11,12,13,14,15} \draw[-latex] (8+1,2*\x+2) -- (8+1,2*\x);
\foreach \x in {1,2,3,4,5,6,7,9,10,11,12,13,14,15} \draw[-latex] (24+1,2*\x+2) -- (24+1,2*\x);
\foreach \x in {0,1,3,5,7,8,9,11,13,15,16} \draw[-latex] (16+1,2*\x +2) -- (16+1,2*\x);
\foreach \x in {0,...,17} \draw (0,2*\x) -- (34,2*\x);
\foreach \x in {0,...,17} \draw (2*\x,0) -- (2*\x,34);
\draw (8+0,16+1) -- (8+1,16+1);
\draw[-latex] (8+1.5,16+1) -- (8+2,16+1);
\draw (24+0,16+1) -- (24+1,16+1);
\draw[-latex] (24+1.5,16+1) -- (24+2,16+1);
\draw (16+0,24+1) -- (16+1,24+1);
\draw[-latex] (16+1.5,24+1) -- (16+2,24+1);
\draw (16+0,8+1) -- (16+1,8+1);
\draw[-latex] (16+1.5,8+1) -- (16+2,8+1);
\draw[color=red,domain=0:1] plot({8+2*\x-2*\x^2+1},{16+2*\x});
\draw[color=red,domain=0:1] plot({24+2*\x-2*\x^2+1},{16+2*\x});
\draw[color=red,domain=0:1] plot({16+2*\x-2*\x^2+1},{24+2*\x});
\draw[color=red,domain=0:1] plot({16+2*\x-2*\x^2+1},{8+2*\x});
\end{tikzpicture}\]
\caption{\label{pic.11} Example of a synchronization net in an order 
$2$ cell}
\end{figure}
\bigskip

\noindent \textbf{\textit{Symbols:}} \bigskip

The symbols of this third sublayer are the following 
ones : 
\[\begin{tikzpicture}[scale=0.6]
\draw (0,0) rectangle (1,1);
\draw[latex-] (0.5,0) -- (0.5,1);
\draw[-latex] (0,0.5) -- (1,0.5);
\end{tikzpicture}, \ \ \begin{tikzpicture}[scale=0.6]
\draw (0,0) rectangle (1,1);
\draw (0,0.5) -- (0.5,0.5);
\draw[-latex] (0.75,0.5) -- (1,0.5);
\draw[color=red,domain=0:1] plot({0.5*\x-0.5*\x^2+0.5},{\x});
\end{tikzpicture} \ \ \begin{tikzpicture}[scale=0.6]
\draw (0,0) rectangle (1,1);
\draw[latex-] (0.5,0) -- (0.5,1);
\end{tikzpicture}, \ \ \begin{tikzpicture}[scale=0.6]
\draw (0,0) rectangle (1,1);
\draw[-latex] (0,0.5) -- (1,0.5);
\end{tikzpicture}, \ \ \begin{tikzpicture}[scale=0.6]
\draw (0,0) rectangle (1,1);
\end{tikzpicture}\] 

\noindent \textbf{\textit{Local rules:}}

\begin{itemize}
\item The symbol \begin{tikzpicture}[scale=0.6,baseline=2mm]
\draw (0,0) rectangle (1,1);
\draw[latex-] (0.5,0) -- (0.5,1);
\draw[-latex] (0,0.5) -- (1,0.5);
\end{tikzpicture} is superimposed on and only on the 
nuclei, corners, and centers of the walls 
of cells. As a consequence, 
when a line of arrows 
crosses a column of arrows 
on another position, then the 
symbol superimposed on this position 
is $\begin{tikzpicture}[scale=0.6,baseline=2mm]
\draw (0,0) rectangle (1,1);
\draw (0,0.5) -- (0.5,0.5);
\draw[-latex] (0.75,0.5) -- (1,0.5);
\draw[color=red,domain=0:1] plot({0.5*\x-0.5*\x^2+0.5},{\x});
\end{tikzpicture}$.
\item The symbols 
\begin{tikzpicture}[scale=0.6,baseline=2mm]
\draw (0,0) rectangle (1,1);
\draw[latex-] (0.5,0) -- (0.5,1);
\end{tikzpicture} and 
\begin{tikzpicture}[scale=0.6,baseline=2mm]
\draw (0,0) rectangle (1,1);
\draw[-latex] (0,0.5) -- (1,0.5);
\end{tikzpicture} are transmitted in the direction of the arrow:
north and south for the first one, east and west for the second one.
\item Consider a position 
$\vec{u}$. If it is superimposed with 
a vertical (resp. horizontal) 
arrow, then the 
positions $\vec{u} \pm \vec{e}^1$ (resp. 
 $\vec{u} \pm \vec{e}^2$)
are not superimposed with a vertical (resp. horizontal) arrow. 
\end{itemize} 

\noindent \textbf{\textit{Global behavior:}} \bigskip

The first two rules build \textbf{wires} of the net,
defined to be infinite columns (resp. 
rows) whose positions are superimposed with 
a vertical (resp. horizontal) arrow. These 
are the columns (resp. rows) intersecting corners 
and nuclei of cells. The other columns (resp. rows) 
are not superimposed with a vertical (resp. horizontal
arrow): this is a consequence of the last rule.

The \textbf{intersections of the wires} are superimposed with one of the two first symbols 
of the alphabet: 
\[\begin{tikzpicture}[scale=0.6]
\draw (0,0) rectangle (1,1);
\draw[latex-] (0.5,0) -- (0.5,1);
\draw[-latex] (0,0.5) -- (1,0.5);
\end{tikzpicture},\begin{tikzpicture}[scale=0.6]
\draw (0,0) rectangle (1,1);
\draw (0,0.5) -- (0.5,0.5);
\draw[-latex] (0.75,0.5) -- (1,0.5);
\draw[color=red,domain=0:1] plot({0.5*\x-0.5*\x^2+0.5},{\x});
\end{tikzpicture}.\]
The first symbol is superimposed on 
nuclei and corners, where the frequency 
bits will be synchronized. The other 
intersections have the second symbol. 
On these positions, the frequency bits won't 
be synchronized. As a consequence, only 
cells having the same order are synchronized.

See on Figure~\ref{pic.11} an example of a net 
superimposed on an order 1 cell. 

\subsection{\label{basis} Basis layer}

The basis layer supports \textbf{random bits}. 
We recall that $i=\lfloor 4h \rfloor$. \bigskip

\noindent \textbf{\textit{Symbols:}} \bigskip

The symbols of this layer are 
0 and 1. \bigskip

\noindent \textbf{\textit{Local rules:}} 

\begin{itemize}
\item if $i=0$, then on any position $\vec{u}$ superimposed with a blue corner, 
the positions $\vec{u}+\vec{e}^1$, $\vec{u}+\vec{e}^1$ and 
$\vec{u}+\vec{e}^1 +\vec{e}^2$ are superimposed with the bit $0$.
\item if $i=1$, the positions $\vec{u}+\vec{e}^1$ and $\vec{u}+\vec{e}^1+\vec{e}^2$
are superimposed with $0$, and there is no constraint on 
the bit on position $\vec{u}+\vec{e}^2$.
\item if $i=2$, the position $\vec{u}+\vec{e}^1$ is superimposed with $0$.
\item if $i=3$, there is no constraint.
\end{itemize} 

\noindent \textbf{\textit{Global behavior:}} \bigskip

In this layer are superimposed random bits 
in $\{0,1\}$ on any position. According 
to the value of $i$, we impose constraints 
on positions that are not superimposed with 
a blue corner in the structure layer.
We dot this in such a way that the entropy produced by 
these bits is the maximal possible 
value smaller than $h$.

The positions with a blue corner are the ones 
where the random bits will be regulated
by frequency bits. These bits are themselves controlled by 
the machines in order to complete the entropy 
generated by the random bits on the other positions.

\subsection{\label{sec.frequency.bits} Frequency bits layer}

\noindent \textbf{\textit{Symbols:}} \bigskip

The symbols of this layer are $0,1$ and 
a blank symbol. \bigskip

\noindent \textbf{\textit{Local rules:}} 

\begin{itemize}
\item \textbf{Localization:} The non-blank symbols are 
superimposed to the 
cytoplasm positions.
\item \textbf{Synchronization on 
quarters:} two adjacent positions 
in the cytoplasm have the same frequency bit.
\end{itemize}

\noindent \textbf{\textit{Global behavior:}} \bigskip 

The bits $0,1$ are called \textbf{frequency bits}.
In a quarter, these bits are equal. Hence 
each quarter is attached with a unique 
frequency bit.

\subsection{\label{sec.cells.coding} 
Cells coding layer}

\noindent \textbf{\textit{Symbols:}} 

\[\left\{\begin{tikzpicture}[scale=0.5]
\filldraw[color=red] (0,0) circle (3pt);
\filldraw[color=yellow] (0.5,0) circle (3pt);
\end{tikzpicture}, \begin{tikzpicture}[scale=0.5]
\filldraw[color=red] (0,0) circle (3pt);
\filldraw[color=purple] (0,0.5) circle (3pt);
\filldraw[color=white] (0.5,0) circle (3pt);
\end{tikzpicture}, \begin{tikzpicture}[scale=0.5]
\filldraw[color=white] (0,0) circle (3pt);
\filldraw[color=purple] (0,0.5) circle (3pt);
\filldraw[color=orange] (0.5,0.5) circle (3pt);
\end{tikzpicture}, \begin{tikzpicture}[scale=0.5]
\filldraw[color=white] (0,0) circle (3pt);
\filldraw[color=yellow] (0.5,0) circle (3pt);
\filldraw[color=orange] (0.5,0.5) circle (3pt);
\end{tikzpicture}, \begin{tikzpicture}[scale=0.5]
\filldraw[color=red] (0,0) circle (3pt);
\filldraw[color=yellow] (0.5,0) circle (3pt);
\filldraw[color=orange] (0.5,0.5) circle (3pt);
\filldraw[color=purple] (0,0.5) circle (3pt);
\end{tikzpicture} \right\},\]
and a blank symbol. \bigskip

\noindent \textbf{\textit{Local rules:}} 

\begin{itemize}
\item The non-blank symbols are superimposed 
to the nuclei, the blank symbols to other positions.
\item The DNA symbol $\begin{tikzpicture}[scale=0.5,baseline=0.25mm]
\filldraw[color=red] (0,0) circle (3pt);
\filldraw[color=yellow] (0.5,0) circle (3pt);
\filldraw[color=orange] (0.5,0.5) circle (3pt);
\filldraw[color=purple] (0,0.5) circle (3pt);
\end{tikzpicture}$ is and can only be over a nucleus 
of an order $\le N$ cell. 
\item The others DNA symbols are over $>N$ order cells.
\end{itemize}

\noindent \textbf{\textit{Global behavior:}} \bigskip

On the nucleus of every cell is superimposed 
a symbol called the {\bf{DNA}}. 
It rules the behavior of each 
of the machines working in the 
cytoplasm, telling which ones of the machines 
execute simulation and which ones compute.

When the order of the cell is smaller or equal 
to $N$, then all the machines compute. 
When the order is greater than $N$, 
two of the machines compute and the other two 
execute simulation.

\subsection{\label{sec.synchr} Synchronization layer} 

This layer serves for the synchronization of the 
frequency bits corresponding to computation quarters. \bigskip

\noindent \textbf{\textit{Symbols:}} \bigskip
 
Elements of $\{0,1\}$ and $\{0,1\}^2$,
and a blank symbol. \bigskip

\noindent \textbf{\textit{Local rules:}} 
 
\begin{enumerate}
\item \textbf{Localization:}

\begin{itemize}
\item  
The non-blank symbols are superimposed on and 
only on positions having a non-blank symbol 
in the synchronization net sublayer.
\item Positions superimposed 
with \begin{tikzpicture}[scale=0.6,baseline=2mm]
\draw (0,0) rectangle (1,1);
\draw[latex-] (0.5,0) -- (0.5,1);
\draw[-latex] (0,0.5) -- (1,0.5);
\end{tikzpicture}, \begin{tikzpicture}[scale=0.6,baseline=2mm]
\draw (0,0) rectangle (1,1);
\draw[latex-] (0.5,0) -- (0.5,1);
\end{tikzpicture}, or \begin{tikzpicture}[scale=0.6,baseline=2mm]
\draw (0,0) rectangle (1,1);
\draw[-latex] (0,0.5) -- (1,0.5);
\end{tikzpicture}
are 
superimposed in the present layer 
with an element of $\{0,1\}$.  
The positions superimposed with 
\begin{tikzpicture}[scale=0.6,baseline=2mm]
\draw (0,0) rectangle (1,1);
\draw (0,0.5) -- (0.5,0.5);
\draw[-latex] (0.75,0.5) -- (1,0.5);
\draw[color=red,domain=0:1] plot({0.5*\x-0.5*\x^2+0.5},{\x});
\end{tikzpicture} are superimposed 
with an element of $\{0,1\}^2$.
\end{itemize}

\item \textbf{Synchronization:} 

\begin{itemize}
\item On a nucleus, if 
the DNA symbol is $\begin{tikzpicture}[scale=0.5,baseline=0.25mm]
\filldraw[color=red] (0,0) circle (3pt);
\filldraw[color=yellow] (0.5,0) circle (3pt);
\filldraw[color=orange] (0.5,0.5) circle (3pt);
\filldraw[color=purple] (0,0.5) circle (3pt);
\end{tikzpicture}$, 
then the bit in this layer is equal to the frequency bit 
on north east, south east, south west and 
north west positions.

\item When the DNA symbol is not $\begin{tikzpicture}[scale=0.5,baseline=0.25mm]
\filldraw[color=red] (0,0) circle (3pt);
\filldraw[color=yellow] (0.5,0) circle (3pt);
\filldraw[color=orange] (0.5,0.5) circle (3pt);
\filldraw[color=purple] (0,0.5) circle (3pt);
\end{tikzpicture}$, then the bit in this layer 
is equal to the frequency bits corresponding 
to the colors represented in the DNA symbol.
\end{itemize}

\item \textbf{Information transfer rules:}

\begin{itemize}
\item 
Considering two adjacent positions with \begin{tikzpicture}[scale=0.6,baseline=2mm]
\draw (0,0) rectangle (1,1);
\draw[latex-] (0.5,0) -- (0.5,1);
\end{tikzpicture}, or \begin{tikzpicture}[scale=0.6,baseline=2mm]
\draw (0,0) rectangle (1,1);
\draw[-latex] (0,0.5) -- (1,0.5);
\end{tikzpicture}, 
the bits superimposed on these two positions are equal.
\item On a position with \begin{tikzpicture}[scale=0.6,baseline=2mm]
\draw (0,0) rectangle (1,1);
\draw (0,0.5) -- (0.5,0.5);
\draw[-latex] (0.75,0.5) -- (1,0.5);
\draw[color=red,domain=0:1] plot({0.5*\x-0.5*\x^2+0.5},{\x});
\end{tikzpicture} symbol, the first bit of the couple 
is equal to the bit of positions on north and south.
The second one to the bit of positions on west and east.
\item On a position superimposed with 
\begin{tikzpicture}[scale=0.6,baseline=2mm]
\draw (0,0) rectangle (1,1);
\draw[latex-] (0.5,0) -- (0.5,1);
\draw[-latex] (0,0.5) -- (1,0.5);
\end{tikzpicture}, 
the bit is equal to the bit on south, west, east and north positions.
\end{itemize}
\end{enumerate} 

\noindent \textbf{\textit{Global behavior:}} \bigskip

Using the information contained in the nucleus (the DNA), 
we synchronize the two frequency bits of the computation quarters.
These bits are transmitted through the wires of 
the synchronization net. They are synchronized on the positions 
having the symbol \begin{tikzpicture}[scale=0.6,baseline=2mm]
\draw (0,0) rectangle (1,1);
\draw[latex-] (0.5,0) -- (0.5,1);
\draw[-latex] (0,0.5) -- (1,0.5);
\end{tikzpicture}. 
As a consequence, when $n \le N$, all the frequency 
bits of order $n$ cells are equal. 
When $n>N$, the frequency bits corresponding 
to computation quarters are synchronized. 
The other two are left free and are not synchronized 
between cells.

\subsection{\label{sec.computation.areas} Computation areas layer} 

\begin{figure}[ht]\[\begin{tikzpicture}[scale=0.125]
\fill[gray!90] (-2,-2) rectangle (64,64);
\fill[gray!40] (0,0) rectangle (64,64);
\fill[red!40] (0,0) rectangle (2*31,2*31);
\supertilesynchroone{-4*16+66}{-1*16+66}
\supertilesynchroone{-3*16+66}{-1*16+66}
\supertilesynchroone{-2*16+66}{-1*16+66}
\supertilesynchroone{-1*16+66}{-1*16+66}

\supertilesynchroone{-4*16+66}{-2*16+66}
\supertilesynchroone{-3*16+66}{-2*16+66}
\supertilesynchroone{-2*16+66}{-2*16+66}
\supertilesynchroone{-1*16+66}{-2*16+66}

\supertilesynchroone{-4*16+66}{-3*16+66}
\supertilesynchroone{-3*16+66}{-3*16+66}
\supertilesynchroone{-2*16+66}{-3*16+66}
\supertilesynchroone{-1*16+66}{-3*16+66}

\supertilesynchrotwo{16}{16}

\supertilesynchroone{-4*16+66}{-4*16+66}
\supertilesynchroone{-3*16+66}{-4*16+66}
\supertilesynchroone{-2*16+66}{-4*16+66}
\supertilesynchroone{-1*16+66}{-4*16+66}

\fill[blue!50] (16+0,16) rectangle (16+2,2+16);
\fill[blue!50] (16+0,28) rectangle (16+2,2+28);
\fill[blue!50] (28+0,28) rectangle (28+2,2+28);
\fill[blue!50] (28+0,16) rectangle (28+2,2+16);

\fill[blue!50] (16+0,16+16) rectangle (16+2,2+16+16);
\fill[blue!50] (16+0,28+16) rectangle (16+2,2+28+16);
\fill[blue!50] (28+0,28+16) rectangle (28+2,2+28+16);
\fill[blue!50] (28+0,16+16) rectangle (28+2,2+16+16);

\fill[blue!50] (16+16+0,16) rectangle (16+16+2,2+16);
\fill[blue!50] (16+16+0,28) rectangle (16+16+2,2+28);
\fill[blue!50] (16+28+0,28) rectangle (16+28+2,2+28);
\fill[blue!50] (16+28+0,16) rectangle (16+28+2,2+16);

\fill[blue!50] (20,20) rectangle (22,22);
\fill[blue!50] (20,24) rectangle (22,26);
\fill[blue!50] (24,20) rectangle (26,22);
\fill[blue!50] (24,24) rectangle (26,26);

\fill[blue!50] (36,20) rectangle (38,22);
\fill[blue!50] (36,24) rectangle (38,26);
\fill[blue!50] (40,20) rectangle (42,22);
\fill[blue!50] (40,24) rectangle (42,26);

\fill[blue!50] (36,36) rectangle (38,38);
\fill[blue!50] (36,40) rectangle (38,42);
\fill[blue!50] (40,36) rectangle (42,38);
\fill[blue!50] (40,40) rectangle (42,42);

\fill[blue!50] (20,36) rectangle (22,38);
\fill[blue!50] (20,40) rectangle (22,42);
\fill[blue!50] (24,36) rectangle (26,38);
\fill[blue!50] (24,40) rectangle (26,42);

\fill[blue!50] (0,0) rectangle (2,2);
\fill[blue!50] (12+0,0) rectangle (12+2,2);
\fill[blue!50] (48+0,0) rectangle (48+2,2);
\fill[blue!50] (60+0,0) rectangle (60+2,2);

\fill[blue!50] (0,12) rectangle (2,2+12);
\fill[blue!50] (12+0,12) rectangle (12+2,2+12);
\fill[blue!50] (48+0,12) rectangle (48+2,2+12);
\fill[blue!50] (60+0,12) rectangle (60+2,2+12);

\fill[blue!50] (0,48) rectangle (2,2+48);
\fill[blue!50] (12+0,48) rectangle (12+2,2+48);
\fill[blue!50] (48+0,48) rectangle (48+2,2+48);
\fill[blue!50] (60+0,48) rectangle (60+2,2+48);

\fill[blue!50] (0,60) rectangle (2,2+60);
\fill[blue!50] (12+0,60) rectangle (12+2,2+60);
\fill[blue!50] (48+0,60) rectangle (48+2,2+60);
\fill[blue!50] (60+0,60) rectangle (60+2,2+60);

\fill[blue!50] (16+16+0,16+16) rectangle (16+16+2,16+2+16);
\fill[blue!50] (16+16+0,16+28) rectangle (16+16+2,16+2+28);
\fill[blue!50] (16+28+0,16+28) rectangle (16+28+2,16+2+28);
\fill[blue!50] (16+28+0,16+16) rectangle (16+28+2,16+2+16);

\fill[blue!50] (4,4) rectangle (6,6);
\fill[blue!50] (8,4) rectangle (10,6);
\fill[blue!50] (4,8) rectangle (6,10);
\fill[blue!50] (8,8) rectangle (10,10);
\fill[blue!50] (20,4) rectangle (22,6);
\fill[blue!50] (24,4) rectangle (26,6);
\fill[blue!50] (20,8) rectangle (22,10);
\fill[blue!50] (24,8) rectangle (26,10);
\fill[blue!50] (36,4) rectangle (38,6);
\fill[blue!50] (40,4) rectangle (42,6);
\fill[blue!50] (36,8) rectangle (38,10);
\fill[blue!50] (40,8) rectangle (42,10);
\fill[blue!50] (16+36,4) rectangle (16+38,6);
\fill[blue!50] (16+40,4) rectangle (16+42,6);
\fill[blue!50] (16+36,8) rectangle (16+38,10);
\fill[blue!50] (16+40,8) rectangle (16+42,10);

\fill[blue!50] (4,16+4) rectangle (6,16+6);
\fill[blue!50] (8,16+4) rectangle (10,16+6);
\fill[blue!50] (4,16+8) rectangle (6,16+10);
\fill[blue!50] (8,16+8) rectangle (10,16+10);
\fill[blue!50] (20,16+4) rectangle (22,16+6);
\fill[blue!50] (24,16+4) rectangle (26,16+6);
\fill[blue!50] (20,16+8) rectangle (22,16+10);
\fill[blue!50] (24,16+8) rectangle (26,16+10);
\fill[blue!50] (36,16+4) rectangle (38,16+6);
\fill[blue!50] (40,16+4) rectangle (42,16+6);
\fill[blue!50] (36,16+8) rectangle (38,16+10);
\fill[blue!50] (40,16+8) rectangle (42,16+10);
\fill[blue!50] (16+36,16+4) rectangle (16+38,16+6);
\fill[blue!50] (16+40,16+4) rectangle (16+42,16+6);
\fill[blue!50] (16+36,16+8) rectangle (16+38,16+10);
\fill[blue!50] (16+40,16+8) rectangle (16+42,16+10);

\fill[blue!50] (4,32+4) rectangle (6,32+6);
\fill[blue!50] (8,32+4) rectangle (10,32+6);
\fill[blue!50] (4,32+8) rectangle (6,32+10);
\fill[blue!50] (8,32+8) rectangle (10,32+10);
\fill[blue!50] (20,32+4) rectangle (22,32+6);
\fill[blue!50] (24,32+4) rectangle (26,32+6);
\fill[blue!50] (20,32+8) rectangle (22,32+10);
\fill[blue!50] (24,32+8) rectangle (26,32+10);
\fill[blue!50] (36,32+4) rectangle (38,32+6);
\fill[blue!50] (40,32+4) rectangle (42,32+6);
\fill[blue!50] (36,32+8) rectangle (38,32+10);
\fill[blue!50] (40,32+8) rectangle (42,32+10);
\fill[blue!50] (16+36,32+4) rectangle (16+38,32+6);
\fill[blue!50] (16+40,32+4) rectangle (16+42,32+6);
\fill[blue!50] (16+36,32+8) rectangle (16+38,32+10);
\fill[blue!50] (16+40,32+8) rectangle (16+42,32+10);

\fill[blue!50] (4,48+4) rectangle (6,48+6);
\fill[blue!50] (8,48+4) rectangle (10,48+6);
\fill[blue!50] (4,48+8) rectangle (6,48+10);
\fill[blue!50] (8,48+8) rectangle (10,48+10);
\fill[blue!50] (20,48+4) rectangle (22,48+6);
\fill[blue!50] (24,48+4) rectangle (26,48+6);
\fill[blue!50] (20,48+8) rectangle (22,48+10);
\fill[blue!50] (24,48+8) rectangle (26,48+10);
\fill[blue!50] (36,48+4) rectangle (38,48+6);
\fill[blue!50] (40,48+4) rectangle (42,48+6);
\fill[blue!50] (36,48+8) rectangle (38,48+10);
\fill[blue!50] (40,48+8) rectangle (42,48+10);
\fill[blue!50] (16+36,48+4) rectangle (16+38,48+6);
\fill[blue!50] (16+40,48+4) rectangle (16+42,48+6);
\fill[blue!50] (16+36,48+8) rectangle (16+38,48+10);
\fill[blue!50] (16+40,48+8) rectangle (16+42,48+10);
\draw[latex-latex] (2,1) -- (12,1);
\draw[latex-latex] (50,1) -- (60,1);
\draw[latex-latex] (14,1) -- (48,1);
\draw[latex-latex] (2,13) -- (12,13);
\draw[latex-latex] (50,13) -- (60,13);
\draw[latex-latex] (14,13) -- (48,13);
\draw[latex-latex] (2,49) -- (12,49);
\draw[latex-latex] (50,49) -- (60,49);
\draw[latex-latex] (14,49) -- (48,49);
\draw[latex-latex] (2,61) -- (12,61);
\draw[latex-latex] (50,61) -- (60,61);
\draw[latex-latex] (14,61) -- (48,61);

\draw[latex-] (1,2) -- (1,12);
\draw[latex-] (1,50) -- (1,60);
\draw[latex-] (1,14) -- (1,48);
\draw[latex-] (13,2) -- (13,12);
\draw[latex-] (13,50) -- (13,60);
\draw[latex-] (13,14) -- (13,48);
\draw[latex-] (49,2) -- (49,12);
\draw[latex-] (49,50) -- (49,60);
\draw[latex-] (49,14) -- (49,48);
\draw[latex-] (61,2) -- (61,12);
\draw[latex-] (61,50) -- (61,60);
\draw[latex-] (61,14) -- (61,48);
\end{tikzpicture}\]
\caption{\label{fig.computation.area.init} Example 
of a computation area in 
a computation quarter in an order $2$ cell.}
\end{figure}

This layer specifies the function of each
position of the cytoplasm relatively 
to the Turing machines: 
information transfer (vertical or horizontal) or execution 
of one computation step. This is done 
as in \cite{R71}. However, the 
constitution of the computation areas in infinite cells 
is not well controlled. That is the reason why, in order to ensure the linear net gluing 
property, we simulate degenerated behaviors for the 
constitution of computation areas in the 
simulation quarters. We use 
error signals to ensure that the computation areas are 
well constituted 
in the computation quarters. \bigskip

\noindent \textbf{\textit{Symbols:}} \bigskip

Elements of $\{\texttt{in},\texttt{out}\}^2 \times \{\texttt{in},\texttt{out}\}^2$, 
elements of \[\left\{\begin{tikzpicture}[scale=0.5]
\filldraw[color=white] (0,0) circle (3pt); 
\filldraw[color=white] (0.5,0) circle (3pt);
\filldraw[color=purple] (0,0.5) circle (3pt);\end{tikzpicture},
\begin{tikzpicture}[scale=0.5]
\filldraw[color=white] (0,0) circle (3pt);
\filldraw[color=orange] (0.5,0.5) circle (3pt);\end{tikzpicture},
\begin{tikzpicture}[scale=0.5]
\filldraw[color=white] (0,0) circle (3pt);
\filldraw[color=yellow] (0.5,0) circle (3pt);\end{tikzpicture}, 
\begin{tikzpicture}[scale=0.5]
\filldraw[color=red] (0,0) circle (3pt);
\filldraw[color=white] (0.5,0) circle(3pt); \end{tikzpicture}, 
\begin{tikzpicture}[scale=0.5]
\filldraw[color=red] (0,0) circle (3pt);
\filldraw[color=yellow] (0.5,0) circle (3pt);
\end{tikzpicture}, \begin{tikzpicture}[scale=0.5]
\filldraw[color=red] (0,0) circle (3pt);
\filldraw[color=purple] (0,0.5) circle (3pt);
\filldraw[color=white] (0.5,0) circle (3pt);
\end{tikzpicture}, \begin{tikzpicture}[scale=0.5]
\filldraw[color=white] (0,0) circle (3pt);
\filldraw[color=purple] (0,0.5) circle (3pt);
\filldraw[color=orange] (0.5,0.5) circle (3pt);
\end{tikzpicture}, \begin{tikzpicture}[scale=0.5]
\filldraw[color=white] (0,0) circle (3pt);
\filldraw[color=yellow] (0.5,0) circle (3pt);
\filldraw[color=orange] (0.5,0.5) circle (3pt);
\end{tikzpicture}\right\},\]
elements of 
\[\left\{\begin{tikzpicture}[scale=0.4]
\draw (0,0) rectangle (1,1); 
\draw[-latex] (0.5,0) -- (0.5,1);
\end{tikzpicture}, 
\begin{tikzpicture}[scale=0.4]
\draw (0,0) rectangle (1,1); 
\draw[-latex] (0.5,1) -- (0.5,0);
\end{tikzpicture}, \begin{tikzpicture}[scale=0.4]
\draw (0,0) rectangle (1,1); 
\draw[-latex] (0,0.5) -- (1,0.5);
\end{tikzpicture}, \begin{tikzpicture}[scale=0.4]
\draw (0,0) rectangle (1,1); 
\draw[-latex] (1,0.5) -- (0,0.5); \end{tikzpicture}
, \begin{tikzpicture}[scale=0.4]
\draw (0,0) rectangle (1,1); 
\draw[latex-] (0.5,0) -- (0.5,0.5);
\draw (0.5,0.5) -- (1,0.5);
\end{tikzpicture}, \begin{tikzpicture}[scale=0.4]
\draw (0,0) rectangle (1,1); 
\draw (0.5,0) -- (0.5,0.5);
\draw[-latex] (0.5,0.5) -- (1,0.5);
\end{tikzpicture}
, \begin{tikzpicture}[scale=0.4]
\draw (0,0) rectangle (1,1); 
\draw (0.5,0) -- (0.5,0.5);
\draw[-latex] (0.5,0.5) -- (0,0.5);
\end{tikzpicture},
\begin{tikzpicture}[scale=0.4]
\draw (0,0) rectangle (1,1); 
\draw[latex-] (0.5,0) -- (0.5,0.5);
\draw (0.5,0.5) -- (0,0.5);
\end{tikzpicture}, \begin{tikzpicture}[scale=0.4]
\draw (0,0) rectangle (1,1); 
\draw (0,0.5) -- (0.5,0.5);
\draw[-latex] (0.5,0.5) -- (0.5,1);
\end{tikzpicture}, \begin{tikzpicture}[scale=0.4]
\draw (0,0) rectangle (1,1); 
\draw[latex-] (0,0.5) -- (0.5,0.5);
\draw (0.5,0.5) -- (0.5,1);
\end{tikzpicture}, \begin{tikzpicture}[scale=0.4]
\draw (0,0) rectangle (1,1); 
\draw (0.5,0.5) -- (0.5,1);
\draw[-latex] (0.5,0.5) -- (1,0.5);
\end{tikzpicture}, \begin{tikzpicture}[scale=0.4]
\draw (0,0) rectangle (1,1); 
\draw[-latex] (0.5,0.5) -- (0.5,1);
\draw (0.5,0.5) -- (1,0.5);
\end{tikzpicture}\right\},\]
and a blank symbol. \bigskip

The first set corresponds to signals that propagate
horizontally and vertically in the cytoplasm. 
The second set corresponds to error 
signals propagating on the reticle and to the nucleus.
The last ones correspond to the propagation of the 
error signals through the walls. \bigskip

\noindent \textbf{\textit{Local rules:}}

\begin{itemize}
\item \textbf{Localization:} 
\begin{itemize}
\item 
The elements of 
$\{\texttt{in},
\texttt{out}\} ^2 \times \{\texttt{in},
\texttt{out}\} ^2$ are superimposed on
and only on positions in the cytoplasm.
The first two coordinates are associated to the horizontal direction, 
and the second two to the vertical direction.
\item The elements of 
$\left\{\begin{tikzpicture}[scale=0.5]
\filldraw[color=white] (0,0) circle (3pt); 
\filldraw[color=white] (0.5,0) circle (3pt);
\filldraw[color=purple] (0,0.5) circle (3pt);\end{tikzpicture},
\begin{tikzpicture}[scale=0.5]
\filldraw[color=white] (0,0) circle (3pt);
\filldraw[color=orange] (0.5,0.5) circle (3pt);\end{tikzpicture},
\begin{tikzpicture}[scale=0.5]
\filldraw[color=white] (0,0) circle (3pt);
\filldraw[color=yellow] (0.5,0) circle (3pt);\end{tikzpicture}, 
\begin{tikzpicture}[scale=0.5]
\filldraw[color=red] (0,0) circle (3pt);
\filldraw[color=white] (0.5,0) circle(3pt); \end{tikzpicture}, 
\begin{tikzpicture}[scale=0.5]
\filldraw[color=red] (0,0) circle (3pt);
\filldraw[color=yellow] (0.5,0) circle (3pt);
\end{tikzpicture}, \begin{tikzpicture}[scale=0.5]
\filldraw[color=red] (0,0) circle (3pt);
\filldraw[color=purple] (0,0.5) circle (3pt);
\filldraw[color=white] (0.5,0) circle (3pt);
\end{tikzpicture}, \begin{tikzpicture}[scale=0.5]
\filldraw[color=white] (0,0) circle (3pt);
\filldraw[color=purple] (0,0.5) circle (3pt);
\filldraw[color=orange] (0.5,0.5) circle (3pt);
\end{tikzpicture}, \begin{tikzpicture}[scale=0.5]
\filldraw[color=white] (0,0) circle (3pt);
\filldraw[color=yellow] (0.5,0) circle (3pt);
\filldraw[color=orange] (0.5,0.5) circle (3pt);
\end{tikzpicture}\right\}$ can be superimposed 
only on reticle positions.
\item All the other positions are superimposed 
with the blank symbol.
\end{itemize}
\item \textbf{Transmission of the cytoplasm 
signals:} on a cytoplasm position $\vec{u}$, 
the two first coordinates of the symbol 
are transmitted to the positions $\vec{u} \pm \vec{e}^1$
and the second two are transmitted 
to the positions $\vec{u} \pm \vec{e}^2$, 
when these positions are in the cytoplasm. \bigskip

In a computation 
quarter, these are signals allowing 
 the lines and columns of the 
cells which do not intersect a smaller cell to be 
specified. 
In the first (resp. second) 
couple, the symbol $\texttt{in}$ 
for the first coordinate
corresponds to the fact that 
the position is in a segment of row (resp. column)
originating from the inside of the cell wall. 
The symbol $\texttt{out}$ 
corresponds to the fact that 
the position is in a segment of row (resp. column)
originating from the outside of the cell wall. 
For the second coordinate, these symbols 
have the same signification concerning 
the end of the segment instead. The next 
rules impose that when near the pertinent parts of 
a cell and inside it, 
if a symbol $\texttt{in} - \texttt{out}$ does not 
correspond to the nature of the 
origin or end at this position, this triggers 
an \textit{error signal}. When 
outside the cell, the origin or end 
is imposed - thus not triggering 
an error signal. These rules are presented 
for positions in the \textbf{red quarter}: 
similar rules are imposed for the other ones.

\item \textbf{Triggering error signals 
(inside the wall and reticle):} 
On a position $\vec{u}$ 
in the horizontal arm of the reticle 
(specified by having a reticle symbol different from 
the nucleus and having a reticle position on 
the right and on the left), if 
the position $\vec{u} - \vec{e}^2$ 
has its second couple having second coordinate 
equal to $\texttt{out}$, then
the red quarter is represented in the symbol 
superimposed on position $\vec{u}$. 

For instance, the pattern 
\[\begin{tikzpicture}[scale=0.5]
\fill[gray!40] (0,1) rectangle (3,2);
\fill[red!40] (0,0) rectangle (3,1);
\draw (0,0) grid (3,2);
\node at (1.5,-1) {$(*,\texttt{out})$};
\draw[dashed,-latex] (1.5,-0.5) -- (1.5,1);
\end{tikzpicture}
,\]
where $*$ means any symbol, and the couple 
represented in $\{\texttt{in},\texttt{out}\}^2$ 
is the second one, implies the following: 

\[\begin{tikzpicture}[scale=0.5]
\fill[gray!40] (0,1) rectangle (3,2);
\fill[red!40] (0,0) rectangle (3,1);
\draw (0,0) grid (3,2);
\node at (1.5,-1) {$(*,\texttt{out})$};
\draw[-latex] (1,1.5) -- (2,1.5);
\draw[dashed,-latex] (1.5,-0.5) -- (1.5,1);
\end{tikzpicture}
.\]

On a position $\vec{u}$ 
in the vertical arm of the reticle 
(specified by having a reticle symbol different from 
the nucleus and having a reticle position on 
the top and bottom), if 
the position $\vec{u} - \vec{e}^1$ 
has its first couple having second coordinate 
equal to $\texttt{out}$, 
the red quarter is represented in the symbol 
superimposed on position $\vec{u}$.

\item On a position $\vec{u}$ in the 
horizontal part of the wall (specified
by having a wall symbol different from the corner, 
and having a wall position on left and right), 
if the position $\vec{u} + \vec{e}^2$ 
has its second couple having first coordinate 
equal to $\texttt{out}$, then the symbol 
on the position $\vec{u}$ is an arrow symbol 
\[\begin{tikzpicture}[scale=0.4]
\draw (0,0) rectangle (1,1); 
\draw[-latex] (0,0.5) -- (1,0.5);
\end{tikzpicture} \ \text{or} \ 
\begin{tikzpicture}[scale=0.4]
\draw (0,0) rectangle (1,1); 
\draw[-latex] (1,0.5) -- (0,0.5); \end{tikzpicture}.\]
On a position $\vec{u}$ in the 
vertical part of the wall (specified
by having a wall symbol different from the corner, 
and having a wall position on top and bottom), 
if the position $\vec{u} + \vec{e}^1$ 
has its second couple having first coordinate 
equal to $\texttt{out}$, then the symbol 
on the position $\vec{u}$ is an arrow symbol 
\[\begin{tikzpicture}[scale=0.4]
\draw (0,0) rectangle (1,1); 
\draw[-latex] (0.5,0) -- (0.5,1);
\end{tikzpicture} \ \text{or} \ 
\begin{tikzpicture}[scale=0.4]
\draw (0,0) rectangle (1,1); 
\draw[-latex] (0.5,1) -- (0.5,0);
\end{tikzpicture}.\]

\item \textbf{Enforcing cytoplasm signals 
(outside the wall):}

Considering a wall position $\vec{u}$ which is on the west (resp. east, north, south) wall 
of a cell, the position $\vec{u} - \vec{e}^1$ (resp. $\vec{u}+\vec{e}^1$, $\vec{u}+\vec{e}^2$, 
$\vec{u}-\vec{e}^2$ has the second coordinate 
of its second couple (resp. 
first coordinate and second couple, first 
coordinate first couple, 
second coordinate first couple) equal 
to $\texttt{out}$.

\item {\bf{Propagation of error signals}}. 
An arrow symbol propagates in the direction 
pointed by the arrow 
on the wall, while the next position in this 
direction is not near 
a reticle position, as in the following pattern: 
\[\begin{tikzpicture}[scale=0.5]
\fill[gray!40] (0,1) rectangle (3,2); 
\fill[red!40] (0,0) rectangle (3,1);
\draw (0,0) grid (3,2) ; 
\draw[-latex] (0,1.5) -- (1,1.5);
\end{tikzpicture}\]
implies the following one: 
\[\begin{tikzpicture}[scale=0.5]
\fill[gray!40] (0,1) rectangle (3,2); 
\fill[red!40] (0,0) rectangle (3,1);
\draw (0,0) grid (3,2) ; 
\draw[-latex] (0,1.5) -- (1,1.5);
\draw[-latex] (1,1.5) -- (2,1.5);
\end{tikzpicture}\]

\item On a position $\vec{u}$ of the north (resp. east, south, west) arm of the reticle, specified 
by the colors on the sides, if the position $\vec{u}-\vec{e}^1$ (resp. $\vec{u}-\vec{e}^1$, 
$\vec{u}+\vec{e}^2$, $\vec{u}+\vec{e}^1$) is not the nucleus, then the symbol on this position contains the symbol 
on position $\vec{u}$.

\item \textbf{Connection between error signals:} when on a position $\vec{u}$
on the wall which is near a position on the reticle, if one of the wall 
symbols aside contains an error symbol, then the reticle position 
has an error symbol where the corresponding quarter is represented. For instance, 
the pattern 
\[\begin{tikzpicture}[scale=0.5]
\fill[red!40] (0,0) rectangle (1,3);
\fill[yellow!40] (2,0) rectangle (3,3);
\fill[gray!90] (0,0) rectangle (3,1);
\fill[gray!40] (1,1) rectangle (2,3);
\draw (0,0) grid (3,3);
\draw[-latex] (0,0.5) -- (1,0.5);
\end{tikzpicture}\]
implies the following: 
\[\begin{tikzpicture}[scale=0.5]
\fill[red!40] (0,0) rectangle (1,3);
\fill[yellow!40] (2,0) rectangle (3,3);
\fill[gray!90] (0,0) rectangle (3,1);
\fill[gray!40] (1,1) rectangle (2,3);
\draw (0,0) grid (3,3);
\draw[-latex] (0,0.5) -- (1,0.5);
\filldraw[color=red] (1.3,1.3) circle (3pt);
\end{tikzpicture}.\]

\item {\bf{Forbidding wrong error signals}}. 
On any of the four reticle positions around the nucleus, 
there can not be a symbol that contains a color which is in the DNA. For instance, 
the following pattern is forbidden: 

\[\begin{tikzpicture}[scale=0.5]
\fill[gray!40] (0,0) rectangle (3,3);
\fill[purple!40] (0,2) rectangle (1,3);
\fill[orange!40] (2,2) rectangle (3,3);
\fill[red!40] (0,0) rectangle (1,1);
\fill[yellow!40] (2,0) rectangle (3,1);
\draw (0,0) grid (3,3);
\filldraw[color=red] (0.3,1.3) circle (3pt);
\filldraw[color=red] (1.3,1.3) circle (3pt);
\filldraw[color=purple] (1.3,1.7) circle (3pt);
\end{tikzpicture}\]

\end{itemize} 

\noindent \textbf{\textit{Global behavior:}} \bigskip

In any quarter of a cell, the segment of rows and columns are colored with a couple 
of symbols in $\{\texttt{in},\texttt{out}\}$. One for the origin of the segment, 
the other one for the end of it. If it originates from or ends at the outside 
of a cell, then the corresponding symbol is forced to be $\texttt{out}$. \bigskip

Moreover, when near the walls or the reticle and 
inside the corresponding cell, 
if the corresponding symbol is $\texttt{out}$
then an error signal is triggered and propagates 
to the nucleus. 
On the walls, a propagation direction is chosen. 
In the reticle, the 
error signals contain the information about the quarter where 
the error was detected. Around the nucleus, we forbid 
an error signal to come from 
a computation quarter. \bigskip

In a \textit{computation quarter} of a cell, each row which 
does not intersect 
a smaller cell has first couple equal 
to $(\texttt{in},\texttt{in})$, 
since it originates inside the cell, and ends inside, 
on the reticle.
Each column which does not intersect a smaller cell 
has second couple equal to 
$(\texttt{in},\texttt{in})$. The couples on other 
segments of rows or columns are determined 
in a similar way, according to their origin and end.
This is enforced by the propagation of 
error signals to the nucleus.

The positions marked with 
$((\texttt{in},\texttt{in}), (\texttt{in},\texttt{in}))$ 
are called \textbf{computation positions}. The ones that have 
first (resp. second) couple equal 
to $(\texttt{in},\texttt{in})$ 
and second (resp. first) not equal 
to $(\texttt{in},\texttt{in})$ 
are \textbf{horizontal transfer positions} (resp. vertical 
transfer positions). 
See Figure \ref{fig.computation.area.init} for a representation of a computation area in 
the red quarter of an order two cell. On this figure, 
computation positions are represented by a blue square.
Vertical and horizontal transfer positions by arrows in this direction.

\begin{remark}
These mechanisms can not be easily simplified, since 
an infinite row or a column can not ''know'' if it is a free row or column 
of its infinite cell. Moreover, from the division of the cells it is difficult 
to code this with a hierarchical process.
\end{remark}

\begin{remark}
In the literature, most of the constructions 
using substitutions include
the construction of the computation areas~\cite{Hochman-Meyerovitch-2010} with 
substitution rules. However, in order to get 
the net gluing property, and furthermore the 
block gluing property, we need a more flexible construction 
of the computation areas. The method 
presented above was used initially in the construction 
of Robinson for his undecidability result~\cite{R71}. 
\end{remark}

\subsection{\label{sec.machine} The machines (RNA)}

In this section, we present the implementation 
of Turing machines 
that will check that the frequency bit of 
level $n$ 
cells are equal to $s_n$, for all $n$.

In order 
to have the linear block gluing property, we have 
to adapt the Turing machine model 
in order to simulate each possible 
degenerated behavior of the machines. This is done as follows: 
in each of the quarters of a cell, a machine 
is implemented. For this machine, the directions 
of space and time are as on Figure~\ref{fig.time.space}: 
the rules of the machine 
will depend on the color of the quarter. 
Moreover, for each of the quarters, we initialize the 
tape with elements of 
$\mathcal{A} \times \mathcal{Q}$. The set $\mathcal{Q}$ is 
the state set 
of the machine and $\mathcal{A}$ its alphabet. Machine heads can enter on the two sides of 
the computation area. Signals will be used to verify that in the computation quarters
the machine is well initialized. This means 
that no head enters on the sides, and on 
the initial row there is a unique machine 
head on the position near the nucleus in initial state. 
Moreover, all the letters in $\mathcal{A}$ are blank.

\begin{figure}[ht]
\[\begin{tikzpicture}[scale=0.4]
\fill[gray!90] (0,0) rectangle (11,11);
\fill[gray!40] (0.5,0.5) rectangle (10.5,10.5);
\fill[red!50] (0.5,0.5) rectangle (5.25,5.25);
\fill[orange!50] (5.75,5.75) rectangle (10.5,10.5);
\fill[yellow!50] (5.75,0.5) rectangle (10.5,5.25); 
\fill[purple!50] (0.5,5.75) rectangle (5.25,10.5); 
\draw (0,0) rectangle (11,11); 
\draw (5.25,0.5) rectangle (5.75,10.5);
\draw (0.5,5.25) rectangle (10.5,5.75);
\draw (0.5,0.5) rectangle (10.5,10.5);
\draw[-latex] (6.5,6.5) -- (9.5,6.5);
\draw[-latex] (6.5,6.5) -- (6.5,9.5);
\draw[-latex] (4.5,4.5) -- (1.5,4.5);
\draw[-latex] (4.5,4.5) -- (4.5,1.5);
\draw[-latex] (4.5,6.5) -- (4.5,9.5);
\draw[-latex] (4.5,6.5) -- (1.5,6.5);
\draw[-latex] (6.5,4.5) -- (9.5,4.5);
\draw (6.5,4.5) -- (6.5,1.5);
\node at (2,3.5) {space};
\node at (2,7.5) {space};
\node at (3.5,1.5) {time};
\node at (3.5,9.5) {time};

\node at (7.75,9.5) {time};
\node at (9,7.5) {space};

\node at (7.75,1.5) {time};
\node at (9,3.5) {space};

\end{tikzpicture}\]
\caption{\label{fig.time.space} Schema 
of time and space direction 
in the four different quarters of a cell.}
\end{figure}
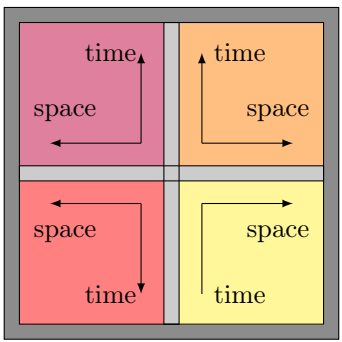

As usually in this type of constructions, 
the tape is not connected. 
Between two computation positions, the information 
is transported. In our model, each 
computation position takes as 
input up to four symbols coming 
from bottom and the sides. It outputs 
up to two symbols to the top and sides.
Moreover, we add special states 
to the definition of Turing machine. We do this 
in order to manage the presence 
of multiple machine heads.
We describe this model in 
Section~\ref{subsec.machines.block.gluing}, 
and then show how to implement 
it with local rules in Section~\ref{subsec.machines.local.rules.block.gluing}.

If a machine head enters an error state, 
this triggers an error signal that propagates through 
the trajectory of the machine. This signal is 
taken into account only for computation quarters.

\subsubsection{\label{subsec.machines.block.gluing} Adaptation 
of computing machines 
model to linear block gluing property}

In this section we present the 
way computing machines work in our construction. 
The model that we use is adapted in order 
to have the linear block gluing property, 
and is defined as follows: 

\begin{definition}
A \textbf{computing machine} $\mathcal{M}$
is some tuple $=(\mathcal{Q}, \mathcal{A}, 
\delta, q_0, q_e,q_s, \#)$.
The set $\mathcal{Q}$ is the state set, $\mathcal{A}$ 
the alphabet, $q_0$ the initial state, and $\#$ 
is the blank symbol, 
and $$\delta : \mathcal{A} \times \mathcal{Q} 
\rightarrow
\mathcal{A} \times \mathcal{Q}   
\times \{\leftarrow,\rightarrow,\uparrow\}.$$

The other elements $q_e,q_s$ are 
states in $\mathcal{Q}$. They are such that 
for all $q \in \{q_e,q_s\}$ 
and for all $a$ in $\mathcal{A}$, 
$\delta(a,q) = (a,q,\uparrow)$.

\end{definition}

The special states $q_e,q_s$ in this definition 
have the following meaning: 

\begin{itemize}
\item error state $q_e$: a machine head 
enters this state when 
it detects an error
or when it collides with another 
machine head.

This state is not forbidden 
in the subshift, but this is replaced 
by the sending of an error signal. 
We forbid the coexistence of the error 
signal with a well initialized tape. 
The machine stops moving when 
it enters this state.

\item shadow state $q_s$: 
this state corresponds to 
the absence of head. We need to introduce
this state so that the number 
of possible space-time diagrams in 
finite cells has a closed form.
\end{itemize}

Any Turing machine can be transformed 
in such a machine by adding some 
state $q_s$ verifying 
the properties 
listed above. 

When the machine is well initialized, 
none of these states and letters will be 
reached. Hence this machine behave 
as the initial one.
As a consequence, one can consider that the 
machine we used has these properties.

In this section, we use a machine 
which successively for all $n \ge 0$
writes the bits 
$s^{(n)}_k$, $k = 1 ... n$, 
on positions $p_n = 2^n$ (which 
is a computable function).
This position corresponds 
to the number of the 
first active 
column from left to right which 
is just on the right of an order $n$ 
two dimensional cell on a face amongst 
active columns 

Recall that $s$ is the $\Pi_1$-computable 
sequence defined at the beginning 
of the construction. The sequence $(s^{(n)}_k)$ 
is a computable sequence such that 
for all $k$, $s_n = \inf_k s^{(n)}_k$. 

\subsubsection{\label{subsec.machines.local.rules.block.gluing} Implementation 
of the machines}

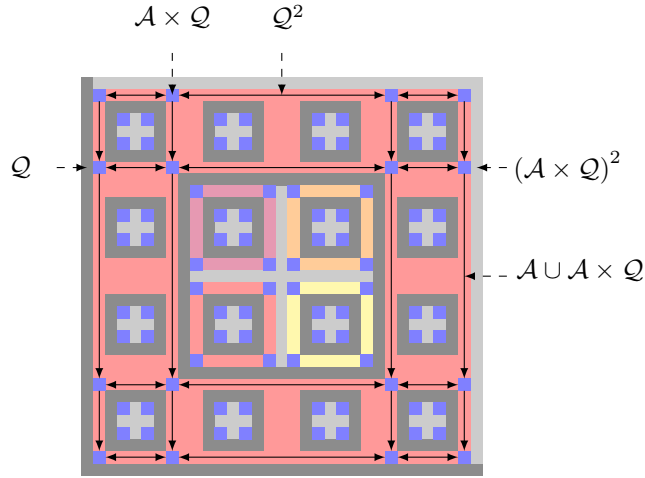
\begin{figure}[ht]
\[\begin{tikzpicture}[scale=0.08]
\fill[gray!90] (-2,-2) rectangle (64,64);
\fill[gray!40] (0,0) rectangle (64,64);
\fill[red!40] (0,0) rectangle (2*31,2*31);
\supertilesynchroone{-4*16+66}{-1*16+66}
\supertilesynchroone{-3*16+66}{-1*16+66}
\supertilesynchroone{-2*16+66}{-1*16+66}
\supertilesynchroone{-1*16+66}{-1*16+66}

\supertilesynchroone{-4*16+66}{-2*16+66}
\supertilesynchroone{-3*16+66}{-2*16+66}
\supertilesynchroone{-2*16+66}{-2*16+66}
\supertilesynchroone{-1*16+66}{-2*16+66}

\supertilesynchroone{-4*16+66}{-3*16+66}
\supertilesynchroone{-3*16+66}{-3*16+66}
\supertilesynchroone{-2*16+66}{-3*16+66}
\supertilesynchroone{-1*16+66}{-3*16+66}

\supertilesynchrotwo{16}{16}

\supertilesynchroone{-4*16+66}{-4*16+66}
\supertilesynchroone{-3*16+66}{-4*16+66}
\supertilesynchroone{-2*16+66}{-4*16+66}
\supertilesynchroone{-1*16+66}{-4*16+66}

\fill[blue!50] (16+0,16) rectangle (16+2,2+16);
\fill[blue!50] (16+0,28) rectangle (16+2,2+28);
\fill[blue!50] (28+0,28) rectangle (28+2,2+28);
\fill[blue!50] (28+0,16) rectangle (28+2,2+16);

\fill[blue!50] (16+0,16+16) rectangle (16+2,2+16+16);
\fill[blue!50] (16+0,28+16) rectangle (16+2,2+28+16);
\fill[blue!50] (28+0,28+16) rectangle (28+2,2+28+16);
\fill[blue!50] (28+0,16+16) rectangle (28+2,2+16+16);

\fill[blue!50] (16+16+0,16) rectangle (16+16+2,2+16);
\fill[blue!50] (16+16+0,28) rectangle (16+16+2,2+28);
\fill[blue!50] (16+28+0,28) rectangle (16+28+2,2+28);
\fill[blue!50] (16+28+0,16) rectangle (16+28+2,2+16);

\fill[blue!50] (20,20) rectangle (22,22);
\fill[blue!50] (20,24) rectangle (22,26);
\fill[blue!50] (24,20) rectangle (26,22);
\fill[blue!50] (24,24) rectangle (26,26);

\fill[blue!50] (36,20) rectangle (38,22);
\fill[blue!50] (36,24) rectangle (38,26);
\fill[blue!50] (40,20) rectangle (42,22);
\fill[blue!50] (40,24) rectangle (42,26);

\fill[blue!50] (36,36) rectangle (38,38);
\fill[blue!50] (36,40) rectangle (38,42);
\fill[blue!50] (40,36) rectangle (42,38);
\fill[blue!50] (40,40) rectangle (42,42);

\fill[blue!50] (20,36) rectangle (22,38);
\fill[blue!50] (20,40) rectangle (22,42);
\fill[blue!50] (24,36) rectangle (26,38);
\fill[blue!50] (24,40) rectangle (26,42);

\fill[blue!50] (0,0) rectangle (2,2);
\fill[blue!50] (12+0,0) rectangle (12+2,2);
\fill[blue!50] (48+0,0) rectangle (48+2,2);
\fill[blue!50] (60+0,0) rectangle (60+2,2);

\fill[blue!50] (0,12) rectangle (2,2+12);
\fill[blue!50] (12+0,12) rectangle (12+2,2+12);
\fill[blue!50] (48+0,12) rectangle (48+2,2+12);
\fill[blue!50] (60+0,12) rectangle (60+2,2+12);

\fill[blue!50] (0,48) rectangle (2,2+48);
\fill[blue!50] (12+0,48) rectangle (12+2,2+48);
\fill[blue!50] (48+0,48) rectangle (48+2,2+48);
\fill[blue!50] (60+0,48) rectangle (60+2,2+48);

\fill[blue!50] (0,60) rectangle (2,2+60);
\fill[blue!50] (12+0,60) rectangle (12+2,2+60);
\fill[blue!50] (48+0,60) rectangle (48+2,2+60);
\fill[blue!50] (60+0,60) rectangle (60+2,2+60);

\fill[blue!50] (16+16+0,16+16) rectangle (16+16+2,16+2+16);
\fill[blue!50] (16+16+0,16+28) rectangle (16+16+2,16+2+28);
\fill[blue!50] (16+28+0,16+28) rectangle (16+28+2,16+2+28);
\fill[blue!50] (16+28+0,16+16) rectangle (16+28+2,16+2+16);

\fill[blue!50] (4,4) rectangle (6,6);
\fill[blue!50] (8,4) rectangle (10,6);
\fill[blue!50] (4,8) rectangle (6,10);
\fill[blue!50] (8,8) rectangle (10,10);
\fill[blue!50] (20,4) rectangle (22,6);
\fill[blue!50] (24,4) rectangle (26,6);
\fill[blue!50] (20,8) rectangle (22,10);
\fill[blue!50] (24,8) rectangle (26,10);
\fill[blue!50] (36,4) rectangle (38,6);
\fill[blue!50] (40,4) rectangle (42,6);
\fill[blue!50] (36,8) rectangle (38,10);
\fill[blue!50] (40,8) rectangle (42,10);
\fill[blue!50] (16+36,4) rectangle (16+38,6);
\fill[blue!50] (16+40,4) rectangle (16+42,6);
\fill[blue!50] (16+36,8) rectangle (16+38,10);
\fill[blue!50] (16+40,8) rectangle (16+42,10);

\fill[blue!50] (4,16+4) rectangle (6,16+6);
\fill[blue!50] (8,16+4) rectangle (10,16+6);
\fill[blue!50] (4,16+8) rectangle (6,16+10);
\fill[blue!50] (8,16+8) rectangle (10,16+10);
\fill[blue!50] (20,16+4) rectangle (22,16+6);
\fill[blue!50] (24,16+4) rectangle (26,16+6);
\fill[blue!50] (20,16+8) rectangle (22,16+10);
\fill[blue!50] (24,16+8) rectangle (26,16+10);
\fill[blue!50] (36,16+4) rectangle (38,16+6);
\fill[blue!50] (40,16+4) rectangle (42,16+6);
\fill[blue!50] (36,16+8) rectangle (38,16+10);
\fill[blue!50] (40,16+8) rectangle (42,16+10);
\fill[blue!50] (16+36,16+4) rectangle (16+38,16+6);
\fill[blue!50] (16+40,16+4) rectangle (16+42,16+6);
\fill[blue!50] (16+36,16+8) rectangle (16+38,16+10);
\fill[blue!50] (16+40,16+8) rectangle (16+42,16+10);

\fill[blue!50] (4,32+4) rectangle (6,32+6);
\fill[blue!50] (8,32+4) rectangle (10,32+6);
\fill[blue!50] (4,32+8) rectangle (6,32+10);
\fill[blue!50] (8,32+8) rectangle (10,32+10);
\fill[blue!50] (20,32+4) rectangle (22,32+6);
\fill[blue!50] (24,32+4) rectangle (26,32+6);
\fill[blue!50] (20,32+8) rectangle (22,32+10);
\fill[blue!50] (24,32+8) rectangle (26,32+10);
\fill[blue!50] (36,32+4) rectangle (38,32+6);
\fill[blue!50] (40,32+4) rectangle (42,32+6);
\fill[blue!50] (36,32+8) rectangle (38,32+10);
\fill[blue!50] (40,32+8) rectangle (42,32+10);
\fill[blue!50] (16+36,32+4) rectangle (16+38,32+6);
\fill[blue!50] (16+40,32+4) rectangle (16+42,32+6);
\fill[blue!50] (16+36,32+8) rectangle (16+38,32+10);
\fill[blue!50] (16+40,32+8) rectangle (16+42,32+10);

\fill[blue!50] (4,48+4) rectangle (6,48+6);
\fill[blue!50] (8,48+4) rectangle (10,48+6);
\fill[blue!50] (4,48+8) rectangle (6,48+10);
\fill[blue!50] (8,48+8) rectangle (10,48+10);
\fill[blue!50] (20,48+4) rectangle (22,48+6);
\fill[blue!50] (24,48+4) rectangle (26,48+6);
\fill[blue!50] (20,48+8) rectangle (22,48+10);
\fill[blue!50] (24,48+8) rectangle (26,48+10);
\fill[blue!50] (36,48+4) rectangle (38,48+6);
\fill[blue!50] (40,48+4) rectangle (42,48+6);
\fill[blue!50] (36,48+8) rectangle (38,48+10);
\fill[blue!50] (40,48+8) rectangle (42,48+10);
\fill[blue!50] (16+36,48+4) rectangle (16+38,48+6);
\fill[blue!50] (16+40,48+4) rectangle (16+42,48+6);
\fill[blue!50] (16+36,48+8) rectangle (16+38,48+10);
\fill[blue!50] (16+40,48+8) rectangle (16+42,48+10);
\draw[latex-latex] (2,1) -- (12,1);
\draw[latex-latex] (50,1) -- (60,1);
\draw[latex-latex] (14,1) -- (48,1);
\draw[latex-latex] (2,13) -- (12,13);
\draw[latex-latex] (50,13) -- (60,13);
\draw[latex-latex] (14,13) -- (48,13);
\draw[latex-latex] (2,49) -- (12,49);
\draw[latex-latex] (50,49) -- (60,49);
\draw[latex-latex] (14,49) -- (48,49);
\draw[latex-latex] (2,61) -- (12,61);
\draw[latex-latex] (50,61) -- (60,61);
\draw[latex-latex] (14,61) -- (48,61);

\draw[latex-] (1,2) -- (1,12);
\draw[latex-] (1,50) -- (1,60);
\draw[latex-] (1,14) -- (1,48);
\draw[latex-] (13,2) -- (13,12);
\draw[latex-] (13,50) -- (13,60);
\draw[latex-] (13,14) -- (13,48);
\draw[latex-] (49,2) -- (49,12);
\draw[latex-] (49,50) -- (49,60);
\draw[latex-] (49,14) -- (49,48);
\draw[latex-] (61,2) -- (61,12);
\draw[latex-] (61,50) -- (61,60);
\draw[latex-] (61,14) -- (61,48);

\draw[-latex,dashed] (68,31) -- (61,31);
\draw[-latex,dashed] (31,68) -- (31,61);
\draw[-latex,dashed] (13,68) -- (13,61);
\draw[-latex,dashed] (68,49) -- (63,49);
\draw[-latex,dashed] (-6,49) -- (-1,49);
\node at (-12,49) {$\mathcal{Q}$};
\node at (78,49) {$\left( \mathcal{A} \times 
\mathcal{Q} \right) ^2$};
\node at (80,32) {$\mathcal{A} 
\cup \mathcal{A} \times \mathcal{Q}$};
\node at (32,74) {$\mathcal{Q}^2$};
\node at (13,74) {$\mathcal{A} \times \mathcal{Q}$};
\end{tikzpicture}\]
\caption{\label{fig.loc.machine.block.gluing} Localization 
of the machine symbols in the red 
quarter of an order two cell.}
\end{figure}

\begin{figure}[ht]
\[\begin{tikzpicture}[scale=0.2]
\begin{scope}
\fill[orange!40] 
(-9,-1) rectangle (9,1);
\fill[orange!40] 
(-1,-9) rectangle (1,9);
\fill[blue!50] (-1,-1) rectangle (1,1);
\draw (-1,-1) rectangle (1,1);
\draw[-latex] (-9,-0.4) -- (-1,-0.4);
\draw[-latex,dashed] (-1,0.4) -- (-9,0.4);
\draw[-latex] (9,-0.4) -- (1,-0.4);
\draw[-latex,dashed] (1,0.4) -- (9,0.4);
\draw[-latex] (0,-9) -- (0,-1);
\draw[-latex] (0,1) -- (0,9);

\node[scale=1.5] at (-11,0) {$1$};
\end{scope}

\begin{scope}[xshift=30cm]
\fill[orange!40] 
(0,-1) rectangle (9,1);
\fill[orange!40] 
(-1,-9) rectangle (1,9);
\fill[blue!50] (-1,-1) rectangle (1,1);
\draw (-1,-1) rectangle (1,1);
\draw[-latex] (9,-0.4) -- (1,-0.4);
\draw[-latex,dashed] (1,0.4) -- (9,0.4);
\draw[-latex] (0,-9) -- (0,-1);
\draw[-latex,dashed] (0,1) -- (0,9);

\node[scale=1.5] at (-11,0) {$2$};

\node at (16,3) {Outputs};
\node at (16,6) {Inputs};
\draw (10,3) -- (12,3); 
\draw[dashed] (10,6) -- (12,6);
\end{scope}

\begin{scope}[yshift=-25cm]

\fill[orange!40] 
(0,-1) rectangle (9,1);
\fill[orange!40] 
(-1,-1) rectangle (1,9);
\fill[blue!50] (-1,-1) rectangle (1,1);
\draw (-1,-1) rectangle (1,1);
\draw[-latex] (9,-0.4) -- (1,-0.4);
\draw[-latex,dashed] (1,0.4) -- (9,0.4);
\draw[-latex] (0,-2) -- (0,-1);
\node at (0,-3) {face 2};
\draw[-latex,dashed] (0,1) -- (0,9);

\node[scale=1.5] at (-11,0) {$3$};
\end{scope}

\begin{scope}[yshift=-25cm,xshift=30cm]

\fill[orange!40] 
(-9,-1) rectangle (9,1);
\fill[orange!40] 
(-1,-1) rectangle (1,9);
\fill[blue!50] (-1,-1) rectangle (1,1);
\draw (-1,-1) rectangle (1,1);
\draw[-latex,dashed] (-1,0.4) -- (-9,0.4);
\draw[-latex] (-9,-0.4) -- (-1,-0.4);
\draw[-latex] (9,-0.4) -- (1,-0.4);
\draw[-latex,dashed] (1,0.4) -- (9,0.4);
\draw[-latex,dashed] (0,1) -- (0,9);
\node[scale=1.5] at (-11,0) {$4$};
\end{scope}

\begin{scope}[yshift=-37cm]

\fill[orange!40] 
(0,-1) rectangle (9,1);
\fill[orange!40] 
(1,1) rectangle (-1,-9);
\fill[blue!50] (-1,-1) rectangle (1,1);
\draw (-1,-1) rectangle (1,1);
\draw[-latex] (9,-0.4) -- (1,-0.4);
\draw[-latex,dashed] (1,0.4) -- (9,0.4);
\draw[-latex] (0,-9) -- (0,-1);

\node[scale=1.5] at (-11,0) {$5$};
\end{scope}

\begin{scope}[yshift=-37cm,xshift=30cm]
\fill[orange!40] 
(-9,-1) rectangle (9,1);
\fill[orange!40] 
(1,1) rectangle (-1,-9);

\fill[blue!50] (-1,-1) rectangle (1,1);
\draw (-1,-1) rectangle (1,1);
\draw[-latex,dashed] (-1,0.4) -- (-9,0.4);
\draw[-latex] (-9,-0.4) -- (-1,-0.4);
\draw[-latex] (9,-0.4) -- (1,-0.4);
\draw[-latex,dashed] (1,0.4) -- (9,0.4);
\node[scale=1.5] at (-11,0) {$6$};
\draw[-latex] (-9,-0.4) -- (-1,-0.4);
\draw[-latex,dashed] (-1,0.4) -- (-9,0.4);
\draw[-latex] (0,-9) -- (0,-1);
\end{scope}

\end{tikzpicture}\]
\caption{\label{fig.schema.inputs.outputs.block.gluing}
Schema of the inputs and outputs 
directions when inside the area (1) 
and on the border of the area (2,3,4,5,6).}
\end{figure}
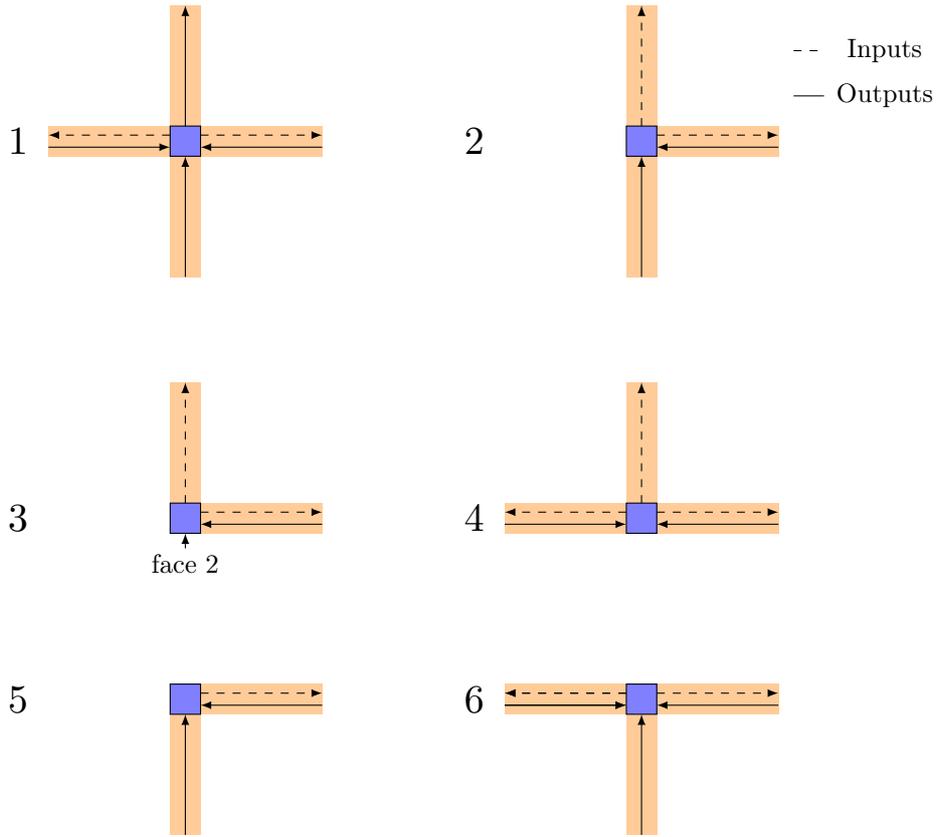

\begin{figure}[ht]
\[\begin{tikzpicture}[scale=0.2]
\fill[orange!40] 
(-9,-1) rectangle (9,1);
\fill[orange!40] 
(-1,-9) rectangle (1,9);
\fill[blue!50] (-1,-1) rectangle (1,1);
\draw (-1,-1) rectangle (1,1);
\draw[-latex] (-9,0) -- (-1,0);
\draw[-latex] (0,-9) -- (0,-1);
\draw[-latex] (0,1) -- (0,9);
\node at (-10,0) {$q$};
\node at (0,-10) {$a$};
\node at (0,10) {$(a,q)$};
\end{tikzpicture}\]
\caption{\label{fig.standard.rule.0.block.gluing} Illustration 
of the standard rules (1).}
\end{figure}

\begin{figure}[h]
\[\begin{tikzpicture}[scale=0.175]
\begin{scope}
\fill[orange!40] 
(-9,-1) rectangle (9,1);
\fill[orange!40] 
(-1,-9) rectangle (1,9);
\fill[blue!50] (-1,-1) rectangle (1,1);
\draw (-1,-1) rectangle (1,1);
\draw[-latex] (-1,0) -- (-9,0);
\draw[-latex] (0,-9) -- (0,-1);
\draw[-latex] (0,1) -- (0,9);
\node at (-13,0) {$\delta_2 (a,q)$};
\node at (0,-10) {$(a,q)$};
\node at (0,10) {$\delta_1 (a,q)$};
\node at (0,-14) {$\delta_3 (a,q) = \leftarrow$};
\end{scope}

\begin{scope}[xshift=23cm]
\fill[orange!40] 
(-9,-1) rectangle (9,1);
\fill[orange!40] 
(-1,-9) rectangle (1,9);
\fill[blue!50] (-1,-1) rectangle (1,1);
\draw (-1,-1) rectangle (1,1);
\draw[-latex] (1,0) -- (9,0);
\draw[-latex] (0,-9) -- (0,-1);
\draw[-latex] (0,1) -- (0,9);
\node at (13,0) {$\delta_2 (a,q)$};
\node at (0,-10) {$(a,q)$};
\node at (0,10) {$\delta_1 (a,q)$};
\node at (0,-14) {$\delta_3 (a,q) = \rightarrow$};
\end{scope}

\begin{scope}[xshift=50cm]
\fill[orange!40] 
(-9,-1) rectangle (9,1);
\fill[orange!40] 
(-1,-9) rectangle (1,9);
\fill[blue!50] (-1,-1) rectangle (1,1);
\draw (-1,-1) rectangle (1,1);
\draw[-latex] (0,-9) -- (0,-1);
\draw[-latex] (0,1) -- (0,9);
\node at (0,10) {$(\delta_1(a,q),\delta_2 (a,q))$};
\node at (0,-10) {$(a,q)$};
\node at (0,-14) {$\delta_3 (a,q) = \uparrow$};
\end{scope}
\end{tikzpicture}\]
\caption{\label{fig.standard.rule.1.block.gluing}
Illustration of the standard rules (2).}
\end{figure}

\begin{figure}[ht]
\[\begin{tikzpicture}[scale=0.2]
\fill[red!50] 
(-9,-1) rectangle (9,1);
\fill[red!50] 
(-1,-9) rectangle (1,9);
\fill[blue!50] (-1,-1) rectangle (1,1);
\draw (-1,-1) rectangle (1,1);
\draw[-latex] (9,0) -- (1,0);
\draw[-latex] (0,-1) -- (0,-9);
\draw[-latex] (0,9) -- (0,1);
\node at (10,0) {$q$};
\node at (0,10) {$a$};
\node at (0,-10) {$(a,q)$};
\end{tikzpicture}\]
\end{figure}

\begin{figure}[ht]
\[\begin{tikzpicture}[scale=0.175]
\begin{scope}[xshift=23cm]
\fill[red!50] 
(-9,-1) rectangle (9,1);
\fill[red!50] (-1,-9) rectangle (1,9);
\fill[blue!50] (-1,-1) rectangle (1,1);
\draw (-1,-1) rectangle (1,1);
\draw[-latex] (1,0) -- (9,0);
\draw[-latex] (0,-1) -- (0,-9);
\draw[-latex] (0,9) -- (0,1);
\node at (13,0) {$\delta_2 (a,q)$};
\node at (0,10) {$(a,q)$};
\node at (0,-10) {$\delta_1 (a,q)$};
\node at (0,14) {$\delta_3 (a,q) = \leftarrow$};
\end{scope}

\begin{scope}
\fill[red!50] 
(-9,-1) rectangle (9,1);
\fill[red!50] (-1,-9) rectangle (1,9);
\fill[blue!50] (-1,-1) rectangle (1,1);
\draw (-1,-1) rectangle (1,1);
\draw[-latex] (-1,0) -- (-9,0);
\draw[-latex] (0,-1) -- (0,-9);
\draw[-latex] (0,9) -- (0,1);
\node at (-13,0) {$\delta_2 (a,q)$};
\node at (0,10) {$(a,q)$};
\node at (0,-10) {$\delta_1 (a,q)$};
\node at (0,14) {$\delta_3 (a,q) = \rightarrow$};
\end{scope}

\begin{scope}[xshift=50cm]
\fill[red!50] 
(-9,-1) rectangle (9,1);
\fill[red!50] (-1,-9) rectangle (1,9);
\fill[blue!50] (-1,-1) rectangle (1,1);
\draw (-1,-1) rectangle (1,1);
\draw[-latex] (0,-1) -- (0,-9);
\draw[-latex] (0,9) -- (0,1);
\node at (0,-10) {$(\delta_1(a,q),\delta_2 (a,q)$};
\node at (0,10) {$(a,q)$};
\node at (0,14) {$\delta_3 (a,q) = \uparrow$};
\end{scope}
\end{tikzpicture}\]
\end{figure}

In this section, we describe the second 
sublayer of this layer.

\noindent \textbf{\textit{Symbols:}}

The symbols are elements of the sets 
$\mathcal{A} \times \mathcal{Q}$,
$\mathcal{A}$, 
$\mathcal{Q}^2$, $\mathcal{Q}$, $\left(\mathcal{A} \times \mathcal{Q} \right)^2$,
and a blank symbol. \bigskip

\noindent \textbf{\textit{Local rules:}} \bigskip

\begin{itemize}
\item \textbf{Localization:}
the non-blank symbols are superimposed on information 
transfer rows and columns, as well as positions corresponding 
to information transfer rows and columns on the arms of the reticle and 
the east and west walls.
More precisely: 
\begin{itemize}
\item the possible symbols for 
information transfer columns are elements 
of the sets $\mathcal{A}$ and $\mathcal{A} \times \mathcal{Q}$.
The elements of $\mathcal{A} \times \mathcal{Q}$ are on 
computation positions. The other ones on the other positions
of these lines and columns.
\item the positions on the vertical (resp. horizontal) arms of the reticle 
corresponding to an information transfer line are colored with 
an element of $\mathcal{Q} ^2$ (resp. $\left( \mathcal{A} \times \mathcal{Q} \right)^2$).
The first coordinate corresponds to the machine heads 
entering in the west quarter. The second one corresponds 
to machine heads entering 
in the east one (resp. machine head and letter entering in 
the north and south ones).
\item on the west and east walls, the symbols are in $\mathcal{Q}$. They 
correspond to machine heads entering in the adjacent quarter.
 See an illustration on 
Figure~\ref{fig.loc.machine.block.gluing}.
\end{itemize}
\item \textbf{Transmission:} 

Along the rows and columns, 
the symbol is transmitted while not 
on computation positions.

\item \textbf{Computation positions rules:}

Consider some computation position. These rules depend on the 
orientation of the quarter in the cell. We describe them 
in the north east quarter. The rules in the other quarters 
are obtained by symmetry, 
respecting the orientation of time and space 
given on Figure~\ref{fig.time.space}. 

For such a position, 
the \textbf{inputs} include:
 
\begin{enumerate}
\item the symbols written 
on the south position,
\item  
the first symbol written on the west position 
(except in the leftmost column, where 
the input is the second symbol of the west position), 
\item and the second symbol 
on the east position (except 
when in the rightmost column, where 
the input is the unique symbol written on east position).
\end{enumerate} 

The \textbf{outputs}
include: 

\begin{enumerate}
\item the symbols written on the 
north position when not in 
the topmost row, 
\item the second symbol of the west position (when 
not in the leftmost column), 
\item and the first symbol on 
the east position (when not on the 
rightmost column).
\end{enumerate}

Moreover, on the row near the reticle, the inputs 
from inside the area are always the 
shadow state $q_s$. The input from the bottom is 
free. As a consequence the couple written on the 
the position is also free. 
This is also true for the elements of 
$\mathcal{Q}$ on the computation positions in the leftmost and rightmost columns 
and the triple of symbols 
written on the position near the nucleus.

See Figure~\ref{fig.schema.inputs.outputs.block.gluing} 
for an illustration.

On the first row, all the inputs 
are determined by the counter and 
by the above rule. Then each row is 
determined from the adjacent one 
on the bottom and the inputs on the sides. 
This is due to 
the following rules, which on 
each computation position determine 
the outputs from the inputs: 

\begin{enumerate}
\item \textbf{Collision between machine heads:} 
if there are at least 
two elements of $\mathcal{Q} \backslash \{q_s\}$ 
in the inputs, then the 
computation position is superimposed 
with $(a,q_e)$. The output on the top (when 
this exists)
is $(a,q_e)$, where $a$ is the letter input below.
The outputs on the sides are $q_s$.
When there is a unique symbol in 
$\mathcal{Q} \backslash \{q_s\}$ in the inputs, 
this symbol is called the machine head 
state (the symbol $q_s$ is not considered 
as representing a machine head).
\item \textbf{Standard rule:}
\begin{enumerate}
\item when the head input comes from a side, 
then the functional position is superimposed with 
$(a,q)$. It outputs the couple $(a,q)$ 
above, 
where $a$ is the letter input under, and $q$ the head input.
The other outputs are $q_s$.
See Figure~\ref{fig.standard.rule.0.block.gluing} for 
an illustration of this rule.

\item when the head input comes from under, the output is 
$\delta_1 (a,q)$ above when 
the $\delta_3 (a,q)$ is in $\{\rightarrow,\leftarrow\}$ and 
$(\delta_1(a,q),\delta_2 (a,q))$ when 
$\delta_3 (a,q) = \uparrow$. 
The head output is in the 
direction of $\delta_3 (a,q)$ when 
this output direction exists, and equal to 
$\delta_2 (a,q)$
when this 
direction is in $\{\rightarrow,\leftarrow\}$. The other output is $q_s$.
See Figure~\ref{fig.standard.rule.1.block.gluing} 
for an illustration. \bigskip

The computation positions rules in 
the other quarters are similar. These
rules in a purple quarter are obtained 
by reversing west and east, in the red one 
by reversing west and east and moreover 
north and south, and in the yellow one 
by reversing north and south. 
For instance 
the previous schemata are changed to the following 
one in the red quarter. 

This corresponds to changing the direction 
of time and space of evolution 
of the Turing machine, as abstracted on 
Figure~\ref{fig.time.space}. 

\end{enumerate}
\item \textbf{Collision with border:} 
When the output 
direction does not exist, the output 
is $(a,q_e)$ on the top. The output 
on the side is $q_s$.
The computation position is 
superimposed with $(a,q)$.  
\item \textbf{No machine head:} 
when all the inputs in $\mathcal{Q}$ are 
$q_s$, then the output above is in 
$\mathcal{A}$ and equal to $a$.
\end{enumerate} 

\end{itemize}

\noindent {\textbf{\textit{Global behavior:}} \bigskip

In each of the quarters of any cell is implemented 
a computing machine according to our model, with multiple 
machine heads on the initial tape and 
entering in each row. In the next section, we 
will impose that in the computation 
quarters there are no machine heads entering on the sides. 
We also impose that the tape is well initialized. 
This is done using signals.
As a consequence, in these quarters 
the computations 
are as intended. This means that a Turing 
machine writes successively the 
bits $s^{(n)}_k$ 
on the $p_k = 2^k$th column of its tape (in 
order to impose the value of the frequency bits). 
It enters in the error 
state $q_e$ when it detects an error - meaning that the 
corresponding frequency bit in the column just on the right 
is greater than the written bit.

When this is not the case, the computations 
are determined by the rules 
giving the outputs on computation positions 
from the inputs. When there is a collision 
of a machine head with the border 
it enters into state $q_e$ and when 
heads collide, they fusion into a unique 
head in state $q_e$.

\subsubsection{ \label{sec.empty.tape.block.gluing} Empty tape and sides signals}

This sublayer serves for the propagation 
of a signal which detects if the initial 
tape of a machine is well initialized, and if 
a machine head enters on a side. \bigskip

\noindent \textbf{\textit{Symbols:}} \bigskip

Elements of 
$\{\begin{tikzpicture}[scale=0.3]
\fill[Salmon] (0,0) rectangle (1,1);
\draw (0,0) rectangle (1,1);
\end{tikzpicture}, 
\begin{tikzpicture}[scale=0.3]
\fill[YellowGreen] (0,0) rectangle (1,1);
\draw (0,0) rectangle (1,1);
\end{tikzpicture}\}^2$, elements of $\{\begin{tikzpicture}[scale=0.3]
\fill[Salmon] (0,0) rectangle (1,1);
\draw (0,0) rectangle (1,1);
\end{tikzpicture}, 
\begin{tikzpicture}[scale=0.3]
\fill[YellowGreen] (0,0) rectangle (1,1);
\draw (0,0) rectangle (1,1);
\end{tikzpicture}\}$ and a blank symbol. \bigskip

\noindent \textbf{\textit{Local rules:}} 

\begin{itemize}
\item \textbf{Localization:} the non-blank symbols 
are superimposed on and only on the arms of the reticle, 
and the west and east walls. The east and west walls are colored with 
elements of $\{\begin{tikzpicture}[scale=0.3]
\fill[Salmon] (0,0) rectangle (1,1);
\draw (0,0) rectangle (1,1);
\end{tikzpicture}, 
\begin{tikzpicture}[scale=0.3]
\fill[YellowGreen] (0,0) rectangle (1,1);
\draw (0,0) rectangle (1,1);
\end{tikzpicture}\}$, and the reticle with element of $\{\begin{tikzpicture}[scale=0.3]
\fill[Salmon] (0,0) rectangle (1,1);
\draw (0,0) rectangle (1,1);
\end{tikzpicture}, 
\begin{tikzpicture}[scale=0.3]
\fill[YellowGreen] (0,0) rectangle (1,1);
\draw (0,0) rectangle (1,1);
\end{tikzpicture}\} ^2$.

\item \textbf{Triggering the signal:} 
the topmost and bottommost positions 
of the two walls are superimposed 
with \begin{tikzpicture}[scale=0.3]
\fill[YellowGreen] (0,0) rectangle (1,1);
\draw (0,0) rectangle (1,1);
\end{tikzpicture}. 

\item \textbf{Transmission rules:} 

\begin{enumerate}
\item \textbf{In the walls:}
\begin{itemize}
\item On the north (resp. south) part of one of the walls, 
the symbol \begin{tikzpicture}[scale=0.3]
\fill[YellowGreen] (0,0) rectangle (1,1);
\draw (0,0) rectangle (1,1);
\end{tikzpicture} propagates upwards while in the wall. It propagates 
downwards (resp. upwards) while not encountering 
a symbol in $\mathcal{Q} \backslash \{q_s\}$. When this is is the case, 
the color becomes \begin{tikzpicture}[scale=0.3]
\fill[Salmon] (0,0) rectangle (1,1);
\draw (0,0) rectangle (1,1);
\end{tikzpicture}.
\item The symbol \begin{tikzpicture}[scale=0.3]
\fill[Salmon] (0,0) rectangle (1,1);
\draw (0,0) rectangle (1,1);
\end{tikzpicture} propagates downwards (resp. upwards) in the north (resp. south) part 
of the wall, and upwards (resp. downwards) while not encountering 
a symbol in $\mathcal{Q} \backslash \{q_s\}$.
\item The center of the wall is colored with 
a couple of colors. The first one is equal to the color of 
the north position. The second one is equal to the 
color of the south position.
\end{itemize}
With words, a signal propagates through the north (resp. south) part 
of the wall. This signal is triggered in state \begin{tikzpicture}[scale=0.3]
\fill[YellowGreen] (0,0) rectangle (1,1);
\draw (0,0) rectangle (1,1);
\end{tikzpicture}. When it detects the first symbol in $\mathcal{Q} \backslash \{q_s\}$, it 
changes its state which becomes \begin{tikzpicture}[scale=0.3]
\fill[Salmon] (0,0) rectangle (1,1);
\draw (0,0) rectangle (1,1);
\end{tikzpicture}. This information is transmitted to the center of the wall.
\item \textbf{In the reticle:}
The rules for the reticle are similar, except that:  
\begin{itemize}
\item there are two signals for each arm, one for each adjacent quarter.
\item the propagation direction 
is to the east for the west arm, and to the west for the east arm. The 
case of vertical arms is similar as the case of walls.
\item when the 
signal starts in state \begin{tikzpicture}[scale=0.3]
\fill[YellowGreen] (0,0) rectangle (1,1);
\draw (0,0) rectangle (1,1);
\end{tikzpicture}: on the west arm (resp. east one) 
each signal detects the first symbol from 
left to right (resp right to left) 
different from $(\#,q_s)$ when not on the rightmost position (resp. leftmost one), 
and different from $(\#,q_0)$ when on this position. These symbols 
correspond to the quarter associated with this signal.
\item when it starts in state \begin{tikzpicture}[scale=0.3]
\fill[Salmon] (0,0) rectangle (1,1);
\draw (0,0) rectangle (1,1);
\end{tikzpicture}, the arm just 
transmits this information to the nucleus.
\item In the vertical arms, the signals detect the first symbol different 
from $q_s$, from south to north for the south arm, and from north to south 
for the north arm.
\end{itemize}
\end{enumerate}

\item \textbf{Computation quarters are initialized with empty tape and 
sides:} 
considering a nucleus $\vec{u}$, the symbol 
on position $\vec{u}+\vec{e}^1$ has first (resp. second) coordinate equal to 
 \begin{tikzpicture}[scale=0.3]
\fill[YellowGreen] (0,0) rectangle (1,1);
\draw (0,0) rectangle (1,1);
\end{tikzpicture} if the orange (resp. yellow) quarter is represented in the DNA.
\item the symbol on position $\vec{u}-\vec{e}^1$ has first (resp. second) coordinate equal to 
 \begin{tikzpicture}[scale=0.3]
\fill[YellowGreen] (0,0) rectangle (1,1);
\draw (0,0) rectangle (1,1);
\end{tikzpicture} if the purple (resp. red) quarter is represented in the DNA.
\item There are similar conditions for the 
symbols on positions $\vec{u} \pm \vec{e}^2$.
\end{itemize}

\noindent \textbf{\textit{Global behavior:}} \bigskip

These rules induce the propagation 
of a signal triggered in state 
\begin{tikzpicture}[scale=0.3]
\fill[YellowGreen] (0,0) rectangle (1,1);
\draw (0,0) rectangle (1,1);
\end{tikzpicture} 
which detects, for each of the quarter, if 
the sides and the tape are well initialized: if this is not 
the case, then the signal detects an error. It sends this information,
 which corresponds to state \begin{tikzpicture}[scale=0.3]
\fill[Salmon] (0,0) rectangle (1,1);
\draw (0,0) rectangle (1,1);
\end{tikzpicture},
to the nucleus through the reticle. 
We forbid this signal to come from a quarter which is represented in the DNA. 
As a consequence, the computation quarters are well initialized. 
The simulation quarters are left free.

\subsubsection{Error signals}

\begin{figure}[ht] 
\begin{center}
\begin{tikzpicture}[scale=0.25]
\fill[purple] (6,0) rectangle (0,2);
\fill[purple] (4,6) rectangle (6,0);
\fill[purple] (16,4) rectangle (4,6);
\fill[purple] (14,4) rectangle (16,16);
\fill[purple] (14,14) rectangle (20,16);
\fill[purple] (18,14) rectangle (20,20);
\fill[Salmon] (20,20) rectangle (14,22);
\fill[YellowGreen] (14,20) rectangle (0,22);
\draw[step=2] (0,20) grid (20,22);
\draw (0,0) rectangle (2,2);
\draw (4,0) rectangle (6,2);
\draw[-latex] (4,1) -- (2,1);
\draw (0,4) rectangle (2,6);
\draw (4,4) rectangle (6,6); 
\draw[-latex] (5,4) -- (5,2);
\draw[-latex] (1,4) -- (1,2);
\draw[-latex] (15,4) -- (15,2);
\draw[-latex] (19,4) -- (19,2);

\draw[-latex] (5,18) -- (5,16);
\draw[-latex] (1,18) -- (1,16);
\draw[-latex] (15,18) -- (15,16);
\draw[-latex] (19,18) -- (19,16);

\draw[-latex] (5,14) -- (5,6);
\draw[-latex] (1,14) -- (1,6);
\draw[-latex] (15,14) -- (15,6);
\draw[-latex] (19,14) -- (19,6);

\draw[-latex] (14,5) -- (6,5);
\draw[-latex] (18,15) -- (16,15);

\draw (14,0) rectangle (16,2);
\draw (18,0) rectangle (20,2);
\draw (14,4) rectangle (16,6);
\draw (18,4) rectangle (20,6);

\draw (0,14) rectangle (2,16);
\draw (4,14) rectangle (6,16);
\draw (0,18) rectangle (2,20);
\draw (4,18) rectangle (6,20); 

\draw (14,14) rectangle (16,16);
\draw (18,14) rectangle (20,16);
\draw (14,18) rectangle (16,20);
\draw (18,18) rectangle (20,20);
\fill[gray!40] (20,20) rectangle (22,22);
\draw (20,20) rectangle (22,22);

\fill[YellowGreen] (-2,0) rectangle (0,22);
\draw[step = 2] (-2,0) grid (0,22);

\fill[YellowGreen] (20,0) rectangle (22,6); 
\fill[Salmon] (20,6) rectangle (22,20);
\draw[step = 2] (20,0) grid (22,20);

\draw[dashed] (-1,18) -- (-1,21);
\draw[dashed,-latex] (-1,21) -- (2,21);

\draw[dashed,-latex] (21,8) -- (21,13);

\filldraw[color=purple] (20+0.5,20+1.5) circle (5pt);
\filldraw[color=orange] (20+1.5,20+1.5) circle (5pt);
\end{tikzpicture}\end{center}
\caption{\label{fig.machine.trajectory} 
Schema of a possible trajectory of a machine
in a simulation quarter. 
The empty tape and sides signals represented correspond to the 
south west quarter. The dashed arrows represent the propagation 
direction of these signals.}
\end{figure}

\noindent \textbf{\textit{Symbols:}} \bigskip

This sub-layer has the following symbols: 
\begin{tikzpicture}
\fill[purple] (0,0) rectangle (0.3,0.3) ;
\draw (0,0)--(0.3,0) ;
\draw (0,0)--(0,0.3) ;
\draw (0,0.3)--(0.3,0.3) ; 
\draw (0.3,0)--(0.3,0.3) ;
\end{tikzpicture} and 
\begin{tikzpicture}
\draw (0,0)--(0.3,0) ;
\draw (0,0)--(0,0.3) ;
\draw (0,0.3)--(0.3,0.3) ; 
\draw (0.3,0)--(0.3,0.3) ;
\end{tikzpicture}. \bigskip

\noindent \textbf{\textit{Local rules:}} 

\begin{itemize}
\item \textbf{Localization:} the 
symbol \begin{tikzpicture} \fill[purple] (0,0) rectangle (0.3,0.3) ;
\draw (0,0)--(0.3,0) ;
\draw (0,0)--(0,0.3) ;
\draw (0,0.3)--(0.3,0.3) ; 
\draw (0.3,0)--(0.3,0.3) ;
\end{tikzpicture} can be superimposed only on positions having in its machine symbol 
a part in $\mathcal{Q} \backslash \{q_s\}$.
\item \textbf{Transmission:} for two adjacent 
vertical transfer or horizontal transfer positions, the symbols in this sublayer are 
the same. 
\item when on a computation position $\vec{u}$, if two of the positions $\vec{u} \pm \vec{e}^1$ 
and $\vec{u} \pm \vec{e}^2$ have a part in $\mathcal{Q} \backslash \{q_s\}$, 
these two positions have the same symbol in this layer.
\item \textbf{Triggering 
the error signal when on halting state:} 
a position with a symbol having a part equal to $q_h$
is superimposed with \begin{tikzpicture} 
\fill[purple] (0,0) rectangle (0.3,0.3) ;
\draw (0,0)--(0.3,0) ;
\draw (0,0)--(0,0.3) ;
\draw (0,0.3)--(0.3,0.3) ; 
\draw (0.3,0)--(0.3,0.3) ;
\end{tikzpicture}.
\item \textbf{Machine heads 
can not enter in error state:} 
if $\vec{u}$ is a nucleus position and the red quarter (resp. yellow, orange, purple ones) 
is represented in the DNA, then the position $\vec{u}-\vec{e}^2-\vec{e}^1$ (resp. 
$\vec{u}-\vec{e}^2+\vec{e}^1$, $\vec{u}+\vec{e}^2+\vec{e}^1$, 
$\vec{u}+\vec{e}^2-\vec{e}^1$) can not be superimposed 
with \begin{tikzpicture} 
\fill[purple] (0,0) rectangle (0.3,0.3) ;
\draw (0,0)--(0.3,0) ;
\draw (0,0)--(0,0.3) ;
\draw (0,0.3)--(0.3,0.3) ; 
\draw (0.3,0)--(0.3,0.3) ;
\end{tikzpicture}
\end{itemize}
\bigskip

\noindent \textbf{\textit{Global behavior:}} \bigskip

When a machine enters in error state, 
then it sends through its trajectory an 
error signal (represented by the symbol 
\begin{tikzpicture} \fill[purple] (0,0) rectangle (0.3,0.3) ;
\draw (0,0)--(0.3,0) ;
\draw (0,0)--(0,0.3) ;
\draw (0,0.3)--(0.3,0.3) ; 
\draw (0.3,0)--(0.3,0.3) ;
\end{tikzpicture}). See Figure~\ref{fig.machine.trajectory} for 
a schematic example of a possible trajectory of a machine. 
In a computation quarter, since the tape is well initialized and 
the error signal is forbidden, this means that the 
machines in such a quarter effectively forbid
the frequency bits $f_n$ to be different from $s_n$.

\section{ \label{sec.choice.parameters} Entropy 
formula for the entropy of \texorpdfstring{$X_{s,N}$}{XsN} and choices 
of the parameters.}

In this section, we prove a formula for the entropy of the 
subshifts $X_{s,N}$. Using this 
formula, we describe how to choose $N$ so that 
the entropy generated by simulation is small enough. 
Then $s$ is chosen in 
order to complete this entropy 
so that the entropy of $X_{s,N}$ is equal to $h$.

The formula relies on the density of 
the observable structures in the subshift $X_{\texttt{R}}$.

\subsection{Density properties of the 
subshift \texorpdfstring{$X_{\texttt{R}}$}{XtextttR}}

In this section we define the density of a subset of $\Z^2$ and compute 
the density of some subsets related to the 
subshift $X_{\texttt{R}}$. 

\begin{definition}
Let $\Lambda$ be a subset of $\Z^2$. Denote for $n \ge 1$: 
\[\mu_n (\Lambda) = \left|\Lambda \cap \llbracket -n,n \rrbracket ^2\right|.\]
The upper and lower densities of $\Lambda$ in $\Z^2$ are defined as respectively  
\[\overline{\mu} (\Lambda) = 
\limsup_n \frac{\mu_n (\Lambda)}{(2n+1)^2}\]
and 
\[\underline{\mu} (\Lambda) = 
\liminf_n \frac{\mu_n (\Lambda)}{(2n+1)^2}.\]
When the limit exists, it is called the density of $\Lambda$, and denoted $\mu(\Lambda)$.
\end{definition}

\begin{lemma} \label{lem.properties.density}
\begin{enumerate}
\item Let $\Lambda$ be a subset of $\Z^2$ having a density. 
Then $\Lambda^c = \Z^2 \backslash \Lambda$ 
has a density and 
\[\mu(\Lambda^c) = 1 - \mu(\Lambda).\]
\item Let $\left(\Lambda_k\right)_{k=1..m}$ 
be a finite sequence of subsets of $\Z^2$ 
such that for all $j \neq k$, $\Lambda_j \cap \Lambda_k = \emptyset$. Then 
the set $\bigcup_{k=1}^m \Lambda_k$ has density equal to 
\[\mu \left( \bigcup_{k=1}^m \Lambda_k \right) = \sum_{k=1}^m \mu(\Lambda_k).\]
\end{enumerate}
\end{lemma}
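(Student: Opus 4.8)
The plan is to reduce both statements to an identity at the finite level --- namely between the counting functions $\mu_n$ --- and then pass to the limit, using that the relevant limits exist by hypothesis and that only finitely many sets are involved.

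For the first item, I would start from the observation that $\Lambda$ and $\Lambda^c$ partition $\Z^2$, so for every $n \ge 1$ one has $\mu_n(\Lambda) + \mu_n(\Lambda^c) = |\llbracket -n,n \rrbracket^2| = (2n+1)^2$, and hence $\frac{\mu_n(\Lambda^c)}{(2n+1)^2} = 1 - \frac{\mu_n(\Lambda)}{(2n+1)^2}$ for all $n$. Since $\Lambda$ has a density, the right-hand side converges to $1 - \mu(\Lambda)$ as $n \to \infty$; therefore the left-hand side converges as well, which by definition means that $\Lambda^c$ has a density and $\mu(\Lambda^c) = 1 - \mu(\Lambda)$.

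For the second item, the key point is that pairwise disjointness is inherited by intersecting with a fixed box: the sets $\Lambda_k \cap \llbracket -n,n \rrbracket^2$ are pairwise disjoint, and their union is $\left(\bigcup_{k=1}^m \Lambda_k\right) \cap \llbracket -n,n \rrbracket^2$. Counting cardinalities gives $\mu_n\!\left(\bigcup_{k=1}^m \Lambda_k\right) = \sum_{k=1}^m \mu_n(\Lambda_k)$ for every $n$. Dividing by $(2n+1)^2$ and letting $n \to \infty$, each of the finitely many summands $\frac{\mu_n(\Lambda_k)}{(2n+1)^2}$ converges to $\mu(\Lambda_k)$ (here I use the standing assumption that each $\Lambda_k$ has a density), so the finite sum converges to $\sum_{k=1}^m \mu(\Lambda_k)$, and hence so does the left-hand side. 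This shows that $\bigcup_{k=1}^m \Lambda_k$ has a density equal to $\sum_{k=1}^m \mu(\Lambda_k)$.

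There is essentially no obstacle here; the only subtlety worth flagging is that both statements genuinely use the existence hypotheses --- without them one only obtains the corresponding relations between upper and lower densities, such as $\overline{\mu}(\Lambda^c) = 1 - \underline{\mu}(\Lambda)$ and $\overline{\mu}\!\left(\bigcup_{k=1}^m \Lambda_k\right) \le \sum_{k=1}^m \overline{\mu}(\Lambda_k)$, which are weaker than what is claimed. The finiteness of the index set in the second item is also used crucially, since an infinite disjoint union of density-zero sets need not have density zero, or indeed any density at all.
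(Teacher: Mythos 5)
Your proof is correct and follows essentially the same route as the paper: establish the exact identity at the level of the counting functions $\mu_n$ (complementation within the box for the first item, disjoint counting for the second), then divide by $(2n+1)^2$ and pass to the limit using the existence hypotheses. Your remark that the statement implicitly requires each $\Lambda_k$ to have a density is a fair and accurate observation, but does not change the argument.
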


\begin{proof}
\begin{enumerate}
\item For all $n$, the number of elements of $\Lambda^c$ in $\llbracket -n,n \rrbracket ^2$ 
is $(2n+1)^2 - |(\Lambda \cap \llbracket -n,n \rrbracket ^2|$. 
Hence we have that
\[\mu_n (\Lambda^c) = 1 - \mu_n (\Lambda).\]
This means that $\Lambda^c$ has a density equal to $1-\mu(\Lambda)$.
\item For all $n$, the number of elements of $\bigcup_{k=1}^m \Lambda_k$ in $\llbracket -n,n \rrbracket ^2$ 
is the sum of the numbers of elements in $\Lambda_k$, $k=1...m$. An immediate consequence 
is that \[\mu_n \left( \bigcup_{k=1}^m \Lambda_k \right) = \sum_{k=1}^m \mu_n(\Lambda_k).\]
Hence the density of the set $\bigcup_{k=1}^m \Lambda_k $ is indeed the sum of 
the densities of the sets $\Lambda_k$, $k=1..m$.
\end{enumerate}
\end{proof}

Let $x$ be some configuration in the subshift 
$X_{\texttt{R}}$. For all $k \ge 0$, 
$\Lambda_k (x)$ denotes
the set of positions that are included in an order $k$
cell, not included in any smaller cell, 
and on which a blue corner is superimposed.
Moreover, denote the set of 
positions on which is not superimposed a blue corner
$\Lambda_{*} (x)$.

\begin{lemma} \label{lem.density.rob}
For all $x$ in the subshift $X_{\texttt{R}}$, 
we have the following: 
\begin{enumerate}
\item the density $\mu(\Lambda_k (x))$ 
exists for all $k \ge 0$ and
\[\mu(\Lambda_k (x)) = \frac{3^{k}}{4^{k+2}}\]
and the convergence 
of the functions
$x \mapsto \mu_n(\Lambda_k (x))$ is uniform.
\item the set $\Lambda_{*}(x)$ has a density equal to 
\[\mu(\Lambda_{*} (x)) = \frac{3}{4}\]
and the convergence of the functions 
$x \mapsto \mu_n (\Lambda_{*} (x))$ is uniform.
\item for all $m \ge 0$, the set $\left( \bigcup_{k=0}^m \Lambda_k (x) \right)^c$ has a density equal to 
\[\mu \left(
\left( \bigcup_{k=0}^m \Lambda_k (x)\right)^c\right) = \frac{1}{4} \frac{3^{m+1}}{4^{m+1}}.\]
\end{enumerate}
\end{lemma}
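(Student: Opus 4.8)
The plan is to treat the three items in increasing order of bookkeeping, exploiting in each case that both the blue corners and the cell hierarchy repeat periodically inside every infinite supertile, so that all the defects are confined to $O(n)$ positions in a box $\llbracket -n,n\rrbracket^2$.

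I would prove (2) first, since it anchors the rest. Rule (2) of the Robinson layer forces the blue corners inside any infinite supertile to be exactly a translate of the sublattice $2\Z\times 2\Z$ (one blue symbol per $2\times 2$ window, propagated by steps of $\pm(2,0),\pm(0,2)$). Hence the set $B(x)=\bigcup_{k\ge 0}\Lambda_k(x)$ of all blue corners meets $\llbracket -n,n\rrbracket^2$ in $(2n+1)^2/4 + O(n)$ positions, the $O(n)$ term being bounded independently of $x$. So $B(x)$ has density $1/4$, its complement $\Lambda_*(x)$ has density $1-1/4=3/4$ by Lemma~\ref{lem.properties.density}(1), and since the boundary error is uniform in $x$ the convergence $\mu_n(\Lambda_*)\to 3/4$ is uniform.

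For (1) I would first secure existence and uniform convergence, then compute the value. By Lemma~\ref{lem.repetition.supertiles} the order $k$ supertiles, hence the order $k$ cells and the level-$k$ blue corners $\Lambda_k(x)$, repeat with period $4^{k+2}$ at the cell scale inside each infinite supertile. A configuration of $X_{\texttt{R}}$ carries at most one separating line or one separating cross (types (ii) and (iii)); a box $\llbracket -n,n\rrbracket^2$ meets these seams in $O(n)$ positions, and on each side of them $\Lambda_k(x)$ is $4^{k+2}$-periodic, so $\mu_n(\Lambda_k(x))$ equals the per-period frequency up to an error $O(C_k/n)$ with $C_k$ uniform in $x$; this gives existence and uniform convergence at once. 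To identify the value I would set up a self-similar recursion: passing from level $k$ to level $k+1$ dilates the fundamental domain by a factor $4$ in each direction (area factor $16$), while the containment counts recalled before the lemma ($4\cdot 12^{i-1}$ order $m-i$ cells per order $m$ cell) show that the enlarged domain carries $12$ times as many level-$(k+1)$ blue corners as the old one carried level-$k$ ones. Thus $\mu(\Lambda_{k+1})=\frac{12}{16}\mu(\Lambda_k)=\frac34\mu(\Lambda_k)$, and with the base count $\mu(\Lambda_0)=1/16$ on a $16\times 16$ domain this yields $\mu(\Lambda_k(x))=\frac{3^k}{4^{k+2}}$.

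For (3) the decisive observation is that the target set is the complement of the blue corners of order at most $m$, so it is the common complement of the \emph{finitely many} pairwise disjoint sets $\Lambda_*(x),\Lambda_0(x),\dots,\Lambda_m(x)$ — disjoint because $\Lambda_*$ carries no blue corner while distinct $\Lambda_k$ sit at distinct levels of the hierarchy, and together with $\bigcup_{k>m}\Lambda_k$ they partition $\Z^2$. Reading the complement, as throughout this lemma, relative to the blue corners $B(x)$ (the positions carrying the random bits used later, so that the non-blue part $\Lambda_*$ is excluded), the set in question is precisely $\Z^2\setminus\bigl(\Lambda_*(x)\cup\bigcup_{k=0}^m\Lambda_k(x)\bigr)=\bigcup_{k>m}\Lambda_k(x)$. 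I would exploit this finite presentation, which sidesteps any convergence of an infinite family: Lemma~\ref{lem.properties.density}(2) gives the density of the finite disjoint union, and Lemma~\ref{lem.properties.density}(1) its complement, so that
\[
\mu\!\left(\Big(\bigcup_{k=0}^m \Lambda_k(x)\Big)^{c}\right)
= 1 - \mu(\Lambda_*(x)) - \sum_{k=0}^m \mu(\Lambda_k(x))
= 1 - \frac34 - \frac{1}{16}\sum_{k=0}^m \Big(\frac34\Big)^k ,
\]
and summing the finite geometric series collapses the right-hand side to $\frac14\bigl(\frac34\bigr)^{m+1}=\frac14\,\frac{3^{m+1}}{4^{m+1}}$, the stated value. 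The main obstacle is entirely in part (1), and specifically in the \emph{uniformity} over all of $X_{\texttt{R}}$: one must control the seams of type (ii) and (iii) configurations — where the sub-supertiles on the two sides need not be aligned — by a bound on the defect count that is uniform in $x$, and pin down the base count and the recursion factor from the Robinson hierarchy; once (1) is in place, the density algebra in (2) and (3) is routine.
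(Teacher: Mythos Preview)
Your argument is correct and lands on the same numbers, but it is organized differently from the paper and uses a genuinely different device to pin down the value in part (1).

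The paper handles (1) and (2) by a single covering argument: it sandwiches $\llbracket -n,n\rrbracket^2$ between $(\lfloor\frac{2n+1}{2\cdot 4^{k+1}}\rfloor-1)^2$ and $(\lceil\frac{2n+1}{2\cdot 4^{k+1}}\rceil+1)^2$ translates of a $2\cdot 4^{k+1}$-block centred on an order~$k$ cell, uses the exact count $4\cdot 12^k$ of proper blue corners in each such block, and reads off existence, value $\frac{12^k}{16^{k+1}}=\frac{3^k}{4^{k+2}}$, and uniformity in one stroke; the bounds depend only on $n$ and $k$, so the seam analysis you carry out is subsumed in the covering. You instead separate the two tasks: existence and uniformity come from periodicity plus an $O(n)$ seam estimate, and the value comes from the self-similar recursion $\mu(\Lambda_{k+1})=\tfrac{12}{16}\mu(\Lambda_k)$ anchored at $\mu(\Lambda_0)=1/16$. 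Both routes are sound; the paper's is shorter because the block count already encodes the recursion, while yours makes the role of the type~(ii)/(iii) seams more explicit. For (3) you and the paper do the same thing: use finite additivity on the disjoint family $\Lambda_*,\Lambda_0,\dots,\Lambda_m$ and complement, with the complement read relative to the blue-corner set (density $1/4$), which you correctly flagged.
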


\begin{proof}
\begin{enumerate}
\item From the form of the subshift $X_{\texttt{R}}$, for 
any configuration $x$ and 
for all $n$ the set $\llbracket -n,n \rrbracket ^2$ 
can be covered with a number smaller than 
$(\lceil \frac{2n+1}{2.4^{k+1}} \rceil+1) ^2$ 
of $2.4^{k+1}$-blocks centered on 
an order $k$ cell in the configuration $x$. 
In each of these blocks there are exactly 
$4.12^{k}$ positions in $\Lambda_k$ (see 
the properties listed in 
Section~\ref{sec.substructures}).
Moreover, such a pattern contains 
a number of 
translates of $\llbracket 0, 2.4^{k+1} \rrbracket ^2$
which is at least 
$(\lfloor \frac{2n+1}{2.4^{k+1}+1} \rfloor-1) ^2$. 
As a consequence, for all $n,k$,
\[ 4.12^{k}. \frac{(\lfloor \frac{2n+1}{2.4^{k+1}} \rfloor-1) ^2}{(2n+1)^2} 
\le \frac{\mu_n (\Lambda_k)}{(2n+1)^2} \le 4.12^{k} \frac{
(\lceil \frac{2n+1}{2.4^{k+1}} \rceil+1)^2}{(2n+1)^2}.\]
This implies 
that 
\[\frac{\mu_n (\Lambda_k (x))}{
(2n+1)^2}
\rightarrow \frac{12^k}{16^{k+1}} 
= \frac{3^k}{4^{k+2}}.\]
\item Moreover, the set $\llbracket -n,n \rrbracket ^2$ 
is covered by at most $(\lceil\frac{2n+1}{2} \rceil +1)^2$
blocks on $\llbracket 0, 1 \rrbracket ^2$ such that 
the symbol on position $(0,0)$ is a blue corner. 
This set contains at least a number 
$(\lfloor \frac{2n+1}{2} \rfloor -1)^2$ of translates 
of $\llbracket 0, 1\rrbracket ^2$.
In each of these squares the number of positions 
in $\Lambda_{*} (x)$ is $3$.
This implies that 
\[ 3 \frac{(\lfloor \frac{2n+1}{2} \rfloor -1)^2}{(2n+1)^2} 
\le \frac{\mu_n (\Lambda_{*} (x))}{n} \le 3 
\frac{(\lfloor \frac{2n+1}{2} \rfloor -1)^2}{(2n+1)^2},\]
and we deduce that 
\[\mu(\Lambda_{*} (x)) = \frac{3}{4}.\]
\item From 
the second point of Lemma~\ref{lem.properties.density},
\[ \mu \left(
\left( \bigcup_{k=0}^m \Lambda_k (x)\right)^c\right) + \mu \left(\left(\bigcup_{k=0}^m \Lambda_k (x)\right)^c\right) + \mu \left( \Lambda_{*} 
(x) \right) = 1.\]
As a consequence, using the two first 
points in the statement of the lemma and the 
second point of Lemma~\ref{lem.properties.density},
\[\mu \left(
\left( \bigcup_{k=0}^m \Lambda_k (x)\right)^c\right) = \frac{1}{4} - \sum_{k=0}^m 
\frac{3^k}{4^{k+2}} = 
\sum_{k=0}^{+\infty} \frac{3^k}{4^{k+2}} 
- \sum_{k=0}^m 
\frac{3^k}{4^{k+2}} = 
\sum_{k=m+1}^{+\infty} 
\frac{3^k}{4^{k+2}} = \frac{3^{m+1}}{4^{m+2}}.\]
\end{enumerate}
\end{proof}

\subsection{A formula for the entropy 
depending on the parameters}

In this section we prove a formula for the entropy depending on $s$ and $N$.

\begin{definition} Let $(a_n)_n$ be a sequence of 
non-negative numbers. The series $\sum a_n$ converges at a computable rate when there is 
a computable function $n : \N \rightarrow \N$ (the rate) such that for all $t \in \N$, 
$$\left| \sum_{n \ge n(t)} a_n \right| \le 2^{-t}$$
\end{definition}

\begin{remark} \label{remark.comp} Let $(a_n)_n$ and $(b_n)_n$ be two sequence of non-negative 
numbers such 
that for all $n$, $a_n \le b_n$. If the series $\sum b_n$ converges at a computable rate, 
then the series $\sum a_n$ also converges at computable rate.
\end{remark}

\begin{remark}
Let $(a_n)_n$ be some sequence of real numbers. 
If the series $\sum a_n$ converges at computable 
rate, then $\sum_{n=0}^{+\infty}$ is 
a computable number. 
\end{remark}

\begin{lemma} There 
exists a sequence 
$(\kappa_k)_{k \ge 0}$ of 
non-negative real numbers 
such that the series $\sum_k \kappa_k$ 
converges at computable rate, and 
the entropy of the subshift $X_{s,N}$ is 
\[\boxed{
h(X_{s,N}) = \frac{\lfloor 4h \rfloor}{4}+ 
\sum_{k=0}^N \frac{3^{k}}{4^{k+2}} s_k 
+ \frac{1}{2} \sum_{k=N+1}^{+\infty} \frac{3^{k}}{4^{k+2}} s_k + \sum_{k=N+1}^{+\infty} \kappa_k}.\] 
\end{lemma}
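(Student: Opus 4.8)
The plan is to decompose the language of $X_{s,N}$ layer by layer and count the freedom at each layer in terms of the density results of Lemma~\ref{lem.density.rob}. First I would recall that, by the structure of the construction, a configuration of $X_{s,N}$ is determined by the structure layer (which is rigid up to the choice of the infinite supertile decomposition — a zero-entropy contribution, since the number of $n$-blocks of $X_{\texttt{R}}$ grows subexponentially in $n^2$), together with: the random bits on the positions not carrying a blue corner (set $\Lambda_{*}$), the random bits on blue-corner positions whose governing frequency bit is $1$, the free symbols in the simulation quarters, and the free choices in the error-signal and machine sublayers of simulation quarters. The key point is that all the \emph{non}-simulation freedom is controlled by the machines: a blue corner of an order $k$ cell may carry a free bit only if the frequency bit of its computation quarter is $1$, and by the machine rules this frequency bit is forced to equal $s_k$ (for $k\le N$, all four quarters; for $k>N$, only the two computation quarters, i.e. half of the order-$k$ blue corners). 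Hence I would write $N_n(X_{s,N})$ as a product over these independent sources and take $\log_2$ divided by $n^2$.

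The second step is the explicit bookkeeping. The positions in $\Lambda_{*}$ contribute $\mu(\Lambda_{*})=3/4$ bits per unit area, but the basis-layer local rules (the cases $i=\lfloor 4h\rfloor\in\{0,1,2,3\}$) cut this down: the allowed fraction of $1$'s among these positions is $i/3$, giving a contribution of $\tfrac{i}{3}\cdot\tfrac34=\tfrac{\lfloor 4h\rfloor}{4}$ after the standard entropy computation for i.i.d.\ bits with a density constraint (this is where the $\lfloor 4h\rfloor/4$ term comes from). For each $k\le N$, the blue corners of order $k$ cells have density $\mu(\Lambda_k)=3^k/4^{k+2}$, and they carry a free bit exactly when $s_k=1$, contributing $\tfrac{3^k}{4^{k+2}}s_k$. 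For $k>N$, only the two computation quarters' blue corners are regulated by $s_k$ — that is half of $\Lambda_k$ — contributing $\tfrac12\tfrac{3^k}{4^{k+2}}s_k$; the blue corners in the two simulation quarters, together with all the free machine/error-signal/empty-tape symbols inside simulation quarters, contribute a further nonnegative amount which I would \emph{define} to be $\kappa_k$ (the per-area entropy attributable to simulation data localized to order $k$ cells, summed appropriately). One then checks $\sum_k \kappa_k$ converges at a computable rate: $\kappa_k$ is bounded by $(\text{const})\cdot\mu(\Lambda_k)+(\text{const})\cdot\mu\big((\bigcup_{j\le k}\Lambda_j)^c\big)$-type quantities, i.e.\ by a geometric sequence in $(3/4)^k$, so Remark~\ref{remark.comp} applies and gives computable-rate convergence; summing everything yields the boxed formula.

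The main obstacle is the careful justification that these sources of freedom really are \emph{independent} and \emph{exhaustive} — i.e.\ that no constraint couples the random bits in one cell to those in another, and that the machine dynamics, despite the possibility of degenerate behaviors in infinite cells, does not introduce hidden global constraints that would lower the count. This is exactly what the error-signal mechanism and the simulation quarters were designed to handle: an ill-behaved machine run in a computation quarter is forbidden outright (the empty-tape/sides signals plus the prohibition of the error signal coexisting with the DNA representation), while every degenerate behavior \emph{is} realized somewhere (in the simulation quarters), so that a locally admissible block can always be completed to a configuration. I would therefore phrase the counting as matching upper and lower bounds: the upper bound on $N_n$ from the product decomposition above, and a matching lower bound by exhibiting, for any prescribed data, a genuine configuration — using the net-gluing/completion properties (Lemma~\ref{prop.complete.rob} and the realizability of degenerate behaviors) to fill in the unconstrained regions. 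Passing to $n\to\infty$ with the uniform convergence of the density functions $x\mapsto\mu_n(\Lambda_k(x))$ and $x\mapsto\mu_n(\Lambda_{*}(x))$ from Lemma~\ref{lem.density.rob} then gives the stated value of $h(X_{s,N})$.
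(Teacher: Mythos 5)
Your plan is essentially the paper's own proof: the paper likewise counts the globally admissible data attached to each order-$k$ cell (DNA symbol, simulation-quarter tapes and entering heads, frequency-bit-controlled random bits, the undetermined $\texttt{in}/\texttt{out}$ symbols and error-signal symbols), uses the periodic repetition and density of order-$k$ cells (Lemma~\ref{lem.density.rob}) to get matching lower and upper bounds on $N_{2n+1}(X_{s,N})$ — the structure layer entering only as a subexponential factor and the positions outside order $\le m$ cells contributing a term that vanishes as $m\to\infty$ — and bundles the simulation contributions into $\kappa_k$ with a geometric bound (in the paper, via $\kappa^{*}_k\le 4^{6(4^{k+1}+1)}$) giving computable-rate convergence. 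The only cosmetic difference is that the paper writes the per-cell counts explicitly (e.g. the factor $(2^{12^k}+1)^2$ for simulation-quarter random bits and $2^{\lfloor 4h\rfloor}$ per $2\times 2$ square, which is the precise form of your $\lfloor 4h\rfloor/4$ term), whereas you leave $\kappa_k$ implicitly defined.
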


\begin{proof}

Let us prove that 

\[\begin{array}{ccl}
h (X_{s,N}) & = & \sum_{k=N+1}^{+\infty}  \left( \frac{1}{32.16^k} + \frac{1}{32.8^{k}} 
\log_2 (|\mathcal{A}|)+ \frac{3}{32.8^k} \log_2 (|\mathcal{Q}|) + \frac{1}{8.4^k} + \frac{\log_2 (\kappa^{*}_k)}{4.16^{k+1}}\right)\\
& & 
+ \frac{1}{2} \sum_{k=N+1}^{+\infty} \frac{3^{k}}{4^{k+2}} (1+s_k)\\
& & + \sum_{k=0}^N \frac{3^{k}}{4^{k+2}} s_k  \\
& &
+  \frac{\lfloor 4h \rfloor}{4}
\end{array},\]
where $(\kappa^{*}_k)_k$ is a computable 
sequence of integers.

\textbf{Number of pattern on the proper blue corners positions in a cell:} \bigskip

For all $k>N$, the number of globally admissible patterns on 
the set of proper positions 
of an order $k$ cell is equal to
\[4.|\mathcal{A}|^{2.2^{k}} |\mathcal{Q}|^{6.2^{k}} 
. (2^{12^{k}}+1)^2 . 2^{2.12^k s_k} . 2^{2.4.(4^k-1)}. \kappa'(k).\]
Indeed: 

\begin{enumerate}
\item The factor $4$ corresponds to the number 
of possibilities for the DNA symbol on the 
nucleus of this cell [\textit{See Section~\ref{sec.cells.coding}}].
\item The 
factor $|\mathcal{A}|^{2.2^{k}} |\mathcal{Q}|^{6.2^{k}}$ 
corresponds to the number of possibilities for 
filling the initial tapes of the 
two simulation quarters and the set of states 
of the machine heads 
entering on the two sides. Let us recall that the number of columns (resp. lines)
in of quarter, that do not intersect a smaller cell, 
is equal to $2^{k}$ [\textit{See 
Section~\ref{sec.machine}}]. 
\item The factor 
$(2^{12^{k}}+1)^2$ corresponds to the 
possibilities for the random bits on 
blue corner positions in these two quarters. 
Let us recall 
that the number of such positions in a quarter 
is equal to $12^{k}$. Thus $2^{12^{k}}$ 
is the number of possibilities for the random bits 
when the frequency bit is 
$1$, and $1$ is the number of possibilities 
when the frequency bit is $0$. [\textit{
See Section~\ref{basis} and Section~\ref{sec.machine}}].
\item The factor
$2^{2.12^k s_k}$ corresponds to the possibilities 
for random bits in the computation quarters, since 
the frequency bit is determined to be $s_k$ [\textit{
See Section~\ref{basis} and Section~\ref{sec.machine}}].
\item The last factor $2^{2.4.(4^k-1)}$
corresponds to the number of possibilities for the undetermined
$\texttt{in},\texttt{out}$ symbols in the two simulation 
quarters. Let us recall that $4^k-1$ is the number of lines (resp. columns) intersecting 
a quarter of an order $k$ cell (since the 
number of non determined symbols 
correspond the the number 
of possible contacts 
a segment can have with the reticle 
or the walls of the cell). [\textit{See Section~\ref{sec.computation.areas}}].
\item The factor 
$4$ is the number of sides of a quarter 
and $2$ is the number of simulation quarters.
\item $\kappa^{*}_k$ denotes the number of possibilities for 
the set of symbols in the error signals sublayer. This number 
is not simple to express, since it depends on the states of machine heads on the sides 
of the area after computation. However, for this reason it can be computed.
We have $\kappa^{*}_k \le 4^{6.(4^{k+1}+1)}$ for the reason that 
the possible sets of error symbols correspond to the choices of at most four 
symbols on every position of the walls and the reticle. [\textit{See Section~\ref{sec.machine}}].
\end{enumerate}

When $k \le N$, this number is 
\[2^{4.12^k s_k},\]
which corresponds to the number of possible sets of random bits. \bigskip

\textbf{Upper and lower bounds:} \bigskip

Let $m > N$. We shall give an upper bound and a 
lower bound on the number of $(2n+1)$-blocks in the language of $X_{s,N}$, for all $n$.
This depends on the integer $m$.

The \textbf{lower bound} is obtained as follows: 

\begin{enumerate}
\item we give a lower bound on the number of blue corners in 
an order $k \le m$ cells,
included in some translate of $\llbracket -n, n \rrbracket ^2$.
\item we give a lower bound on the number of non-blue corners 
in this set.
\item taking the product of the possible 
patterns over the union of these cells, we get a lower bound.
\end{enumerate}

Consider some configuration $x$. The set 
$\llbracket -n,n \rrbracket ^2$ contains 
at least $\left( \lfloor \frac{2n+1}{2.4^{m+1}} \rfloor - 1\right)^2$ 
translates of $\llbracket 1, 2.4^{m+1} \rrbracket^2$ centered on an order $m$ cell. 
Moreover, there are at least  a number $\left(\lfloor \frac{2n+1}{2} \rfloor - 1\right)^2$ 
of translates of $\llbracket 0, 1 \rrbracket ^2$ such that 
the $(0,0)$ symbol is a blue corner. On each of these squares, the number of 
possible patterns on the set $\llbracket 0, 1 \rrbracket ^2 \backslash (0,0)$
is $2^{\lfloor 4h \rfloor}$.

\[\begin{array}{ccl}
N_{2n+1} (X_{s,N}) & \ge & 
\prod_{k=N+1}^m \left( 
4.|\mathcal{A}|^{2.2^{k}} |\mathcal{Q}|^{6.2^{k}} 
. (2^{12^{k}}+1)^2 . 2^{2.12^k s_k} . 2^{2.4.(4^k-1)} . \kappa^{*}_k
\right)^{\left( \lfloor \frac{2n+1}{2.4^{k+1}} \rfloor - 1\right)^2} \\
& & .\prod_{k=1}^N \left( 2^{ 
\left( \lfloor \frac{2n+1}{2.4^{k+1}} \rfloor - 1\right)^2. 4. 12^k s_k } \right)  . 2^{\left( \lfloor \frac{2n+1}{2} \rfloor - 1\right)^2 \lfloor 4h \rfloor }
\end{array}.\]

As a consequence, 

\[\begin{array}{ccl}
\frac{\log_2 (N_{2n+1} (X_{s,N}))}{(2n+1)^2} & \ge & 
\frac{1}{(2n+1)^2} \sum_{k=N+1}^m  
\left(\lfloor \frac{2n+1}{2.4^{k+1}} \rfloor - 1\right)^2 
( 2 + 2.2^{k}.\log_2 (|\mathcal{A}|) + 6.2^{k}.\log_2 (|\mathcal{Q}|) + 
8.(4^k-1) \\
& & + \log_2 (\kappa^{*}_k))
+ \frac{1}{(2n+1)^2} \sum_{k=N+1}^m  
\left(\lfloor \frac{2n+1}{2.4^{k+1}} \rfloor - 1\right)^2 
\left( 2. 12^k (1+s_k)\right) \\
& & + \frac{1}{(2n+1)^2} \sum_{k=1}^N 
\left( \lfloor \frac{2n+1}{2.4^{k+1}} \rfloor - 1\right)^2. 4. 12^k s_k  \\
& &
+  \frac{1}{(2n+1)^2} . \left( \lfloor \frac{2n+1}{2} \rfloor - 1\right)^2 \lfloor 4h \rfloor
\end{array}.\]

Taking $n$ tending towards infinity, we obtain: 

\[\begin{array}{ccl}
h(X_{s,N}) & \ge & \sum_{k=N+1}^m  \left( \frac{1}{32.16^k} + \frac{1}{32.8^{k}} 
\log_2 (|\mathcal{A}|)+ \frac{3}{32.8^k} \log_2 (|\mathcal{Q}|) + 
\frac{1}{8.4^k} + \frac{\log_2 (\kappa^{*}_k)}{4.16^{k+1}}\right)\\
& & 
+ \frac{1}{2} \sum_{k=N+1}^m \frac{3^{k}}{4^{k+2}} (1+s_k)\\
& & + \sum_{k=0}^N \frac{3^{k}}{4^{k+2}} s_k  \\
& &
+  \frac{\lfloor 4h \rfloor}{4}
\end{array}.\]

Indeed, from the upper bound on $\kappa'(k)$, $k > N$ the 
series corresponding to this sequence in the formula above converges.

As this is true for all $m >N$, taking $m \rightarrow +\infty$, we obtain 

\[\begin{array}{ccl}
h(X_{s,N}) & \ge & \sum_{k=N+1}^{+\infty}  \left( \frac{1}{32.16^k} + \frac{1}{32.8^{k}} 
\log_2 (|\mathcal{A}|)+ \frac{3}{32.8^k} \log_2 (|\mathcal{Q}|) + \frac{1}{8.4^k} + 
\frac{\log_2 (\kappa^{*}_k)}{4.16^{k+1}}\right)\\
& & 
+ \frac{1}{2} \sum_{k=N+1}^{+\infty} \frac{3^{k}}{4^{k+2}} (1+s_k)\\
& & + \sum_{k=0}^N \frac{3^{k}}{4^{k+2}} s_k  \\
& &
+  \frac{\lfloor 4h \rfloor}{4}
\end{array}.\]

On the other hand, 
the \textbf{upper bound} is obtained as follows: 
\begin{enumerate}
\item Since any $n$-block of $X_{\texttt{R}}$ 
can be extended into an order 
$\lceil\log_2 (n)\rceil+4$ supertile, 
there exists some $K>0$ such that 
for all $n$ the number 
of $n$-blocks in the structure layer 
is smaller than $(2^{K(\lceil\log_2 (n)\rceil+4)})^2 \le 2^{9K} n^{2K}$. 
\item For all 
$m \le n$ and $x$, the set $\llbracket -n,n \rrbracket ^2$ is 
covered by at most $\left( \lceil \frac{2n+1}{2.4^{m+1}} \rceil + 1\right)^2$ 
translates of $\llbracket 1, 2.4^{m+1} \rrbracket^2$ centered on an order $m$ cell.
\item The set $\llbracket -n,n \rrbracket ^2$ is covered 
by at most a number $\left(\lceil \frac{2n+1}{2} \rceil + 1\right)^2$ 
of translates of $\llbracket 0, 1 \rrbracket ^2$ such that 
the $(0,0)$ symbol is a blue corner. On each of these squares, the number of 
possible patterns on the set $\llbracket 0, 1 \rrbracket ^2 \backslash (0,0)$
is $2^{\lfloor 4h \rfloor}$.
\item $\Lambda'_m (x)$ denotes 
the set of positions that are not in an order $\le m$ cell 
in $x$. Then the number of possibilities for these positions 
in $\llbracket -n,n \rrbracket ^2$ is smaller than $c^{|\Lambda'_n (x) \cap 
\llbracket -n,n \rrbracket ^2|}$, where $c$ is the cardinality of the alphabet 
of the subshift $X_{s,N}$.
\end{enumerate}

Hence we have the following inequality: 

\[\begin{array}{ccl}
N_{2n+1} (X_{s,N}) & \le & 2^{9K} n^{2K}.
\prod_{k=N+1}^m \left( 
4.|\mathcal{A}|^{2.2^{k}} |\mathcal{Q}|^{6.2^{k}} 
. (2^{12^{k}}+1)^2 . 2^{2.12^k s_k} . 2^{2.4.(4^k-1)} . \kappa^{*}_k \right)^{\left( \lceil \frac{2n+1}{2.4^{k+1}} \rceil 
+ 1\right)^2} \\
& & .\prod_{k=0}^N \left( 2^{ 
\left( \lceil \frac{2n+1}{2.4^{k+1}} \rceil + 1\right)^2. 4. 12^k s_k} \right)  . 2^{\left( \lceil 
\frac{2n+1}{2} \rceil + 1\right)^2 \lfloor 4h \rfloor } . c^{|\Lambda'_m (x) \cap 
\llbracket -n,n \rrbracket ^2|}
\end{array}.\]

This implies, as for the lower bound, 
and since $\log_2 (2^{9K} n^{2K} )/(2n+1)^2 \rightarrow 0$ when $n$ tends to infinity, 
and from the third point of Lemma~\ref{lem.density.rob}, that

\[\begin{array}{ccl}
h (X_{s,N}) & \le & \sum_{k=N+1}^{+\infty}  \left( \frac{1}{32.16^k} + \frac{1}{32.8^{k}} 
\log_2 (|\mathcal{A}|)+ \frac{3}{32.8^k} \log_2 (|\mathcal{Q}|) + 
\frac{1}{8.4^k} + \frac{\log_2 (\kappa^{*}_k)}{4.16^{k+1}} \right)\\
& & 
+ \frac{1}{2} \sum_{k=N+1}^{+\infty} \frac{3^{k}}{4^{k+2}} (1+s_k)\\
& & + \sum_{k=0}^N \frac{3^{k}}{4^{k+2}} s_k  \\
& &
+  \frac{\lfloor 4h \rfloor}{4} + \log_2 (c) \frac{3^{m+1}}{4^{m+2}}
\end{array}.\]

Taking $m \rightarrow + \infty$, 
as $3^m /4 ^m \rightarrow 0$, 
we have the upper bound:

\[\begin{array}{ccl}
h (X_{s,N}) & \le & \sum_{k=N+1}^{+\infty}  \left( \frac{1}{32.16^k} + \frac{1}{32.8^{k}} 
\log_2 (|\mathcal{A}|)+ \frac{3}{32.8^k} \log_2 (|\mathcal{Q}|) + \frac{1}{8.4^k} + \frac{\log_2 (\kappa^{*}_k)}{4.16^{k+1}}\right)\\
& & 
+ \frac{1}{2} \sum_{k=N+1}^{+\infty} \frac{3^{k}}{4^{k+2}} (1+s_k)\\
& & + \sum_{k=0}^N \frac{3^{k}}{4^{k+2}} s_k  \\
& &
+  \frac{\lfloor 4h \rfloor}{4}
\end{array}.\]

\end{proof}

\subsection{Choosing the parameters values}

In this section we explain how to choose 
$s$, $N$ and $M$ such that the entropy of $X_{s,N}$ 
is equal to $h$ and there exists some $M$ such that for all $k \ge 1$ integer, 
$s_{2kM}=1$ and $s_{(2k+1)M}=0$. This constraint on the sequence $s$ will serve for 
the linear net gluing property.
Here is the process that 
we follow for these choices:

\begin{enumerate}
\item we choose $N >0$ and $M>0$ such that 
\[\begin{array}{ccl}
h & > & \sum_{k=N+1}^{+\infty}  \left( \frac{1}{32.16^k} + \frac{1}{32.8^{k}} 
\log_2 (|\mathcal{A}|)+ \frac{3}{32.8^k} \log_2 (|\mathcal{Q}|) + \frac{1}{8.4^k}\right)
+ \frac{1}{2} \sum_{k=N+1}^{+\infty} \frac{3^{k}}{4^{k+2}} \\
& + & \sum_{k=1}^{+\infty} \frac{3^{2kM}}{4^{2(kM+1)}} 
+ \frac{\lfloor 4h\rfloor}{4} \end{array}.\]
This is possible since 
$\frac{\lfloor 4h\rfloor}{4} < h$. The other 
terms of the sum in the left 
member of this inequality tends to $0$ 
when $N,M$ tends to $+\infty$.
For this value of $M$, we impose 
the constraint on $s$ that 
for all $k \ge 1$ integer, 
$s_{2kM}=1$ and $s_{(2k+1)M}=0$.
\item 
Since
\[\sum_{k=0} \frac{3^{k}}{4^{k+2}} + \frac{\lfloor 4h \rfloor}{4} = \frac{1}{4} + 
\frac{\lfloor 4h \rfloor}{4} > h,\]
the maximal value for the entropy $h(X_{s,N})$ for $s$ verifying the constraint 
is greater than $h$. Moreover, 
the minimal value is smaller than $h$.
The number 
\[\begin{array}{ccl} z' & = & \sum_{k=N+1}^{+\infty}  \left( \frac{1}{32.16^k} + \frac{1}{32.8^{k}} 
\log_2 (|\mathcal{A}|)+ \frac{3}{32.8^k} \log_2 (|\mathcal{Q}|) + \frac{1}{8.4^k} \right)\\
& & 
+ \frac{1}{2} \sum_{k=N+1}^{+\infty} \frac{3^{k}}{4^{k+2}} + 
\sum_{k=1}^{+\infty} \frac{3^{2kM}}{4^{2(kM+1)}} 
+ \frac{\lfloor 4h\rfloor}{4}
\end{array}\]
is computable, and as a consequence the number
\[z = h - z'\]
is a $\Pi_1$-computable number. Hence we now have to choose $s$ 
a $\Pi_1$-computable sequence in $\{0,1\}^{\N}$ such that 
the number 
\[\frac{1}{2} \sum_{k=N+1}^{+\infty} 
\frac{3^{k}}{4^{k+2}} s_k + 
\sum_{k=0}^N \frac{3^{k}}{4^{k+2}} s_k \]
is equal to $z$. This is possible since $z$ is a $\Pi_1$-computable number.
\end{enumerate}

For these values of $s$ and $N$ and given the choice 
of $M$, the subshift $X_{s,N}$ is denoted $X_h$.

\section{ \label{sec.linear.net.gluing.xh} Linear net gluing of 
\texorpdfstring{$X_{h}$}{Xh}}

In this section we prove that 
the subshift $X_h$ is linearly net gluing.
This proof consists of two steps: 
first proving that any block can be extended 
into a cell, with control over the size of this cell. 
Then we prove the gluing property 
on cells having the same order.
For reading this section, the reader should have some 
familiarity with the construction 
of the Robinson subshift~\cite{R71}.

\subsection{\label{sec.completing.blocks} Completing blocks}

The point of this section is to prove the 
following lemma: 

\begin{lemma}
Let $n \ge 0$ an integer, and 
$P$ some $2^{n+1}-1$-block. 
This pattern can be completed 
into an admissible pattern in 
$X_h$ over an order 
$$\left(2\left( \Bigl \lceil \frac{ n+1 }{2.2M} 
\Bigr \rceil+1\right)+3\right) M$$
cell.
\end{lemma}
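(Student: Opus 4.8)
The plan is to reduce the statement to the earlier completion results for the structure layer $X_{\texttt{R}}$ (which is $X_{adR}$ with a renaming), namely Lemma~\ref{prop.complete.rob}, and then to account for all the additional layers of $X_h$ one by one. First I would apply Lemma~\ref{prop.complete.rob} to the $2^{n+1}-1$-block $P$ projected to the structure layer: this gives that the projection of $P$ is a sub-pattern of some order $\chi'(2^{n+1}-1)$ cell of $X_{\texttt{R}}$, where $\chi'(m) = \lceil \tfrac{\lceil \log_2 m \rceil}{2}\rceil + 2$. Since $\log_2(2^{n+1}-1) < n+1$, we get $\lceil \log_2(2^{n+1}-1)\rceil \le n+1$, hence the projection of $P$ embeds into an order $\lceil \tfrac{n+1}{2}\rceil + 2$ cell. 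The arithmetic step is then to check that this order is at most $\left(2\left(\lceil \tfrac{n+1}{2\cdot 2M}\rceil + 1\right) + 3\right)M$: since $2M \ge 2$, one has $\lceil \tfrac{n+1}{2}\rceil + 2 \le 2M\left(\lceil \tfrac{n+1}{4M}\rceil + 1\right) + 3M = \left(2\left(\lceil \tfrac{n+1}{4M}\rceil + 1\right) + 3\right)M$, using $\lceil \tfrac{n+1}{2}\rceil \le 2M\lceil \tfrac{n+1}{4M}\rceil$ when $2M$ divides into the denominator appropriately (this needs the elementary inequality $\lceil a/b\rceil \le k\lceil a/(kb)\rceil$ for positive integers, or rather $\lceil a/2 \rceil \le 2M \lceil a/(4M)\rceil + $ slack absorbed by the additive terms; I would write this carefully). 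So the target order $n_0 := \left(2\left(\lceil \tfrac{n+1}{4M}\rceil + 1\right) + 3\right)M$ is large enough for the structure layer, and moreover one may choose $n_0$ itself to be of the form $(2k+3)M$ which, being odd times $M$, is of the shape needed to exploit the constraint $s_{2kM}=1$, $s_{(2k+1)M}=0$ on the sequence $s$.

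Next I would extend $P$ layer by layer inside this order $n_0$ cell, using the global behavior descriptions established in Sections~\ref{sec.structure}--\ref{sec.machine}. The coloring sublayer and the synchronization net sublayer are \emph{deterministic} given the structure layer (their global behavior is entirely determined by the positions of nuclei, corners, walls and reticles), so they extend uniquely. The basis layer and frequency bits layer can be completed by: keeping the restriction to $P$ where defined, and elsewhere choosing all random bits equal to $0$ and all frequency bits equal to $0$; this is always locally admissible since the local rules only ever \emph{force} bits to be $0$, never to be $1$. The cells coding layer (DNA) is completed by choosing, on each nucleus of order $\le N$ inside the cell, the full DNA symbol, and on each nucleus of order $>N$ an arbitrary admissible DNA symbol; since $n_0$ is fixed this is a finite choice and there is no obstruction. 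The synchronization layer is then forced from the DNA and the frequency bits (all $0$), hence extends. For the computation areas layer I would complete each quarter with the ``correct'' computation area as in Robinson's construction (possible since the cell is finite), with no error signals; then the machines layer is completed by running, in each computation quarter, the machine correctly from the empty tape — since all frequency bits were set to $0$, the machine never detects the condition ``frequency bit greater than $s^{(n)}_k$'', so it never halts and never enters an error state — and in each simulation quarter by running the machine from the empty tape as well (a legitimate degenerate behavior). The empty-tape/sides signals and error-signal sublayers are then forced, and carry no error symbol, which is consistent with the rule forbidding error signals emanating from DNA-represented quarters.

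The main obstacle I expect is \emph{bookkeeping of the interlayer constraints near nuclei}: one must verify simultaneously that (i) the DNA choice, (ii) the all-zero frequency bits, (iii) the synchronization-layer values, (iv) the empty-tape signal colors (\begin{tikzpicture}\fill[YellowGreen](0,0)rectangle(0.2,0.2);\end{tikzpicture}), and (v) the absence of error signals are mutually compatible on the four reticle positions around every nucleus, for every order of cell simultaneously, and that the periodic repetition of order $k$ cells (period $4^{k+2}$) does not create a conflict with the fixed finite pattern $P$. The key point making this work is that each of these layers has a canonical ``ground state'' (all zeros / no signals / correct computation) that is admissible on the whole plane, so the only real content is checking that $P$ itself, being already a block of $X_h$, is compatible with completing everything outside it to the ground state — and compatibility of a block with extending the structure layer to the order $n_0$ cell is exactly Lemma~\ref{prop.complete.rob}, while compatibility of the other layers follows because their local rules are all of the ``permissive toward the ground state'' type. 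Finally I would remark that the completed pattern is genuinely in the language of $X_h$ because the ground-state completion extends to a full configuration of $X_h$ (take any configuration of $X_{\texttt{R}}$ containing the completed order $n_0$ cell, then put all layers in their ground state, which is globally admissible), so the order $n_0$ cell with $P$ inside it is globally admissible, which is what the lemma asserts.
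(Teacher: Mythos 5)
Your reduction to Lemma~\ref{prop.complete.rob} for the structure layer is fine as far as it goes, but the ``complete everything else to a ground state'' step is where the argument breaks, and it breaks on exactly the point the paper's proof is built around. The block $P$ is an arbitrary admissible block of $X_h$, so it already carries frequency bits on the (at most two) cells it meets, and these bits cannot be reset to $0$: a frequency bit is constant on its quarter, the frequency bits of computation quarters of all order-$k$ cells are synchronized, and the machines force them to equal $s_k$. Hence the \emph{order} of the cell into which you complete must be chosen so that the forced value $s_k$ agrees with the bit already visible in $P$. This is the whole reason the parameters were fixed with $s_{2kM}=1$ and $s_{(2k+1)M}=0$, and why the target order has the form $(2\lceil\cdot\rceil+2+j)M$ with $j\in\{0,1\}$ for the inner cell and $j\in\{2,3\}$ for the outer one: for each of the two intersected cells one picks whichever of two consecutive multiples of $M$ matches its frequency bit. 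You notice that the target order is ``of the shape needed to exploit the constraint on $s$'' but never use it; with all frequency bits set to $0$ outside $P$ your completion is simply not admissible whenever $P$ contains a quarter with frequency bit $1$ (or whenever the order you land on forces the bit to $1$).

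The same problem recurs in the computation-area and machine layers. $P$ may already contain in/out symbols, machine heads entering from the sides, arbitrary tape contents and error signals; these cannot be completed into ``the correct computation area with a correct run from the empty tape and no error signals.'' The paper instead colors the added rows and columns between $P$ and the nucleus with $(\texttt{out},\texttt{out})$ so that no new computation positions are created, chooses the propagation direction of the error signals according to what is already present in $P$, and, when the nucleus is not in $P$, chooses the DNA so that the (at most two) quarters met by $P$ are \emph{not} represented in it — so that whatever degenerate data $P$ carries is absorbed by simulation quarters. Your ``arbitrary DNA, correct machines, no signals'' choice ignores these compatibility constraints, so the proposed completion need not be admissible. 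In short: the arithmetic bound on the order is the easy part; the missing idea is the matching of the already-present frequency bits (via the two-step choice of cell orders in $M\Z$) together with the out-coloring/DNA trick for the computation layers.
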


\begin{proof}

Let $n \ge 0$ an integer and 
$P$ some $2^{n+1}-1$-block which appears in some 
configuration $x$ of the subshift $X_h$. 

\begin{enumerate}

\item \textbf{Intersecting four order $2n$ 
supertiles:}

This part of the proof 
is similar to the beginning of
the proof of 
Lemma~\ref{prop.complete.rob}. 
The pattern $P$ can be extended 
in the configuration $x$ into a pattern over 
some pattern over one of the structure layer 
patterns on Figure~\ref{fig.orientation.supertiles3}, 
Figure~\ref{fig.orientation.supertiles4}, 
and Figure~\ref{fig.orientation.supertiles5}, 
composed with four $2n$ order supertiles 
with a cross separating them.

\item \textbf{Completing the structure, 
frequency bits and basis:}

All these patterns intersect non-trivially
at most two different cells 
in the configuration $x$, one included into 
the other. The intersection 
with the small one is included into 
the union of two quarters of the cell 
with the separating segment. The 
intersection with the great one 
is included into a quarter of this cell.
Similarly as in the proof of Lemma~\ref{prop.complete.rob}, 
the new formed pattern can be completed into an 
order $\Bigl\lceil \frac{\Bigl \lceil \log_2 
(2^{n+1}-1) \Bigr \rceil}{2}  
\Bigr \rceil +2+k$ cell, for any $k \ge 0$ in the 
Robinson 
sub-layer. However, in the present proof, 
we have to take care about the frequency bits of 
these two cells. Indeed, the order of the cell 
into which the pattern is completed 
has to be coherent with the frequency bit.

That is why we slightly modify the way we complete 
the structure layer. In the cases when the 
pattern intersects non-trivially two cells (the schemata corresponding 
to the case of an intersecting with two cells are 
the numbers $1,2,3$, the second $5'$, 
$6,7$, the third $8$, and the first $9$ 
on Figure~\ref{fig.orientation.supertiles3}, 
Figure~\ref{fig.orientation.supertiles4}, 
and Figure~\ref{fig.orientation.supertiles5}), we first complete the smallest cell into 
a cell having minimal order greater than 
$\Bigl\lceil \frac{\Bigl \lceil \log_2 (2^{n+1}-1) \Bigr \rceil}{2}  
\Bigr \rceil +2$. This 
is done in such a way that the 
corresponding bit is imposed to 
be equal to the actual bit attached to the part 
of the cell intersecting the pattern at this point. This means 
that we extend this part into an order $k.M$ cell, 
with $k$ equal to 
$$2\left( \Bigl \lceil \frac{\Bigl \lceil \log_2 (2^{n+1}-1) \Bigr \rceil }{2.2M} 
\Bigr \rceil+1\right)$$
or  $$2\left( \Bigl \lceil \frac{\Bigl \lceil \log_2 
(2^{n+1}-1) \Bigr \rceil }{2.2M} 
\Bigr \rceil+1\right)+1$$
Then the part of the second cell is completed into 
an order $k'.M$ cell, 
with $k'$ equal to 
$$2\left( \Bigl \lceil \frac{\Bigl \lceil \log_2 (2^{n+1}-1) \Bigr \rceil }{2.2M} 
\Bigr \rceil+1\right)+2$$ 
or $$2\left( \Bigl \lceil \frac{\Bigl \lceil \log_2 (2^{n+1}-1) \Bigr \rceil }{2.2M} 
\Bigr \rceil+1\right)+3.$$

Thus, in any case, the initial pattern can be completed into an order 
$$\left(2\left( \Bigl \lceil \frac{n+1}{2.2M} 
\Bigr \rceil+1\right)+3\right) M$$
cell.

The case when the pattern intersects non-trivially only one cell, this completion 
is done similarly.

After this, one can complete the frequency bits layer according to the values 
in the initial pattern. One can also 
complete
the synchronization net sublayer and 
the synchronization layer, since 
these are determined by the Robinson sublayer and 
the frequency bits. Then the random bits 
are chosed according to the frequency bits.

\item \textbf{Completing the computation areas, machines
trajectories, and error signals:}

In this paragraph, 
we describe how to complete the other layers 
(computation areas and machines layers) over 
this completed pattern.

For each of the two non-trivially intersected cells, 
the lines and columns that do not intersect the initial pattern and 
that are between this pattern and the nucleus
are colored $(\texttt{out}, \texttt{out})$. This allows 
the extension of the pattern in the machine layer simply by transport information between the 
nucleus and this part of the area. Indeed,
there is not computation position 
outside the initial pattern. 
This is illustrated on Figure~\ref{fig.completing.computation.areas.block.gluing}.
When completing the machine's trajectories in the direction of time, 
we simply apply the computation rules of the machine.

Error signals for the computation areas are triggered by these choices. We 
choose the propagation direction according to the presence 
or the absence of an error signal in the initial pattern.
This is illustrated on Figure~\ref{fig.completion.signal.direction.block.gluing}.
The empty tape and sides signals and error signals of the machines are completed 
in a similar way.

If the nucleus was in the initial pattern, then all these layers can be completed 
according to the configuration $x$. If this was not the case, then we choose 
the DNA such that the quarters present in the initial pattern - there are at most two 
since the nucleus is not in the initial pattern - are not represented in the DNA.
\end{enumerate}

\begin{figure}[ht]
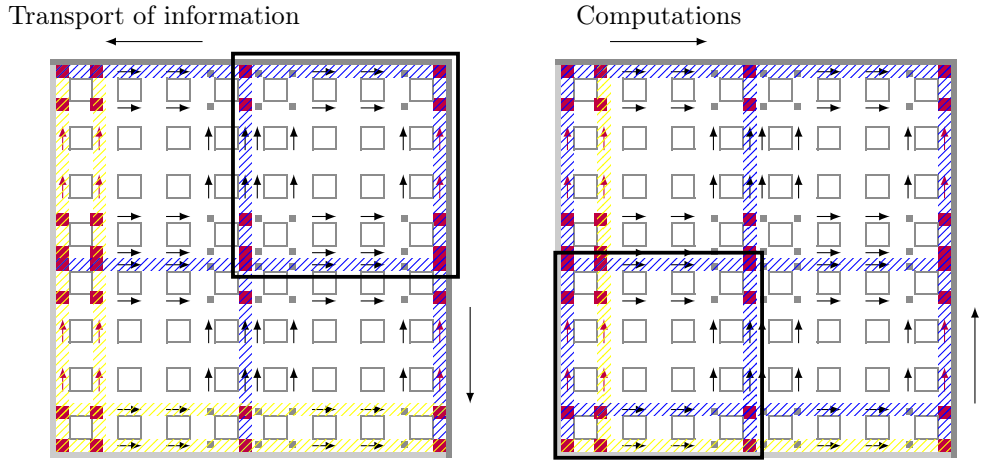

\[
\]
\caption{\label{fig.completing.computation.areas.block.gluing} Illustration 
of the completion 
of the $\texttt{in}, \texttt{out}$ 
signals and the space-time diagram 
of the machine. The known part of the cell is 
surrounded by a black square. The yellow lines 
and columns are colored with a symbol 
different from $(\texttt{in},\texttt{in})$ 
in the computation areas layer, while the blue columns and lines 
are.}
\end{figure}

\end{proof}

\subsection{Linear net gluing of 
\texorpdfstring{$X_{h}$}{Xh}}

\begin{theorem}
The subshift $X_h$ is linearly net gluing.
\end{theorem}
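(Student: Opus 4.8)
The plan is to carry out exactly the two-step scheme announced at the beginning of Section~\ref{sec.linear.net.gluing.xh}: first turn a block into a pattern supported on a cell whose order (hence size) is controlled, then glue two cells of the same order using the rigidity of the structure layer $X_{\texttt{R}}$. By Proposition~\ref{proposition.DefLinearPower} it suffices to verify the net gluing inequality on blocks whose size is one of the exceptional values $L=2^{n+1}-1$ (an arbitrary $\ell$-block may alternatively be embedded in a $(2^{n+1}-1)$-block of size $O(\ell)$ and the argument run directly). So I would fix two $(2^{n+1}-1)$-blocks $p,q$ of $X_h$ and apply the completion lemma of Section~\ref{sec.completing.blocks} to each, obtaining admissible patterns $\tilde p,\tilde q$ of $X_h$, each supported on a cell. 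Since the order produced by that lemma, $c=c(n)=\bigl(2(\lceil (n+1)/(4M)\rceil+1)+3\bigr)M$, depends only on $n$, the two cells have the same order $c$; and since $c=\tfrac12\log_2 L+O_M(1)$, a cell of order $c$ has side $4^{c+1}+1=O(L)$, while in any configuration of $X_{\texttt{R}}$ the order-$c$ cells recur horizontally and vertically with period $T:=4^{c+2}=O(L)$ (and, more finely, a fixed order-$c$ supertile recurs periodically, by Lemma~\ref{lem.repetition.supertiles}).

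Next I would build, for each $k\in\Z^2\setminus\{0\}$, a configuration $y_k\in X_h$ containing a copy of $\tilde q$ and, a fixed translate plus $Tk$ away, a copy of $\tilde p$. Concretely: using the self-similar structure of $X_{\texttt{R}}$ (going up a bounded number of levels realises, periodically, any prescribed pair of order-$c$ cells at a prescribed relative offset among finitely many), choose the structure layer of $y_k$ so that one order-$c$ cell has the structure of $\tilde q$, placed so that $q$ lands on $\llbracket 0,2^{n+1}-2\rrbracket^2$, and another, at distance $Tk+(\mathrm{off}_p-\mathrm{off}_q)$, has the structure of $\tilde p$; then superimpose all the remaining layers of $\tilde p$ and $\tilde q$ and complete every other cell and every larger structure admissibly. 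Granting the compatibility discussed below, this shows $\vec u(p,q)+T(\Z^2\setminus\{0\})\subset\Delta_{X_h}(p,q)$ with $\vec u(p,q)=\mathrm{off}_p-\mathrm{off}_q$. Taking $\widetilde f(p,q):=T-(2^{n+1}-1)=O(L)$ gives the net gluing property of Definition~\ref{definition.NetGluing} with a gap function in $O(n)$, i.e.\ linear net gluing.

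The hard part is justifying that $y_k$ actually exists, that is, that two prescribed cell patterns can coexist in one configuration of $X_h$. Two issues must be addressed. The first is structural compatibility: the structure-layer parts of $\tilde p$ and $\tilde q$ are order-$c$ Robinson cells, and one must place both (at the required relative position) inside a common configuration; this is where the hierarchical periodicity of $X_{\texttt{R}}$ is used, at the cost of replacing $c$ by $c+O(1)$, which leaves all sizes $O(L)$. The second, and genuinely delicate, issue is the frequency-bit synchronisation of Section~\ref{sec.synchr}: for orders above $N$ all cells of a given order share the frequency bits of their computation quarters (see Figure~\ref{fig.abstract.freqbits.block.gluing}), so a priori $\tilde p$ and $\tilde q$ could impose conflicting bits on cells of the same order. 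This is exactly why the completion lemma extends a cell met by a block not to an arbitrary order but to an order of the form $2jM$ when the pertinent quarter carries frequency bit $1$ and $(2j+1)M$ when it carries $0$, and why $s$ was constrained to satisfy $s_{2kM}=1$, $s_{(2k+1)M}=0$: two completed sub-cells with conflicting frequency bits then sit at distinct orders and never need to be synchronised with one another, while a completed sub-cell always has frequency bit $\le s$ at its order.

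Once the structure, frequency-bit, basis and synchronisation layers of $y_k$ are fixed in this consistent way, I would complete the computation-area, machine, empty-tape-and-sides and error-signal layers over the whole plane exactly as in the proof of the completion lemma: lines and columns of a cell that do not reach any block are marked $(\texttt{out},\texttt{out})$ so that no computation position lies outside a block, machine trajectories are propagated by the computation rules, the DNA of a nucleus not contained in a block is chosen so that the quarters met by the block are not represented, and consequently no error signal is triggered in a computation quarter; in the unconstrained regions one may simply take frequency bit $0$ everywhere, which is compatible with every machine. This yields a legitimate $y_k\in X_h$ for every $k\neq 0$, and hence the theorem. The single step I expect to be the real obstacle is the joint placement of $\tilde p$ and $\tilde q$ — the structural compatibility combined with the frequency-bit synchronisation — the latter being precisely what the parameter constraints on $s$ and $N$ were designed to make harmless.
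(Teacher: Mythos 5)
Your proposal follows the paper's own proof essentially verbatim: complete both $(2^{n+1}-1)$-blocks into admissible patterns over cells of the same order $\left(2\left(\bigl\lceil \frac{n+1}{4M}\bigr\rceil+1\right)+3\right)M$ via the completion lemma of Section~\ref{sec.completing.blocks}, glue the two same-order cells using their periodic repetition with period $4$ to the power of that order plus $2$, and conclude with Proposition~\ref{proposition.DefLinearPower}. The only difference is that you spell out the cell-gluing step (structural placement by self-similarity together with the frequency-bit/synchronization compatibility ensured by the choice of $M$, $N$ and $s$), which the paper asserts in a single line, so the content matches.
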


\begin{proof}
Let $P$ and $Q$ be two $(2^{n+1}-1)$-blocks 
in the language of $X_h$. 
These two patterns can be completed into admissible 
patterns over order 
$$\left(2\left( \Bigl \lceil \frac{ n+1 }{2.2M} 
\Bigr \rceil+1\right)+3\right) M$$
cells. The gluing set of these two cells is
$$ 4^{\left(2\left( \Bigl \lceil \frac{ n+1 }{2.2M} 
\Bigr \rceil+1\right)+3\right) M +2} . \Z^2 \backslash (0,0).$$
This means that there exists some vector $\vec{u}$ such that 
the gluing set of the pattern $P$ contains 
$$\vec{u} + (2^{n+1}-1+f(2^{n+1}-1)).\Z^2 \backslash (0,0),$$
where 
$$f(2^{n+1}-1) = 4^{\left(2\left( \Bigl \lceil \frac{ n+1 }{2.2M} 
\Bigr \rceil+1\right)+3\right) M +2} - 2^{n+1}+1 = O(2^{n+1}-1).$$

As a consequence of Proposition~\ref{proposition.DefLinearPower}, 
the subshift $X_h$ is linearly net gluing.
\end{proof}

\section{ \label{sec.linearly.block.gluing.
realization} Transformation of \texorpdfstring{$X_{h}$}{Xh} into a linearly block 
gluing SFT}

In this section, we prove that every $\Pi_1$-computable 
non-negative real number is the entropy of 
some linearly block gluing $\Z^2$-SFT. \bigskip

In order to prove 
this assertion, we use modified versions of the operator 
$d_{\A}$, depending on an integer parameter $r \ge 1$, 
that we denote $d^{(r)}_{\A}$.
The definition of the 
operators $d^{(r)}_{\A}$ consists 
in imposing that the curves 
appearing in the definition 
of $d_{\A}$ are composed by 
length $r$ straight segments, as illustrated on 
Figure~\ref{fig.illustration.def.modified.operator}.

\begin{figure}[ht]
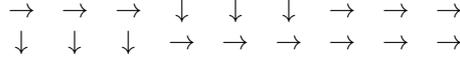

\[\begin{array}{ccccccccc} \mathbf{\rightarrow} & \rightarrow & \rightarrow & 
\downarrow & \downarrow & \downarrow & \rightarrow & \rightarrow & \rightarrow \\
\downarrow & \downarrow & \downarrow & 
\rightarrow & \rightarrow & \rightarrow & \rightarrow & \rightarrow & \rightarrow \\
\end{array}\]
\caption{\label{fig.illustration.def.modified.operator}
Making the curves more rigid. 
In this example, $r=3$.}
\end{figure}

Each of these segments can be superimposed with random colors defined
to be a length $r$ word amongst the words $0^r, 10^{r-1} , ... 1^r$. This sequence is imposed to be $0^r$ when the segment is not surrounded 
with other segments, as illustrated on Figure~\ref{fig.surrounded.segment}.

The idea behind this definition
is that with these random colors, 
the patterns crossed only by 
straight curves are the most numerous ones, 
and thus the entropy is easier to compute, 
with the cost of a parasitic entropy.
The parameter $r$ serves to control 
the parasitic entropy.

\begin{figure}[ht]
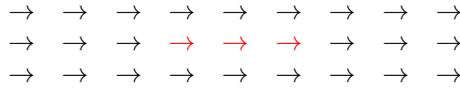

\[\begin{array}{ccccccccc} \rightarrow & \rightarrow & \rightarrow & 
\rightarrow & \rightarrow & \rightarrow &  \rightarrow & \rightarrow & \rightarrow \\
\rightarrow & \rightarrow & \rightarrow & 
\textcolor{red}{\rightarrow} & \textcolor{red}{\rightarrow} & \textcolor{red}{\rightarrow} 
& \rightarrow & \rightarrow & \rightarrow \\
\rightarrow & \rightarrow & \rightarrow & 
\rightarrow & \rightarrow & \rightarrow & \rightarrow & \rightarrow & \rightarrow \\
\end{array}\]
\caption{\label{fig.surrounded.segment}
Segment (colored red) surrounded by other ones.
In this example, $r=3$.}
\end{figure}

$\tilde{A}_r$ denotes the alphabet 
of $d^{(r)}_{\A}$.
The operators $ d^{(r)}_{\tilde{\A}_r} 
\circ \rho \circ d^{(r)}_{\A}$ still transform linearly net gluing 
subshifts into linearly block gluing ones, and the entropy 
of $d^{(r)}_{\A} (Z)$ for a subshift $Z$ 
is a function of $h(Z)$: 

\begin{theorem} \label{thm.close.form.entropy}
For any $Z$ on $\Z^2$ and on 
alphabet $\mathcal{A}$, the entropy 
of $d^{(r)}_{\A} (Z)$ is 
$$h(d^{(r)}_{\A} (Z)) = h(Z) + \frac{\log_2 
(1+r)}{r}.$$
\end{theorem}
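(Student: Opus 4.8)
The plan is to decompose a configuration of $d^{(r)}_{\A}(Z)$ into its $\Delta$-layer data (the segmented curves, which now have gap $0$ or $1$ between contiguous curves but are forced to be straight on any length-$r$ run that is not surrounded by other curves) and the $Z$-data living along the curves, plus the random colours. First I would establish that the $\Delta$-layer of $d^{(r)}_{\A}$ has \textbf{zero entropy}: the same kind of counting argument as in Proposition~\ref{prop.completing.block.aperiodicity} shows that any locally admissible $n$-block extends into a rectangular globally admissible pattern whose dimensions are $O(n)$, and moreover the number of locally admissible $n$-blocks of $\Delta$ grows only sub-exponentially in $n^2$ (each column of the $\Delta$-layer is essentially determined, up to a bounded amount of information describing where curves shift, by the previous column; the number of shifts in an $n\times n$ window is $O(n)$, not $\Theta(n^2)$). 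Hence $\log_2 N_n(\Delta)/n^2 \to 0$, and similarly the segmentation refinement of $\Delta$ contributes no entropy.

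Next I would count, conditionally on a fixed admissible $\Delta$-pattern $\delta$ over $\llbracket 0,n-1\rrbracket^2$, the number of ways to fill in the $Z$-layer and the random colours. Because the pseudo-projection $\mathcal P$ is a bijection between curves-crossing-columns data and $\Z^2$-positions, the $Z$-content along the $m$ curves crossing an $m$-column window of $\mathcal T$-type is exactly an $m$-block of $Z$; so the number of $Z$-fillings of an $n\times n$ window is, up to the $o(n^2)$ boundary corrections coming from curves entering/leaving through the top and bottom, equal to $N_{n(1+o(1))}(Z)$, contributing entropy $h(Z)$ to $h(d^{(r)}_{\A}(Z))$. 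For the random colours: a length-$r$ segment carries one of the $1+r$ words $0^r,10^{r-1},\dots,1^r$ when it is "surrounded" (has a segment both above and below in the same $r$ columns) and is forced to $0^r$ otherwise. In a typical configuration almost every segment is surrounded, and the number of length-$r$ segments in an $n\times n$ window is $(1+o(1))\,n^2/r$; each carries $\log_2(1+r)$ bits. This yields the extra term $\frac{\log_2(1+r)}{r}$, and the two sources (curve-$Z$-content and random colours) are independent given $\delta$, so the counts multiply and the entropies add:
\[
h(d^{(r)}_{\A}(Z)) = h(Z) + \frac{\log_2(1+r)}{r}.
\]

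The two directions (upper and lower bound on $N_n$) are handled symmetrically, as in the entropy-formula computation of Section~\ref{sec.choice.parameters}: for the lower bound one fixes a very regular $\delta$ (all curves straight, packed with gap $0$, so that essentially every segment is surrounded and one has full freedom in the colours and in the $Z$-content of $(1+o(1))n/r$-many complete rows of curves), and for the upper bound one sums over the sub-exponentially-many $\delta$'s and uses that each contributes at most $|\tilde A_r|^{(\text{boundary})}\cdot(1+r)^{\#\text{segments}}\cdot N_{n+O(1)}(Z)$ fillings, with the boundary term $o(n^2)$. The main obstacle I anticipate is the careful bookkeeping near the top and bottom of the window, where curves start and end and the "surrounded" condition can fail for a non-negligible-looking band of height $O(r)$; one must check that the resulting correction is $O(n)$ per column, hence $o(n^2)$ overall, so it does not perturb the leading exponential rate. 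This is exactly the same technical point that makes Proposition~\ref{prop.completing.block.aperiodicity} work, and the $5n$-size bound there is what one invokes to keep all boundary corrections linear in $n$.
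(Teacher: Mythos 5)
Your lower bound and your handling of the $Z$-content along the curves do match the paper's argument (Lemma~\ref{lemma.lower.bound} and Lemma~\ref{lemma.upper.bound.1}), but the upper bound rests on a false premise: the $\Delta$-layer does \emph{not} have zero entropy, and the number of shifts in an $n\times n$ window is not $O(n)$. The $\downarrow$-symbols of a configuration of $\Delta$ form a family of non-crossing up-right staircase trajectories with vertical spacing at least $2$ (each ``hole'' either stays or moves up by one at each column), and nothing bounds the total number of up-moves: one may take holes in every other row all moving up at every column (every $r$-th column in $\Delta_r$), so a window can contain $\Theta(n^2)$ shift positions, and a column is determined by the previous one only up to $\sim n/2$ bits, not a bounded amount. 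Counting such families is a lozenge-tiling type count and gives $\exp(\Theta(n^2))$ admissible $n$-blocks for $\Delta$ (and $\exp(\Theta(n^2/r))$ for $\Delta_r$), i.e.\ $h(\Delta_r)>0$. Consequently your upper-bound scheme --- sum over the patterns $\delta$ of the $\Delta$-layer and allow each segment its full $1+r$ colours and the curves their full $Z$-content --- only yields $h(d^{(r)}_{\A}(Z))\le h(Z)+\frac{\log_2(1+r)}{r}+h(\Delta_r)$, which is strictly weaker than the statement.

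Since the theorem asserts equality, the surplus shape entropy of the curves must be exactly cancelled, and the only mechanism available is the isolation rule: segments that are not surrounded (which is what curvature produces) have their colour forced to $0^r$. So shapes, counters and colours cannot be counted independently; the colour loss has to be charged against the shape freedom. This is precisely the content of the paper's Lemma~\ref{lemma.upper.bound.2}, which bounds the \emph{joint} number of $kr$-blocks of $\Delta'_r$ by $\alpha(r)2^{\lambda(r)k}(r+1)^{rk^2}$ via an inductive count of the extensions of a pattern of $\mathcal{T}_{kr}$ to one of $\mathcal{T}_{(k+1)r}$, using in the mixed cases that freshly created, non-surrounded segments carry the trivial colour. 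Your proposal demotes the isolation rule to a typicality remark (``almost every segment is surrounded''), which is enough for the lower bound but cannot close the upper bound; the top-and-bottom boundary bookkeeping you anticipate as the main obstacle is not where the difficulty lies.
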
 

Since any non-negative $\Pi_1$-computable number 
is the entropy 
of a linearly net gluing SFT, for all $r \ge 1$, 
all the numbers in 
$\left[ 2 \frac{\log_2 (1+r)}{r} , +\infty \right[$ 
(the factor $2$ here comes from the 
fact that we apply two operators)
are entropy of a linearly block gluing SFT.
As a consequence, since $0$ is the entropy of the full shift on alphabet $\{0\}$, 
which is linearly block gluing, all the non-negative $\Pi_1$-computable numbers 
are entropy of a linearly block gluing SFT.

\begin{question}
In this proof, the entropy $0$ is 
obtained in a different way than 
other $\Pi_1$-computable numbers.
Is there some non-trivial 
SFT which is linearly block and 
have entropy $0$ ? 
\end{question}

\subsection{Definition of the operators \texorpdfstring{$d^{(r)}_{\A} (Z)$}{drAZ}}

Let $Z$ be a $\Z^2$-SFT on alphabet $\A$, and $r \ge 1$.
The subshift $d^{(r)}_{\A} (Z)$ is defined as a product of 
four layers: 

\begin{enumerate}
\item the first layer is $\Delta$, 
\item the second one is $Z$, with similar rules with respect to $\Delta$ as in the definition of $d_{\A}$.
\item the counter layer [Section~\ref{sec.counter.layer}]: in this layer we impose, 
using a counter, the curves to be composed of length $r$ segments.
\item the random colors layer [Section~\ref{sec.random.colors.transformation}]: here we superimpose 
random colors to the length $r$ segments of curves.
\end{enumerate}

\subsubsection{\label{sec.counter.layer} Counter layer}

\noindent \textbf{\textit{Symbols:}} \bigskip

The elements of $\Z/r\Z$ and a blank symbol. \bigskip

\noindent \textbf{\textit{Local rules:}} \bigskip

\begin{itemize}
\item \textbf{Localization:} the 
non blank-symbols are superimposed on and only on 
the positions having a $\rightarrow$ symbol 
in the $\Delta$ layer.
\item \textbf{Incrementing the counter:} 
over a pattern 
\[\rightarrow \rightarrow\]
or 
\[\begin{array}{cc}
\rightarrow & \downarrow \\
\downarrow & \rightarrow
\end{array}\]
in the $\Delta$ layer, 
if the value of the counter on the left position with $\rightarrow$ is 
$\overline{i}$, then the value on the right position is 
$\overline{i}+\overline{1}$.
\item \textbf{The curves can shift downwards only on 
position with maximal counter value:} 
on the pattern \[\begin{array}{cc}
\rightarrow & \downarrow \\
\downarrow & \rightarrow
\end{array},\]
the left position with $\rightarrow$ has counter value 
equal to $\overline{r-1}$.
\end{itemize}

\noindent \textbf{\textit{Global behavior:}} \bigskip

On each curve induced by the restriction to the $\Delta$ 
layer we superimpose independent counters that are incremented 
along the curves. A curve can shift 
downwards only when the counter has maximal value. 
This implies that the curves are composed of length $r$ segments. A segment is
defined to be a part of the curve between two consecutive 
positions where the counter 
has value $\overline{0}$ and $\overline{r-1}$.

\subsubsection{\label{sec.random.colors.transformation} Random colors layer}

\noindent \textbf{\textit{Symbols:}} \bigskip

The symbols of this layer are $0,1$ and a blank symbol. \bigskip

\noindent \textbf{\textit{Local rules:}} 

\begin{itemize}
\item \textbf{Localization:} the bits $0,1$ 
are superimposed on and only on positions with 
symbol $\rightarrow$.
\item \textbf{Restriction of the possible colors:}
Along a length $r$ segment of curve, the symbol $1$ propagates to the left.
The symbol $0$ propagates to the right.
\item \textbf{Isolation rule:} if a segment is not surrounded by 
other segments, its color is $0^r$.
\end{itemize}

\noindent \textbf{\textit{Global behavior:}} \bigskip

Each length $r$ segment is attached with a word in 
$\{0^r,10^{r-1},...,1^r\}$. Moreover, if the segment is 
not surrounded by other segments, its color is $0^r$.

\subsection{Transformation of
linearly net gluing subshifts into linearly block gluing ones}

We have a result similar to 
Theorem~\ref{theorem.transformation.block.gluing} for the operators $d^{(r)}_{\A}$: 

\begin{theorem} \label{th.operator.dr}
For all $r \ge 0$, 
the operator $d^{(r)}_{\tilde{\A}_r} 
\circ \rho \circ d^{(r)}_{\A}$ 
transforms linear net-gluing subshifts 
of finite type into 
linear block gluing ones.
\end{theorem}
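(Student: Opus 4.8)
The plan is to re-run the argument of Theorem~\ref{theorem.transformation.block.gluing} with the operators $d^{(r)}_{\A}$ in place of $d_{\A}$, checking that the extra two layers (the counter layer and the random colors layer) do not interfere with any of the gluing bookkeeping. First I would verify the analogue of Proposition~\ref{prop.completing.block.aperiodicity}: given a locally admissible $n$-block of the $\Delta$-layer-plus-counter, one can complete it into a rectangular pattern whose number of curves equals its number of columns, whose top and bottom rows consist only of $\rightarrow$ symbols, and whose dimensions are linear in $n$ — the only new point is that every curve must now be made of length-$r$ straight segments, which only inflates the bound by a factor depending on $r$ (still $O(n)$ since $r$ is fixed) and forces us to add, when extending a curve straightly, a multiple of $r$ columns; the counter values are then uniquely determined along each curve, and the random colors can be set to $0^r$ on every segment that ends up isolated, which is automatic outside the original block after completion.

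Next I would reproduce the argument of Proposition~\ref{proposition.transfo.gluing.sets}: given two $n$-blocks $p,q$ in $d^{(r)}_{\A}(X)$, complete their $\Delta$-projections as above, pseudo-project to get $m$-blocks of $X$ with $m = O(n)$, and use the linear net gluing of $X$ to fill in the $\A$-symbols. The case analysis on a vector $\vec{u} = (m + \widetilde{f})(k,l)$ with $|l|\ge 2$, $l=\pm 1$ goes through verbatim: the constructions of Figures~\ref{fig.schema.branching.thm.0} and~\ref{fig.schema.branching.thm.1} insert straight curves (which trivially satisfy the counter and segment constraints, taking the added blocks of columns to have length a multiple of $r$) and shifted curves whose shifts occur only at counter-value $\overline{r-1}$; on all these straight pieces the random colors are forced to $0^r$ by the isolation rule, and where segments are adjacent one simply chooses any admissible pattern of random colors, which only enlarges the gluing set. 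One concludes, as there, that the gluing set of $p$ relative to $q$ in $d^{(r)}_{\A}(X)$ contains a family of infinite columns periodically displaced with period $O(n)$, together with periodically displaced positions in a central column. Then, exactly as in the proof of Theorem~\ref{theorem.transformation.block.gluing}, composing with $\rho$ (which rotates these vertical stripes into horizontal ones) and applying $d^{(r)}_{\tilde{\A}_r}$ once more introduces the complementary perturbation, so that $d^{(r)}_{\tilde{\A}_r}\circ\rho\circ d^{(r)}_{\A}(X)$ has its gluing set of any couple of $n$-blocks containing all of $\Z^2$ outside a box of size $O(n)$; that is, the resulting SFT is linearly block gluing.

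The main obstacle I expect is checking that the length-$r$ segment constraint and the isolation rule for random colors are genuinely \emph{compatible} with all the curve manipulations used in the completion and gluing constructions — in particular that every straight extension one performs can be taken to have a length that is a multiple of $r$ without breaking the requirement that the total number of added columns hits a prescribed value (one has freedom only up to multiples of $r$ there, but since $r$ is a fixed constant this changes the linear gap function only by an additive $O(1)$, hence still $O(n)$), and that the shift positions in the shifted curves, which the operator forces to occur at counter value $\overline{r-1}$, can always be arranged — this is where the explicit "compactification" and "$t$ shifts" sub-steps of the proof of Proposition~\ref{proposition.transfo.gluing.sets} must be re-examined, padding each gadget so its width is a multiple of $r$. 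Everything else — the finite-type character of $d^{(r)}_{\A}(X)$ (the gap between contiguous curves is still bounded, and the counter and random-color layers are finite), and the factorization structure — is routine. The verification is lengthy but entirely parallel to the $r=1$ case already carried out, so I would present it as an adaptation and only spell out the modifications at the points indicated above.
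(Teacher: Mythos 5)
Your proposal is correct and follows essentially the same route as the paper, whose own proof of Theorem~\ref{th.operator.dr} is just a brief remark that the argument of Theorem~\ref{theorem.transformation.block.gluing} goes through verbatim, with the gap function multiplied by $r$ (since downward shifts may only occur at counter value $\overline{r-1}$) and with the counters causing no trouble because the curves of the two glued patterns are never connected. Your only over-caution is the padding of straight extensions to multiples of $r$: straight pieces carry no counter constraint, so only the shift gadgets need the factor-$r$ adjustment you already identify.
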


\begin{proof}
The steps of the proof 
of this theorem are exactly the same ones 
as for the proof of Theorem~\ref{theorem.transformation.block.gluing}. 
However, the gap function of the image subshift 
is $r$ times the gap function of the image 
subshift obtained when applying 
$d_{\tilde{\A}} \circ \rho \circ d_{\A}$.
The presence of the counters do not 
have any impact since when we proved 
that two patterns can be glued we don't 
connect the curves of the two patterns.
\end{proof}

$\Delta'_{r}$ denotes the subshift that 
consists in the product of the $\Delta$ layer 
with the counter layer and random colors layer, 
with rules relating these layers. Moreover, $\Delta_r$ 
denotes
the product of the $\Delta$ layer with the counter layer, with rules relating these layers.

The following sections are devoted to the proof 
of Theorem~\ref{thm.close.form.entropy}.

\subsection{Lower bound}

\begin{lemma} \label{lemma.lower.bound}
For any subshift $Z$ on alphabet 
$\mathcal{A}$, we have the following 
lower bound on the entropy of $d^{(r)}_{\A} (Z)$: 
$$h(d^{(r)}_{\A} (Z)) \ge h(Z) + \frac{\log_2 
(1+r)}{r}.$$
\end{lemma}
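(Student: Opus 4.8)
The plan is to obtain the lower bound by exhibiting a large, explicitly parametrized family of admissible $n$-blocks of $d^{(r)}_{\A}(Z)$: I will freeze the $\Delta$ sublayer and the counter sublayer to one very simple configuration, and let the $Z$ sublayer and the random colors sublayer vary independently.

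First I would fix the $\Delta$ sublayer to the configuration $\delta$ with $\delta_{\vec i}=\rightarrow$ for all $\vec i$. For this $\delta$ the curves $\mathcal{C}_k(\delta)$ are exactly the horizontal lines of $\Z^2$, none of them is ever shifted downwards, and the pseudo-projection acts as the identity: $\bigl(\mathcal{P}(y,\delta)\bigr)_{i,j}=y_{(i,j)}$. Consequently the constraints linking the $Z$ sublayer to $\Delta$ (no box on a $\rightarrow$, and $\mathcal{P}(y,\delta)\in Z$) say precisely that $y$ is a configuration of $Z$; so the $Z$ sublayer restricted to a window $\llbracket 0,n-1\rrbracket^2$ ranges over all $n$-blocks of $Z$, i.e.\ over $N_n(Z)$ patterns. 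Next I would fix the counter sublayer so that the counter has value $\overline{i}$ at every position of column $i$ (the same phase on every row); this is admissible, since over an all-$\rightarrow$ row the only rule is that the counter increments, and it cuts each curve into the length-$r$ segments $\{(i,k):jr\le i\le (j+1)r-1\}$, $j,k\in\Z$. Every such segment has a curve running immediately above it and one immediately below it (every row is a curve), so it is ``surrounded by other segments'' in the sense of Figure~\ref{fig.surrounded.segment}, the isolation rule of Section~\ref{sec.random.colors.transformation} never triggers, and each length-$r$ segment may independently carry any of the $r+1$ colors $0^r,10^{r-1},\dots,1^r$.

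Now I would count. Each $n$-block meets $n$ rows, and each of these rows contains at least $\lfloor n/r\rfloor-1$ length-$r$ segments lying entirely inside the window, whose colors can be chosen freely and independently; this already produces at least $(r+1)^{\,n(\lfloor n/r\rfloor-1)}$ distinct fillings of the random colors sublayer. Since different choices of the $Z$ sublayer or of the colors give pairwise distinct $n$-blocks of $d^{(r)}_{\A}(Z)$ — each such block being realizable, by extending the chosen $Z$-configuration and colors arbitrarily (say with color $0^r$) outside the window — this gives
\[N_n\!\left(d^{(r)}_{\A}(Z)\right)\ \ge\ N_n(Z)\cdot (r+1)^{\,n(\lfloor n/r\rfloor-1)}.\]
Taking $\log_2$, dividing by $n^2$, and letting $n\to\infty$ — using that $h(Z)=\lim_n \log_2 N_n(Z)/n^2$ and that $\log_2 N_n\!\left(d^{(r)}_{\A}(Z)\right)/n^2$ converges to $h\!\left(d^{(r)}_{\A}(Z)\right)$ (subadditivity, which is also why $h=\inf_n$, as recalled in Section~\ref{sec.abstract.block.gluing}) — yields
\[h\!\left(d^{(r)}_{\A}(Z)\right)\ \ge\ h(Z)+\frac{\log_2(1+r)}{r},\]
which is the assertion of the lemma.

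The only point that needs genuine care is checking that, for the chosen configuration, the isolation rule is indeed inactive — that is, that with the all-$\rightarrow$ $\Delta$ sublayer and a uniform counter phase every length-$r$ segment really is surrounded, so that all $r+1$ colors remain available on every segment. Once this is granted the rest is an elementary independent count; the boundary segments along the left and right edges of the window, and the per-row freedom in the counter phase that I chose to suppress, only affect lower-order terms that disappear after division by $n^2$.
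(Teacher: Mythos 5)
Your proposal is correct and follows essentially the same route as the paper: fix the $\Delta$ layer to all $\rightarrow$ symbols (so the pseudo-projection reduces to membership in $Z$), fix the counters, and multiply the $N_n(Z)$ choices of the $Z$ sublayer by the free $(r+1)$-ary color choices on the length-$r$ segments before passing to the limit. The paper merely works with $kr$-blocks whose first column has counter $\overline{0}$, getting exactly $(r+1)^{rk^2}$ color choices, whereas you work at arbitrary $n$ with the slightly cruder bound $(r+1)^{n(\lfloor n/r\rfloor-1)}$ — the difference is immaterial after dividing by $n^2$.
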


\begin{proof}
The language of $d^{(r)}_{\A} (Z)$ contains 
all the $kr$-blocks 
whose symbols in the $\Delta$ layer are all equal to $\rightarrow$ 
and such that in the first column, the value of the counter 
is $\overline{0}$ in each line.

The number of such patterns 
is $N_{kr} (Z) . (r+1)^{r k^2}$ (the 
first factor is the number of choices in the $Z$ layer 
and the other one is the number of choices in 
the random colors layer). Hence
\[N_{kr} (d^{(r)}_{\A} (Z)) \ge 
(r+1)^{ r k^2} . N_{kr} (Z).\]
Which implies 
\[\frac{\log_2 (N_{kr} (d^{(r)}_{\A} (Z)))}{kr.kr} \ge 
\frac{\log_2 (1+r)}{r} +  \frac{\log_2 (N_{kr} (Z))}{kr.kr}.\]
We deduce, taking $k \rightarrow +\infty$, that 
\[h(d^{(r)}_{\A} (Z)) \ge \frac{\log_2 
(1+r)}{r} + h(Z).\]
\end{proof}

\subsection{Upper bound}

We prove an upper bound for $h(d^{(r)}_{\A} (Z))$ in two steps: 

\begin{enumerate}
\item In Section~\ref{sec.bound.pseudo.proj}, 
 we prove a bound on the number 
of possible pseudo-projections of a $kr$-block onto the 
subshift $Z$. This is done ussing an upper bound on the number of curves crossing a 
$n$-block.
\item In Section~\ref{sec.bound.delta}, we give 
an upper bound on the number of $kr$-blocks in $\Delta'_r$. 
We do this by analyzing the possible ways 
to extend a $kr$-block in the language of this subshift
into a $(k+1)r$-block which stays admissible. 
\end{enumerate}

\subsubsection{\label{sec.bound.pseudo.proj} 
Upper bound on the number of pseudo-projections}

\begin{lemma} \label{lemma.nb.pseudo.proj}
Let $n \ge 1$ and $P$ be 
some $n$-block in the language of $\Delta_r$.
The number of curves crossing $P$ is equal to 
$k_n+k'_n$, where $k_n$ is the number of $\rightarrow$ symbols 
on the south west - north east diagonal and 
$k'_n$ is the number of patterns 
\[\begin{array}{cc} \rightarrow & \downarrow \\
& \rightarrow \end{array}\]
such that the $\downarrow$ symbol is on the south 
west - north east diagonal.
\end{lemma}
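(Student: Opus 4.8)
For an $n$-block $P$ in the language of $\Delta_r$, the number of curves crossing $P$ equals $k_n + k'_n$, where $k_n$ counts the $\rightarrow$ symbols on the SW--NE diagonal of $P$ and $k'_n$ counts the patterns with $\rightarrow$ above $\downarrow$ (offset to the right below) whose $\downarrow$ lies on that diagonal.

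Let me sketch the approach I would take.

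\textbf{The plan} is to count the curves crossing $P$ by looking at how each curve intersects the anti-diagonal (or rather the SW--NE diagonal, the set of positions $(i,i)$ within the $n$-block, where I index so that the diagonal runs from the bottom-left corner to the top-right). The key structural fact is that a curve, as defined in Section~\ref{sec3.3.3}, moves one step right and either stays at the same height or drops by one; in $\Delta_r$ it is moreover composed of length-$r$ straight segments, and it can shift downwards only at a position whose counter value is $\overline{r-1}$. The crucial consequence of the defining forbidden pattern $\begin{array}{cc}\rightarrow & \downarrow \\ \rightarrow & \rightarrow\end{array}$ of $\Delta$ is that when a curve is shifted downwards at position $\vec{i}$, there is no curve through $\vec{i}-(1,1)$; this is precisely the ``no two curves collide at a shift'' property recalled in the excerpt. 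So near the diagonal, the curves crossing $P$ are in bijection with two disjoint types of ``events'' along the diagonal: those that cross the diagonal on a horizontal step (a $\rightarrow$ symbol sitting exactly at a diagonal cell) and those that cross it on a downward step (the curve passes through a $\rightarrow$ at diagonal position and immediately drops to the cell below-right).

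\textbf{First I would} set up the bookkeeping precisely. For each $j$, consider the diagonal cell $d_j = (j,j)$ of $P$ (with suitable boundary conventions for $j=0$ and $j=n-1$). Each curve $\mathcal{C}$ crossing $P$ must pass through exactly one of the cells $d_j$ or must ``jump over'' the diagonal between $d_j$ and $d_{j+1}$ — but the second case is impossible for curves of this type, because a curve can only descend by one at each step, so consecutive curve positions differ by $(1,0)$ or $(1,-1)$, and a curve that is at height $\geq j$ at column $j$ and at height $\leq j$ at column $j+1$ must actually pass through $(j,j)$ or $(j+1,j)$. This is the elementary discrete intermediate-value argument, and it is the step I expect to require the most care: one has to rule out that two distinct curves ``share'' a diagonal crossing and conversely that one curve is counted twice. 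The non-collision property at downward shifts is exactly what prevents the pattern $\begin{array}{cc}\rightarrow & \downarrow \\ & \rightarrow\end{array}$ with $\downarrow$ on the diagonal from corresponding to anything other than a single curve descending there, with the cell $(j,j)$ (a $\rightarrow$) belonging to that curve and the cell $(j-1,j)$ not belonging to any curve.

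\textbf{Then I would} conclude by partitioning: a curve crossing $P$ either has a horizontal step at its diagonal crossing, contributing a $\rightarrow$ at some $d_j$ not followed by a descent at $d_j$ — these are counted by $k_n$ minus the overcounting of descents, so I should be careful to define $k_n$ as the count of $\rightarrow$ on the diagonal and $k'_n$ as the count of the descent-patterns, and argue the two sets of curves are disjoint and exhaustive. The disjointness is immediate (a curve descends at $d_j$ or it does not), and exhaustiveness follows from the intermediate-value step above. Hence the total is $k_n + k'_n$. I would also note as a remark that this lemma is the combinatorial input to the counting of pseudo-projections in Section~\ref{sec.bound.pseudo.proj}: since each curve carries an element of $\mathcal A$ in the $Z$-layer (and of the random-colors alphabet), bounding the number of curves bounds the number of pseudo-projections of $P$ onto $Z$, and the bound $k_n + k'_n \leq 2n$ (each diagonal cell contributes at most one to each count, minus boundary effects) gives the desired control.
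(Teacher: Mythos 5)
Your route is the same as the paper's: count the curves crossing $P$ by the unique trace each one leaves on the south-west--north-east diagonal, and observe that this trace is either a $\rightarrow$ at a diagonal cell or a downward shift whose $\downarrow$ sits on a diagonal cell, the two cases being disjoint and exhaustive. The paper's own proof is exactly this observation stated in a few lines, so there is no divergence of method.

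Two local statements in your write-up are, however, wrong as written and contradict your own conclusion, so they should be repaired. First, the assertion that the ``jump over'' case is impossible: a curve can perfectly well cross the diagonal without occupying any diagonal cell --- this happens exactly when it is shifted downwards across it, and it is precisely the case counted by $k'_n$; if it were impossible the count would collapse to $k_n$ alone. What your intermediate-value computation actually shows is not that this case is absent but that it is detectable: if the curve has height $j$ at column $j-1$ and height $j-1$ at column $j$, then $\delta$ carries $\rightarrow$ at $(j-1,j)$, $\downarrow$ at the diagonal cell $(j,j)$, and $\rightarrow$ at $(j,j-1)$, which is the $k'_n$ pattern. Second, in that pattern the curve's positions are the two $\rightarrow$'s, namely $(j-1,j)$ (before the shift) and $(j,j-1)$ (after it), while the diagonal cell $(j,j)$ carries the $\downarrow$ and is not a curve position; your parenthetical stating that $(j,j)$ ``(a $\rightarrow$) belongs to that curve'' and that $(j-1,j)$ ``does not belong to any curve'' has the roles exchanged. (Like the paper, you also implicitly use that every $\rightarrow$ position on the diagonal lies on a unique curve; only the inequality ``number of curves $\le k_n+k'_n$'' is needed downstream, so this is harmless.) Finally, a small quantitative point: each diagonal cell is either a $\rightarrow$ or a $\downarrow$, hence contributes to at most one of the two counts, so $k_n+k'_n\le n$ rather than $2n$; this sharper bound is the one actually used in Section~\ref{sec.bound.pseudo.proj}, where a $kr$-block is crossed by at most $kr$ curves.
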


\begin{proof}
Each curve crossing $P$ crosses the diagonal. This 
is due to the fact that in 
each column, the curve goes straightly onto the right 
or is shifted downwards. When it crosses the diagonal,
there are two possibilities: either it is shifted downwards, 
and it corresponds to the pattern \[\begin{array}{cc} \rightarrow & \downarrow \\
& \rightarrow \end{array},\]
or it is not and this corresponds to the 
symbol $\rightarrow$ on the diagonal.
Hence counting this patterns gives the 
number of curves crossing $P$. On
Figure~\ref{fig.curves.crossing.lemma}, the pattern 
has three times the symbol $\rightarrow$ on the diagonal 
and once the pattern 
\[\begin{array}{cc} \rightarrow & \downarrow \\
& \rightarrow \end{array}.\]
One can see that the total is equal to the number of curves 
crossing this pattern.

\begin{figure}[ht]

\[\begin{array}{cccccc} 
\rightarrow & \rightarrow & \rightarrow & \rightarrow & \rightarrow & 
\textcolor{red}{\rightarrow}\\
\rightarrow & \rightarrow & \rightarrow & \textcolor{red}{\rightarrow} & \textcolor{red}{\downarrow} & \downarrow \\
\rightarrow & \downarrow & \downarrow & \downarrow & \textcolor{red}{\rightarrow} & 
\rightarrow \\
\downarrow & \rightarrow & \textcolor{red}{\rightarrow} & \rightarrow & \downarrow & 
\downarrow \\
\rightarrow & \textcolor{red}{\rightarrow} & \rightarrow & \downarrow & \rightarrow & 
\rightarrow \\
 \downarrow & \downarrow & \downarrow & \rightarrow & \rightarrow & 
\rightarrow \\
\end{array}\]

\caption{\label{fig.curves.crossing.lemma} Example 
of pattern $P$ in the $\Delta$ layer. The 
patterns on the diagonal allow the number of curves
to be counted.
In this example, $r=3$.}
\end{figure}
\end{proof}

\begin{lemma} \label{lemma.upper.bound.1}
The number of pseudo-projections of a $kr$-block pattern 
in the language of $d^{(r)}_{\A} (Z))$ is smaller or equal to 
$N_{kr} (Z)$.
\end{lemma}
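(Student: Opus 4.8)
The plan is to set up an injection from the set of pseudo-projections of $kr$-blocks of $d^{(r)}_{\A}(Z)$ into the set $\mathcal{L}_{kr}(Z)$ of $kr$-blocks of $Z$. Recall that if $(y,\delta,\dots)$ is a configuration of $d^{(r)}_{\A}(Z)$ restricted to a $kr$-block, its pseudo-projection is the block $p$ with $p_{i,j}=y_{\varphi_{\delta,j}(i)}$, so $p$ reads off the $X$-layer symbols along the curves crossing the block. First I would invoke Lemma~\ref{lemma.nb.pseudo.proj}: the number $m$ of curves crossing a $kr$-block $P$ (in the $\Delta$-layer) equals the number of $\rightarrow$ symbols on the SW--NE diagonal plus the number of down-shift patterns straddling that diagonal. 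Since both of these are counted among the $kr$ positions of the diagonal (each down-shift pattern uses exactly one diagonal position, namely the $\downarrow$), we get $m\le kr$. Thus along each of the $kr$ columns the curves intersect in at most $kr$ distinct rows, and the pseudo-projection $p$ is a well-defined pattern on support $\llbracket 0,m-1\rrbracket\times\llbracket 0,kr-1\rrbracket$ with $m\le kr$.

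Next I would argue that $p$, being the restriction of a configuration of $Z$ (read along curves) to an $m\times kr$ rectangle with $m\le kr$, lies in $\mathcal{L}_{kr}(Z)$ — indeed it is a subpattern of a $kr$-block of $Z$. This is exactly the point used already in Proposition~\ref{proposition.transfo.gluing.sets} and in the definition of $d_{\A}$: the $X$-layer symbols along $r$ contiguous curves on $r$ consecutive columns form an $r$-block that is not forbidden in $Z$, so globally the symbols along any set of contiguous curves over a block of columns assemble into an admissible pattern of $Z$. Hence the number of distinct pseudo-projections is bounded by the number of admissible $m\times kr$ sub-rectangles of $Z$-configurations with $m\le kr$, which is at most $N_{kr}(Z)$ since every such rectangle extends to a $kr$-block of $Z$ and distinct pseudo-projections correspond to distinct (or at least embeddable-in-distinct) $kr$-blocks. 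One must be slightly careful here: two different pseudo-projections could in principle embed into the same $kr$-block of $Z$ if $m<kr$; to avoid overcounting issues, I would instead note that each pseudo-projection $p$ has a fixed width $m$ depending only on the $\Delta$-layer, and a $kr$-block of $Z$ restricts to a unique $m\times kr$ sub-block for each fixed $m$, so the count is still dominated by $N_{kr}(Z)$. (Even simpler: pad $p$ on the right with $kr-m$ copies of its last column if necessary — this stays in $\mathcal{L}_{kr}(Z)$ only if $Z$ is closed under such an operation, so the cleaner route is the per-width argument.)

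The main obstacle will be making the last counting step rigorous without assuming more structure on $Z$ than is available: the naive bound "$\#\{\text{pseudo-projections}\}\le N_{kr}(Z)$" requires that distinct pseudo-projections map to distinct objects counted by $N_{kr}(Z)$. The clean fix is to observe that a pseudo-projection of a $kr$-block is itself literally a $j$-block of $Z$ for some $j\le kr$ (it is $x_{\mathcal{C}}$ for a suitable finite curve-region $\mathcal C$ which is a deformed $m\times kr$ rectangle, and such a region fits inside a $kr$-square after the straightening performed in Proposition~\ref{prop.completing.block.aperiodicity}), so it appears in the language of $Z$ and in particular is a subpattern of a $kr$-block; the map sending a pseudo-projection to "the $kr$-block of $Z$ obtained by completing it (inside the straightened region)" can be made well-defined and the whole construction is reversible on the level of the $Z$-layer. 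I would therefore conclude $N_{kr}(d^{(r)}_{\A}(Z))$'s pseudo-projection-count $\le N_{kr}(Z)$, which is the assertion of the lemma, and this bound feeds directly into the upper bound computation of Section~\ref{sec.bound.delta} combined with the count of $kr$-blocks of $\Delta'_r$.
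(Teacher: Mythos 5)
Your proposal is correct and follows essentially the same route as the paper: the paper's proof simply invokes Lemma~\ref{lemma.nb.pseudo.proj} to bound the number of curves crossing a $kr$-block by $kr$, notes each curve meets the block in at most $kr$ positions, and concludes that the pseudo-projection is an admissible pattern of $Z$ fitting inside a $kr$-block, hence counted by $N_{kr}(Z)$. Your extra care about the counting (fixed width determined by the $\Delta$-layer and surjectivity of the restriction from $kr$-blocks onto admissible $m\times kr$ patterns) only makes explicit what the paper leaves implicit.
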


\begin{proof}
A a consequence of Lemma~\ref{lemma.nb.pseudo.proj}, 
a $kr$-block contains at most $kr$ curves and the number
of positions of a curve in a $kr$ block is smaller than $kr$.
Hence the number of pseudo-projections of a $kr$-block 
on $Z$ is smaller than $N_{kr} (Z)$.
\end{proof}

\subsubsection{\label{sec.bound.delta} 
Upper bound on the number of colored curves patterns}

In this section, we give an upper bound on 
the number of $kr$-block in the language of $\Delta'_r$, 
for $k \ge 1$. It relies on an upper bound on 
the number of patterns in specific sets, defined as follows. \bigskip

Consider some $kr$-block $P$ in the language of $\Delta_r$. 
Define $\mathbb{U}_P$ to be the minimal set containing $\llbracket 0 , kr-1 \rrbracket ^2$ 
such that there exists some pattern $P'$ - which is unique - on support 
$\mathbb{U}_P$: 

\begin{itemize}
\item whose restriction on $\llbracket 0 , kr-1 \rrbracket ^2$ is 
$P$, 
\item and such that 
the leftmost (resp. rightmost) position in any curve in $P'$ crossing the left 
(resp. right) side of $P$ has 
counter $\overline{0}$ (resp. $\overline{r-1}$).
\end{itemize}

On Figure~\ref{fig.carpet.pattern}, one can find some example 
of such completion of a block $P$ into the pattern $P'$.

\begin{figure}[ht]
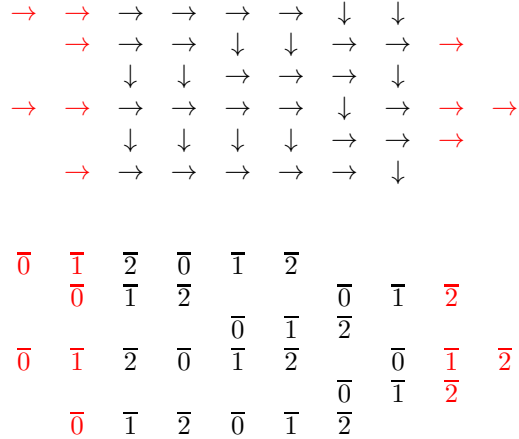

\[\begin{array}{cccccccccc}
\textcolor{red}{\rightarrow} & \textcolor{red}{\rightarrow} & \rightarrow & \rightarrow & \rightarrow & \rightarrow & \downarrow & \downarrow & & \\
& \textcolor{red}{\rightarrow} & \rightarrow & \rightarrow & \downarrow & \downarrow & \rightarrow & \rightarrow & \textcolor{red}{\rightarrow} & \\
& & \downarrow & \downarrow & \rightarrow & \rightarrow & \rightarrow & \downarrow & & \\
 \textcolor{red}{\rightarrow} & \textcolor{red}{\rightarrow} & \rightarrow & \rightarrow & \rightarrow & \rightarrow & \downarrow & \rightarrow & \textcolor{red}{\rightarrow} & \textcolor{red}{\rightarrow}\\
& & \downarrow &\downarrow &\downarrow &\downarrow & \rightarrow & \rightarrow & \textcolor{red}{\rightarrow} & \\
& \textcolor{red}{\rightarrow} & \rightarrow & \rightarrow & \rightarrow & \rightarrow & \rightarrow & \downarrow & & \\
&&&&&&&&&\\
&&&&&&&&&\\
\textcolor{red}{\overline{0}} & \textcolor{red}{\overline{1}} & \overline{2} & \overline{0} & \overline{1} & \overline{2} &  &  & & \\
& \textcolor{red}{\overline{0}} & \overline{1} & \overline{2} &  &  & \overline{0} & 
\overline{1} & \textcolor{red}{\overline{2}} & \\
& &  &  & \overline{0} & \overline{1} & \overline{2} &  & & \\
 \textcolor{red}{\overline{0}} & \textcolor{red}{\overline{1}} & \overline{2} & \overline{0} & \overline{1} & \overline{2} &  & \overline{0} & \textcolor{red}{\overline{1}} & \textcolor{red}{\overline{2}}\\
& &  & &  &  & \overline{0} & \overline{1} & \textcolor{red}{\overline{2}} & \\
& \textcolor{red}{\overline{0}} & \overline{1} & \overline{2} & \overline{0} & \overline{1} & \overline{2} &
 & & \\
\end{array}\]
\caption{\label{fig.carpet.pattern} Illustration of the extension of 
a block by completing the 
segments, $r=3$. The 
pattern $P$ is represented by dark symbols, and 
is completed into the projection of $P'$ by red ones.}
\end{figure}

Let us denote $\mathcal{T}_{kr}$ the set of patterns in the language of 
$\Delta'_r$ whose projection on $\Delta_r$ is $P'$ for some 
$kr$-block $P$ in the language of $\Delta_r$.

\begin{lemma} \label{lemma.upper.bound.2}
For all $k \ge 1$, we have the following upper bound on the number of 
$kr$-blocks in the language of $\Delta'_r$:
\[|N_{kr} (\Delta'_r) |\le \alpha(r).
2^{\lambda(r).k} (r+1)^{r k^2} ,\]
where $\alpha(r),\lambda(r)>0$ 
depend only on $r$. 
\end{lemma}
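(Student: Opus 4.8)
The plan is to bound $N_{kr}(\Delta'_r)$ by counting, separately, the number of possible projections onto $\Delta_r$ (i.e.\ the curve patterns) and, for each such projection $P'$, the number of ways to superimpose the random colors layer. The crucial observation is that once the projection $P'$ is fixed, a pattern in $\mathcal{T}_{kr}$ is determined by the choice of one color word in $\{0^r,10^{r-1},\dots,1^r\}$ for each of the length $r$ segments composing the curves of $P'$, subject to the isolation rule. Since a $kr$-block contains at most $kr$ curves (Lemma~\ref{lemma.nb.pseudo.proj}) and each curve has at most $k+2$ segments intersecting the block (its horizontal extent is at most $kr+2r$ columns, hence at most $k+2$ full segments), the number of segments is $O(rk^2)$; more precisely it is at most $rk^2$ up to an additive $O(k)$ correction coming from the boundary segments completed when passing from $P$ to $P'$. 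Each segment carries one of $r+1$ colors, which yields the factor $(r+1)^{rk^2}$, while the boundary segments and the finitely many degrees of freedom at the borders contribute the factor $2^{\lambda(r)k}$ and the constant $\alpha(r)$.

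First I would make precise the correspondence $P\mapsto P'$: the set $\mathbb{U}_P$ extends $\llbracket 0,kr-1\rrbracket^2$ by at most $2r$ columns on each side (one incomplete segment per curve crossing a vertical side, and $r-1$ extra cells to complete it), so the extended pattern $P'$ has dimensions bounded by $kr+4r$, and the number of segments of curves in $P'$ is bounded by (number of curves) $\times$ (number of segments per curve) $\le kr\cdot(k+4)$. Next I would count the number of projections: by Lemma~\ref{lemma.nb.pseudo.proj} the curve structure of a $kr$-block in $\Delta_r$ is encoded by its diagonal behaviour, and more simply, the number of $\Delta$-patterns on a $kr$-block is at most the number of $\Delta$-patterns, which by a crude argument (each column of a curve pattern is determined by the shift positions, and there are at most $kr$ curves each shifting at most once per block) is bounded by $2^{\mu(r)k^2}$ for some $\mu(r)$ --- but actually we only need that it is subexponential in $(kr)^2$ relative to the color count, or we absorb it; the cleanest route is to bound $N_{kr}(\Delta_r)$ itself by $2^{c k^2}$ with $c$ small and check it is dominated, or alternatively note the projection count contributes only to $\alpha(r)2^{\lambda(r)k}$ once we realize the dominant term. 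Then for each fixed $P'$, multiply by $(r+1)^{(\text{number of segments})}\le (r+1)^{rk^2+O(rk)}=(r+1)^{rk^2}\cdot(r+1)^{O(rk)}$, absorbing $(r+1)^{O(rk)}$ into $2^{\lambda(r)k}$. Finally, passing from $P'$ back to $P$ only restricts, so $N_{kr}(\Delta'_r)\le |N(\mathcal{T}_{kr})|\le \alpha(r)2^{\lambda(r)k}(r+1)^{rk^2}$.

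The main obstacle I expect is controlling the number of curve-pattern projections $N_{kr}(\Delta_r)$ and showing it does not spoil the $(r+1)^{rk^2}$ leading term --- equivalently, showing the entropy contribution of the bare $\Delta_r$ layer is zero, so that it only affects the lower-order factors $\alpha(r)2^{\lambda(r)k}$. This requires observing that in $\Delta_r$ a curve can shift downwards at most once every $r$ columns and, within a $kr$-block, a given curve therefore shifts at most $k$ times; combined with the non-intersection of curves and the second forbidden pattern of $\Delta$ (which forces a curve shifting at $\vec i$ to leave $\vec i-(1,1)$ empty), the curve pattern on a $kr$-block is determined by at most $kr$ bounded choices, giving $N_{kr}(\Delta_r)\le \beta(r)^{k^2}$ for some $\beta(r)$ --- and one must verify $\beta(r)$ can be taken so small, or rather that the precise count is $2^{o(k^2)}$ times something absorbable; in fact the honest statement is that the diagonal of the block (length $kr$) together with the isolated shift data already determines everything, yielding a bound of the form $2^{Ck}$ with the quadratic term entirely from the colors. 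I would therefore phrase the segment count carefully: the total number of (complete) segments in $P'$ is exactly (sum over curves of $\lceil \text{length}/r\rceil$), bounded above by $kr\cdot\lceil (kr+2r)/r\rceil\le kr(k+3)=rk^2+3rk$, so the color factor is at most $(r+1)^{rk^2}\cdot(r+1)^{3rk}$, and setting $\lambda(r)=3r\log_2(r+1)+(\text{curve-structure exponent})$ and choosing $\alpha(r)$ to absorb the finitely many boundary adjustments completes the estimate.
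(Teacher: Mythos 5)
Your factorization $N_{kr}(\Delta'_r)\le N_{kr}(\Delta_r)\times\max_{P}(\text{number of colorings of }P)$ breaks down at exactly the point you flag as the main obstacle, and the fix you propose is not available: it is not true that a $kr$-block of $\Delta_r$ is ``determined by the diagonal plus isolated shift data'' up to $2^{Ck}$ choices. Lemma~\ref{lemma.nb.pseudo.proj} only recovers the \emph{number} of curves from the diagonal, not their shapes. The bare subshift $\Delta_r$ has positive topological entropy: the gap-$1$ ``holes'' between contiguous curves trace out monotone, non-crossing trajectories in the (column, curve-index) lattice, and with $\Theta(kr)$ curves, $\Theta(k)$ shift opportunities per curve and $\Theta(kr)$ room, such families of non-intersecting monotone paths number $2^{\Theta(k^2)}$ on a $kr$-block (a dimer/lozenge-type count), not $2^{O(k)}$. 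So the only correct version of your decomposition is $N_{kr}(\Delta_r)\le\beta(r)^{k^2}$ with some $\beta(r)>1$, and the resulting bound $\beta(r)^{k^2}(r+1)^{rk^2}$ is strictly weaker than the lemma; it would give an entropy upper bound $h(Z)+\frac{\log_2(r+1)}{r}+\frac{\log_2\beta(r)}{r^2}$ and Theorem~\ref{thm.close.form.entropy} would no longer match the lower bound. Indeed the whole point of the counters and of the isolation rule is that the geometric freedom of $\Delta_r$ is \emph{not} negligible and must be paid for by lost color freedom: the patterns with many shifts or gaps have many unsurrounded segments forced to $0^r$, while the patterns achieving the maximal color count $(r+1)^{rk^2}$ are the rigid, straight, gap-$0$ ones. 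Taking the maximum of the color count uniformly over all geometries, as you do, discards precisely this compensation.

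The paper's proof couples the two counts instead of separating them: it bounds the number of ways to extend a pattern of $\mathcal{T}_{kr}$ to one of $\mathcal{T}_{(k+1)r}$ (first the added columns on the right, then the added rows on top), and in each extension every geometric choice is counted together with the colorings it allows. For a group of $j$ consecutive outgoing curves the shift choices contribute only a factor $\min(j,r)$ (the counter fixes where a shift can occur, and gap-$0$ curves must shift together), against $(r+1)^j$ colorings; and when an added curve shifts, the newly created segments are unsurrounded, hence colored $0^r$, which is what caps the mixed-case count by sums like $\sum_j(r+1)^j\le(r+1)^{k+2}$. This yields a per-step factor $3^r(r+1)^{2kr+9r+2}$, and telescoping gives the quadratic exponent $rk^2$ with only $2^{\lambda(r)k}$ slack. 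To salvage your approach you would have to prove a bound on the number of geometries \emph{conditioned on} the number of colorable (surrounded) segments, which is essentially the paper's bookkeeping in disguise; your segment-count estimate $(r+1)^{rk^2+O(rk)}$ for a fixed geometry is fine, but it is the unconditioned count of geometries that cannot be absorbed into $\alpha(r)2^{\lambda(r)k}$.
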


\noindent \textbf{Idea:} \textit{the idea of the proof 
is to get an upper bound on the cardinality 
of $\mathcal{T}_{kr}$ for all $k \ge 1$ considering 
the possible extensions 
of a pattern in $\mathcal{T}_{kr}$ into a pattern 
of $\mathcal{T}_{(k+1)r}$. We derive then an upper 
bound for the number of $kr$-blocks 
in the language of $\Delta'_r$.}

\begin{proof}

\begin{enumerate}
\item \textbf{Upper bound on the extensions
of patterns in $\mathcal{T}_{kr}$ into patterns of $\mathcal{T}_{(k+1)r}$:}

Consider some pattern $Q$ in $\mathcal{T}_{kr}$. 
We will first consider the number 
of possibilities to extend this pattern on the right side 
and then on the top, as illustrated on Figure~\ref{fig.plan.extension}.

\begin{figure}[ht]
\[\begin{tikzpicture}[scale=0.4]

\fill[gray!20] (8,0) rectangle (7,6);
\draw (0,0) -- (-1,0) -- (-1,1) -- (0,1) -- (0,2) -- (-2,2) -- (-2,3) -- (0,3) -- (0,4) -- 
(-1,4) -- (-1,5) -- (-2,5) -- (-2,6) -- (8,6) -- (8,5) -- (9,5) -- (9,4) -- (8,4) -- (8,3) -- 
(10,3) -- (10,2) -- (9,2) -- (9,1) -- (8,1) -- (8,0) -- (0,0);
\draw[dashed] (0,6) -- (0,7) -- (-1,7) -- (-1,8) -- (0,8) -- (0,9);
\draw[dashed] (8,0) -- (12,0) -- (12,1) -- (13,1) -- (13,2) -- (11,2) -- (11,3) -- (12,3) -- (12,4) 
-- (11,4) -- (11,5) -- (12,5) -- (12,6) -- (11,6) -- (11,7) -- (13,7) -- (13,8) -- (11,8) -- (11,9) 
-- (0,9);

\draw[color=blue] (8,6) -- (11,6);
\draw[-latex,color=blue] (7,3.5) -- (10.5,3.5); 
\node[color=blue,scale=1.5] at (14,3) {$1.$};
\draw[-latex,color=blue] (6,5) -- (6,8);
\node[color=blue,scale=1.5] at (5,7) {$2.$};
\node at (2,2) {$Q \in \mathcal{N}_k$};
\node at (16,7.5) {$ Q' \in \mathcal{N}_{k+1}$};
\node at (7.5,-1) {$C$};
\end{tikzpicture}\]
\caption{\label{fig.plan.extension} Illustration of the 
considered order for completion in order to 
give an upper bound on $|\mathcal{T}_{(k+1)r}|/|\mathcal{T}_{kr}|$.}
\end{figure}
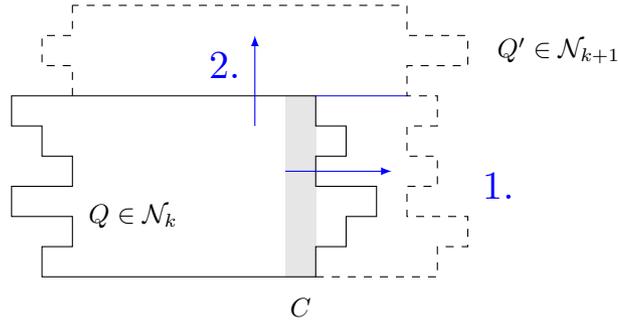

\begin{enumerate}
\item \textbf{Extensions on the right side:}

The restriction $C$ of $Q$ on the rightmost complete column 
$\{kr-1\} \times \llbracket 0, kr-1\rrbracket$ (colored gray on 
Figure~\ref{fig.plan.extension})
is sub-pattern of a pattern $C'$ 
over $\{kr-1\} \times \llbracket -1, kr\rrbracket$ or $\{kr-1\} \times \llbracket 0, kr\rrbracket$
such that this pattern is the (vertical) concatenation 
of patterns 
\[\begin{array}{c}
\rightarrow \\
\vdots \\
\rightarrow \\
\downarrow
\end{array}.\]
One can see this 
by adding a symbol $\rightarrow$ on the top, and 
a symbol $\downarrow$ on the bottom if the bottommost 
symbol in this column is $\rightarrow$ (we do not complete $C$ into $C'$, 
which is just an artifact allowing 
an upper bound on the number 
of ways to extend $Q$ on its right side).
Each of these patterns corresponds to a set of vertically consecutive curves going 
out of the pattern $Q$ through its right side.

A completion of $Q$ on the right side is determined 
by the following choices: 
\begin{itemize}
\item for each of the outgoing curves, choose if this curve 
is shifted downwards in the set of additional positions or not.
\item for each of the added segments of curve, 
choose a color to superimpose over it.
\end{itemize}

Moreover, for each set of consecutive curves, if 
one of these curves is shifted downwards, this forces the curves 
in this set to be shifted downwards as well in the set additional positions. This shift is realized
in a column on the left of the column where the first curve is shifted. 
Since the additional shift positions lie in a set of $r$ 
consecutive columns, this means that only the $r$ bottommost curves in this set can 
be shifted downwards in the additional columns. 
The number of possible choices for 
these shifts is $\min(j,r)$ (since 
the position of the shift is determined by the counter), 
where $j$ is the number of curves in this set.

For this set of curves the number of choices for the colors is $(r+1)^j$.
As a consequence, for a set of consecutive curves represented by 
a pattern \[\begin{array}{c}
\rightarrow \\
\vdots \\
\rightarrow \\
\downarrow
\end{array},\]
with $j$ symbols $\rightarrow$, the number of possible extensions
on the right of this pattern is smaller than $(r+1)^{j+1}$.
Since in this formula, $j+1$ is the height of the pattern 
 \[\begin{array}{c}
\rightarrow \\
\vdots \\
\rightarrow \\
\downarrow
\end{array},\]
the number of possible extensions of $Q$ on the right side is 
smaller than the product 
of these numbers. This is equal to 
\[(r+1)^{kr+2},\] $kr$ 
being the height of the pattern $C$.

\item \textbf{Extensions on the top:}

Let us consider a possible completion $Q^{(1)}$ of $Q$ on the right side.
We give an upper bound on the number of possible ways to complete this pattern 
on the row $\llbracket 0, (k+1)r-1\rrbracket \times \{kr\}$ just above 
this pattern. This depends only on the restriction of 
$Q^{(1)}$ on $\llbracket 0, (k+1)r-1\rrbracket \times \{kr-1\}$. 
This way, taking the power $r$ of 
this bound will provide a bound for the possible 
completions of $Q^{(1)}$ into a pattern $Q^{(2)}$ of $\mathcal{T}_{(k+1)r}$. \bigskip

The restriction of $Q^{(1)}$ on 
its topmost row $\llbracket 0, (k+1)r-1\rrbracket \times \{kr-1\}$ 
can be decomposed into a (possibly empty) concatenation of patterns
\[\rightarrow \hdots \rightarrow \downarrow \hdots \downarrow,\]
possibly followed on the right by a 
pattern 
\[\rightarrow \hdots \rightarrow,\]
and possibly preceded on the left by 
a pattern \[\downarrow \hdots \downarrow.\]

For instance, on the following pattern, we represent 
the decomposition with parentheses: 
\[ \left(\downarrow \downarrow \downarrow \right) \left(\rightarrow \rightarrow 
\rightarrow \downarrow \downarrow \downarrow \right) \left(\rightarrow \rightarrow 
\rightarrow \downarrow \downarrow \downarrow \right) \left( 
\rightarrow \rightarrow 
\rightarrow \right).\]

According to this decomposition, the possibilities for completing 
$Q'$ on the top are as follows: 

\begin{itemize}
\item If the pattern on the top row of $Q^{(1)}$ is equal to 
\[\rightarrow \hdots \rightarrow,\]
then the pattern can extended on top with 
some pattern which consists in the concatenation of 
\[\downarrow \hdots \downarrow,\]
with 
\[\rightarrow \hdots \rightarrow\]
on the right, 
one of which can be empty. For instance, if the pattern on the top row is 
\[\begin{array}{cccccc}
\rightarrow &\rightarrow &
\rightarrow & \rightarrow &\rightarrow &
\rightarrow
\end{array}\]
and $r=3$,
one can extend the pattern in the following ways: 
\[\begin{array}{cccccccc}
\downarrow & \downarrow &  
\rightarrow & \rightarrow &\rightarrow &
\rightarrow & \rightarrow & \rightarrow \\
\rightarrow &\rightarrow &
\rightarrow & \rightarrow &\rightarrow &
\rightarrow& &
\end{array}, \]

\[\begin{array}{cccccc}
\rightarrow &\rightarrow &
\rightarrow & \rightarrow &\rightarrow &
\rightarrow \\
\rightarrow &\rightarrow &
\rightarrow & \rightarrow &\rightarrow &
\rightarrow
\end{array}, \]
or 
\[\begin{array}{cccccc}
\downarrow & \downarrow &  
\downarrow & \downarrow & \downarrow &  
\downarrow \\
\rightarrow &\rightarrow &
\rightarrow & \rightarrow &\rightarrow &
\rightarrow
\end{array}.\]

\begin{itemize}
\item 
When we extend with the pattern 
\[\rightarrow \hdots \rightarrow,\]
there are $r$ possibilities for the positions of the counter symbols, and at most
$(r+1)^{k+2}$ possibilities for the random colors 
of the added segments (there are at most 
$(k+2)$ ones). As a consequence, in this case there are at most 
$r(r+1)^{k+2} \le (r+1)^{k+3}$ possible extensions.
\item 
When we extend with some pattern 
\[\downarrow \hdots \downarrow \rightarrow \hdots \rightarrow,\]
there are $r$ possibilities for the rightmost position 
of the row where the counter has value $\overline{0}$. 
For each $j$ such that the added curve shifts downwards 
in a column between the $jr$th and the $(j+1)r-1$th one, 
the number of possible colorings of this curve is at most $(r+1)^{k-j+1}$ (since 
in this case, the number of added segments is at most $(k-j+1)$). 
As a consequence, the number of completions in this case 
is at most 
\[r \sum_{j=1}^{k+1} (r+1)^j = r (r+1) \frac{(r+1)^{k+1} - 1}{r+1-1} 
\le (r+1)^{k+2}.\]
\end{itemize}

\item When the pattern on the top row of $Q^{(1)}$ is 
\[\downarrow \hdots \downarrow,\]
the only possibility for completion in the $\Delta$ layer is by
\[\rightarrow \hdots \rightarrow.\]
The number of possibilities in the other layers is 
given by the choice of the counter position and the colors, 
and is smaller than $(r+1)^{k+3}$.

\item \textbf{Mixed cases:}
For the same reasons as in the two previous 
cases, the number of possible extensions on the top 
of the $\rightarrow \hdots \rightarrow$ pattern in the decomposition 
of the top row pattern 
and the $\downarrow \hdots \downarrow$ pattern, and the 
leftmost (resp. rightmost) pattern 
\[\rightarrow \hdots \rightarrow \downarrow \hdots \downarrow\]
when there is no pattern $\downarrow \hdots \downarrow$ 
(resp. $\rightarrow \hdots \rightarrow$) 
in the decomposition, are at most 
\[(r+1)^{\Bigl \lceil \frac{l}{r} \Bigr \rceil +3},\]
where $l$ is the length of this pattern.

The possible extensions over the other patterns
\[\rightarrow \hdots \rightarrow \downarrow \hdots \downarrow\]
in the decomposition are as follows: 
\begin{itemize}
\item the $r\Bigl \lceil \frac{m}{r}\Bigr \rceil$ rightmost 
symbols in the extending row are equal to $\rightarrow$, where $m$ 
is the length of the 
sub-pattern $\downarrow \hdots \downarrow$: 
this is imposed by the presence on the 
right of $\rightarrow$ symbols. This corresponds to a shifted curve, 
which has to go straight while there are symbols $\downarrow$ below it
during a number $rj$ of columns, for some $j$. 
In this step, the colors of the added segments is $0^r$, since they 
are not surrounded by other segments.
At this point, the completion over this part of the top row 
looks as follows: 
\[\begin{array}{cccccccc} 
& & & & & \rightarrow & 
\rightarrow & \rightarrow  \\
\rightarrow & \rightarrow & \rightarrow & \rightarrow & 
\rightarrow & \rightarrow & \downarrow & \downarrow 
\end{array}.\]
\item then we have to choose the position of the shift of this curve 
in the row that we are adding. Indeed, the top row pattern is preceded by 
$\downarrow$ symbols on the left. This
means that this
is shifted downwards there. 
The added curve has thus to be shifted in a 
column on the right of this one. For 
these added segments, we have to choose a color.
\end{itemize}

For these patterns, the number of possible extensions is thus at most 
\[\sum_{k=0}^{\Bigl \lfloor \frac{l}{r} \Bigr \rfloor -1 } (r+1)^{k} \le 
(r+1)^{\Bigl \lfloor \frac{l}{r} \Bigr \rfloor},\]
where $l$ is the length of this pattern. Indeed, 
there are at most $\Bigl \lfloor \frac{l}{r} \Bigr \rfloor$ added 
segment over this part of the top row, and that at least one 
of them has trivial color $0^r$.
 
As a consequence, in these cases, the number of possible extensions 
over the top row is at most $(r+1)^{\frac{L}{r}+8}$, 
where $L$ is the total length of the top row. 
As a consequence, since $L=rk$, this number is equal to 
\[(r+1)^{k+9}.\]
\end{itemize}

In any of these cases the number of possible 
extensions is smaller than $(r+1)^{k+9}$. Hence 
the total number 
of possible extensions of a pattern in $\mathcal{T}_{kr}$
into a pattern in $\mathcal{T}_{(k+1)r}$ is at most $3(r+1)^{k+9}$, 
since there are three cases.

\end{enumerate}

\item \textbf{Upper bound on the cardinality of $\mathcal{T}_{kr}$:}

As a consequence, the number of possible extensions of a pattern 
in $\mathcal{T}_{kr}$ into a pattern in $\mathcal{T}_{(k+1)r}$ is at most 
\[(r+1)^{kr+2} \left(3(r+1)^{k+9}\right)^r = 3^r (r+1)^{2kr + 9r +2}.\]
It follows, using inductively this inequality, 
that the number of pattern in 
$\mathcal{T}_{(k+1)r}$ is smaller 
than 
\[|\mathcal{T}_r|.(r+1)^{k.(2+9r)} . 3^{k.r} . (r+1)^{2 r \sum_{i=1}^{k} i}
= |\mathcal{T}_r|.(r+1)^{k.(2+9r)} . 3^{kr}. (r+1)^{r k (k+1)}.\] 

\item \textbf{Upper bound on $N_{kr} (\Delta'_r)$:}

As a consequence, the number of $kr$-blocks in the language of 
$\Delta'_r$ is smaller than 
\[|\mathcal{T}_r|.(r+1)^{k.(2+10r)} . 3^{kr}. (r+1)^{r k^2} . (r+1)^{2(k+1)r},\]
since any $kr$-block is sub-pattern of a pattern in $\mathcal{T}_{(k+1)r}$.
\end{enumerate}

\end{proof}

\begin{proof} \textit{of Theorem~\ref{thm.close.form.entropy}:}
As a consequence of Lemma~\ref{lemma.upper.bound.1}, 
and Lemma~\ref{lemma.upper.bound.2}, we get that 
the number of $kr$-blocks in the language of $d^{(r)}_{\A} (Z)$
smaller than 
\[N_{kr} (Z) . \alpha(r). 2^{\lambda(r).k} .
(r+1)^{r k^2}.\]
Hence,
$$h(d^{(r)}_{\A} (Z)) \le h(Z) + \frac{\log_2 (1+r)}{r}$$ 
Using Lemma~\ref{lemma.lower.bound}, we have the equality: 
$$h(d^{(r)}_{\A} (Z)) = h(Z) + \frac{\log_2 (1+r)}{r}.$$
\end{proof}

\begin{figure}[h!]
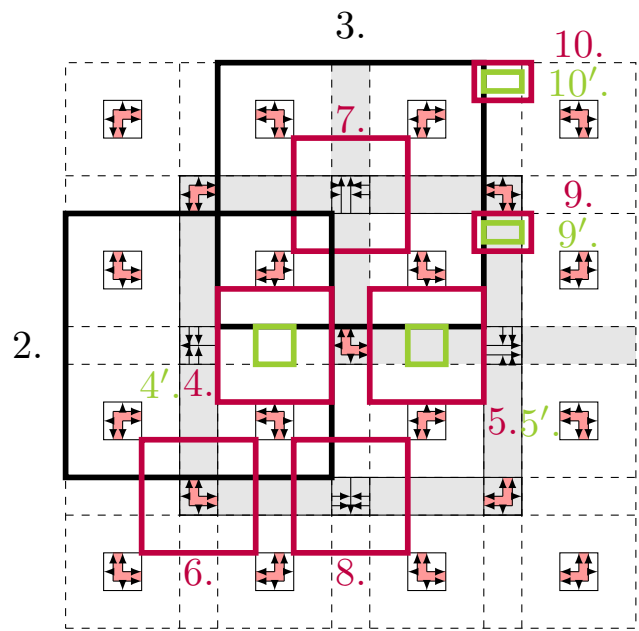

\begin{center}
\]
\caption{\label{fig.completing.supertile} Illustration 
of the correspondance between patterns of 
Figure~\ref{fig.orientation.supertiles1}
and parts of a supertile.}
\end{figure}

\end{document}